\documentclass[reqno,oneside,11pt]{amsart}

\usepackage[a4paper, top=2.5cm, bottom=2.5cm, left = 3.25cm, right = 3.25cm]{geometry}

\usepackage[utf8]{inputenc}
\usepackage[T1]{fontenc}
\usepackage[svgnames,hyperref]{xcolor}
\usepackage{lmodern}
\usepackage{amssymb} %\setminus...
\usepackage{graphicx}
\usepackage{import} % pour images
\usepackage{tikz}

%%%%%%%%%%%%%%%%%%%%%%%%% 
% Pour extraire seulement le symbol \bigast du package mathabx, et ne pas modifier d'autres symboles : 
\DeclareFontFamily{U}{mathb}{\hyphenchar\font45}
\DeclareFontShape{U}{mathb}{m}{n}{
      <5> <6> <7> <8> <9> <10> gen * mathb
      <10.95> mathb10 <12> <14.4> <17.28> <20.74> <24.88> mathb12
      }{}
\DeclareSymbolFont{mathb}{U}{mathb}{m}{n}
\DeclareMathSymbol{\bigast}{1}{mathb}{"06}

%%%%%%%%%%%%%%%%%%%%%%%%
% \widebar from mathabx
\DeclareFontFamily{U}{mathx}{\hyphenchar\font45}
\DeclareFontShape{U}{mathx}{m}{n}{<-> mathx10}{}
\DeclareSymbolFont{mathx}{U}{mathx}{m}{n}
\DeclareMathAccent{\widebar}{0}{mathx}{"73}

% OPTIONS XY PDF
\usepackage[cmtip]{xy}
\xyoption{pdf}
\xyoption{color}
\xyoption{all}
\newcommand{\mygraph}[1]{\xybox{\xygraph{#1}}}

\usepackage[shortlabels]{enumitem}
\setlist[enumerate]{label=\rm{(\arabic*)}}
\setlist[enumerate,2]{label=\rm({\it\roman*})}
\setlist[itemize]{label=\raisebox{0.25ex}{\tiny$\rond$}}

\theoremstyle{plain}
\newtheorem{maintheorem}{Theorem}

\newtheorem{maincorollary}[maintheorem]{Corollary}
\newtheorem{theorem}{Theorem}[section]
\newtheorem{corollary}[theorem]{Corollary}
\newtheorem{proposition}[theorem]{Proposition}
\newtheorem{lemma}[theorem]{Lemma}

\theoremstyle{definition}

\newtheorem{example}[theorem]{Example}
\newtheorem{remark}[theorem]{Remark}
\newtheorem{setup}[theorem]{Set-Up}

%%%%%%%%%%%%%%%%%%%%%%%%%%%%%%%%%%
%   Lettres
%%%%%%%%%%%%%%%%%%%%%%%%%%%%%

% Les variétés :
\newcommand{\A}{\mathbb{A}}
\newcommand{\F}{\mathbb{F}}
\newcommand{\p}{\mathbb{P}}

\newcommand{\Z}{\mathbf{Z}}

% Les corps :
\newcommand{\K}{\mathbf{k}}

\newcommand{\C}{\mathbf{C}}
\newcommand{\R}{\mathbf{R}}
\newcommand{\Q}{\mathbf{Q}}
\newcommand{\FF}[1]{\mathbf{F}_{\!#1}} % Finite Fields

% les mathcal qui sont des éléments de Cremona :
\newcommand{\E}{\mathcal E}
\newcommand{\B}{\mathcal B}

% les mathcal qui sont des complexes, graphes, polyèdres... 
\newcommand{\Al}{\mathcal A} % chambers
\newcommand{\Bl}{\mathcal B} % barycentric 
\newcommand{\Cl}{\mathcal C} % Kaloghiros cone
\newcommand{\Fl}{\mathcal F} % faces
\newcommand{\Gl}{\mathcal G} % graph
 % hull
\newcommand{\Il}{\mathcal I} % inner
 % skeleton
\newcommand{\Tl}{\mathcal T} % tree
\newcommand{\Wl}{\mathcal W} % wall
\newcommand{\Xl}{\mathcal X} % square complex 
\newcommand{\Yl}{\mathcal Y} % subcomplex

\newcommand{\Ol}{\mathcal O}

%%%%%%%%%%%%%%%%%%%%%%%%%%%%%%%%%%%%%%%%%%%%%%%%%%%%%%%%%%%%%%%%%
%   Abbrev.
%%%%%%%%%%%%%%%%%%%%%%%%%%%%%%%%%%%%%%%%%%%%%%%%%%%%%%%%%%%%%%%%%

% LIE GROUPS:
\DeclareMathOperator{\PGL}{PGL}
\DeclareMathOperator{\PSL}{PSL}

%%%%

\newcommand{\eps}{{\varepsilon}}
\renewcommand{\phi}{\varphi}
\newcommand{\id}{\text{\rm id}}
\newcommand{\rat}{\dashrightarrow}
\def\dashmapsto{\mapstochar\dashrightarrow}

\def\mapsrat{\mapstochar\dashrightarrow}

\newcommand{\llangle}{\mathopen\ll}
\newcommand{\rrangle}{\mathclose\gg}
\renewcommand{\setminus}{\smallsetminus}
\newcommand{\rond}{\scalebox{1.2}{$\bullet$}}

\DeclareMathOperator{\Aut}{Aut}
\DeclareMathOperator{\Bir}{Bir}
\DeclareMathOperator{\Gal}{Gal}
\DeclareMathOperator{\Stab}{Stab}

\DeclareMathOperator{\Spec}{Spec}

\DeclareMathOperator{\Div}{Div}

\DeclareMathOperator{\Proj}{Proj}

\DeclareMathOperator{\Conv}{Conv}

\DeclareMathOperator{\nef}{nef}
\DeclareMathOperator{\Eff}{Eff}
\DeclareMathOperator{\Int}{Int}
\DeclareMathOperator{\Star}{star}
\DeclareMathOperator{\order}{order}
\DeclareMathOperator{\NE}{NE}

\newcommand{\ConeConv}{\Conv^{\circ}}
\newcommand{\pt}{\text{pt}}

\renewcommand{\le}{\leqslant}
\renewcommand{\ge}{\geqslant}

\newcommand{\mycolor}{Navy}
\usepackage[pdfauthor={S. Lamy and S. Zimmermann}, colorlinks, citecolor = \mycolor, linkcolor = \mycolor, urlcolor = \mycolor]{hyperref}
% add option backref to get back reference in biblio
\usepackage[all]{hypcap} % needed to help hyperlinks direct correctly;
\renewcommand{\L}{\mathbf L}

%%%% Scaling in Inkscape %%%%
\newcommand\SB[1][\scalebox{.8}]{#1} % quotient $\Tl$ de $\Comp$
\newcommand\SBd[1][\scalebox{.9}]{#1} % Bass-Serre en general (dans l'introduction)

\title[Signature morphisms from the Cremona group]{Signature morphisms from the Cremona group over a non-closed field}
\author{Stéphane Lamy}
\author{Susanna Zimmermann}
\address{Institut de Math\'ematiques de Toulouse, Universit\'e Paul Sabatier,
118 route de Narbonne, 31062 Toulouse Cedex 9, France}
\email{slamy@math.univ-toulouse.fr}
\email{susanna.zimmermann@univ-angers.fr}
\curraddr{LAREMA, Universit\'e d'Angers, 49045 Angers cedex 1, France}
\keywords{Cremona group; Sarkisov program; abelianization; non-closed 
fields}
\subjclass[2010]{14E07, 14E30}
\date{\today}

\begin{document}

\thanks{The second author greatefully acknowledges support by the Swiss National Science Foundation.}

\begin{abstract}
We prove that the plane Cremona group over a perfect field with at least one Galois extension of degree $8$ is a non-trivial amalgam, and that it admits a surjective morphism to a free product of groups of order two.
\end{abstract}

\maketitle

%\tableofcontents

\section*{Introduction}

The Cremona group $\Bir_\K(\p^2)$ is the group of birational symmetries of the projective plane defined over a field $\K$. 
Its elements are of the form
\[[x:y:z]\dashmapsto[f_0(x,y,z):f_1(x,y,z):f_2(x,y,z)]\]
where $f_0,f_1,f_2\in\K[x,y,z]$ are homogeneous polynomials of equal degree with no common factor, and such that there exists an inverse of the same form.
Equivalently, working in an affine chart one can define the Cremona group as the group of birational selfmaps of the affine plane, which is also (anti-)isomorphic to the group $\Aut_\K \K(x,y)$ of $\K$-automorphisms of the fraction field $\K(x,y)$.
The Cremona group contains the group of polynomial automorphisms of the affine plane over the field $\K$.
In particular it is a rather huge group.
It is neither finitely generated (see \cite[Proposition 3.6]{CantatSurvey}), nor finite dimensional, even when working over a finite base field.
It was recently shown that $\Bir_{\K}(\p^2)$ is not a simple group, over any base field $\K$ \cite{CL, Lonjou}. 
Then it is natural to ask for nice quotients of $\Bir_{\K}(\p^2)$, for instance abelian ones.
Over an algebraically closed field, it is known (see also \S\ref{subsec:jonq}) that the automorphism group $\Aut_\K(\p^2) = \PGL_3(\K)$, or the Jonquières group $\PGL_2(\K(T)) \rtimes \PGL_2(\K)$, both embed in any quotient of the Cremona group.
In particular in this  situation any morphism from $\Bir_{\K}(\p^2)$ to an abelian group, or to a finite group,  is trivial.
On the other hand it was shown by the second author in \cite{Zimmermann} that the situation is drastically different over the field $\R$ of real numbers. 
The real Cremona group admits an uncountable collection of morphisms to $\Z/2\Z$, and precisely we have the following result about the abelianization of $\Bir_\R(\p^2)$:
\begin{equation} \label{eq:morphism R}
\Bir_\R(\p^2)/[\Bir_\R(\p^2),\Bir_\R(\p^2)] \simeq \bigoplus_{(0,1]} \Z/2\Z. \tag{$\dagger$}
\end{equation}
One consequence is that $\Bir_\R(\p^2)$ is a nontrivial amalgamated
product of two factors along their intersection \cite{Zimmermannb}.

In this paper we explore a similar question, over any perfect base field $\K$ that admits at least one Galois extension of degree 8.
Observe that this condition corresponds to a large collection of fields, which includes the case of all number fields and finite fields.

The special role of degree 8 extensions is explained by their relation to Bertini involutions.
Indeed, given a point of degree 8 on $\p^2$, that is, an orbit of cardinal 8 under the natural action of the absolute Galois group of the base field $\K$, we can consider the surface $S$ obtained by blowing-up this orbit.
If the point is sufficiently general, the surface $S$ is del Pezzo and admits another birational morphism to $\p^2$, and the induced birational selfmap of $\p^2$ is an example of a Bertini involution.
Let $\B\subset\Bir_{\K}(\p^2)$ be a set of representatives of such Bertini involutions with a base point of degree $8$, up to conjugacy by automorphisms.
We prove that as soon as $\K$ admits at least one Galois extension of degree 8, then the set $\B$ is quite large.
Namely $\B$ has at least the same cardinality than $\K$, and in the case of a finite field $\FF{q}$ one can be more precise and give a lower bound for the cardinal of $\B$ which is polynomial in $q$ (see \S\ref{sec:bertini}).

These Bertini involutions are part of a system of elementary generators $\E$ for the group $\Bir_{\K}(\p^2)$ which was found by Iskovskikh \cite{Isko}.
The set $\E$ is always a huge set, because for instance it contains all Jonquières maps (up to left-right composition by automorphisms).
Before stating our results we introduce a bit more notation.
For each Bertini involution $b\in \B$, we set $G_b = \langle\Aut_\K(\p^2), b\rangle$.
Moreover, we denote $G_e = \langle\Aut_\K(\p^2), \E \setminus \B \rangle$ the subgroup generated by  automorphisms and all non-Bertini  elementary generators.
Our first result gives an amalgamated product structure for $\Bir_\K(\p^2)$, in terms of these subgroups.

\begin{maintheorem} \label{thm:main big amalgam}
Let $\K$ be a perfect field admitting at least one Galois extension of degree $8$.
Consider the subgroups $G_i$ as defined above, for $i \in \B\cup\{e\}$. 
Then for all $i\neq j$ we have $G_i\cap G_j = \Aut_\K(\p^2)$, and the Cremona group is the amalgamated product of the $G_i$ along their common intersection:
\[\Bir_\K(\p^2)\simeq\bigast_{\Aut_\K(\p^2)}G_i.\] 
Moreover, $\Bir_\K(\p^2)$ acts faithfully on the corresponding Bass-Serre tree.
\end{maintheorem}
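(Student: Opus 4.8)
The plan is to derive the theorem from a presentation of $\Bir_\K(\p^2)$ whose generating set is $\Aut_\K(\p^2)\cup\E$: such a presentation is provided by Iskovskikh's analysis of elementary links \cite{Isko} (equivalently, by the elementary relations attached to rank-$3$ fibrations in the Sarkisov program), and it has the feature that its relators can be sorted according to the type of links they involve. Write $A:=\Aut_\K(\p^2)$. The point is that $\Bir_\K(\p^2)\simeq\bigast_{A}G_i$ is exactly the assertion that this presentation is, after renaming, the canonical presentation of the amalgam $\bigast_A G_i$ --- generators $\bigcup_i G_i$, relators the relators of each $G_i$ together with the identifications of the copies of $A$. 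Since $\B\subseteq\E$ and $\E=(\E\setminus\B)\cup\B$, Iskovskikh's generation theorem shows the $G_i$ generate $\Bir_\K(\p^2)$, so the universal property of the amalgamated product yields a canonical surjection $\Phi\colon\bigast_{A}G_i\twoheadrightarrow\Bir_\K(\p^2)$, the amalgam being formed along the genuine common subgroup $A\le G_i$. Everything comes down to the injectivity of $\Phi$.

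Here is the key reduction: to prove $\Phi$ injective it suffices to show that every relator in Iskovskikh's presentation is a word in the generators of a single factor $G_i$. Indeed, such a word holds in $\Bir_\K(\p^2)$ and involves only elements of the subgroup $G_i$, hence holds in $G_i$, hence already holds in $\bigast_A G_i$; thus $\ker\Phi$ is trivial, and as a by-product the natural maps $G_i\to\bigast_A G_i$ are the expected inclusions. Now a relator involving only $A$ is shared by all factors and presents $A$, and a relator involving only $A$ and elements of $\E\setminus\B$ is by definition carried by $G_e$; so the entire difficulty is concentrated in the relators that mention a degree-$8$ Bertini involution $b\in\B$. One must show that no such relator involves a generator lying outside $\langle A,b\rangle$ --- in particular, that two distinct elements of $\B$, and that an element of $\B$ together with a Jonquières-type generator, never occur in a common relator.

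I expect this isolation of the Bertini generators to be the main obstacle, and the mechanism should be geometric. A degree-$8$ Bertini involution is realized as a Sarkisov link of type II, $\p^2\leftarrow S\to\p^2$, whose middle surface $S$ is a del Pezzo surface of degree $1$ which --- precisely because its eight exceptional curves form a single Galois orbit, which is where the hypothesis of a degree-$8$ Galois extension enters (along with the genericity statements of \S\ref{sec:bertini}) --- has Picard rank $2$ over $\K$. Such an $S$ is a dead end of the graph of Sarkisov links: blowing up any closed point $x$ of $S$ destroys the del Pezzo condition, since the anticanonical self-intersection drops to $1-\deg(x)\le 0$, and produces no fibration that would give a relation linking the Bertini link to links of other types. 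Consequently the only relations in which this link takes part are the tautological one, expressing that it is an involution, and those coming from automorphisms of the eight-point configuration, both of which live in $G_b=\langle A,b\rangle$. Making this rigorous amounts to going through Iskovskikh's relators and checking that every one mentioning a degree-$8$ Bertini involution is of this form --- equivalently, to analysing the rank-$3$ fibrations obtained by blowing up a closed point of $S$ and verifying that none of them produces a relation mixing the Bertini link with a generator outside $\langle A,b\rangle$. Granting this, every relator is supported on a single $G_i$, so $\Phi$ is an isomorphism, and the equalities $G_i\cap G_j=A$ for $i\ne j$ follow at once, since the factors of an amalgamated product meet exactly in the amalgamated subgroup.

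Finally, for the faithfulness of the action on the Bass--Serre tree $T$: the kernel $N$ of $\Bir_\K(\p^2)\curvearrowright T$ is the largest normal subgroup contained in every vertex stabilizer, so $N\trianglelefteq\Bir_\K(\p^2)$ and $N\subseteq G_i\cap G_j=A=\PGL_3(\K)$. Being normalized by $\PGL_3(\K)\subseteq G_e$, the group $N$ is normal in $\PGL_3(\K)$, hence --- as $\PSL_3(\K)$ is simple with trivial centralizer in $\PGL_3(\K)$ --- either trivial or containing $\PSL_3(\K)$. The latter is impossible: $N$ is also normalized by the standard quadratic involution $\sigma\in G_e$, so from $N\supseteq\PSL_3(\K)$ we would get $\PGL_3(\K)\supseteq N=\sigma N\sigma^{-1}\supseteq\sigma\,\PSL_3(\K)\,\sigma^{-1}$, which is false because conjugating a sufficiently general projective transformation by $\sigma$ yields a transformation of degree $>1$. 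Hence $N=1$, and the action on $T$ is faithful.
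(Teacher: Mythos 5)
Your strategy is a presentation-theoretic reformulation of the paper's argument, and it correctly isolates the same geometric fact the paper uses (Lemma \ref{lem:chi8}): the middle surface of a degree-$8$ Bertini link is a del Pezzo surface of degree $1$, so no blow-up of it is again del Pezzo, hence it underlies no rank-$3$ fibration and the Bertini edge lies in no square of $\Xl$. Your formal reduction ``every relator supported in a single factor $\Rightarrow \Phi$ is an isomorphism'' is also sound. The genuine gap is the step you yourself defer with ``Granting this'': you never exhibit the presentation of $\Bir_\K(\p^2)$ on $\Aut_\K(\p^2)\cup\E$ whose relators you propose to sort, nor do you verify that every relator mentioning a Bertini involution is a word in $\Aut_\K(\p^2)\cup\{b\}$, and this verification is the technical heart of the theorem, not a routine check. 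What the del Pezzo degree-$1$ computation gives directly is that the Bertini \emph{link} occurs in no \emph{elementary relation}. But a relator of the group presentation corresponds to an arbitrary loop in $\Xl$ based at a vertex of type $\p^2$; by Proposition \ref{pro:simply connected} such a loop is a product of \emph{conjugates} of elementary relations, and one must still show that this decomposition can be re-sorted into relations each supported in a single $G_i$ --- equivalently, that a loop can only cross a Bertini edge by backtracking along it, and that distinct translates $g(\Tl_b)$, $g'(\Tl_{b'})$ meet in at most one vertex of type $\p^2$. That is precisely what the paper's machinery delivers: simple connectedness of $\Xl$, the tree structure of $\Xl_\B^\circ$ (Lemma \ref{lem:Xe_and_Xb}), the intersection Lemma \ref{lem:stab Tb}, and the star-quotient tree $\Tl_Q$ to which Theorem \ref{thm:bass-serre} is applied. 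Falling back on Iskovskikh's explicit list of relations \cite{IKT} is exactly what the paper avoids, since one would then have to certify that list over an arbitrary perfect field and inspect each relator by hand; your proposal does neither.

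Two smaller points. The equality $G_i\cap G_j=\Aut_\K(\p^2)$ does follow formally from the amalgam structure, as you say, but one also needs $\Aut_\K(\p^2)\subsetneq G_i$ for each $i$, i.e.\ the nonemptiness and abundance of $\B$, which is where \S\ref{sec:bertini} and the degree-$8$ Galois hypothesis enter beyond your Picard-rank remark. Your faithfulness argument (kernel contained in $\PGL_3(\K)$, hence trivial or containing $\PSL_3(\K)$, the latter excluded by conjugating with a quadratic involution) is correct but takes a different and longer route than the paper, which simply notes that an element fixing both $(\p^2,\id)$ and $(\p^2,b)$ is an automorphism $g$ with $bgb\in\Aut_\K(\p^2)$, forcing $g=\id$ because no nontrivial automorphism of $\p^2$ fixes $8$ points in general position.
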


It was shown by Cornulier (appendix of \cite{CL}) that the Cremona group over an algebraically closed field is not a non-trivial amalgam of two groups.
In contrast, we deduce from the above theorem the following structure result, where we denote $G_\B=\langle \Aut_\K(\p^2),\B \rangle$ the subgroup generated by all $G_b$:

\begin{maincorollary} \label{cor:main 2 amalgam}
Let $\K$ be a perfect field admitting at least one Galois extension of degree $8$. Then $G_{e}\cap G_\B = \Aut_\K(\p^2)$, and 
\[\Bir_\K(\p^2)\simeq G_\B \ast_{\Aut_\K(\p^2)}G_{e},\]
and it acts faithfully on its Bass-Serre tree.
\end{maincorollary}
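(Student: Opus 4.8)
The plan is to derive Corollary~\ref{cor:main 2 amalgam} from Theorem~\ref{thm:main big amalgam} by a purely group-theoretic manipulation of amalgamated products over a common subgroup. Write $H=\Aut_\K(\p^2)$ for the common factor. I will use two standard facts about an amalgam $G=\bigast_H G_i$ taken over a family $(G_i)_{i\in I}$ that all contain $H$: (a) for any subset $J\subseteq I$, the subgroup of $G$ generated by $\{G_j:j\in J\}$ is canonically isomorphic to the amalgam $\bigast_H G_j$ over $j\in J$ — this follows from the normal form of elements in an amalgam, or from Bass–Serre theory by restricting to the subtree spanned by the relevant cosets; and (b) associativity: for a partition $I=J\sqcup J'$ one has $G\simeq\bigl(\bigast_{j\in J}G_j\bigr)\ast_H\bigl(\bigast_{j'\in J'}G_{j'}\bigr)$, the isomorphism being the obvious one.

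Now apply this with $I=\B\cup\{e\}$, $J=\B$ and $J'=\{e\}$. By fact~(a) the subgroup of $\Bir_\K(\p^2)$ generated by the $G_b$, $b\in\B$ — which is exactly the subgroup denoted $G_\B$ — is canonically the amalgam $\bigast_H G_b$; then fact~(b) gives $\Bir_\K(\p^2)\simeq G_\B\ast_H G_e$, the isomorphism restricting to the identity on $G_\B$ and on $G_e$ viewed as subgroups of $\Bir_\K(\p^2)$. (Surjectivity of the natural map is automatic since $\Bir_\K(\p^2)$ is generated by all the $G_i$, hence by $G_\B$ and $G_e$.) Since in any amalgamated product $X\ast_H Y$ one has $X\cap Y=H$, we deduce $G_e\cap G_\B=H=\Aut_\K(\p^2)$, which is the first assertion.

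It remains to see that the action of $\Bir_\K(\p^2)$ on the Bass–Serre tree $T$ of the decomposition $G_\B\ast_H G_e$ is faithful. For any amalgam $X\ast_H Y$, the kernel $N$ of the action on the Bass–Serre tree is the largest subgroup of $H$ normal in $X\ast_H Y$: such $N$ is normal and fixes the base edge, hence fixes its whole orbit and therefore, since the tree is connected and generated by one edge orbit, all of $T$; and $N$ lies in two adjacent vertex stabilizers, so in a conjugate of $X$ and of $Y$, hence $N\subseteq H$; conversely a normal subgroup contained in $H$ fixes the base edge and, by normality, the whole tree. The same description applies to the kernel of the action on the Bass–Serre tree of the big amalgam of Theorem~\ref{thm:main big amalgam}: it is again the largest normal subgroup of $\Bir_\K(\p^2)$ contained in $H=\Aut_\K(\p^2)$, and Theorem~\ref{thm:main big amalgam} asserts it is trivial. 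Since the relevant condition is literally the same for both trees, faithfulness on $T$ follows. (Alternatively, collapsing the appropriate edges of the big Bass–Serre tree produces a $\Bir_\K(\p^2)$-equivariant map onto $T$, along which one can transport faithfulness.)

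There is no genuinely hard step here; the one point to be careful about is that one really needs the full multi-factor statement of Theorem~\ref{thm:main big amalgam}, not just a two-factor decomposition, in order to invoke fact~(a) and conclude that $G_\B$ is the amalgam $\bigast_H G_b$ rather than some proper quotient of it. Everything else is the standard formalism of amalgamated products and Bass–Serre theory.
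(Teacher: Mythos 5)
Your proposal is correct and follows essentially the same route as the paper: regroup the amalgam of Theorem~\ref{thm:main big amalgam} by associativity, identify $\bigast_{\Aut_\K(\p^2)} G_b$ with $G_\B$, read off $G_e\cap G_\B=\Aut_\K(\p^2)$ from the two-factor decomposition, and deduce faithfulness from the faithfulness statement of Theorem~\ref{thm:main big amalgam} (the paper phrases this last step via the triviality of the common stabilizer of $(\p^2,\id)$ and $(\p^2,b)$, while you phrase it via the largest normal subgroup contained in $\Aut_\K(\p^2)$ — these are the same observation).
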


It turns out that each subgroup $G_b$ admits a structure of free product, and this allows to obtain a lot of morphisms from the Cremona group to $\Z/2\Z$:

\begin{maintheorem} \label{thm:main morphisms}
Let $\K$ be a perfect field with at least one Galois extension of degree $8$. 
Then:
\begin{enumerate}
\item For each $b\in \B$ we have $G_b\simeq\Aut_\K(\p^2)\ast\Z/2\Z$, and we can write the Cremona group as a free product
\[
\Bir_\K(\p^2) \simeq G_e \ast \left( \bigast_{\B} \Z/2\Z \right).
\]
\item In particular, there is a surjective morphism 
\[\Bir_\K(\p^2)\rightarrow\bigast_{\B}\Z/2\Z\]
whose kernel is the smallest normal subgroup containing $G_e$, and which sends each $b \in \B$ to the corresponding generator on the right-hand side. 
\item In particular, the abelianization of the Cremona group over $\K$ contains
a subgroup isomorphic to $\bigoplus_{\B}  \Z/2\Z$.
\end{enumerate}
\end{maintheorem}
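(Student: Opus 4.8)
The plan is to establish part~(1) and then obtain (2) and (3) by soft group theory. The heart of (1) is the internal structure of each subgroup $G_b$. Fix $b\in\B$ and write $A=\Aut_\K(\p^2)$. Since $b$ is a Bertini involution we have $b^2=\id$, so there is a well-defined homomorphism $A\ast\Z/2\Z\to G_b$ that is the identity on $A$ and sends the generator of $\Z/2\Z$ to $b$; it is surjective because $G_b=\langle A,b\rangle$ by definition. The point is its injectivity, namely that a non-empty reduced word $a_0\,b\,a_1\,b\cdots b\,a_n$, with $a_i\in A$ and $a_1,\dots,a_{n-1}\neq\id$, never represents the identity of $\Bir_\K(\p^2)$. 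I would derive this from the analysis of Iskovskikh's relations that underlies Theorem~\ref{thm:main big amalgam}: a Bertini involution with a base point of degree~$8$ occurs in no relation of the presentation of $\Bir_\K(\p^2)$ other than $b^2=\id$ and the tautological conjugation relations with $A$; equivalently $b\,a\,b\notin A$ for all $a\neq\id$, and in fact the degree of the word above grows strictly with $n$, so the word is non-trivial. Granting this, $G_b$ has presentation $\langle A,b\mid b^2=1\rangle$, i.e.\ $G_b\simeq A\ast\Z/2\Z$.

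To finish (1) I feed this into Theorem~\ref{thm:main big amalgam}. By the universal property of the amalgamated product, a homomorphism from $\bigast_A G_i$ (for $i\in\B\cup\{e\}$) to a group $\Gamma$ is the same as a homomorphism $G_e\to\Gamma$ together with, for each $b\in\B$, a homomorphism $G_b\to\Gamma$ agreeing with it on $A$; and since $G_b=A\ast\Z/2\Z$, the latter datum is simply the choice of an element of order dividing~$2$ in $\Gamma$. Hence $\bigast_A G_i$ has the same universal property as $G_e\ast\bigl(\bigast_\B\Z/2\Z\bigr)$ and the two groups are isomorphic; concretely this is the iterated pushout identity $K\ast_A(A\ast\Z/2\Z)\simeq K\ast\Z/2\Z$ applied with $K=G_e$. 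This gives the free product decomposition in~(1).

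Parts (2) and (3) are now formal. For (2), the canonical retraction of $\Bir_\K(\p^2)\simeq G_e\ast\bigl(\bigast_\B\Z/2\Z\bigr)$ onto its second free factor, trivial on $G_e$ and the identity on $\bigast_\B\Z/2\Z$, is a surjective homomorphism sending each $b$ to the corresponding generator, and its kernel is the normal closure of $G_e$ in the free product, that is, the smallest normal subgroup of $\Bir_\K(\p^2)$ containing $G_e$. For (3), the abelianization of a free product is the direct sum of the abelianizations, so $\Bir_\K(\p^2)^{\mathrm{ab}}\simeq G_e^{\mathrm{ab}}\oplus\bigoplus_\B\Z/2\Z$; in particular $\bigoplus_\B\Z/2\Z$ sits inside the abelianization of $\Bir_\K(\p^2)$, even as a direct summand.

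The only real work is the injectivity claimed in the first paragraph, i.e.\ excluding hidden relations between $A$ and a degree-$8$ Bertini involution $b$; this is precisely where the hypothesis on degree-$8$ Galois extensions and the Sarkisov-theoretic control of Iskovskikh's relations — the same input needed for Theorem~\ref{thm:main big amalgam} — do the heavy lifting, the remainder of the argument being formal.
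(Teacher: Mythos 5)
Your overall architecture is the right one and matches the paper's: everything hinges on showing $G_b\simeq\Aut_\K(\p^2)\ast\langle b\rangle$, after which the free product decomposition follows from Theorem~\ref{thm:main big amalgam} by the universal property (the pushout identity $G_e\ast_A(A\ast\Z/2\Z)\simeq G_e\ast\Z/2\Z$ iterated over $\B$), and (2), (3) are formal; your treatment of (2) and (3) is correct, and your observation that $\bigoplus_\B\Z/2\Z$ is even a direct summand of the abelianization is a nice (valid) strengthening.

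The gap is precisely in the step you identify as ``the only real work,'' and you do not actually carry it out. Two issues. First, the reduction you offer is logically wrong: the condition ``$bab\notin\Aut_\K(\p^2)$ for all $a\neq\id$'' is \emph{not} equivalent to injectivity of $\Aut_\K(\p^2)\ast\Z/2\Z\to G_b$ --- it only excludes relations involving two occurrences of $b$. (For instance $\langle a,b\mid a^3=b^2=(ab)^3=1\rangle\simeq A_4$ satisfies $bab\notin\langle a\rangle$ for $a\neq\id$ yet is far from $\Z/3\ast\Z/2$.) One needs every nonempty reduced word $a_0ba_1b\cdots ba_n$ with $a_1,\dots,a_{n-1}\neq\id$ to be nontrivial. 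Second, your proposed route to that --- strict degree growth of such words, or the absence of $b$ from Iskovskikh-type relations --- is asserted, not proved, and neither follows from Theorem~\ref{thm:main big amalgam}, which constrains only how the various $G_i$ interact over $\Aut_\K(\p^2)$ and says nothing about the internal structure of a single $G_b$. The paper closes this gap geometrically: $G_b$ acts on the subtree $\Tl_b$ of $\Xl_\B^\circ$ (a tree, by simple connectedness of $\Xl$, Proposition~\ref{pro:simply connected}) with fundamental domain a single edge joining $(\p^2,\id)$ to $(S_b,\eta_b)$, with vertex stabilizers $\Aut_\K(\p^2)$ and $\langle b\rangle$ and \emph{trivial} edge stabilizer, since an automorphism of $\p^2$ fixing $8$ points in general position is the identity; Theorem~\ref{thm:bass-serre} then yields $G_b\simeq\Aut_\K(\p^2)\ast\Z/2\Z$. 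Some argument of this kind (or a genuine ping-pong/normal-form computation substantiating your degree-growth claim) is needed; as written, the central assertion is circularly deferred to machinery that does not directly supply it.
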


We see that even if there was no Bertini involutions involved in the paper \cite{Zimmermann}, we obtain a similar looking (even if less precise) result.
In particular, the $\Z/2\Z$ in the target group in (\ref{eq:morphism R}) have nothing to do with the fact that the absolute Galois group of $\R$ has order 2, but rather with the fact that we are able to produce a natural set of generators for the Cremona group that contains involutions.
In this sense, we like to think of the above morphisms $\Bir_\K(\p^2) \to \Z/2\Z$ as some analogues of the classical signature morphism on the symmetric group.
The huge collection of such morphisms corresponds to the existence of a system of generators with a lot of non-conjugate involutions.
In this paper, we focus on Bertini involutions associated to a base point of degree 8 because they seem to be the easiest to handle technically. 
When the base field is $\R$, a similar role was played by the so-called ``standard quintic involutions''. 
It seems quite plausible that other ``signature morphisms'' exist on the Cremona group, associated to other type of involutions, such as the Geiser involution associated to a base point of degree 7.
Also, we mention that we see no obvious obstruction why such morphisms could not exist in higher dimension, even over the field of complex numbers.

The strategy to prove the above results is to use the Sarkisov Program.
The Sarkisov Program is a way to factorize a given birational map between Mori fiber spaces into elementary links.
We recall that even if the starting map is a birational selfmap of a given variety $X$ (for instance $X = \p^2$), the elementary links are not in general elements of the group $\Bir(X)$.
In other words, even if one is primarily interested in the group $\Bir(X)$, the Sarkisov Program naturally produces generators for the groupoid of birational maps $Y \rat Z$, where $Y, Z$ can be any Mori fiber spaces birational to $X$.
Nevertheless the Sarkisov Program turns out to be an efficient tool to produce some systems of generators for $\Bir_\K(\p^2)$, and also to describe relations between them \cite{Isko, IKT, iskovskikh_1996}.
The Sarkisov Program was revisited recently in light of the progresses in the theory of the Minimal Model Program, and is now established in any dimension (over $\C$) \cite{HMcK}.
Moreover the relations between Sarkisov links were described by Kaloghiros \cite{Kaloghiros}.

In Section \ref{sec:squares} we encode Sarkisov links and relations between them in a square complex $\Xl$ on which the group $\Bir_\K(\p^2)$ acts naturally.
Then in Section \ref{sec:sarkisov} we give an account of the proof of the Sarkisov Program in the simpler case of surfaces, but working over an arbitrary perfect field.
This allows to prove that the square complex $\Xl$ is connected and simply connected.

In Section \ref{sec:generators} we recall the notion of elementary generators for the group $\Bir_\K(\p^2)$, following the work of Iskovskikh.
Among the elementary generators we discuss in particular the Bertini involutions and prove their existence (and in fact, their abundance).
We also discuss the Jonquières maps, and in any dimension we recall the following basic dichotomy: given a morphism $\phi$ from the Cremona group to another group $H$, either the subgroup generated by the Jonquières maps lies in the kernel, or $\phi$ induces an embedding of this subgroup into $H$.

Finally in Section \ref{sec:proofs} we use Bass-Serre theory to prove our results. 
The general idea is that the Bass-Serre trees of the various amalgams appearing in Theorem \ref{thm:main big amalgam}, Corollary \ref{cor:main 2 amalgam} and Theorem \ref{thm:main morphisms} are realized either as a quotient or as a subcomplex of the square complex $\Xl$.

When one encounters a cube complex in geometric group theory, a natural question is whether this complex has non-positive curvature.
It turns out that this is not the case for our square complex $\Xl$, however we should mention that $\Xl$ is essentially a subcomplex of an infinite dimensional CAT$(0)$ cube complex associated with the Cremona group that was constructed by Lonjou in her PhD thesis.
We thank Anne Lonjou for many useful discussions at an early stage of this project, Anne-Sophie Kaloghiros for clarifying to us some fine points in her work \cite{Kaloghiros}, and J\'er\'emy Blanc and Andrea Fanelli for discussions on issues with birational maps over non-perfect fields.

\section{Birational maps between surfaces over an arbitrary field}
\label{sec:surfaces}

In this section we review some results about the birational geometry of
surfaces, with a focus on the case of an arbitrary perfect base field.

\subsection{Factorization into blow-ups}\label{ssec:1.1}

Let $\K$ be a perfect field, and $\K^a$ an algebraic closure.
All field extensions of $\K$ that we shall consider will be supposed to lie in $\K^a$.
By a \textit{surface} (over $\K$) we shall mean a smooth projective surface defined over $\K$.
We denote by $S(\K)$ the set of $\K$-rational points on $S$.
The Galois group $\Gal(\K^a/\K)$ acts on $S \times_{\Spec \K} \Spec \K^a$ through the second factor.
In particular, $\Gal(\K^a/\K)$ acts on the set $S(\K^a)$ of $\K^a$-rational points.
By a \textit{point of degree $d$} on $S$ we mean an orbit $p =\{p_1, \dots, p_d\}\subset S(\K^a)$ of cardinal $d$ under the action of $\Gal(\K^a/\K)$.
Observe that the points in $S(\K)$ are exactly the fixed points for the action of $\Gal(\K^a/\K)$ on $S(\K^a)$, or in other words the
points of degree 1.
Let $\L/\K$ be a field extension such that the $p_i$ are $\L$-rational points.
We call blow-up of $p$ the blow-up of these $d$ points, which is a morphism $\pi\colon S' \to S$ defined over $\K$, with exceptional divisor $E = C_1 + \dots + C_d$, where the $C_i$ are disjoint $(-1)$-curves defined over $\L$, and $E^2 = -d$. 
We shall refer to this situation by saying that $E$ is an \textit{exceptional divisor of degree $d$}.

We recall the following classical factorization results (see e.g. \cite[Theorems 9.2.2 and 9.2.7]{Liu}). 

\begin{proposition} \label{pro:factorization}
Let $\pi\colon S' \to S$ be a birational morphism between surfaces defined over $\K$.
Then $\pi = \pi_1 \circ \dots \circ \pi_n$, where each $\pi_i \colon S_i \to S_{i-1}$ is the blow-up of a point of degree $d_i \ge 1$ on $S_{i-1}$, with exceptional divisor $E_i$ on $S_i$ satisfying $E_i^2 = -d_i$ $($in particular $S = S_0$, and $S' = S_n)$.
\end{proposition}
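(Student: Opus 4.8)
The plan is to reduce to the classical statement over an algebraically closed field and then keep track of the Galois action. First I would base-change everything to $\K^a$: the birational morphism $\pi\colon S'\to S$ becomes a birational morphism $\bar\pi\colon \bar{S'}\to \bar S$ between smooth projective surfaces over $\K^a$, and I may apply the classical factorization into blow-ups of single closed points (e.g. \cite[Theorem 9.2.2]{Liu}). The key point is that $\bar\pi$ is not an arbitrary sequence of blow-ups: it is the one obtained from $\pi$, hence $\Gal(\K^a/\K)$ acts compatibly on source and target, and this action must permute the finitely many points that get blown up. So I would organise the classical factorization by \emph{levels}, blowing up at each stage the full set of points of the exceptional locus lying over a fixed point of the previous surface — concretely, one blows up first all the points whose image in $\bar S$ is a fixed point $q$, which form a Galois orbit, then descends one level, and so on. This yields a factorization $\bar\pi=\bar\pi_1\circ\cdots\circ\bar\pi_n$ in which each $\bar\pi_i$ is the simultaneous blow-up of a Galois-stable finite set of points in general position (distinct points, no infinitely near issues within a single step), i.e. of a point of degree $d_i$ in the sense of \S\ref{ssec:1.1}.

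Next I would descend each step to $\K$. Since the set of points blown up at stage $i$ is a single Galois orbit $\{p_1,\dots,p_{d_i}\}$, it is a point of degree $d_i$ on the surface $S_{i-1}$ (which itself descends to $\K$ by induction, being obtained from $S_{i-1}$ over $\K^a$ together with its descent datum), and the blow-up of a Galois-stable zero-dimensional subscheme is defined over $\K$ by Galois descent for the blow-up — equivalently, one blows up the reduced closed point of degree $d_i$ on the $\K$-surface. The exceptional divisor $E_i$ is then defined over $\K$, and over $\K^a$ it splits as $C_1+\dots+C_{d_i}$ with the $C_j$ disjoint $(-1)$-curves (disjoint because the $p_j$ are distinct points of $S_{i-1}(\K^a)$), permuted transitively by $\Gal(\K^a/\K)$; hence $E_i^2=\sum C_j^2=-d_i$. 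Composing, $\pi=\pi_1\circ\cdots\circ\pi_n$ over $\K$ with the asserted properties.

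The main obstacle, and the only place requiring care, is the bookkeeping in the first paragraph: one must check that the classical factorization can genuinely be arranged level by level so that within a single step only honestly distinct points are blown up (no point infinitely near another point of the \emph{same} step), and that this grouping is compatible with the Galois action. This is where one uses that blowing up the deepest points first, grouped by their common image, respects both the partial order of "infinitely near" and the Galois action — the two being compatible because Galois acts by surface automorphisms and hence preserves the infinitely-near order. Once this is clear, Galois descent of each individual blow-up step is standard (a Galois-equivariant blow-up of a quasi-projective scheme descends, cf. \cite[Theorem 9.2.7]{Liu}), and the self-intersection computation is immediate. I do not expect any further difficulty.
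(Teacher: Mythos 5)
Your argument is correct, but it takes a genuinely different (and somewhat longer) route than the paper, which gives no proof and simply invokes \cite[Theorems 9.2.2 and 9.2.7]{Liu}: those results are stated for regular surfaces over a general base and already factor a birational morphism of $\K$-surfaces into blow-ups of \emph{closed points of the $\K$-scheme}, so no base change or descent is needed --- one only has to observe that, $\K$ being perfect, a closed point of degree $d_i$ is the same thing as a $\Gal(\K^a/\K)$-orbit of cardinal $d_i$ in $S_{i-1}(\K^a)$, and that $E_i^2=-d_i$ is immediate from the splitting $E_i=C_1+\dots+C_{d_i}$ into disjoint $(-1)$-curves, exactly as the paper sets things up in \S\ref{ssec:1.1}. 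Your route --- base-change to $\K^a$, apply the classical factorization there, regroup the blown-up points level by level into Galois orbits, and descend each step --- is a legitimate alternative, and the two points you rightly isolate (conjugate points are never infinitely near one another, because the Galois action preserves the depth in the tree of infinitely near points; and the reduced orbit descends to a closed point of $S_{i-1}$ because $\K$ is perfect, so that the blow-up over $\K$ base-changes to the blow-up of the $d_i$ points) are precisely where perfectness enters. What the intrinsic approach buys is the avoidance of all the reordering and descent bookkeeping (including the final identification of your reordered tower with $S'$ over $\K$, which you gloss over but which also follows from Galois descent of the equivariant isomorphism over $\K^a$); what yours buys is an explicit geometric picture over $\K^a$, which is in any case the picture the paper uses throughout.
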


\begin{proposition} \label{pro:resolution}
Let $\phi\colon S \rat S'$ be a birational map between surfaces defined over  $\K$.
Then there exist $Z$ a surface defined over $\K$, and sequences of blow-ups $\pi\colon Z \to S$, $\pi'\colon Z \to S'$ of orbits of points
under $\Gal(\K^a/\K)$, such that $\pi' = \phi \circ \pi$.
\end{proposition}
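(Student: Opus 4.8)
The plan is the classical graph construction, carried out over $\K$ and then fed into Proposition \ref{pro:factorization}. Let $U\subseteq S$ be the (dense open) locus on which $\phi$ is defined, and let $\Gamma\subseteq S\times_{\Spec\K}S'$ be the Zariski closure of $\{(x,\phi(x)):x\in U\}$. Since $\phi$ is a birational map defined over $\K$, the set $\Gamma$ is an integral projective surface over $\K$ (not necessarily smooth), and the two projections $p\colon\Gamma\to S$ and $p'\colon\Gamma\to S'$ are proper birational morphisms, each restricting to an isomorphism over a dense open subset. By construction of $\Gamma$ as the graph of $\phi$, we have $p'=\phi\circ p$ as rational maps $\Gamma\rat S'$.

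Next I would pass to a smooth model of $\Gamma$ over $\K$. Normalizing gives a finite birational morphism $\nu\colon\widetilde\Gamma\to\Gamma$, which is defined over $\K$ because normalization is a canonical construction; then I take the minimal resolution of singularities $\rho\colon Z\to\widetilde\Gamma$. Resolution of singularities for surfaces over a perfect field is classical, and the minimal resolution is unique up to unique isomorphism, hence invariant under $\Gal(\K^a/\K)$ and thus defined over $\K$; so $Z$ is a surface over $\K$ in the sense of the paper. Setting $\pi=p\circ\nu\circ\rho\colon Z\to S$ and $\pi'=p'\circ\nu\circ\rho\colon Z\to S'$, these are birational morphisms between smooth projective surfaces over $\K$, and $\pi'=\phi\circ\pi$; the point is that the left-hand side, a priori only a rational map, is now a genuine morphism.

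Finally, I would invoke Proposition \ref{pro:factorization} twice: applied to $\pi\colon Z\to S$ it exhibits $\pi$ as a composition of blow-ups of points of degree $d_i\ge 1$ on intermediate surfaces, i.e. of orbits of points under $\Gal(\K^a/\K)$, and likewise for $\pi'\colon Z\to S'$. This is exactly the asserted conclusion. The only ingredient that is not a purely formal consequence of facts already available is the existence of the smooth model $Z$ over $\K$ — that resolution of singularities for surfaces is available over the perfect field $\K$ and is compatible with descent from $\K^a$ — and this is where I expect whatever mild difficulty there is to sit. An alternative route avoiding resolution of singularities altogether is the dual one: blow up the indeterminacy locus of $\phi$, which is a finite Galois-stable set of closed points and hence a union of points of degree $d\ge 1$, replace $\phi$ by the induced map, and iterate, using the classical elimination of indeterminacy over $\K^a$ to guarantee that this process terminates.
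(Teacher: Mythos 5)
Your argument is correct. Note that the paper does not actually prove this statement: it is quoted as a classical fact with a pointer to Liu's book, and the proof behind that reference is essentially your ``alternative route'' --- blow up the finite, Galois-stable indeterminacy locus (a union of points of degree $d\ge 1$), and show the process terminates by a numerical argument (for instance, the multiplicities of the strict transform of a very ample linear system at the base points strictly decrease); that route never leaves the world of smooth surfaces and blow-ups of closed points. Your main route via the closure of the graph, its normalization, and a resolution of singularities is also valid, and the way you then feed the two projections into Proposition~\ref{pro:factorization} to extract the required sequences of blow-ups of Galois orbits is exactly right. What this buys you is conceptual cleanliness at the price of a much heavier input, namely resolution of singularities for surfaces in arbitrary characteristic (Abhyankar/Lipman); the iterative route needs nothing beyond the material of \S\ref{ssec:1.1}. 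One small simplification: you do not need the Galois-descent argument for $Z$ at all, since resolution of excellent two-dimensional schemes is available directly over $\K$, and because $\K$ is perfect, smoothness of $Z$ over $\K$ is equivalent to smoothness of $Z\times_{\Spec\K}\Spec\K^a$; likewise, since $\K$ is perfect, normalization commutes with the base change to $\K^a$, so no subtlety arises there either.
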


We should mention that even if the Cremona group was explicitly defined in the introduction in terms of homogeneous polynomials, in practice we almost always think of an element of $\Bir_\K(\p^2)$ as given by two sequences of blow-ups defined over $\K$, as provided by Proposition \ref{pro:resolution}.

\begin{remark}
Over a non-perfect field $\K$, there is no reason why the base points of a birational map should be defined over a separable closure of $\K$, and so we can no longer identify closed points with Galois orbits as we did in the statements of Propositions \ref{pro:factorization} and \ref{pro:resolution}.
As a simple example of this phenomenon, consider $\K = \FF{2}(t)$, and denote by $t^{\frac{1}{2}}$ the unique square root of $t$ in $\K^a$. 
Then $\K(t^{\frac{1}{2}})/\K$ is a non separable extension.
Now consider the birational involution $f \in \Bir_\K(\p^2)$ given by 
\[
f\colon [x_0:x_1:x_2]\dashmapsto[x_0x_2:x_1x_2:x_0^2+tx_1^2]\]
As a quadratic birational map, $f$ admits 3 base points defined over $\K^a$, which are
\begin{align*}
p_1 = [0:0:1], && p_2=  [t^{\frac{1}{2}}:1:0],
\end{align*}
and $p_3$ a point infinitely near to $p_2$.
In particular $p_2$ is not defined over a separable extension of $\K$.
\end{remark}

\subsection{Negative maps, minimal and ample models, scaling}

Let $S$ be a surface defined over $\K$, and $S^a$ the same surface over $\K^a$.
We define the Néron-Severi space $N^1(S^a)$ as the space of numerical classes of $\R$-divisors:
\[N^1(S^a) := \Div(S^a) \otimes \R/\equiv.\]
The action of $\Gal(\K^a/\K)$ on $N^1(S^a)$ factors through a finite group, and we denote by $N^1(S)$ the subspace of invariant classes.
Since we only consider surfaces with $S(\K) \neq \emptyset$ and $\K^a[S^a]^* = (\K^a)^*$, $N^1(S)$ is also the space of classes of divisors defined over $\K$ (see \cite[Lemma 6.3(iii)]{Sansuc}).
The dimension of this finite dimensional $\R$-vector space is called the Picard number of $S$ over $\K$, and denoted by $\rho(S)$. 

\begin{remark}
When working on a surface $S$, we can identify the space $N^1(S)$ of divisors and the space $N_1(S)$ of 1-cycles, and similarly the subspaces $\Eff(S)$ or $\NE(S)$ of effective divisors or 1-cycles.
In the sequel we shall use the notation that seems most natural in view of the extension of the results in higher dimension.
For instance the Cone Theorem \ref{thm:cone} is about 1-cycles, so there we use the notation $\NE(S)$.
\end{remark}

Let $\pi\colon S' \to S$ be a birational morphism between surfaces defined over $\K$, and $D'$ a $\Q$-divisor on $S'$ with push-forward $D = \pi_*(D')$.
By Proposition \ref{pro:factorization}, we can write $\pi = \pi_1 \circ \dots \pi_n$, where $\pi_i\colon S_i \to S_{i-1}$ is the blow-up of a point of degree $d_i$, with $S = S_0$ and $S' = S_n$.
For any $i$, we denote by $E_i$ the exceptional divisor of $\pi_i$, and by $D_i$ the push-forward of $D'$ on $S_i$.
We say that $\pi$ is \textit{$D'$-negative} if $D_i \cdot E_i < 0$ for all $i$.
Observe also that on $S'$ we can write 
\[D' = \pi^* D + \sum a_i E_i\]
for some $a_i \in \Q$, where here the $E_i$ denote strict transforms on $S'$.

\begin{lemma} \label{lem:negative}
With the above notation, the morphism $\pi$ is $D'$-negative if and only if $a_i > 0$ for all $i$.
\end{lemma}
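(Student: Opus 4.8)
The statement is local in the sense that it only involves the sequence of blow-ups $\pi = \pi_1 \circ \dots \circ \pi_n$ and the relation between the two ways of writing $D'$ on $S'$. The plan is to argue by induction on the number $n$ of blow-ups, comparing the "infinitesimal" quantities $D_i \cdot E_i$ appearing in the definition of $D'$-negativity with the coefficients $a_i$ appearing in $D' = \pi^*D + \sum a_i E_i$.

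First I would treat the case $n = 1$, that is, a single blow-up $\pi_1 \colon S_1 \to S$ of a point $p$ of degree $d$, with exceptional divisor $E_1$ satisfying $E_1^2 = -d$. Writing $D' = \pi_1^* D + a_1 E_1$ and intersecting with $E_1$, we get $D' \cdot E_1 = a_1 E_1^2 = -d\, a_1$, using the projection formula $\pi_1^* D \cdot E_1 = D \cdot (\pi_1)_* E_1 = 0$. Since in this case $D_1 = D'$ and $d > 0$, we see that $D' \cdot E_1 < 0$ if and only if $a_1 > 0$. This is the base case.

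For the inductive step, I would factor $\pi = \pi' \circ \pi_n$ where $\pi' = \pi_1 \circ \dots \circ \pi_{n-1}\colon S_{n-1} \to S$ and $\pi_n\colon S_n = S' \to S_{n-1}$ is the last blow-up, of a point of degree $d_n$ with exceptional divisor $E_n$ (the one with $E_n^2 = -d_n$; note that on $S'$ the strict transforms of the earlier $E_i$ may have different self-intersection, but $E_n$ is the freshly created curve). Set $D_{n-1} = (\pi_n)_* D'$. Intersecting $D' = \pi_n^* D_{n-1} + a_n E_n + (\text{terms in strict transforms of } E_1,\dots,E_{n-1})$ with $E_n$, and using that the strict transforms of the earlier exceptional divisors meet $E_n$ in a way that I need to account for — this is where a small computation is needed — one isolates the relation between $a_n$ and $D' \cdot E_n = D_n \cdot E_n$, giving $D_n \cdot E_n < 0 \iff a_n > 0$. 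Then I apply the inductive hypothesis to $\pi'\colon S_{n-1}\to S$ with the divisor $D_{n-1}$: its negativity is controlled by the coefficients $a_1',\dots,a_{n-1}'$ in $D_{n-1} = (\pi')^* D + \sum_{i<n} a_i' E_i$ on $S_{n-1}$, and one checks these agree with $a_1,\dots,a_{n-1}$ after pulling back by $\pi_n$ (since $\pi_n^*$ sends $(\pi')^*D$ to $\pi^*D$ and sends the $E_i$ on $S_{n-1}$ to their total transforms on $S'$, differing from strict transforms by multiples of $E_n$, which only affects the $a_n$-bookkeeping already handled).

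The main obstacle is the bookkeeping in the inductive step: one must be careful that "$E_i$" means the strict transform on $S'$ in the expression $D' = \pi^*D + \sum a_i E_i$, whereas the negativity condition $D_i \cdot E_i < 0$ is tested on the intermediate surface $S_i$ where $E_i$ is the full exceptional divisor with $E_i^2 = -d_i$. Reconciling these two conventions — tracking how the coefficient $a_i$ is unaffected by the later blow-ups when expressed via strict transforms, and how $D_i \cdot E_i$ on $S_i$ equals the corresponding intersection computed after pushing forward from $S'$ — is the only genuinely delicate point; everything else is the projection formula and induction. An alternative, perhaps cleaner, route is to do the whole computation on $S'$ at once: express both sides in the basis given by $\pi^*(\text{classes on } S)$ together with the total transforms $\widetilde E_1, \dots, \widetilde E_n$ of the exceptional divisors, which form a (block-)orthogonal-type system with respect to intersection, and read off directly that the sign of each $D_i \cdot E_i$ matches the sign of $a_i$; but this requires setting up the intersection matrix of the $\widetilde E_i$, which reintroduces essentially the same bookkeeping.
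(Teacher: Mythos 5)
Your base case is exactly the paper's argument: from $D_i=\pi_i^*D_{i-1}+a_iE_i$ and the projection formula one gets $0=\pi_i^*D_{i-1}\cdot E_i=D_i\cdot E_i-a_iE_i^2$, hence $a_i=-\tfrac{D_i\cdot E_i}{d_i}$, and the paper simply runs this computation on each intermediate surface $S_i$ with no induction at all. So the core identity you use is the right one, and an induction is not needed.

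The genuine problem sits precisely in the step you defer as ``a small computation''. Under the convention announced before the lemma (the $E_i$ in $D'=\pi^*D+\sum a_iE_i$ are \emph{strict} transforms on $S'$), the stage-$n$ equivalence you assert, $D_n\cdot E_n<0\iff a_n>0$, is false whenever the point blown up by $\pi_n$ lies on an earlier exceptional divisor. Concretely: blow up a point $p\in\p^2$ (exceptional divisor $E_1$), then a point $p'\in E_1$ (exceptional divisor $E_2$); on $S'$ the strict transforms satisfy $E_1^2=-2$, $E_2^2=-1$, $E_1\cdot E_2=1$. Take $D'=E_1+\tfrac1{10}E_2$, so $D=\pi_*D'=0$, $a_1=1>0$ and $a_2=\tfrac1{10}>0$, yet $D_2\cdot E_2=D'\cdot E_2=1-\tfrac1{10}>0$, so $\pi$ is not $D'$-negative. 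What is true at each step is $D_i=\pi_i^*D_{i-1}+b_iE_i$ where the $b_i$ are the coefficients of the \emph{total} transforms $\widetilde E_i$ (pullbacks of the $E_i$ all the way to $S'$); these satisfy $\widetilde E_i\cdot\widetilde E_j=-d_i\delta_{ij}$, and the relation to the strict-transform coefficients is $a_j=b_j+\sum_{i<j}c_{ij}b_i$ with $c_{ij}\ge 0$, so ``all $b_i>0$'' implies ``all $a_i>0$'' but not conversely. Hence the lemma, read with strict transforms, is false in one direction, and your induction cannot be repaired in that convention. The route you dismiss at the end as reintroducing bookkeeping --- expanding $D'$ in $\pi^*(\cdot)$ and the total transforms $\widetilde E_i$ --- is in fact the correct and essentially bookkeeping-free reading: the $\widetilde E_i$ are mutually orthogonal, the coefficient appearing in the one-step expression on $S_i$ is exactly $b_i$, and the whole proof is the single projection-formula line above. (This is also what the paper's own proof silently computes, so the lemma should be understood with total-transform coefficients despite the ``strict transforms'' phrasing in the preceding paragraph.)
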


\begin{proof}
On $S_i$, we have $D_i = \pi_i^* D_{i-1} + a_i E_i$, so that 
\[0 = \pi^* D_{i-1} \cdot E_i = D_i \cdot E_i - a_i E_i^2.\]
Since $E_i^2 = -d_i$, where $d_i \ge 1$ is the degree of the point blown-up by $\pi_i$, we get $a_i = - \frac{D_i \cdot E_i}{d_i}$, so that $a_i$ and $D_i \cdot E_i$ have opposite signs as expected.
\end{proof}

If $\pi\colon S' \to S$ is a $D'$-negative birational morphism, and $D = \pi_*(D')$ is nef, we call $S$ a \textit{$D'$-minimal model} of $S'$.
Such a model, if it exists, is unique:

\begin{lemma}[{\cite[p. 94]{Matsuki}}] \label{lem:unique model}
Let $S_1$, $S_2$ be two $D'$-minimal models of $S'$, then the induced map $S_1 \rat S_2$ is an isomorphism.
\end{lemma}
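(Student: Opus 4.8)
The plan is to reduce to the uniqueness of the minimal model as a nef $\Q$-divisor and then invoke the basic structure theory of birational morphisms between surfaces. First I would set $\psi\colon S_1 \rat S_2$ to be the induced birational map, and resolve it by Proposition~\ref{pro:resolution}: there is a surface $Z$ and birational morphisms $\pi_1\colon Z \to S_1$, $\pi_2\colon Z \to S_2$ with $\pi_2 = \psi\circ\pi_1$. By hypothesis there are $D'$-negative birational morphisms $f_i\colon S'\to S_i$ with $D_i := (f_i)_*(D')$ nef, for $i=1,2$. The point is that after passing to a common resolution $W$ dominating both $S'$ and $Z$ (again by Proposition~\ref{pro:resolution}), one obtains on $W$ two expressions for the pullback of $D'$ in terms of the two chains of blow-ups, and I want to compare the discrepancy coefficients.

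The key step is to show that the two factorizations $S' \to S_1$ and $S' \to S_2$ of $D'$-negative type have the \emph{same} set of exceptional divisors (as valuations on the function field). Concretely, write on a sufficiently high model $W$ above $S'$, $S_1$ and $S_2$:
\[
D'_W = \pi_{S_1}^*(D_1) + \sum_{E} a_E\, E = \pi_{S_2}^*(D_2) + \sum_{E} b_E\, E,
\]
where $E$ ranges over all prime exceptional divisors of $W \to S_1$ (resp.\ $W\to S_2$), extended by $0$ otherwise. By Lemma~\ref{lem:negative} applied to each $f_i$, together with the fact that blowing up a $D'$-minimal model introduces no further negativity (the push-forward $D_i$ being nef, so that $\pi_{S_i}^*(D_i)$ is nef and the further coefficients over $S_i$ are $\ge 0$), one reads off that $a_E > 0$ exactly for those $E$ contracted by $W \to S_1$, and similarly for $b_E$ and $W \to S_2$. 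Now subtract: $\pi_{S_1}^*(D_1) - \pi_{S_2}^*(D_2) = \sum_E (b_E - a_E) E$. Intersecting with curves contracted by $\pi_{S_1}$ and using that $D_1$ is nef while $\pi_{S_1}^*(D_1)$ meets every $\pi_{S_1}$-exceptional curve trivially — a negativity-of-contraction argument, i.e.\ the Negativity Lemma of MMP, which on surfaces is just the fact that the intersection form on exceptional curves is negative definite — forces $a_E = b_E$ for all $E$, hence the two sets of exceptional divisors coincide.

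Once the exceptional loci agree, the birational map $\psi\colon S_1 \rat S_2$ contracts no divisor in either direction, so it is an isomorphism in codimension one; since $S_1, S_2$ are smooth projective surfaces, a birational map which is an isomorphism in codimension one is in fact an isomorphism (any indeterminacy point would be resolved by a blow-up whose exceptional curve is not contracted, contradicting that no curve is contracted). This completes the argument. The main obstacle I expect is the bookkeeping in the negativity/discrepancy comparison — making sure that ``$D'$-negative'' as defined stepwise in the excerpt matches the condition ``all discrepancy coefficients $a_i>0$'' (which is exactly Lemma~\ref{lem:negative}) and that no spurious exceptional divisor with zero coefficient causes trouble; on surfaces this is controlled by negative-definiteness of the exceptional intersection form, so it should go through cleanly, but it is the step that needs care.
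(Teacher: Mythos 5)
Your argument is correct and is essentially the standard one: the paper itself gives no proof of this lemma, deferring to the cited page of Matsuki, and what you write is precisely that negativity-lemma argument. Two small points of care. First, the detour through $Z$ and then $W$ is unnecessary: $S'$ already dominates both $S_1$ and $S_2$, so you can compare $D' = f_1^*D_1 + \sum a_E E = f_2^*D_2 + \sum b_F F$ directly on $S'$, where Lemma~\ref{lem:negative} literally gives ``$a_E>0$ if and only if $E$ is $f_1$-exceptional''. On a higher model $W$ your claim that the coefficient is positive \emph{exactly} on the exceptional locus can fail for divisors exceptional over $S'$ (their coefficients are only $\ge 0$); this does no harm, since such divisors are exceptional for both projections, but working on $S'$ avoids the issue entirely. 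Second, the negative-definiteness step as you describe it (intersecting with curves contracted by one of the two morphisms) yields only one inequality, say $\sum a_E E \ge \sum b_F F$; you must apply it a second time with the roles of $S_1$ and $S_2$ exchanged to get equality of the two exceptional divisors, and hence that neither $\psi$ nor $\psi^{-1}$ contracts a curve. The last reduction from ``isomorphism in codimension one'' to ``isomorphism'' for smooth projective surfaces is fine, and can also be phrased via the Rigidity Lemma~\ref{lem:rigidity}: the two contractions from a common resolution then contract the same curves, so each factors through the other.
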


We shall use the above setting for divisors $D'$ of the form $D' = K_{S'} + A$, with $A$ an ample $\Q$-divisor (or $A=0$).
Observe that a $(K_{S'} + A)$-negative birational morphism is also a $K_{S'}$-negative morphism, hence simply a sequence of inverse of blow-ups. 
There exist surfaces with infinitely many $(-1)$-curves: a classical example is given by $\p^2$ blown-up at (sufficiently general) 9 points. 
This gives a countable collection of curves $C_i$ with $K_{S'} \cdot C_i <0$ and $C_i^2 <0$.
However, after perturbing $K_{S'}$ by adding any $\Q$-ample divisor, we get a finite collection:

\begin{theorem}[{\cite[Cone Theorem D.3.2]{Reid}}] \label{thm:cone}
Let $S$ be a surface defined over $\K^a$.
Then if $\rho(S) \ge 3$, all $K_S$-negative extremal rays of the cone $\NE(S)$ are of the form $\R_{>0} C$ with $C$ a $(-1)$-curve.
Moreover, for any ample $\Q$-divisor $A$ on $S$, there are only finitely many $(-1)$-curves $C_i$ such that $(K_S + A)\cdot C_i < 0$.
\end{theorem}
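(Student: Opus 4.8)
The plan is to obtain this from the Cone and Contraction Theorems for smooth projective surfaces over an algebraically closed field (which hold in every characteristic), together with a short surface-specific argument. First I recall that, for a smooth projective surface $S$, the Cone Theorem describes the $K_S$-negative part of $\NEbar(S)$ as locally polyhedral, spanned there by classes of rational curves of bounded anticanonical degree, while the Contraction Theorem asserts that each $K_S$-negative extremal ray $R$ is contractible: there is a morphism $\phi_R\colon S\to Y$ onto a normal projective variety with $\rho(Y)=\rho(S)-1$ which contracts precisely the curves whose numerical class lies on $R$, and which has connected fibres.

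For the first assertion, let $R$ be a $K_S$-negative extremal ray of $\NEbar(S)$ and let $\phi_R\colon S\to Y$ be its contraction, so $\rho(Y)=\rho(S)-1\ge 2$. Since $Y$ is a normal projective variety with $\rho(Y)\ge 2$, it is neither a point nor a curve, so $\dim Y=2$; as $\phi_R$ is then a surjective morphism between surfaces with connected fibres it is birational, and a birational extremal contraction of a smooth projective surface is the blow-down of a single $(-1)$-curve $C$, whence $R=\R_{\ge 0}[C]$. This is the only place where $\rho(S)\ge 3$ is used: it forces $\rho(Y)\ge 2$, excluding the conic-bundle and $\p^2$ contractions that arise only when $\rho(S)\le 2$.

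For the second assertion, since a $(-1)$-curve $C$ has $K_S\cdot C=-1$, the condition $(K_S+A)\cdot C<0$ is equivalent to $A\cdot C<1$. Fix an ample integral divisor $H$; since the ample cone is open there is $\eps>0$ with $A-\eps H$ ample, hence $A\cdot C>\eps\,(H\cdot C)$ for every curve $C$, so $A\cdot C<1$ forces $H\cdot C<\eps^{-1}$. The set of numerical classes $\{[C]\ :\ C \text{ a } (-1)\text{-curve with } H\cdot C<\eps^{-1}\}$ then consists of integral classes lying in the bounded region $\NEbar(S)\cap\{x\ :\ H\cdot x<\eps^{-1}\}$ of the finite-dimensional space $N_1(S)$ — bounded because $H$ is strictly positive on $\NEbar(S)\setminus\{0\}$ by Kleiman's criterion — and is therefore finite. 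Finally $C\mapsto[C]$ is injective on $(-1)$-curves: if $C_1\neq C_2$ were distinct irreducible curves with $[C_1]=[C_2]$, then $C_1\cdot C_2=C_1^2=-1<0$, impossible for distinct irreducible curves. Hence only finitely many $(-1)$-curves $C$ satisfy $(K_S+A)\cdot C<0$.

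The real content, and the only genuine obstacle, is the input from the Minimal Model Program for surfaces: the Cone Theorem — whose hard direction, the effectivity and bounded degree of $K_S$-negative extremal rays, rests on Mori's bend-and-break (or classically on Riemann--Roch together with the Enriques--Kodaira classification) — and the accompanying Contraction Theorem. Once these are granted, the two assertions follow by the elementary dimension count for $Y$ and the compactness-plus-discreteness argument above; for the Minimal Model Program input I would simply cite \cite{Reid} (see also \cite{Matsuki, Liu}).
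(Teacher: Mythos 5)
Your proof is correct, and the finiteness part is actually more self-contained than what the paper offers; but the two texts put the weight in different places. The paper gives no independent derivation at all: it cites Reid's Cone Theorem for the whole statement and its ``proof'' consists only of checking that Reid's argument survives in arbitrary characteristic, namely that the needed vanishing $H^2(S,K_S+A)=H^0(S,-A)^*=0$ follows from Serre duality, so that Riemann--Roch still yields $h^0(S,D)\ge \frac12 D(D-K_S)+\chi(\Ol_S)$ for $D=K_S+A$ (Kodaira vanishing being unavailable in general in positive characteristic). You instead black-box the Cone and Contraction Theorems over $\K^a$ and then derive the two assertions: the dimension count $\rho(Y)=\rho(S)-1\ge 2$ forcing the contraction to be the blow-down of a $(-1)$-curve, and the Kleiman-compactness argument showing that integral classes with $H\cdot C<\eps^{-1}$ form a finite set on which $C\mapsto[C]$ is injective. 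This buys a cleaner, essentially elementary proof of the finiteness claim that does not use the Cone Theorem at all, at the cost of not addressing the one point the authors considered delicate, the characteristic-independence of the MMP input (which is fine if you cite a characteristic-free source such as the bend-and-break proof, but should be said explicitly since the paper's whole concern here is arbitrary perfect base fields).

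One small imprecision to repair: the step ``a surjective morphism between surfaces with connected fibres is birational'' is false as literally stated in characteristic $p$ (the Frobenius morphism has connected, non-reduced fibres and is not birational). What saves you is that the contraction of an extremal ray satisfies $\phi_{R*}\Ol_S=\Ol_Y$, which upon localizing at the generic point gives $\K^a(S)=\K^a(Y)$ and hence birationality once $\dim Y=2$. Phrase it that way and the argument is airtight.
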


\begin{proof}[Comments on the proof]
In the Cone Theorem, the main delicate point to check when working in arbitrary characteristic is vanishing.
If $A$ is ample, by Serre duality we have $H^2(S,K_S+A) = H^0(S,-A)^* = 0$.
So by Riemann-Roch on a surface we have, for the divisor $D = K+A$:
\begin{equation} \label{RRinequality}
h^0(S,D) \ge h^0(S,D) - h^1(S,D) = \frac12 D(D-K_S) +
\chi(\Ol_S).
\end{equation}
In fact when $S$ is rational, we even have  $H^1(S,K_S+A) = 0$ (Kodaira vanishing in positive characteristic can only fail for surfaces of Kodaira dimension $\ge 1$, see \cite[Theorem 1.6]{Terakawa}).
But this extra information is not necessary in the argument given by Reid, as Inequality (\ref{RRinequality}) is enough.
\end{proof}

\begin{remark} \phantomsection \label{rem:2 rays}
\begin{enumerate}[wide]
\item Over an arbitrary perfect field $\K$, we have an equivariant version of the Cone Theorem with respect to the action of $\Gal(\K^a/\K)$, see \cite[p. 48]{KM}.
Essentially we only have to change ``$(-1)$-curve'' by ``orbit of pairwise disjoint $(-1)$-curves under the action of $\Gal(\K^a/\K)$''.
By Castelnuovo Contraction Theorem, we can contract such an orbit and obtain a new smooth projective surface.
Thus by running the Minimal Model Program with respect to the canonical divisor $K$, or more generally with respect to $K+A$ with $A$ ample, we stay in the category of smooth surfaces, and at posteriori this justifies that we restrict ourselves to this setting.

\item In the case where the Picard number $\rho(S)$ is equal to 2, $\NE(S)$ is a convex cone in a real 2-dimensional vector space, thus we have at most two extremal rays, which correspond either to the contraction of an exceptional divisor or to a Mori fibration.
This case is particularly interesting in a relative setting, and is then often referred to as a \textit{two rays game}.
Precisely, we start with a morphism $\pi\colon S \to Y$ from a surface $S$, with relative Picard number equal to 2. 
We assume that any curve $C$ contracted by $\pi$ satisfies $K_S \cdot C < 0$.
Then there exist exactly two morphisms of relative Picard number 1, $\pi_i \colon S \to Y_i$, $i=1,2$,  such that $\pi$ factors through each of the $\pi_i$:
\[\xymatrix@R-15pt@C-15pt{
& S \ar[dl]_{\pi_1} \ar[dr]^{\pi_2}  \ar[dd]^{\pi} \\
Y_1 \ar[dr] && Y_2 \ar[dl] \\
& Y
}\]
\end{enumerate}

\end{remark}

\begin{theorem}[Base Point Free Theorem, see {\cite[D.4.1]{Reid}}]
\label{thm:BPF}
Let $S$ be a surface, and let $D$ be a nef divisor on $S$ such that $D - \eps K_S$ is ample for some $\eps > 0$. 
Then the linear system $|mD|$ is base point free for all sufficiently large $m$.
\end{theorem}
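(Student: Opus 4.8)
The plan is to follow the classical strategy for the base-point-free theorem --- non-vanishing of $|mD|$ for $m \gg 0$, then removal of base points one at a time by a vanishing theorem on a blow-up --- while keeping track of positive characteristic. Since global sections commute with the faithfully flat base change to $\K^a$, so does the base scheme of a linear system, and I may assume $\K = \K^a$. For non-vanishing: if $D \equiv 0$, then $D - \eps K_S \equiv -\eps K_S$ is ample, so $S$ is a del Pezzo surface, hence rational, so linear and numerical equivalence coincide and $D \sim 0$; thus $|D|$ is already base-point free. Otherwise, fixing an ample class $H$, we have $D \cdot H > 0$ (a nonzero nef divisor has positive intersection with an ample one), so $(K_S - mD)\cdot H < 0$ and $H^2(S, mD) = H^0(S, K_S - mD)^* = 0$ for $m$ large; then the Riemann--Roch inequality (\ref{RRinequality}) gives $h^0(S, mD) \ge \chi(S, mD) = \tfrac12\, mD\cdot(mD - K_S) + \chi(\Ol_S)$, and $mD\cdot(mD - K_S) = m^2 D^2 - m\, D\cdot K_S \to +\infty$, since $D\cdot(D - \eps K_S) > 0$ forces $D^2 > 0$, or else $D^2 = 0$ and $D\cdot K_S < 0$.

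Next I would kill a base point $p \in S$. After shrinking $\eps$ --- harmless, as $D - \eps' K_S = \tfrac{\eps'}{\eps}(D - \eps K_S) + (1 - \tfrac{\eps'}{\eps})D$ remains ample for $0 < \eps' \le \eps$ --- I may assume $\eps < \tfrac12\eps_0$, where $\eps_0 > 0$ is a uniform Seshadri-type constant of the ample class $A := D - \eps K_S$: writing $A = \tfrac1N L$ with $L$ integral and $kL$ very ample, one has $A\cdot C \ge \tfrac1{Nk}\,\mathrm{mult}_q C$ for every curve $C$ and every point $q$, so $\eps_0 := \tfrac1{Nk}$ works. Let $\mu\colon \tilde S \to S$ be the blow-up of $p$, with exceptional curve $E$, so $K_{\tilde S} = \mu^* K_S + E$ and $E^2 = -1$. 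The exact sequence $0 \to \Ol_{\tilde S}(\mu^*(mD) - E) \to \Ol_{\tilde S}(\mu^*(mD)) \to \Ol_E \to 0$ shows that $p \notin \mathrm{Bs}|mD|$ whenever $H^1(\tilde S, \mu^*(mD) - E) = 0$. Since
\[
\mu^*(mD) - E - K_{\tilde S} = \mu^*(mD - K_S) - 2E \quad\text{and}\quad mD - K_S = \bigl(m - \tfrac1\eps\bigr)D + \tfrac1\eps A,
\]
a direct intersection computation --- using that $\mu^* D$ is nef, that $D\cdot C$ is a positive integer whenever it is nonzero, and the Seshadri bound to control $\mathrm{mult}_p C$ --- shows that for every $m$ above an explicit threshold \emph{independent of $p$}, the class $\mu^*(mD) - E - K_{\tilde S}$ is nef and big (its self-intersection being $(mD - K_S)^2 - 4 \to +\infty$). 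Hence $H^1(\tilde S, \mu^*(mD) - E) = 0$ by Kawamata--Viehweg vanishing.

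Because this threshold does not depend on $p$, one obtains $m_0$ with $\mathrm{Bs}|mD| = \emptyset$ for all $m \ge m_0$, as required. The point I would expect to be the main obstacle is the vanishing statement used above: Kawamata--Viehweg vanishing for an arbitrary surface may fail in positive characteristic. This is harmless in the applications of this paper, where the surfaces in play are rational (blow-ups of $\p^2$, del Pezzo surfaces, conic bundles), so $\tilde S$ has Kodaira dimension $-\infty$, for which the relevant vanishing does hold (cf. \cite{Terakawa} and the comments following Theorem \ref{thm:cone}). For the statement in full generality one would instead proceed as in \cite[D.4]{Reid}, substituting the Riemann--Roch inequality together with the Hodge index theorem for the vanishing theorem, and analysing the fixed part of $|mD|$ directly.
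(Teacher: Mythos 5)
The paper does not prove this statement at all: it is imported verbatim from Reid \cite[D.4.1]{Reid} and used as a black box, so the only meaningful comparison is with the argument of the cited source. Your proof is the standard characteristic-zero argument: non-vanishing via Serre duality and Riemann--Roch, then elimination of a base point $p$ by blowing up and applying Kawamata--Viehweg vanishing to $\mu^*(mD)-E-K_{\tilde S}$. The reduction to $\K^a$ by flat base change, the non-vanishing step, the identity $\mu^*(mD)-E-K_{\tilde S}=\mu^*(mD-K_S)-2E$, and the Seshadri-type bound making the nefness threshold independent of $p$ are all correct. Reid's proof is genuinely different and deliberately vanishing-free: after non-vanishing, one shows via the Hodge index theorem and adjunction that the curves with $D\cdot C=0$ are a finite disjoint union of $(-1)$-curves (or fibre components when $D^2=0$), contracts them, and concludes by Nakai--Moishezon on the image. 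That is exactly the characteristic-free toolkit --- Serre duality plus the Riemann--Roch inequality (\ref{RRinequality}) --- that the paper is careful to isolate in its comments on the Cone Theorem \ref{thm:cone}, and it is what makes the statement valid over an arbitrary perfect field.

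The gap, which you diagnose accurately but do not close, is therefore the vanishing step. The theorem as stated concerns an arbitrary smooth projective surface over a perfect field of any characteristic, and Kawamata--Viehweg (indeed already Kodaira) vanishing genuinely fails for some surfaces in characteristic $p$ (Raynaud's examples), so ``may fail'' cannot be waved away for the statement as written. Your fallback of restricting to the rational surfaces actually used in the paper does rescue every application, but even there two points need care: first, you need the vanishing for a \emph{nef and big} class $\mu^*(mD)-E-K_{\tilde S}$ (which is never ample, since it is trivial on the $\mu$-exceptional directions of any further $D$-trivial curve), so the Kodaira-vanishing statement quoted after Theorem \ref{thm:cone} is not literally sufficient and the Kawamata--Viehweg-type version for surfaces of non-positive Kodaira dimension in characteristic $p$ must be invoked explicitly; second, your final sentence defers the general case back to Reid's fixed-part analysis, which is in effect the same move the paper makes. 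As a self-contained proof of the theorem as stated, the argument is complete only in characteristic zero; as a proof of what the paper actually needs, it works once the correct positive-characteristic vanishing statement is cited.
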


If $D$ is nef and of the form $D = K + A$ with $A$ ample, we denote by $\phi_D$ the morphism from $S$ associated to the linear system $|mD|$ for $m \gg 0$.
This morphism has connected fibers, and it contracts precisely the curves $C$ such that $(K+A)\cdot C = 0$.
Now we extend the definition of $\phi_D$ to any pseudo-effective divisor $D = K + A$, by using the notion of scaling.

Let $S$ be a surface, and $A$, $\Delta$ ample $\Q$-divisors on $S$ (we also admit the case $A = 0$).
The $(K+A)$ \textit{minimal model program with scaling of} $\Delta$ is a sequence of birational morphisms $\pi_i\colon S_{i-1} \to S_{i}$ defined iteratively as follows.
We set $S_0 = S$.
If $S_i$ is constructed, we denote $A_i$ and $\Delta_i$ the direct images of $A$ and $\Delta$ on $S_i$, and we consider $t_i$ such that $D_i = K_{S_{i}} + A_{i} + t_i \Delta_{i}$ is nef but not ample on $S_i$.
Then the morphism $\pi_{i+1}$ from $S_i$ is obtained by applying Theorem \ref{thm:BPF} to $D_i$.
If $\pi_{i+1}(S_i)  = S_{i+1}$ is a surface, we repeat the construction.
At some point $\pi_{i+1}$ is a fibration to a curve or a point, in which case we reached a Mori fiber space and the program stops.
In particular this process gives a finite sequence of rational numbers 
\[t_0 > t_1 > \dots > t_n\]
such that $K_{S_i} + A_i + t \Delta_i$ is ample on $S_i$ for any $t > t_i$.
We shall say that  a birational morphism $\pi\colon S \to S'$ is $(K+A)$-negative with scaling of $\Delta$ if $S'$ is one of the $S_i$ in the above process.

Now assume that $D$ is a pseudo-effective $\Q$-divisor on $S$ of the form $D = K + \Delta$, with $\Delta$ ample.
We run the $K$ minimal model program with scaling of $\Delta$, and we look where the coefficient $t = 1$ corresponding to $D$ fits into the sequence $t_0 > t_1 > \dots > t_n$. 
Precisely, we set $j = \max\{i \mid t_i \ge 1 \}$ and we denote $\phi_D = \pi_j \circ \dots \circ \pi_1$.
Observe that the morphism $\phi_D$ is birational if and only if $j \le n-1$, and in any case $\phi_D(D)$ is ample.
We say that $\phi_D$ is the \textit{ample model} of $D$.

\begin{remark} \phantomsection \label{rem:phi D}
\begin{enumerate}[wide]
\item 
The above construction only depends on the numerical class of $D$, which would not be the case for more general $D$ (that is, not of the form $K + \Delta$ with $\Delta$ ample), see \cite[Example 4.8]{KKL}.
\item 
The morphism $\phi_D$ coincides with the morphism from $S$ to $\Proj(\bigoplus H^0(Z, mD))$, see \cite[Remark 2.4]{KKL}.
In particular, if we write $D = K + \Delta_1 + \Delta_2$ with $ \Delta_1, \Delta_2$ ample, and run the $K+\Delta_1$ minimal model program with scaling of $\Delta_2$, we will get the same morphism $\phi_D$, but possibly by another sequence of contractions.
\end{enumerate}
\end{remark}

\section{A square complex associated to the Cremona group} \label{sec:squares}

In this section we construct a square complex that encodes Sarkisov links and relations between them. 
First we introduce the key notion of rank $r$ fibration.

\subsection{Rank \textit{r} fibrations}

If not stated otherwise, all varieties and morphisms are defined over $\K$.
Let $S$ be a surface, and $r \ge 1$ an integer.
We say that $S$ is a \textit{rank $r$ fibration} if there exists a surjective morphism $\pi \colon S \to B$ with connected fibers, where $B$ is a point or a smooth curve, with relative Picard number equal to $r$, and such that the anticanonical divisor $-K_S$ is $\pi$-ample.
The last condition means that for any curve $C$ contracted to a point by $\pi$, we have $K_S \cdot C < 0$.
Observe that the condition on the Picard number translates as $\rho(S) = r$ if $B$ is a point, and $\rho(S) = r+1$ if $B$ is isomorphic to $\p^1$.
If $S$ is a rank $r$ fibration, we will write $S/B$ if we want to emphasize the basis of the fibration, and  $S^r$ when we want to emphasize the rank.
An isomorphism between two fibrations $S/B$ and $S'/B'$ (necessarily of the same rank $r$) is an isomorphism $S \stackrel{\sim}{\to} S'$ such that there exists an isomorphism on the bases (necessarily uniquely defined) that makes the following diagram commute: 
\[\xymatrix@R-5pt@C-5pt{
S \ar[r]^{\sim} \ar[d]_{\pi} & S ' \ar[d]^{\pi'} \\
B \ar[r]^{\sim} & B'
}\]

As the following examples make it clear, there are sometimes several choices for a structure of rank $r$ fibration on a given surface, that may even correspond to distinct ranks.

\begin{example}
\begin{enumerate}[wide]
\item 
$\p^2$ with the morphism $\p^2 \to \pt$, or the Hirzebruch
surface $\F_n$ with the morphism $\F_n \to \p^1$, are rank 1 fibrations.
\item 
$\F_1$ with the morphism $\F_1 \to \pt$ is a rank 2 fibration.
Idem for $\F_0 \to \pt$.
The blow-up $S^2 \to \F_n \to \p^1$ of a Hirzebruch surface along a point of degree $d$, such that each point of the orbit is in a distinct fiber, is a rank 2 fibration over $\p^1$.   
\item The blow-up of two distinct points on $\p^2$, or of two points of $\F_n$ not lying on the same fiber, give examples of rank 3 fibrations, with morphisms to the point or to $\p^1$ respectively.
\end{enumerate}
\end{example}

\begin{remark}
Observe that the definition of a rank $r$ fibration puts together several well-known notions.
If $B$ is a point, then $S$ is a del Pezzo surface of Picard rank $r$ (over the base field $\K$).
If $B$ is a curve, then $S$ is a conic bundle of relative Picard rank $r$: a general fiber is isomorphic to $\p^1$, and (over $\K^a$) any singular fiber is the union of two $(-1)$-curves secant at one point.
Remark also that rank 1 fibrations are exactly the usual 2-dimensional Mori fiber spaces.
\end{remark}

We will be interested only in rational surfaces, and we call \textit{marking} on a rank $r$ fibration $S/B$ a choice of a birational map $\phi\colon S \rat \p^2$.
Observe that if $S$ is rational and $B$ is a curve, then $B$ is isomorphic to $\p^1$.
We say that two marked fibrations $\phi \colon S/B \rat \p^2$ and $\phi' \colon S'/B' \rat \p^2$ are \textit{equivalent} if $\phi'^{-1} \circ \phi \colon S/B \to S'/B'$ is an isomorphism of fibrations.
We denote by $(S/B, \phi)$ an equivalence class under this relation.
The Cremona group $\Bir_{\K}(\p^2)$ acts on the set of equivalence classes of marked fibrations by post-composition:
\[f\cdot (S/B,\phi) := (S/B,f\circ \phi).\]

If $S'/B'$ and $S/B$ are marked fibrations of respective rank $r' > r \ge 1$, we say that $S'/B'$ \textit{factorizes} through $S/B$ if the birational map $S' \to S$ induced by the markings is a morphism, and moreover there exists a (uniquely defined) morphism $B \to B'$ such that the following diagram commutes:
\begin{equation} \label{dia:factorize}
\vcenter{
\xymatrix@R-5pt{
S' \ar[rrr]^{\pi'} \ar[dr]  &&&  B' \\
&S \ar[r]^{\pi} & B \ar[ur]\\
}
}
\end{equation}
In fact if $B' = \pt$ the last condition is empty, and if $B' \simeq \p^1$ it means that $S' \to S$ is a morphism of fibration over a common basis $\p^1$.

\subsection{Square complex}\label{subsec:full complex}

We define a 2-dimensional complex $\Xl$ as follows.
Vertices are equivalence classes of marked rank $r$ fibrations, with $3 \ge r\ge 1$. 
We put an oriented edge from $(S'/B',\phi')$ to $(S/B,\phi)$ if $S'/B'$ factorizes through $S/B$.
If $r' > r$ are the respective ranks of $S'/B'$ and $S/B$, we say that the edge has type $r',r$.
For each triplets of pairwise linked vertices $(S''^3/B'',\phi'')$, $(S'^2/B', \phi')$, $(S^1/B, \phi)$, we glue a triangle. 
In this way we obtain a 2-dimensional simplicial complex $\Xl$ on which the Cremona group acts.

\begin{lemma} \label{lem:2 triangles}
For each edge of type $3,1$ from $S''/B''$ to $S/B$, there exist exactly two triangles that admit this edge as a side.  
\end{lemma}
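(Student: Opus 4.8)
The plan is to translate the statement into a counting problem and then split into three cases according to the bases. A triangle of $\Xl$ having the given edge as a side is the same thing as a marked rank $2$ fibration $(S'/B',\phi')$ through which $(S''/B'',\phi'')$ factorizes and which itself factorizes through $(S/B,\phi)$; since the marking $\phi'$ is then uniquely determined, what has to be counted is the set of such intermediate fibrations. Writing $g\colon S''\to S$ for the birational morphism induced by the markings, an intermediate $S'/B'$ is a factorization $g=g'\circ h$ into birational morphisms $h\colon S''\to S'$ and $g'\colon S'\to S$, together with a compatible factorization $B\to B'\to B''$ of the induced morphism of bases. As each of $B$, $B'$, $B''$ is either a point or $\p^1$, a short argument using the compatibility conditions shows that only three configurations occur: (I) $B''=B=\pt$, and then $B'=\pt$; (II) $B''=\pt$, $B=\p^1$, and then $B'\in\{\pt,\p^1\}$; (III) $B''=B=\p^1$, and then $B'=\p^1$. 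In cases (I) and (III) the morphism $g$ lowers the Picard number by $2$, and in case (II) by $1$.

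In cases (I) and (III) the morphism $g\colon S''\to S$ has relative Picard number $2$, and I would apply the two rays game from the second point of Remark \ref{rem:2 rays} to it. Its hypothesis, that every $g$-contracted curve $C$ satisfies $K_{S''}\cdot C<0$, is immediate from the validity of $T=S''/B''$: in case (I) the surface $S''$ is del Pezzo, so $-K_{S''}$ is ample, and in case (III) every $g$-contracted curve lies in a fibre of $S''\to\p^1$ over which $-K_{S''}$ is relatively ample. The game then produces exactly two birational morphisms $S''\to S'$ of relative Picard number $1$ factoring $g$, hence exactly two intermediate surfaces, each obtained by contracting one of the two exceptional divisors of $g$; here the validity of $T$ enters a second time, to rule out two infinitely near blown-up points, or, in case (III), two blown-up points in a common fibre, since either would force a curve of self-intersection $\le -2$ meeting $-K_{S''}$ non-positively. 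It then remains to see that each such $S'$, with its forced base $B'$ and fibration structure, is a genuine rank $2$ fibration: over a point one checks, using $K_{S''}=\pi^*K_{S'}+E$, that contracting an exceptional divisor of a del Pezzo surface yields a del Pezzo surface, and over $\p^1$ that contracting a fibre component of a conic bundle with relatively ample $-K$ preserves this property. This gives exactly two triangles in cases (I) and (III).

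In case (II), $g\colon S''\to S$ is a single blow-up and $S=\F_n$ is a Hirzebruch surface. Since $S''$ is del Pezzo it contains no curve of self-intersection $\le -2$; inspecting the strict transform of the section of self-intersection $-n$ forces $n\le 1$, so that $S=\F_n$ is itself del Pezzo, and forces the geometric points of the blown-up point to lie in pairwise distinct fibres of $\F_n\to\p^1$. I claim the two intermediates are then $\F_n/\pt$, namely $S$ with its del Pezzo structure of Picard rank $2$, and $S''/\p^1$, namely $S''$ with the conic bundle structure pulled back from $\F_n\to\p^1$; and that these are the only ones, since if $B'=\pt$ then $\rho(S')=\rho(S)$ forces $g'$ to be an isomorphism, while if $B'=\p^1$ then $\rho(S')=\rho(S'')$ forces $h$ to be an isomorphism. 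Both candidates are valid rank $2$ fibrations — the first because $\F_0$ and $\F_1$ are del Pezzo, the second because the distinct-fibres condition makes $-K_{S''}$ relatively ample over $\p^1$ — and both clearly fit into triangles with $T$ and $M$, so once more there are exactly two.

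The step I expect to be the main obstacle is case (II), together with the bookkeeping common to all cases: one must be confident that the list of intermediates is exhaustive and that each candidate really satisfies the defining ampleness condition of a rank $2$ fibration. The mechanism making everything work is the systematic use of relative ampleness of $-K$ for the structure morphism of the valid rank $3$ fibration $T$, which excludes precisely the degenerate (infinitely near, or common fibre) configurations; and throughout one should keep in mind that all cone and contraction statements are used in their $\Gal(\K^a/\K)$-equivariant form, as recalled in the first point of Remark \ref{rem:2 rays}.
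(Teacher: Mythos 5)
Your overall strategy --- reduce the statement to counting intermediate rank $2$ fibrations, split into three cases according to the bases $B''$ and $B$, and settle the cases of relative Picard number $2$ by the two rays game of Remark \ref{rem:2 rays} --- is essentially the paper's, which plays the game on $S''\to Y$ with $Y=S$ or $Y=B$ according to the case. Your cases (I) and (III) are correct, and your identification of the two intermediates in case (II) as $S/\pt$ and $S''/\p^1$, with the exhaustiveness argument via Picard numbers, is the right way to handle that case.

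There is, however, a genuine gap in case (II): the unjustified assertion that $S=\F_n$ is a Hirzebruch surface. A rank $1$ fibration over $\p^1$ is a Mori conic bundle with $\rho(S/\p^1)=1$ over $\K$, and over a non-closed field --- the only setting in which this paper has content --- such a surface need not be a Hirzebruch surface: over $\K^a$ it may have singular fibres whose two components are exchanged by $\Gal(\K^a/\K)$, so that $\rho(S^a)>2$ while $\rho(S)=2$. This is exactly the phenomenon allowed by the paper's description of rank $r$ fibrations over a curve; a concrete instance over $\R$ is the conic bundle $x^2+y^2=f(t)z^2$ with $\deg f=2$, a rational del Pezzo surface of degree $6$ with two singular geometric fibres, which becomes the bottom of an edge of type $3,1$ after blowing up a general rational point. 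Since your only justification that $S/\pt$ is a valid rank $2$ fibration is ``$\F_0$ and $\F_1$ are del Pezzo,'' that step fails whenever $S$ is not Hirzebruch. The needed conclusion --- that $-K_S$ is ample --- is nevertheless true, and follows from the very argument you use in cases (I) and (III): writing $K_{S''}=g^*K_S+E$, every curve $C\subset S$ with strict transform $\tilde C$ satisfies $-K_S\cdot C=(-K_{S''}+E)\cdot\tilde C\ge -K_{S''}\cdot\tilde C>0$, and $(-K_S)^2=(-K_{S''})^2+\deg E>0$, so $-K_S$ is ample by Nakai--Moishezon. With that substitution (and noting that ampleness of $-K_{S''}$ already gives its relative ampleness over $\p^1$, so the ``distinct fibres'' discussion is superfluous for the second candidate), your case (II) closes and the proof is complete.
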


\begin{proof}
In short, the proof is a two rays game (see Remark \ref{rem:2 rays}).
By assumption $S''/B''$ factorizes through $S/B$, so by setting $Y = S$ (if $B \simeq B'$) or $Y = B$ (if $B \simeq \p^1$ and $B' = \pt$), we obtain via Diagram (\ref{dia:factorize}) a morphism $\pi\colon S'' \to Y$ with relative Picard number $\rho(S''/Y)$ equal to 2.
We have exactly two extremal rays in the cone $\NE(S''/Y)$, and since $\rho(S'') = 3$ or 4, both correspond to divisorial contractions.
Denote by $S'' \to S'$ and $S'' \to \tilde S'$ these two contractions. 
Then the two expected triangles are $S''/B''$, $S'/B'$, $S/B$ and $S''/B''$, $\tilde S'/B'$, $S/B$.
\end{proof}

In view of the lemma, by gluing all the pairs of triangles along edges of type 3,1, and keeping only edges of types 3,2 and 2,1, we obtain a square complex that we still denote $\Xl$.
We call \textit{vertices of type $\p^2$} the vertices in the orbit of the vertex $(\p^2/\pt, \id)$ under the action of $\Bir_\K(\p^2)$.
When drawing subcomplexes of $\Xl$ we will often drop part of the information which is clear by context, about the markings, the equivalence classes and/or the fibration.
For instance $S/B$ must be understood as $(S/B, \phi)$ for an implicit marking $\phi$, and $(\p^2, \phi)$ as $(\p^2/\pt, \phi)$.

\begin{example} \label{exple:complex 1}
Let $S$ be the surface obtained by blowing-up $\p^2$ in two distinct points $a$ and $b$ of degree 1.
Denote by $\F_{1,a}/\p^1_a$, $\F_{1,b}/\p^1_b$ the two intermediate Hirzebruch surfaces with their fibrations to $\p^1$.
Finally, denote by $\F_0$ the surface obtained by contracting the strict transform on $S$ of the line through $a$ and $b$.
All these surfaces fit into the subcomplex of $\Xl$ pictured on Figure \ref{fig:exple1}, where the dotted arrows are the edges of type 3,1 that we need to remove from the simplicial complex in order to get a square complex.

\begin{figure}[ht]
\[
\mygraph{
!{<0cm,0cm>;<1.5cm,0cm>:<0cm,1.8cm>::}
!{(0,0)}*+{S/\pt}="T"
!{(-1,-.5)}*+{S/\p^1_a}="Sa"
!{(1,-.5)}*+{S/\p^1_b}="Sb"
!{(-1,-1.5)}*+{\F_0/\p^1_a}="F0a"
!{(1,-1.5)}*+{\F_0/\p^1_b}="F0b"
!{(0,-1)}*+{\F_0/\pt}="F0p"
!{(-1,.5)}*+{\F_{1,a}/\pt}="F1ap"
!{(1,.5)}*+{\F_{1,b}/\pt}="F1bp"
!{(-2,0)}*+{\F_{1,a}/\p^1_a}="F1a"
!{(2,0)}*+{\F_{1,b}/\p^1_b}="F1b"
!{(0,1)}*+{\p^2/\pt}="P2"
"T"-@{->}"Sa"-@{->}"F0a"
"T"-@{->}"Sb"-@{->}"F0b"
"T"-@{->}"F0p"
"F0p"-@{->}"F0a"
"F0p"-@{->}"F0b"
"T"-@{->}"F1ap"-@{->}"F1a"
"T"-@{->}"F1bp"-@{->}"F1b"
"F1ap"-@{->}"P2"
"F1bp"-@{->}"P2"
"Sa"-@{->}"F1a"
"Sb"-@{->}"F1b"
"T"-@{.>}"F1a"
"T"-@{.>}"F1b"
"T"-@{.>}"F0a"
"T"-@{.>}"F0b"
"T"-@{.>}"P2"
}
\]
\caption{} \label{fig:exple1}
\end{figure}
\end{example}

\begin{figure}[ht]
\[
\mygraph{
!{<0cm,0cm>;<1.4cm,0cm>:<0cm,1.6cm>::}
!{(0,0)}*+{S_{a,b}/\pt}="Sab"
!{(1,1.5)}*+{S_{b,c}/\pt}="Sbc"
!{(-1,1.5)}*+{S_{a,c}/\pt}="Sac"
!{(-1,.5)}*+{\F_{1,a}/\pt}="F1a"
!{(1,.5)}*+{\F_{1,b}/\pt}="F1b"
!{(0,2)}*+{\F_{1,c}/\pt}="F1c"
!{(0,1)}*+{\p^2/\pt}="P2"
"Sab"-@{->}"F1a"
"Sab"-@{->}"F1b"
"Sac"-@{->}"F1a"
"Sac"-@{->}"F1c"
"Sbc"-@{->}"F1c"
"Sbc"-@{->}"F1b"
"Sab"-@{.>}"P2"
"Sac"-@{.>}"P2"
"Sbc"-@{.>}"P2"
"F1a"-@{->}"P2"
"F1b"-@{->}"P2"
"F1c"-@{->}"P2"
}
\]
\caption{} \label{fig:exple2}
\end{figure}

\begin{example} \label{exple:complex 2}
Consider the blow-ups of 3 points $a,b,c$ on $\p^2$.
These gives three squares around the corresponding vertex of type $\p^2$ (see Figure \ref{fig:exple2}).
In particular the square complex $\Xl$ is not CAT$(0)$, as mentioned in the introduction.
\end{example}

\subsection{Sarkisov links and elementary relations}

In this section we show that the complex $\Xl$ encodes the notion of Sarkisov links, and of elementary relation between them.

First we rephrase the usual notion of Sarkisov links between 2-dimensional Mori fiber spaces.
Let $(S/B, \phi)$, $({S}'/B', \phi')$ be two marked rank 1 fibrations.
We say that the induced birational map $S \rat S'$ is a \textit{Sarkisov link} if there exists a marked rank 2 fibration $S''/B''$ that factorizes through both $S/B$ and $S'/B'$.
Equivalently, the vertices corresponding to $S/B$ and $S'/B'$ are at distance 2 in the complex $\Xl$, with middle vertex $S''/B''$:

\[\xymatrix@R-15pt{
& S''/B'' \ar[dl] \ar[dr] \\
S/B && S'/B'
}\]

This definition is in fact equivalent to the usual definition of a link of type I, II, III or IV from $S/B$ to $S'/B'$ (see \cite[Definition 2.14]{Kaloghiros} for the definition in arbitrary dimension).
Below we recall these definitions in the context of surfaces, in terms of commutative diagrams where each morphism has relative Picard number 1 (such a diagram corresponds to a ``two rays game''), and we give some examples.
Remark that these diagrams are not part of the complex $\Xl$: in each case, the corresponding subcomplex of $\Xl$ is just a path of two edges, as described above.

\begin{itemize}[wide]
\item Type I: $B$ is a point, $B' \simeq \p^1$,  and ${S}' \to S$ is the blow-up of a point of degree $d \ge 1$ such that we have a diagram
\[\xymatrix@R-15pt@C-15pt{
& S' \ar[dl] \ar[dr]\\
S \ar[dr] && \p^1 \ar[dl] \\
& \pt
}\]
Then we take $S''/B'' := S'/\pt$.

Examples are given by the blow-up of a point of degree $1$, or a general point of degree $4$, on $S = \p^2$.
The fibration $S'/\p^1$ corresponds respectively to the lines through the point of degree 1, or to the conics through the point of degree 4.

\item Type II: $B = B'$, and there exist two blow-ups $S'' \to S$ and $S'' \to S'$ that fit into a diagram of the form:
\[\xymatrix@R-15pt@C-15pt{
& S'' \ar[dl] \ar[dr]\\
S \ar[dr] && {S}' \ar[dl] \\
& B
}\]
Then we take $S''/B'' := S''/B$.

An example is given by blowing-up a point of degree 2 on $S = \p^2$, and then by contracting the transform of the unique line through this point. 
The resulting surface $S'$ is a del Pezzo surface of degree 8, which has rank 1 over $\K$, but has rank 2 over $\K^a$ (being isomorphic to $\p^1_{\K^a} \times \p^1_{\K^a}$).
Other examples, important for this paper, are provided by blowing-up a point of degree 8 on $\p^2$: see \S\ref{sec:bertini}.

\item Type III: symmetric situation of a link of type I.

\item Type IV: $(S, \phi)$ and $({S}', \phi')$ are equal as marked surfaces, but the fibrations to $B$ and $B'$ are distinct.
In this situation $B$ and $B'$ must be isomorphic to $\p^1$, and we have a diagram
\[\xymatrix@R-15pt@C-15pt{
& S \ar[dl] \ar[dr]\\
B \ar[dr] && B' \ar[dl] \\
& \pt
}\]
Then we take $S''/B'' := S/\pt$.

For rational surfaces, a type IV link always corresponds to the two rulings on $\F_0 = \p^1 \times \p^1$, that is, $S/B = \F_0/\p^1$ is one of the rulings, $S'/B' = \F_0/\p^1$ the other one, and $S''/B'' = \F_0/\pt$.
See \cite[Theorem 2.6 (iv)]{iskovskikh_1996} for other examples in the context of non-rational surfaces.
\end{itemize}

A \textit{path of Sarkisov links} is a finite sequence of marked rank 1 fibrations 
\[(S_0/B_0, \phi_0), \dots, (S_n/B_n, \phi_n),\]
such that for all $0 \le i \le n-1$, the induced map $g_i\colon S_i/B_i \dashrightarrow S_{i+1}/B_{i+1}$ is a Sarkisov link.

\begin{proposition} \label{pro:from S3}
Let $(S'/B, \phi)$ be a marked rank $3$ fibration.
Then there exist finitely many squares in $\Xl$ with $S'$ as a corner, and the union of these squares is a subcomplex of $\Xl$ homeomorphic to a disk with center corresponding to $S'$.
\end{proposition}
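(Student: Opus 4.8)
The plan is to reduce everything to repeated applications of the two rays game, exactly as in the proof of Lemma \ref{lem:2 triangles}, but now organized around the rank 3 vertex $S'$. First I would enumerate the edges of $\Xl$ emanating \emph{downward} from $(S'/B,\phi)$, i.e. the rank 2 fibrations $S'/B'$ through which $S'/B'$ factorizes. Setting $Y = B'$ when $B' \simeq \p^1$ (resp. $Y = \pt$ when $B' = \pt$), such a factorization corresponds to a morphism $S' \to S$ of relative Picard number 1 over $Y$; conversely each contraction of an extremal ray of $\NE(S'/Y)$ that is divisorial gives such an $S$. Since $\rho(S') = 3$ or $4$, the relevant cones $\NE(S')$ or $\NE(S'/\p^1)$ live in a $2$- or $3$-dimensional vector space, and the Cone Theorem \ref{thm:cone} (in its equivariant form, Remark \ref{rem:2 rays}) guarantees that there are only finitely many $K_{S'}$-negative extremal rays, hence finitely many such downward edges. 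This gives the finiteness of the set of squares.

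Next I would build the squares themselves. Fix a downward edge $S' \to S^2/B'$ of type $3,2$. Because $S^2/B'$ has rank 2, it in turn factorizes through exactly two rank 1 fibrations (the two rays game applied to $S^2/B'$), giving two type $2,1$ edges below it; combined with a second downward edge $S' \to \tilde S^2/\tilde B'$ sharing one of those rank 1 fibrations as a common corner, we obtain a square with $S'$ as one corner. The key combinatorial point is then a \emph{local} cyclic structure: I claim that around each rank 2 fibration $S^2/B'$ adjacent to $S'$, there are exactly two rank 3 fibrations of which it is a face — one being $S'$, the other obtained from the complementary extremal ray in $\NE(S'/Y)$ — so the squares containing $S'$ can be cyclically ordered, consecutive ones sharing an edge of type $3,2$, and this is precisely what produces a disk rather than a more complicated surface. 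Concretely, starting from one square and passing through its bounding type $3,2$ edges one walks to neighbouring squares, and finiteness forces this walk to close up into a single cycle.

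The step I expect to be the main obstacle is verifying that the resulting union of squares is genuinely a \emph{disk} — i.e. that the cyclic walk closes up into exactly one cycle and does not, say, split into several disjoint cycles or fail to be embedded. This amounts to showing that the ``link'' of the vertex $S'$ in $\Xl$ is a single circle. I would argue this by analysing the two possible shapes of the ambient two rays game at $S'$: either $B$ is a point and $\rho(S') = 3$, so $\NE(S')$ is a $3$-dimensional cone whose extremal rays and $2$-faces form a single cycle (a polygonal cone), each ray giving a rank 2 fibration and each $2$-face giving a rank 1 fibration; or $B \simeq \p^1$, and one runs the analogous analysis for $\NE(S'/\p^1)$, where one of the extremal contractions is the given fibration $\pi$ itself and the remaining ones again form a cycle. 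In both cases the face lattice of a $3$-dimensional cone is combinatorially a polygon, which is exactly the boundary of a disk; matching each ray with the rank 2 neighbour and each $2$-face with the rank 1 neighbour, and each adjacent pair (ray, face) with a square of $\Xl$, gives the desired homeomorphism with center $S'$. One should also note the slightly degenerate sub-case where a contracted ray is of fibration type rather than divisorial, which can occur when $B' = \pt$; there the corresponding ``edge'' leads to a rank 1 fibration directly and the local picture is adjusted accordingly, but the conclusion that the link is a circle is unchanged.
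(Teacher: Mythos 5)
Your proposal is essentially correct and rests on the same two ingredients as the paper's proof --- the two rays game of Remark \ref{rem:2 rays} to produce the individual squares, and Theorem \ref{thm:cone} for finiteness --- but it organizes the ``disk'' part differently. The paper builds the disk iteratively: starting from one factorization $S' \to S \to Y \to B$ it alternately plays two rays games on $S'/Y$ and on $\tilde S/B$, walking from square to adjacent square until finiteness forces the walk to close up. You instead read off the whole cyclic structure at once from the face lattice of the $3$-dimensional cone $\NE(S'/B)$ (polyhedral because $-K_{S'}$ is relatively ample, so the entire relative cone is $K$-negative): its extremal rays and $2$-faces form a polygon, and this polygon governs the link of $S'$. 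That is a legitimate alternative, and it has the merit of making explicit a point the paper's walk leaves somewhat implicit, namely that the link is a \emph{single} circle rather than a disjoint union of circles.

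Two local statements in your write-up should be repaired, though neither is fatal. First, the claim that a rank $2$ fibration adjacent to $S'$ is a face of ``exactly two rank $3$ fibrations'' is false (blowing up any point of $S^2$ gives another rank $3$ fibration above it) and is not what you need; the relevant statement is that each edge of type $3,2$ issuing from $S'$ lies in exactly two squares having $S'$ as a corner, which follows from the two rays game applied to $S^2/B'$ together with Lemma \ref{lem:2 triangles}. Second, in $\NE(S'/B)$ no extremal \emph{ray} can be of fibration type (its contraction would have a base of Picard number $\rho(S')-1\ge 2$); the degenerate case to handle is a $2$-\emph{face} whose contraction is a conic bundle $S'\to\p^1$, possible only when $B=\pt$. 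Such a face contributes the extra rank $2$ vertex $S'/\p^1$ and two squares instead of one, so the link traverses a path of length two there instead of a single edge --- the conclusion that it is one circle is unaffected, as you say, but the dictionary ``ray $\leftrightarrow$ rank $2$ neighbour, face $\leftrightarrow$ rank $1$ neighbour'' must be amended at exactly these faces.
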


\begin{proof}
Since $\rho(S') = 3$ or 4, we can factorize the fibration $S'/B$ into 
\[S' \to S \to Y \to B,\]
where $S' \to S$ is a divisorial contraction, and $Y$ is either a surface or a rational curve.
By playing the two rays game on $S'/Y$ (see Remark \ref{rem:2 rays}), we obtain another surface $\tilde S$ and a divisorial contraction $S' \to \tilde S$ that fits into a commutative diagram:
\[\xymatrix@R-20pt{
& S \ar[dr]\\
S' \ar[ur] \ar[dr] & & Y \ar[r] & B \\
& \tilde S \ar[ur]
}\]
If $Y$ is a surface, we obtain the following square in $\Xl$:
\[\xymatrix@R-10pt@C-10pt{
& S'/B \ar[dl] \ar[dr]\\
S/B \ar[dr] & & \tilde S/B \ar[dl] \\
& Y/B
}\]
On the other hand if $Y \simeq \p^1$ is a curve (and so $B$ is a point), then we obtain the following two squares in $\Xl$:
\[\xymatrix@R-20pt@C-10pt{
& S'/\pt \ar[dl] \ar[dr] \ar[dd]\\
S/\pt \ar[dd] & & \tilde S/\pt \ar[dd] \\
& S'/\p^1 \ar[dl] \ar[dr] \\
S/ \p^1 & & \tilde S/\p^1
}\]
Now in both cases we consider the two rays game on $\tilde S/B$: this produces $\tilde Y$, which is either a surface or a curve, and which fits into a diagram:
\[\xymatrix@R-20pt{
& S \ar[dr]\\
S' \ar[ur] \ar[dr] & & Y \ar[dr] \\
& \tilde S \ar[ur] \ar[dr] & & B \\
& & \tilde Y \ar[ur]
}\]
Then by considering the two rays game on $S'/\tilde Y$, we produce one or two new squares in $\Xl$ that are adjacent to the previous ones.
After finitely many such steps, the process must stop and produce the expected disk, because by Theorem \ref{thm:cone} there are only finitely many divisorial contractions that we can use to factor $S'/B$.
\end{proof}

\begin{remark}
If $B \simeq \p^1$, then at each step $Y$ is a surface, and in this case the disk produced by the proof consists of exactly four squares in $\Xl$.

Precisely, with the notation of the proof, the morphisms $S/Y$ and $\tilde S/Y$
correspond to the blow-ups of points $p$ and $\tilde p$, and $S'$ is the blow-up
of both.
Observe that  $p, \tilde p$ can have arbitrary degrees, but they are not on the same fiber of $Y/\p^1$ because otherwise $S'/\p^1$ would not be a rank 3 fibration (the anticanonical divisor would not be ample because of the presence of $(-2)$-curves).  
Let $E_p, E_{\tilde p}$ be the corresponding exceptional divisors in $S'$, and $F_p, F_{\tilde p}$ the strict transforms of the fibers through $p$ and $\tilde p$ respectively.
Then the four squares correspond to the four possible choices of contraction of two divisors from $S'$ over $\p^1$: either $E_p$ or $F_p$, and independently either $E_{\tilde p}$ or $F_{\tilde p}$. 

On the other hand if $B$ is a point the number of squares might vary:
for instance in Example \ref{exple:complex 1} we saw a situation with five squares.
\end{remark}

In the situation of Proposition \ref{pro:from S3}, by going around the boundary of the disk we obtain a path of Sarkisov links whose composition is the identity (or strictly speaking, an automorphism).
We say that this path is an \textit{elementary relation} between Sarkisov links, coming from ${S'}^3/B$. 
More generally, any composition of Sarkisov links that corresponds to a loop in the complex $\Xl$ is called a \textit{relation} between Sarkisov links.

\section{Relations in the Sarkisov program in dimension 2} \label{sec:sarkisov}

In this section we prove that the complex $\Xl$  is connected and simply connected, which will be the key in proving Theorem \ref{thm:main big amalgam}.
These connectedness results will follow from the Sarkisov program, and more precisely from the study of relations in the Sarkisov program, that we can state as follows:

\begin{theorem}[Sarkisov Program]~ \label{thm:sarkisov}
\begin{enumerate}
\item \label{sarkisov1} Any birational map $f\colon S \rat S'$ between rank $1$ fibrations is a composition of Sarkisov links $($and automorphisms$)$.
\item \label{sarkisov2} Any relation between Sarkisov links is generated by elementary relations.
\end{enumerate}
\end{theorem}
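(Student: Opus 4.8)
The plan is to deduce both parts of Theorem~\ref{thm:sarkisov} from the connectedness and simple connectedness of the square complex $\Xl$, which in turn we establish by a Minimal Model Program argument with scaling. Concretely, I would first prove:

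\textbf{Step 1: $\Xl$ is connected.} Given any marked rank $1$ fibration $(S/B,\phi)$, I want to join it by a path of edges to the base vertex $(\p^2/\pt,\id)$. Using the marking $\phi\colon S \rat \p^2$, resolve it by Proposition~\ref{pro:resolution} to get a common surface $Z$ dominating both $S$ and $\p^2$. After blowing up one or two further points if necessary, one can arrange $Z$ to carry a rank $3$ fibration structure dominating $S/B$; then repeatedly contracting divisorial extremal rays (staying in the smooth category by Remark~\ref{rem:2 rays}) and using the factorization edges of $\Xl$, one sees that $(S/B,\phi)$ is connected to a vertex of type $\p^2$. Since the Cremona group acts transitively on vertices of type $\p^2$ and acts on the connected component, this shows $\Xl$ is connected. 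This gives part \eqref{sarkisov1}: a path from $(S/B,\phi)$ to $(S'/B',\phi)$ in $\Xl$, once pushed down by forgetting rank $2$ and rank $3$ vertices (each length-$2$ subpath through a rank $2$ vertex being precisely a Sarkisov link, and each rank $3$ detour being absorbable using Proposition~\ref{pro:from S3}), expresses $\phi'^{-1}\circ\phi$ as a composition of Sarkisov links and automorphisms.

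\textbf{Step 2: $\Xl$ is simply connected.} This is the heart of the matter and the main obstacle. I would run a Minimal Model Program with scaling (as set up in \S\ref{sec:squares}, following the ``ample model'' formalism and Remark~\ref{rem:phi D}) to contract an arbitrary loop. The idea: given a loop $\gamma$ in $\Xl$, lift it to a loop of rank $1$ fibrations with intermediate rank $2$ vertices, choose a common resolution dominating all the fibrations occurring, equip it with suitable ample divisors $A,\Delta$, and interpret the loop as a sequence of choices in running the $(K+A)$-MMP with scaling of $\Delta$. Varying the scaling parameter continuously and invoking the finiteness in Theorem~\ref{thm:cone} (finitely many $(-1)$-curves with $(K+A)\cdot C<0$), one shows the loop is homotopic to a product of boundaries of the disks furnished by Proposition~\ref{pro:from S3} — i.e. to a product of elementary relations. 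The subtlety is bookkeeping: one must check that every time the MMP ``changes its mind'' about which extremal ray to contract, the corresponding modification of the path is exactly crossing one of these elementary-relation disks, and that the two rays game (Lemma~\ref{lem:2 triangles}) guarantees no other local moves are possible. I expect the delicate points to be (a) ensuring the MMP with scaling can be threaded through an \emph{arbitrary} vertex of the loop, which requires choosing the auxiliary ample divisors compatibly, and (b) handling type IV links and the fibered ($B\simeq\p^1$) case, where the base curve changes and one must track the extra square produced in Proposition~\ref{pro:from S3}.

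\textbf{Step 3: conclude \eqref{sarkisov2}.} Once $\Xl$ is simply connected, any relation between Sarkisov links corresponds to a loop in $\Xl$ (after inserting the rank $2$ middle vertices), and simple connectedness means this loop bounds a disk tiled by squares; each square lies in the disk around some rank $3$ vertex by Proposition~\ref{pro:from S3}, hence the loop is a product of conjugates of elementary relations. This is exactly the assertion that every relation is generated by elementary relations. I would present Step~1 and Step~3 as short arguments and concentrate the technical work in Step~2, stating the MMP-with-scaling input precisely and then doing the homotopy bookkeeping carefully, since that is where an argument over a non-closed field (with Galois orbits of points rather than individual points) requires the equivariant Cone and Base Point Free theorems quoted above rather than their classical counterparts.
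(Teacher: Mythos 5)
Your global strategy coincides with the paper's: reduce both statements to the connectedness and simple connectedness of $\Xl$, and obtain these from an MMP-with-scaling argument on a common resolution. However, as written the proposal has two genuine gaps exactly where the content of the proof should be.

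First, your mechanism for connectedness does not work. From a common resolution $Z$ of $S/B$ and $\p^2$ you propose to ``repeatedly contract divisorial extremal rays'' and read off a path in $\Xl$; but the intermediate surfaces in such a chain have large Picard rank and are not vertices of $\Xl$, and a single run of the MMP from $Z$ produces one Mori fiber space, not a path joining two prescribed ones. (Also, $Z$ cannot be arranged to ``carry a rank $3$ fibration structure'': $\rho(Z)$ is in general much larger than $3$, and blowing up more points only increases it.) What is actually needed is to vary the polarization: the paper chooses ample $\Q$-divisors $\Delta_1,\Delta_2$ scaling the two fibrations, takes a generic $2$-plane through $K+A$, $\Delta_1$, $\Delta_2$, and reads the chain of Sarkisov links from the sequence of pairwise adjacent codimension-$1$ faces (``windows'') that this plane crosses in the non-big boundary $\partial^+\Cl$, via Corollary~\ref{cor:window and sarkisov}.

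Second, Step 2 is a statement of intent rather than an argument. The missing input is the finite rational polyhedral chamber decomposition of $\Cl$ (Theorem~\ref{thm:polyhedral chambers}) and, above all, the dictionary of Proposition~\ref{pro:F^k}: inner faces of codimension $r\le 3$ in $\partial^+\Cl$ correspond to rank $r$ fibrations, so windows are rank $1$ fibrations, codimension-$2$ faces are Sarkisov links, and codimension-$3$ faces give elementary relations. With that in hand, simple connectedness is not obtained by ``watching the MMP change its mind along a varying scaling parameter''; that is a one-parameter picture and cannot contract a loop. The paper instead uses that $\partial^+\Cl$ is homeomorphic to a ball or sphere of dimension $\rho(Z)-2\ge 2$, hence simply connected, builds the dual $2$-complex $\Bl$ from nested faces $\Fl^{3}\subset\Fl^{2}\subset\Fl^{1}$, retracts onto its inner part $\Il$ using Lemma~\ref{lem:70.1 Munkres}, and observes that every triangle of $\Il$ lies in one of the elementary-relation disks of Proposition~\ref{pro:from S3}. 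You correctly flag where the difficulty sits, but the two-dimensional homotopy that actually performs the contraction is absent from your plan.
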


In arbitrary dimension over $\C$, these results correspond to \cite[Theorem 1.1]{HMcK} and \cite[Theorem 1.3]{Kaloghiros}.
We give an account of the proof of these results in the simpler case of surfaces, but working over an arbitrary perfect base field.

\subsection{Polyhedral decomposition}

Let $\{ (S_1/B_1, \phi_1), \dots, (S_m/B_m, \phi_m) \}$ be a finite collection of marked rank 1 fibrations.
In the context of Theorem \ref{thm:sarkisov},  we will take $m = 2$, $S_1 = S$, $S_2 = S'$ when proving \ref{sarkisov1}, or the entire collection of rank 1 fibrations visited by a relation of Sarkisov links when proving \ref{sarkisov2}.

By repetitively applying  Proposition \ref{pro:resolution}, we produce a marked surface $Z$  dominating all the $S_i$, that is, such that
all induced birational maps $f_i\colon Z \to S_i$ are morphisms.
We pick a sufficiently small ample $\Q$-divisor $A$ on $Z$ such that for all $i$, $f_i$ is $(K+A)$-negative, and $-K_{S_i}- {f_i}_*(A)$ is relatively ample over $B_i$.
The point of choosing such an ample divisor $A$ is to ensure that there exist only finitely many $(K+A)$-negative birational morphisms from $Z$ (up to post-composition by an isomorphism).
Indeed this follows from Theorem \ref{thm:cone}, which says that at each step there are only finitely possible divisorial contractions.
If there are only finitely many birational morphisms from $Z$ (for instance if $Z$ is a del Pezzo surface), we also admit the choice $A = 0$.

For each $i = 1, \dots, m$, applying the following two steps we construct an (effective) ample divisor $\Delta_i$ on $Z$ such that
$f_i\colon Z \to S_i$ is $(K+A)$-negative with scaling of $\Delta_i$, and more precisely $f_i$  will be a $(K+A+\Delta_i)$-ample model
of $Z$:
\begin{enumerate}[wide]
\item If $B_i \simeq \p^1$, pick $G_i$ a large multiple of the fiber of $S_i/B_i$ such that $-K_{S_i}- {f_i}_*(A) + G_i$ is ample on $S_i$.
If $B_i = \{pt\}$, then  $-K_{S_i}- {f_i}_*(A)$ is already ample, so we just set $G_i = 0$.
In both cases $G_i$ is a nef divisor on $S_i$.
\item Now pick $P_i$ an effective $\Q$-divisor on $S_i$, equivalent to the ample divisor $-K_{S_i}- {f_i}_*(A) + G_i$, and set $\Delta_i = A + f_i^*(P_i)$, which is ample as the sum of ample and nef divisors.
One checks that 
\begin{align*}
{f_i}_*(K + A + \Delta_i) &= K_{S_i} + {f_i}_*(A) + {f_i}_*(A) - K_{S_i} - {f_i}_*(A) + G_i \\
&=  {f_i}_*(A) + G_i,
\end{align*}
which is an ample divisor on $S_i$ as expected.
\end{enumerate}

We can assume that the $\Delta_i$ generate $N^1(Z)$ (throw in more ample divisors if necessary).
We choose some rationals $r_i > 0$ such that $K + A + r_i\Delta_i$ is ample.
We say that a $\Q$-divisor $\Delta$ is a \textit{subconvex combination} of the $r_i\Delta_i$ if 
\[\Delta = \sum t_i r_i \Delta_i \text{ with } t_i \ge 0, \sum t_i \le 1.\]
Let $\{g_j\}_{j=1}^{s}$ be the finite collection of $(K+A+\Delta)$-negative birational morphisms (up to isomorphism on the range), for all choices of a subconvex combination $\Delta$ as above.
Observe that this collection is indeed finite because of Theorem~\ref{thm:cone}.
Remark also that the initial $f_i$, $i = 1, \dots, m$, are part of this collection by construction, and also the identity $\id\colon Z \to Z$ (corresponding for instance to taking one of the $t_i$ equal to 1, and all the other equal to 0, because then $D = K+A+r_i\Delta_i$ is ample by assumption, and the corresponding embedding $\phi_D$ is equivalent to the identity morphism).

For any family $\{D_i\}_{i\in I}\in N^1(Z)$ of divisors in the vector space $N^1(Z)$, we denote by $\ConeConv(D_i\mid i\in I)$ the cone over the convex hull of the $D_i$.
Now we intersect the cone 
\[\ConeConv(K+A, K + A + r_1\Delta_1, \dots, K+ A + r_s\Delta_s),\] 
with the pseudo-effective cone $\overline{\Eff}(Z)$ to get a cone $\Cl^\circ$.
Explicitly:
\[\Cl^\circ = \left\lbrace
D = \lambda \left( K + A + \sum t_i r_i \Delta_i \right);\; \lambda,t_i \ge 0,
\sum t_i \le 1, D \text{ pseudo-effective}
\right\rbrace \subseteq N^1(Z).\]
We denote by $\Cl$ the intersection of $\Cl^\circ$ with an affine hyperplane defined by $K+A$:
\[\Cl := \{ D \in \Cl^\circ;\; (K+A)\cdot D = -1 \}.\]

Recall from Proposition \ref{pro:factorization} that each $g_j\colon Z \to S_j$ is a finite sequence of contractions of exceptional divisors, that is, orbits under $\Gal(\K^a/\K)$ of pairwise disjoint $(-1)$-curves. 
We denote $\{C_i\}_{i \in I}$ the finite collection of classes in $N^1(Z)$ obtained as pull-back of such exceptional divisors contracted by $g_j$, for all $j = 1, \dots, s$.
We introduce a notation for the hyperplane and half-spaces defined by $C_i$ in $N^1(Z)$:
\begin{align*}
C_i^\perp &= \{ D \in N^1(Z); D\cdot C_i = 0\},\\
C_i^\ge &= \{ D \in N^1(Z); D\cdot C_i \ge 0\},
\end{align*}
and similarly for $C_i^{>}$, $C_i^\le$, $C_i^{<}$.

Let $D \in \Cl$ be a big divisor.
Then we get a partition $I = I^+ \cup I^-$ such that 
\[D \in \bigcap_{i \in I^+} C_i^{>}  \cap \bigcap_{i \in I^-} C_i^{\le}.\]
There exists $j$ such that the morphism $\phi_D$ associated with $D$ (see discussion after Theorem \ref{thm:BPF}) coincides with $g_j:Z \to S_j$.
The classes $C_i$, $i \in I^-$, correspond to the curves contracted by $\phi_D =
g_j$.

\begin{lemma} \label{lem:polyhedral}
The cone $\Cl^\circ \subset N^1(Z)$ is rational polyhedral and convex, hence also the affine section $\Cl$.
\end{lemma}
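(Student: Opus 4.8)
The plan is to show that $\Cl^\circ$ is rational polyhedral by exhibiting it as a finite intersection of rational half-spaces and a rational polyhedral cone, all of which are convex. By construction $\Cl^\circ$ is the intersection of the cone $\ConeConv(K+A, K+A+r_1\Delta_1, \dots, K+A+r_s\Delta_s)$ with the pseudo-effective cone $\overline{\Eff}(Z)$. The first cone is rational polyhedral and convex by definition (it is the cone over the convex hull of finitely many rational points), so the whole issue is to understand $\overline{\Eff}(Z)$ \emph{restricted to the region we care about}, namely the slice cut out by the first cone. I would not try to prove that $\overline{\Eff}(Z)$ is itself polyhedral — it need not be, since $Z$ may carry infinitely many $(-1)$-curves — but rather that within $\ConeConv(K+A, \dots, K+A+r_s\Delta_s)$ only finitely many of the relevant extremal rays of $\overline{\Eff}(Z)$ are active.

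First I would recall the set-up: every divisor $D$ in $\Cl^\circ$ is of the form $\lambda(K+A+\sum t_i r_i\Delta_i)$ with $\lambda, t_i \ge 0$, $\sum t_i \le 1$, and $\sum t_i r_i \Delta_i$ is a subconvex combination. Scaling by $\lambda$ is harmless, so pseudo-effectivity of $D$ is pseudo-effectivity of $K+A+\Delta$ for $\Delta$ a subconvex combination of the $r_i\Delta_i$. By the MMP-with-scaling discussion (the construction of $\phi_D$ after Theorem~\ref{thm:BPF} and Remark~\ref{rem:phi D}), running the $(K+A)$-MMP with scaling on $K+A+\Delta$ terminates, and the divisor is pseudo-effective exactly as long as the scaling parameter stays in the range where $\phi_D$ is defined; moreover every birational contraction that can occur is one of the finitely many $g_j\colon Z \to S_j$, $j = 1,\dots,s$, and the only curves that get contracted are among the finite family $\{C_i\}_{i\in I}$ of pulled-back exceptional classes. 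This is the crucial finiteness input, and it comes straight from Theorem~\ref{thm:cone}.

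Next I would argue that, because of this, $\Cl^\circ$ coincides with
\[
\ConeConv(K+A, K+A+r_1\Delta_1, \dots, K+A+r_s\Delta_s) \cap \bigcap_{i\in I} C_i^{\ge}.
\]
The inclusion $\subseteq$ holds because each $C_i$ is (the pullback of) an effective curve class, so any pseudo-effective $D$ satisfies $D\cdot C_i \ge 0$. For the reverse inclusion: if $D$ lies in the first cone and $D\cdot C_i \ge 0$ for all $i\in I$, then running the $(K+A)$-MMP with scaling of the corresponding $\Delta$, at no stage do we meet a $(K+A)$-negative extremal ray generated by one of the $C_i$ that would take us outside the pseudo-effective cone — indeed the walls of $\Cl$ that bound the region of pseudo-effectivity are precisely among the hyperplanes $C_i^\perp$, as recorded in the paragraph preceding the lemma (the partition $I = I^+\cup I^-$ and the identification of the curves contracted by $\phi_D$ with the $C_i$, $i\in I^-$). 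Hence such a $D$ is pseudo-effective, so lies in $\Cl^\circ$. Now the right-hand side is an intersection of a rational polyhedral convex cone with finitely many rational closed half-spaces, hence is rational polyhedral and convex; and taking the affine section by $\{(K+A)\cdot D = -1\}$ — a rational affine hyperplane meeting the interior appropriately — yields that $\Cl$ is a rational polytope.

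The main obstacle, and the step deserving the most care, is the reverse inclusion above: precisely checking that $D\cdot C_i \ge 0$ for all $i\in I$ \emph{together with} membership in the finite cone is enough to force pseudo-effectivity, rather than merely being a necessary condition. This requires tracking the MMP-with-scaling carefully and using that the only divisorial contractions available from $Z$ within this cone of divisors are the finitely many $g_j$, whose contracted curves are exactly the $C_i$ — so that once $D$ is nonnegative on all of them, no further contraction (and in particular no exit from $\overline{\Eff}(Z)$) is forced. The finiteness of $\{g_j\}$ and of $\{C_i\}_{i\in I}$, which rests on the ampleness of $A$ and on Theorem~\ref{thm:cone}, is what makes the argument go through; everything else is bookkeeping with convex cones.
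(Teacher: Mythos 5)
There is a genuine gap, and it sits exactly at the step you flag as "the main obstacle" --- but the problem is with the \emph{forward} inclusion, not the reverse one. Your claimed identity
\[
\Cl^\circ \;=\; \ConeConv(K+A,\, K+A+r_1\Delta_1, \dots, K+A+r_s\Delta_s) \cap \bigcap_{i\in I} C_i^{\ge}
\]
is false. The justification you give for ``$\subseteq$'' --- that any pseudo-effective $D$ satisfies $D\cdot C_i\ge 0$ because $C_i$ is an effective class --- is not true: only \emph{nef} divisors pair non-negatively with all effective curves, whereas a pseudo-effective (even effective) divisor can be negative against a curve in its negative part; the basic example is a $(-1)$-curve $E$ with $E\cdot E=-1$. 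In fact the right-hand side of your identity is precisely $\nef(Z)\cap\Cl^\circ$, which is strictly smaller than $\Cl^\circ$ as soon as some chamber $\Al_j$ with $g_j\neq\id$ is non-empty --- and such chambers exist by construction, since the initial fibrations $S_i/B_i$ have $\rho(S_i)<\rho(Z)$. The whole chamber decomposition $\Al_j=\bigcap_{i\in I^+}C_i^{>}\cap\bigcap_{i\in I^-}C_i^{\le}\cap\Cl$ with $I^-\neq\emptyset$ lives inside $\Cl^\circ$ but outside $\bigcap_i C_i^{\ge}$ (see Example \ref{exple:blowup 2 points}, where the chambers $\Al_2,\Al_3,\Al_4$ consist of classes negative against $E'$, $L'$ or $E+E'$).

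The correct route is a two-step argument. First, the identity you wrote does hold for the nef part: $\nef(Z)\cap\Cl^\circ=\bigcap_{i\in I}C_i^{\ge}\cap\ConeConv(K+A,\dots,K+A+r_s\Delta_s)$, which is rational polyhedral. Second, one recovers all of $\Cl^\circ$ from this nef slice by adding the finitely many extremal directions $C_i$ and the direction $K+A$: for $D\in\Cl^\circ$ big with associated contraction $\phi_D$, one writes $D=\phi_D^*(\phi_{D*}D)+\sum a_iC_i$ with $\phi_D^*(\phi_{D*}D)\in\nef(Z)\cap\Cl^\circ$ and $a_i>0$ by Lemma \ref{lem:negative}, which yields
\[
\Cl^\circ=\ConeConv\bigl(\nef(Z)\cap\Cl^\circ,\,K+A\bigr)\;\cap\;\ConeConv\bigl(\nef(Z)\cap\Cl^\circ,\,C_i\mid i\in I\bigr),
\]
a finite intersection of rational polyhedral convex cones. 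Your finiteness inputs (Theorem \ref{thm:cone}, finiteness of the $g_j$ and of the $C_i$) are the right ones, but without this Zariski-type decomposition the non-nef part of $\Cl^\circ$ is not accounted for by your description.
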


\begin{proof}
First we prove that  
\[\nef(Z) \cap \Cl^\circ =  \bigcap_{i \in I} C_i^{\ge} \cap \ConeConv(K+A, K + A +  r_1\Delta_1, \dots, K+ A + r_s\Delta_s),\]
from which it follows that $\nef(Z) \cap \Cl^\circ$ is a rational polyhedral convex cone.

If $D = K + A + \Delta \in \Cl^\circ$ is not nef, then by construction of $\phi_D$ there exists $i \in I$ such that the exceptional divisor $C_i$ is contracted by $\phi_D$, and $D \cdot C_i < 0$. 
On the other hand if $D \in \Cl^\circ$ is nef, then for any irreducible component $C$ in the support of one of the $C_i$ we have $D \cdot C \ge 0$, hence $D \cdot C_i \ge 0$ for all $i$.

Now we show that $\Cl^\circ$ is a rational polyhedral convex cone.
Let $D \in \Cl^\circ$ be a big divisor, and $\phi_D\colon Z \to S$ the associated birational morphism.
Up to reordering the $C_i$, we can assume that $C_1, \dots, C_r$ are the classes contracted by $\phi_D$, where $r$ is the relative Picard number of $Z$ over $S$.
Then we can write
\[D = {\phi_D}^* ({\phi_D}_*(D)) + \sum_{i=1}^r a_i C_i,\]
where ${\phi_D}^* ({\phi_D}_*(D)) \in \nef(Z) \cap \Cl^\circ$, and the $a_i$ are positive by Lemma \ref{lem:negative}.
Together with $K+A+r_i\Delta_i \in \nef(Z)\cap \Cl^\circ$ this gives
\begin{align*}
\Cl^\circ &\subseteq  \ConeConv(\nef(Z)\cap\Cl^\circ,K+A) \bigcap \ConeConv(\nef(Z) \cap \Cl^\circ,C_i\mid i\in I) 
\end{align*}
This inclusion is in fact an equality because the first cone on the right-hand side is just $\ConeConv(K+A,K+A+r_1\Delta_1,\dots,K+A+r_s\Delta_s)$ and the second cone consists of pseudo-effective divisors, so the right hand side is contained in $\Cl^\circ$.
It follows that $\Cl^\circ$ is rational polyhedral and convex, as expected.
\end{proof}

We set
\[\Al_j := \bigcap_{i \in I^+} C_i^{>}  \cap \bigcap_{i \in I^-} C_i^{\le} \cap \Cl.\]
In particular, $\Al_j$ is a rational polyhedral subset of $\Cl$, and is equal to the set of divisors $D \in \Cl$ such that $\phi_D = g_j$.  
Observe that the chamber $\Al$ of ample divisors in $\Cl$ is one of the $\Al_j$, associated to $g_j = \id$, and to the partition $I^+ = I$, $I^- = \emptyset$. 
Clearly the $\Al_j$ form a partition of the interior of $\Cl$.

We just reproved \cite[Theorem 4.2]{KKL} (which is stated in arbitrary dimension, but over $\C$):

\begin{theorem} \label{thm:polyhedral chambers}
The interior of the cone $\Cl$ admits a finite partition in polyhedral chambers $\Int(\Cl) = \bigcup_{j} \Al_j$. 
\end{theorem}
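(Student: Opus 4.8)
The plan is to assemble the observations accumulated in this subsection; the statement essentially repackages Lemma~\ref{lem:polyhedral} together with the finiteness furnished by the Cone Theorem~\ref{thm:cone}, so little new input is needed. First I would record that every $D\in\Int(\Cl)$ is big. Indeed $\Cl^\circ$ is a full-dimensional convex subcone of $\overline{\Eff}(Z)$ — it is spanned by $K+A$ and the ample divisors $K+A+r_i\Delta_i$, and the $\Delta_i$ were chosen to span $N^1(Z)$, so a small neighbourhood of a suitable ample divisor lies inside $\Cl^\circ$ — while $\Cl^\circ\subseteq\overline{\Eff}(Z)$ by construction. Hence $\Int(\Cl^\circ)\subseteq\Int(\overline{\Eff}(Z))$ is contained in the big cone, and $\Int(\Cl)$ is the trace of $\Int(\Cl^\circ)$ on the affine hyperplane $\{(K+A)\cdot D=-1\}$. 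In particular, for every $D\in\Int(\Cl)$ the ample model $\phi_D$ is a well-defined birational morphism from $Z$, and by construction of the finite family $\{g_j\}_{j=1}^s$ it coincides, up to isomorphism on the range, with one of the $g_j$.

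Next I would make the combinatorics explicit. To $D\in\Int(\Cl)$ attach the partition $I=I^+(D)\sqcup I^-(D)$ given by $I^+(D)=\{i\in I:D\cdot C_i>0\}$ and $I^-(D)=\{i\in I:D\cdot C_i\le 0\}$; then by definition $D$ lies in the set
\[
\Al_{(I^+,I^-)}\;:=\;\bigcap_{i\in I^+}C_i^{>}\;\cap\;\bigcap_{i\in I^-}C_i^{\le}\;\cap\;\Cl.
\]
There are at most $2^{|I|}$ such partitions, so only finitely many of these sets are nonempty; relabel the nonempty ones $\Al_1,\dots,\Al_N$. Each $\Al_j$ is a rational polyhedral subset of $\Cl$, since $\Cl$ is rational polyhedral by Lemma~\ref{lem:polyhedral} and the $C_i^{>}$, $C_i^{\le}$ are rational half-spaces. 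Two of them attached to distinct partitions are disjoint, because some index $i$ lies in $I^+$ for one and in $I^-$ for the other, imposing the contradictory conditions $D\cdot C_i>0$ and $D\cdot C_i\le 0$; and their union is all of $\Int(\Cl)$, since every such $D$ has a well-defined sign vector $(D\cdot C_i)_{i\in I}$ and hence lies in the corresponding chamber. This already yields the finite partition $\Int(\Cl)=\bigcup_j\Al_j$ into polyhedral chambers.

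Finally I would tie the chambers to the geometry in order to match the indexing by the morphisms $g_j$: for $D$ in a fixed chamber the curves contracted by $\phi_D$ are exactly the $C_i$ with $i\in I^-$ (this is the point recorded just before Lemma~\ref{lem:polyhedral}, together with Lemma~\ref{lem:negative}), so $\phi_D=g_j$ is constant on $\Al_j$, the nonempty chambers are indexed by the morphisms they realize, $N\le s$, and the chamber $\Al$ of ample classes — realizing $g_j=\id$, cut out by $I^+=I$, $I^-=\emptyset$ — is one of them.

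As for difficulty, there is no genuinely hard step here, which is why I expect the author's proof to be very short and to cite the preceding discussion. If anything needs care it is the assertion that $\phi_D$ depends only on the chamber of $D$, i.e.\ only on the signs of the intersection numbers $D\cdot C_i$; this rests on the two facts established above, namely that $\phi_D$ is the ample model of $D$ (so it is determined by the face structure relative to the contracted classes) and that the collection $\{C_i\}_{i\in I}$ was defined precisely to contain the pullback of every exceptional divisor contracted by any of the $g_j$. Granting this, everything reduces to bookkeeping, and the finiteness is exactly the finiteness of $\{g_j\}$ — equivalently of the divisorial contractions from $Z$ over all subconvex combinations $\Delta$ — provided by Theorem~\ref{thm:cone}.
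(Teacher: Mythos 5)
Your proposal is correct and follows essentially the same route as the paper, which presents this theorem as a summary of the preceding discussion: the finiteness of the morphisms $g_j$ from Theorem~\ref{thm:cone}, the sign-vector decomposition with respect to the finite set of classes $C_i$, and the polyhedrality from Lemma~\ref{lem:polyhedral}. You merely make explicit two points the paper leaves implicit (that interior classes of $\Cl$ are big, and that the chambers attached to distinct sign vectors are disjoint), which is harmless.
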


For further reference we sum-up the above discussion:
\begin{setup} \phantomsection \label{setup:C}\item
\begin{itemize}[wide]
\item We start with a finite collection $\{ (S_1/B_1, \phi_1), \dots, (S_m/B_m, \phi_m) \}$ of marked rank 1 fibrations.
\item We pick $Z$ a common resolution with Picard number $\rho(Z) \ge 4$.
\item We choose $A$ an ample $\Q$-divisor on $Z$ such that each map $Z \to S_i$ is $(K+A)$-negative with scaling of an ample divisor $\Delta_i$.
If there are finitely many birational morphisms from $Z$, we allow $A = 0$.
\item We construct a convex cone $\Cl^\circ$ in $N^1(Z)$, by considering the union of all segments $[\Delta, K+A] \cap \overline{\Eff}(Z)$, for all  convex combinations $\Delta$ of the ample divisors $K + A + r_i\Delta_i$, and by taking the cone  over these. 
In practice, we work with $\Cl$, the section of $\Cl^{\circ}$ by the affine hyperplane corresponding to classes $D$ such that $(K +A )\cdot D = -1$. 
\item Each class $D$ in the interior of $\Cl$ corresponds to a $(K+A)$-birational morphism $\phi_D\colon Z \to S_D$. 
\item Conversely, given $g_j\colon Z \to S_j$ a $(K+A)$-negative birational morphism, the set $\Al_j$ of divisors $D$ in $\Cl$ such that $g_j = \phi_D$ form a polyhedral chamber $\Al_j$ with non-empty interior.
\end{itemize}
\end{setup}

\begin{remark}
In higher dimension several complications arise that we avoided in the above discussion. 
In particular in dimension $\ge 3$ it is not true anymore that each $\Al_j$ spans $N^1(Z)$, because of the appearance of small contractions.

We should also mention that in dimension 2, the decomposition of Theorem \ref{thm:polyhedral chambers} can  be
phrased in terms of Zariski decompositions (see \cite{BKS}).
Namely, in each chamber $\Al_j$ the support of the negative part of the Zariski decomposition is constant, and corresponds to the support of the classes $C_i$ with $i \in I^-$.

Finally, as already noticed in Remark \ref{rem:phi D}, since we only consider adjoint divisors of the form $K + \Delta$ with $\Delta$ ample, we can work directly in the Néron-Severi space $N^1(Z)$ instead of choosing a subspace of the space of Weil divisors, as in \cite{HMcK, Kaloghiros}.
\end{remark}

\begin{example} \label{exple:blowup 2 points}
In \cite[Figures 1 and 6]{Kaloghiros}  the above construction is illustrated by the case of $\p^2$ blown up at 2 or 3 distinct points.
Here we consider the case of $\p^2$ blown up at two points (of degree 1), with one infinitely near the other.
Precisely, let $Z$ be the surface obtained from $\mathbb{P}^2$ by blowing-up a point $p = L \cap L'$ intersection of two lines, producing an exceptional divisor $E$, and then $p' = E \cap L'$, producing an exceptional divisor $E'$ (see Figure \ref{fig:blowups}, where the numbers in bracket denote self-intersection, and where we use the same notation for a curve and its strict transforms).

\begin{figure}[ht]
\def\svgwidth{.8\textwidth}
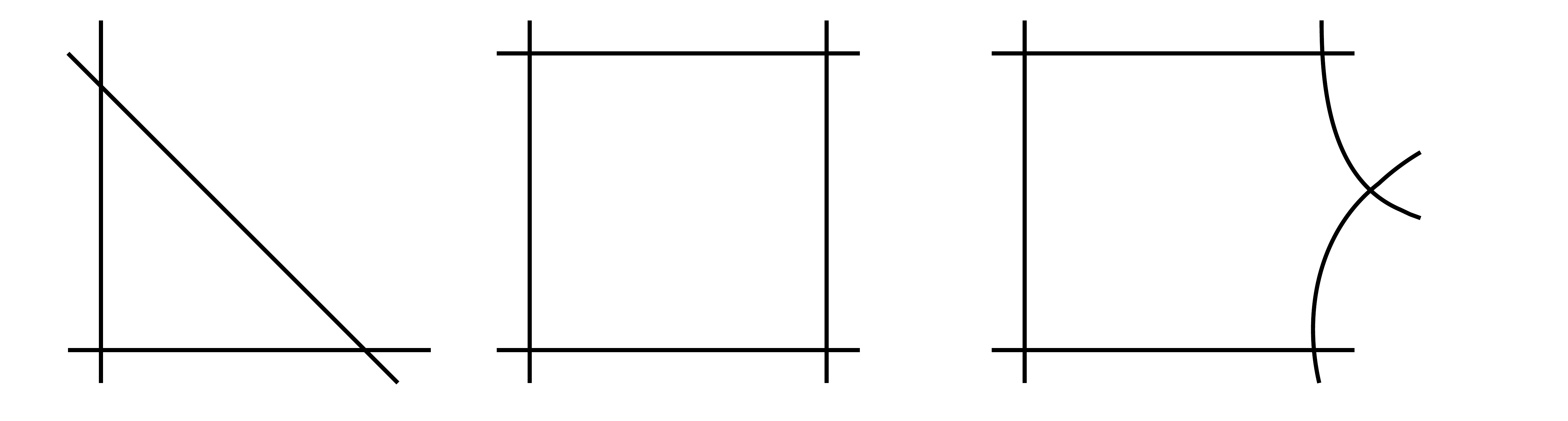
\caption{$\p^2$ blown up at $p$ and $p'$.} \label{fig:blowups}
\end{figure}

On $Z$, the curves $E$, $E'$ and $L'$ are the only irreducible divisors with negative self-intersection, and they generate the pseudo-effective cone $\overline{\Eff}(S)$.
We also denote by $H$ the class of a generic line from $\p^2$, and as usual $K$ is the canonical divisor.
On $Z$ we have 
\begin{align*}
H &= L'+E+2E', &
-K &= 3L' + 2E + 4E'.
\end{align*}
The classes $C_i$ of contracted curves are
\begin{align*}
C_1 &= L', &
C_2 &= E', &
C_3 &= E + E'.
\end{align*}

In this simple example, it turns out that any class $D$ in the pseudo-effective cone $\overline{\Eff}(Z)$ corresponds to a birational morphism $\phi_D$.
There are 6 possibilities for this morphism $\phi_D$, and Figure~\ref{fig:cone 2 pts} shows the chamber decomposition for the whole $\overline{\Eff}(Z)$.

\begin{figure}[ht]
\[
\xygraph{
!{<0cm,0cm>;<3cm,0cm>:<0cm,1.8cm>::}
!{(-0.5,0)}*{\bullet}="L'"
!{(0.5,0)}*{\bullet}="L"
!{(.5,2)}*{\bullet}="E"
!{(.8,.8)}*{\bullet}="ell"
!{(1.5,0)}*{\bullet}="E'"
!{(1,1)}*{\bullet}="E+E'"
!{(0.5,0.6)}*{\bullet}="fantome"
!{(0.25,1)}*{\Al^s_{E,L'}}
!{(0.62,1.1)}*{\Al^s_{E}}
"L"-"ell" "L"-"E'"
"ell"-^<{H}"E'" "fantome"-"L'" "L"-_>{E}"E"
"L"-^<(-0.1){L}"L'" "L'"-"E'"
"E'"-_<{E'}_>{E + E'}"E+E'" "E+E'"-"E"
"L'"-^<{L'}"E"
"ell"-"E"
"ell"-|\bullet_{-K}"fantome"
}
\]
\caption{} \label{fig:cone 2 pts}
\end{figure}

However, Set-Up \ref{setup:C} only guaranties the chamber decomposition for classes in $\Cl$, which is pictured on Figure \ref{fig:cone vert} (where we work with the choice $A = 0$).
Observe in particular that chambers $\Al^s_{E}$ and $\Al^s_{E,L'}$ in Figure \ref{fig:cone 2 pts} corresponds to singular surfaces, namely the blow-down of $E$, or the blow-down of $E,L'$.
Equivalently, a wall between chambers in Figure \ref{fig:cone 2 pts} does not always correspond to the contraction of a $(-1)$-curve.
In contrast, chambers $\Al_1, \dots, \Al_4$ in Figure \ref{fig:cone vert} correspond to smooth surfaces, and the chambers are delimited by the
hyperplanes $C_i^\perp$.

\begin{figure}[ht]
\def\svgwidth{.6\textwidth}
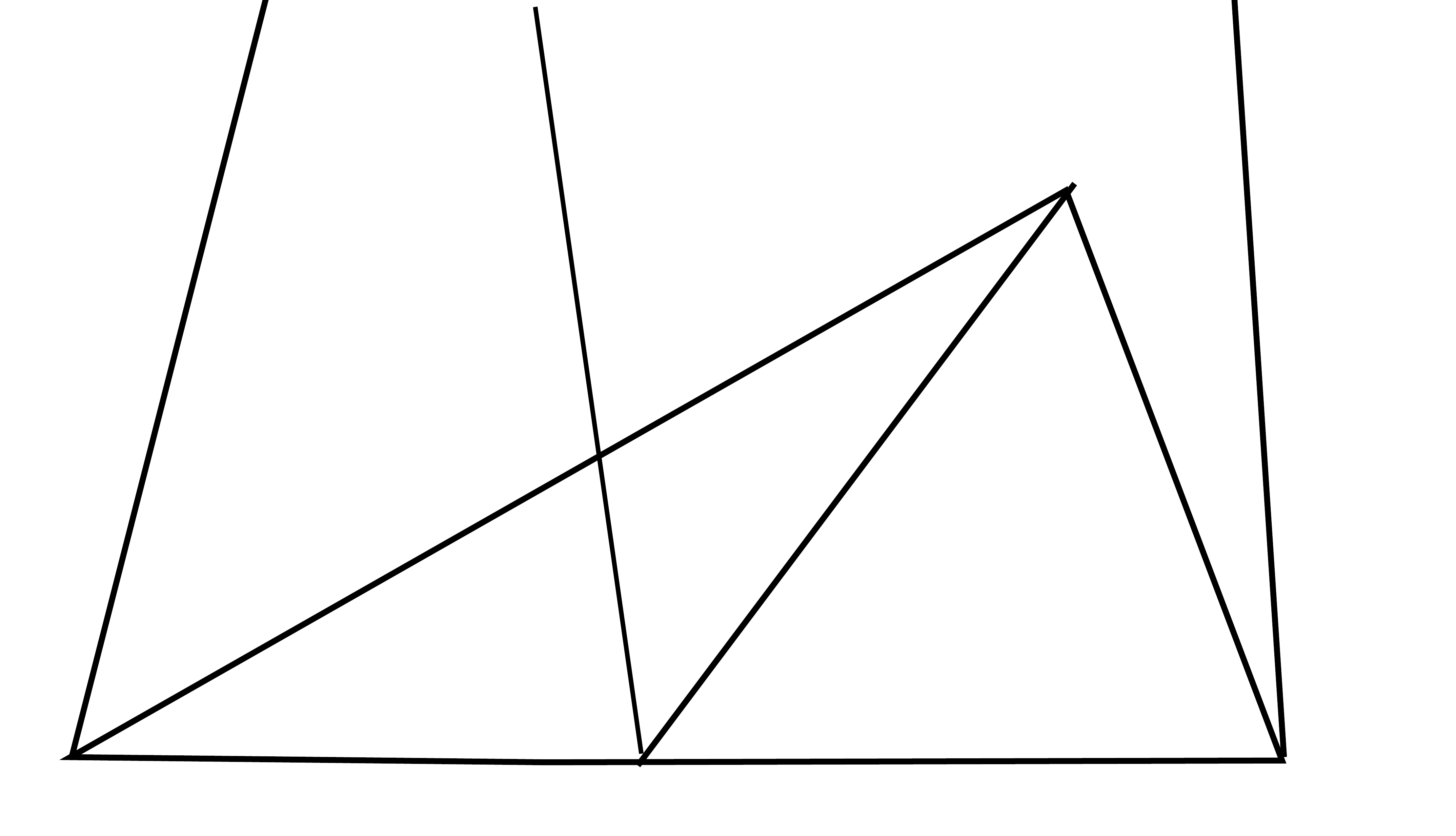
\caption{The partition $\Int(\Cl)=\bigcup_{i=1}^4\Al_i$ (in gray).} 
\label{fig:cone vert}
\end{figure}
\end{example}

The following two propositions correspond to \cite[Theorem 3.3]{HMcK}, in the case of surfaces.

\begin{proposition} \label{pro:factorisation1}
Assume Set-Up $\ref{setup:C}$, let $\Al_j$ be one of the polyhedral chambers given by Theorem~$\ref{thm:polyhedral chambers}$, and $g_j\colon Z \to S_j$ the associated birational morphism.
If $D \in \widebar{\Al_j} \setminus \Al_j$ with associated morphisms $\phi_D\colon Z \to Y$, then there exists a unique morphism $f\colon S_j \to Y$ such that $\phi_D = f \circ g_j$.
Moreover the morphism $\phi_D$ only depends on the face $\Fl^r \subset \widebar{\Al_j}$ such that $D$ is in the relative interior of $\Fl^r$.
\end{proposition}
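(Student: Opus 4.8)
The plan is to realise $\phi_D$ as the morphism $Z\to\Proj\bigl(\bigoplus_m H^0(Z,mD)\bigr)$ of Remark~\ref{rem:phi D}(2), to produce a Zariski-type decomposition $D=g_j^*(L)+E$ with $L$ nef and $E\ge0$ contracted by $g_j$, and to extract from $L$ both the factorisation and the dependence on the face. Throughout I keep the notation of the construction preceding Theorem~\ref{thm:polyhedral chambers}: $\Al_j=\bigcap_{i\in I^+}C_i^{>}\cap\bigcap_{i\in I^-}C_i^{\le}\cap\Cl$, and the irreducible curves contracted by $g_j$ are the components of the $C_i$ with $i\in I^-$. Since $\widebar{\Al_j}$ is a convex polyhedron, the relative interior of the face $\Fl^r$ is exactly the locus of $\widebar{\Al_j}$ where a fixed sub-collection of the defining inequalities is an equality and all the others are strict; in particular the set $\{\,i\in I : D\cdot C_i=0\,\}$ does not depend on $D\in\operatorname{relint}(\Fl^r)$.

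First I would prove the factorisation. Fix $D_0$ in the (nonempty) interior of $\Al_j$, so $\phi_{D_0}=g_j$, and for $t\in[0,1)$ set $D_t=(1-t)D_0+tD$; convexity of $C_i^{>}$, $C_i^{\le}$ and $\Cl$ gives $D_t\in\Al_j$, so $\phi_{D_t}=g_j$ and, $D_t$ being big, the Zariski decomposition of $D_t$ reads $D_t=g_j^*(L_t)+E_t$ with $L_t={g_j}_*(D_t)$ ample and $E_t\ge0$ supported on the $g_j$-exceptional locus. Letting $t\to1$, the classes $L_t$ and $E_t$ converge to $L={g_j}_*(D)$, which is nef, and to an effective $g_j$-exceptional class $E$; thus $D=g_j^*(L)+E$. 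After rescaling $D$ to an adjoint divisor $K_Z+(\text{ample})$ we get $L=K_{S_j}+(\text{ample})$ as well, since the push-forward of an ample class under a birational morphism of smooth surfaces is again ample; hence the ample model $\psi_L\colon S_j\to Y$ of $L$ is defined. The projection formula and the identity ${g_j}_*\Ol_Z(mE)=\Ol_{S_j}$, valid for $E$ effective and $g_j$-exceptional, yield a graded isomorphism $\bigoplus_m H^0(Z,mD)\cong\bigoplus_m H^0(S_j,mL)$, so $\phi_D=\psi_L\circ g_j$; one takes $f:=\psi_L$, and $f$ is unique because $g_j$ is surjective.

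For the last assertion, recall that $\psi_L$ is the morphism defined by $|mL|$ (Theorem~\ref{thm:BPF}), hence contracts exactly the curves $C\subset S_j$ with $L\cdot C=0$; so $\phi_D=\psi_L\circ g_j$ is determined by $g_j$ and by $\operatorname{Null}(L):=\{\,C\subset S_j : {g_j}_*(D)\cdot C=0\,\}$. When $D$ is big so is $L$, the curves in $\operatorname{Null}(L)$ form the negative part of the Zariski decomposition of $L$ — in particular there are finitely many of them, by Theorem~\ref{thm:cone} since $L=K_{S_j}+(\text{ample})$ — and each such $(-1)$-curve $C$ has $g_j^*(C)$ equal to one of the classes $C_i$ of our collection; then $C\in\operatorname{Null}(L)$ if and only if $D\cdot C_i=0$, a condition that by the first paragraph is constant on $\operatorname{relint}(\Fl^r)$. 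Hence $\operatorname{Null}({g_j}_*D)$, and therefore $\phi_D$, depends only on $\Fl^r$. The faces of $\widebar{\Al_j}$ lying on $\partial\overline{\Eff}(Z)$, where $D$ is not big and $\phi_D$ is a Mori fibration, are handled by the same argument carried out on $S_j$ (using that $\phi_{g_j^*L}=\psi_L\circ g_j$), i.e. by an induction on the number of contracted extremal rays. The delicate point is to be sure that the limiting decomposition $D=g_j^*(L)+E$ really is the Zariski decomposition of $D$ — equivalently that the graded-ring identification is legitimate — which is exactly the surface instance of \cite[Theorem~3.3]{HMcK}; a secondary technical matter is the identification of the relevant pulled-back $(-1)$-curves with the classes $C_i$, which relies on $\{C_i\}$ having been built from all the $(K+A)$-negative birational morphisms out of $Z$.
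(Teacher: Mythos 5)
Your argument is correct, but it takes a genuinely different route from the paper. The paper's proof is a three-line application of the Rigidity Lemma \ref{lem:rigidity}: using the same segment $D_t$ joining $D$ to an interior point of $\Al_j$, it observes that every curve contracted by $g_j=\phi_{D_t}$ satisfies $D_t\cdot C\le 0$ for $t$ in the half-open interval, hence also in the limit, so the cone of curves contracted by $g_j$ is contained in that contracted by $\phi_D$, and rigidity hands over the morphism $f$ (with uniqueness built in); the face-dependence is then read off exactly as you do, from the vanishing pattern $(g_j)_*D\cdot C=0$. You instead build $f$ explicitly as the ample model $\psi_L$ of $L=(g_j)_*D$, via the decomposition $D=g_j^*L+E$ (Lemma \ref{lem:negative} plus a limit), the identification of section rings through $(g_j)_*\Ol_Z(mE)=\Ol_{S_j}$, and the $\Proj$ description of $\phi_D$ from Remark \ref{rem:phi D}(2) -- essentially re-deriving the surface case of \cite[Theorem 3.3]{HMcK}. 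What your approach buys is an explicit description of $f$ and a uniform treatment of the big and non-big cases (no induction is actually needed at the end: the argument $\phi_D=\psi_L\circ g_j$ works verbatim when $\psi_L$ is a fibration); what it costs is reliance on Remark \ref{rem:phi D}(2) and a couple of auxiliary facts (push-forward of an ample class under a birational morphism of smooth surfaces is ample; $(g_j)_*\Ol_Z(mE)=\Ol_{S_j}$), all true but absent from the paper's toolkit. Two small inaccuracies that do not affect the proof: since $L$ is nef its Zariski decomposition is trivial, so the curves with $L\cdot C=0$ are not ``the negative part of the Zariski decomposition of $L$'' but simply the curves contracted by the semiample morphism $\psi_L$; and their finiteness follows from $\psi_L$ being a birational morphism of surfaces (via Theorem \ref{thm:BPF}) rather than from Theorem \ref{thm:cone}, which only controls curves with $(K+A)\cdot C<0$. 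Likewise the ``delicate point'' you flag at the end is not actually needed: the graded-ring identification requires only that $E$ be effective and $g_j$-exceptional, not that $g_j^*L+E$ be a Zariski decomposition.
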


\begin{proof}
Set $D_0 = D \in \widebar{\Al_j} \setminus \Al_j$, and pick $D_1 \in \Al_j$.
Then for all $t \in (0,1]$, we have $D_t := tD_1 + (1-t)D_0 \in \Al_j$.
The curves contracted by $g_j = \phi_{D_t}$ are the curves $C$ such that $D_t \cdot C \le 0$ for $ t \in (0,1]$, and so also for $t = 0$.
Hence $\NE(\phi_D) \subset \NE(g_j)$, and the Rigidity Lemma below gives the expected morphism $f\colon S_j \to Y$. 

Finally the curves contracted by $f$ are the curves $C$ on $S_j$ such that $(g_j)_* D \cdot C = 0$, and these conditions on $D$ define the minimal face $\Fl^r$ containing $D$.  
\end{proof}

\begin{lemma}[Rigidity Lemma, see {\cite[Proposition 1.14]{Debarre}}]
\label{lem:rigidity}
Let $X,Y,Y'$ be projective varieties and let $\pi\colon X \to Y$, $\pi'\colon X \to Y'$ be morphisms with connected fibers.
If $\NE(\pi) \subset \NE(\pi')$, there is a unique morphism $f\colon Y \to Y'$ with connected fibers such that $\pi' = f \circ \pi$.
\end{lemma}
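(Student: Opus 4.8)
The plan is to translate the numerical hypothesis $\NE(\pi)\subseteq\NE(\pi')$ into the geometric statement that $\pi'$ contracts every fibre of $\pi$, which yields a set-theoretic factorization $\pi'=f\circ\pi$, and then to invoke the classical rigidity argument to promote $f$ to a morphism. For the first step, fix an ample divisor $H'$ on $Y'$ and consider the nef class $\pi'^*H'$ on $X$. For any curve $C'$ contracted by $\pi'$ the projection formula gives $\pi'^*H'\cdot C'=H'\cdot\pi'_*C'=0$, so $\pi'^*H'$ vanishes on the generators of $\NE(\pi')$, hence on all of $\NE(\pi')$. If $C$ is an irreducible curve contracted by $\pi$, then $[C]\in\NE(\pi)\subseteq\NE(\pi')$, so $H'\cdot\pi'_*C=\pi'^*H'\cdot C=0$; since $H'$ is ample and $\pi'_*C$ is an effective $1$-cycle or zero, this forces $\pi'_*C=0$, i.e. $\pi'$ also contracts $C$.

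Next I would show that $\pi'$ is constant on the fibres of $\pi$. We may assume $\pi$ surjective, replacing $Y$ by the closed subvariety $\pi(X)$ if necessary. For $y\in Y$ write $X_y=\pi^{-1}(y)$, a connected projective scheme. Every curve contained in $X_y$ is contracted by $\pi$, hence by $\pi'$ by the previous step, so $\pi'^*H'|_{X_y}$ has degree $0$ on every curve of $X_y$. Since $\pi'^*H'|_{X_y}$ is the pullback of an ample class from $\pi'(X_y)$, this forces $\dim\pi'(X_y)=0$ --- otherwise $\pi'(X_y)$ would contain a curve, whose preimage in $X_y$ would contain a curve of positive $\pi'^*H'$-degree --- and since $\pi'(X_y)$ is connected it is a single closed point. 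Hence there is a unique map of sets $f\colon Y\to Y'$ with $\pi'=f\circ\pi$.

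It remains to see that $f$ is a morphism, which is the actual content of the rigidity lemma and the main obstacle here. First, $f$ is continuous: for $Z\subseteq Y'$ closed one has $f^{-1}(Z)=\pi(\pi'^{-1}(Z))$ because $\pi$ is surjective and $f$ is constant on the fibres of $\pi$, and this set is closed since $\pi$ is a closed map. Next, since $\pi$ is proper with connected fibres and $Y$ is normal, its Stein factorization is trivial, i.e. $\pi_*\Ol_X=\Ol_Y$; together with $\pi'=f\circ\pi$ this gives $f_*\Ol_Y=f_*\pi_*\Ol_X=\pi'_*\Ol_X$, so the comorphism $\pi'^{\#}\colon\Ol_{Y'}\to\pi'_*\Ol_X=f_*\Ol_Y$ makes $f$ into a morphism of varieties with $f\circ\pi=\pi'$; its fibres $f^{-1}(z)=\pi(\pi'^{-1}(z))$ are connected, being images of the connected fibres of $\pi'$. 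Uniqueness of $f$ as a morphism is immediate from its uniqueness as a map of sets together with the surjectivity of $\pi$.

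The only genuinely delicate point --- and the one requiring some care over a non-closed field --- is the triviality of the Stein factorization used in the last step: ``morphism with connected fibres'' should be understood as ``morphism with geometrically connected fibres'', which over the perfect field $\K$ guarantees $\pi_*\Ol_X=\Ol_Y$. This is automatic for all the fibrations arising in this paper (between rational surfaces, or onto $\p^1$, or onto a point), and in any case one may alternatively cite \cite[Proposition 1.14]{Debarre} directly for this step.
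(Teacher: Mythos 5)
The paper offers no proof of this lemma---it is imported verbatim from \cite[Proposition 1.14]{Debarre}---so there is no internal argument to compare with; what you have written is essentially the standard proof from that source, and it is correct. All three steps are sound: pairing curve classes against $\pi'^*H'$ for $H'$ ample on $Y'$ correctly converts the numerical hypothesis $\NE(\pi)\subset\NE(\pi')$ into ``$\pi'$ contracts every curve contracted by $\pi$''; the dimension count on $\pi'(X_y)$ gives the set-theoretic factorization; and closedness of $\pi$ together with $\pi_*\Ol_X=\Ol_Y$ upgrades $f$ to a morphism with connected fibres. You are also right that the last step is the only delicate one, and it is worth stressing that some hypothesis beyond the literal ``projective varieties with connected fibres'' is genuinely needed there: taking $X=Y'=\p^1$, $\pi\colon\p^1\to Y$ the normalization of a cuspidal cubic and $\pi'=\id$, one has $\NE(\pi)=\NE(\pi')=\{0\}$, yet $\id$ does not factor through $\pi$. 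Your appeal to normality of $Y$ (plus geometric connectedness of the fibres over a non-closed field) to obtain $\pi_*\Ol_X=\Ol_Y$ is precisely the missing ingredient; it matches the form in which the lemma is used in \cite{Debarre}, and it is harmless here since every target occurring in the paper's applications (the surfaces $S_j$, $\p^1$, or a point) is smooth. The only cosmetic quibble is the reduction ``replace $Y$ by $\pi(X)$'': surjectivity of $\pi$ is implicit in the statement (otherwise uniqueness of $f$ already fails), so after that replacement you should note that the resulting $f$ is still defined on all of $Y$.
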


\begin{proposition} \label{pro:factorisation2}
Assume Set-Up $\ref{setup:C}$.
Let $j,k$ be two indexes such that $\widebar{\Al_j} \cap \Al_k \neq \emptyset$, and let $g_{j,k}\colon S_j \to S_k$ be the morphism $($given by Proposition $\ref{pro:factorisation1})$ such that $g_k = g_{j,k} \circ g_j$.
Then the relative Picard number of $g_{j,k}\colon S_j \to S_k$ is equal to the codimension of $\widebar{\Al_j} \cap \widebar{\Al_k}$ in $\widebar{\Al_j}$.
\end{proposition}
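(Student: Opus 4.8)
The plan is to reduce the statement to a combinatorial fact about the polyhedral decomposition of Set-Up~\ref{setup:C}: the faces of $\widebar{\Al_j}$ should be in inclusion-reversing bijection with the birational morphisms $f\colon S_j\to Y$ that arise, the codimension of a face being the relative Picard number $\rho(S_j/Y)$; applying this to $f=g_{j,k}$ then gives the result. Concretely I would proceed in three steps.

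\emph{Producing $g_{j,k}$ and the relevant face.} For $j\neq k$ the chambers $\Al_j,\Al_k$ are disjoint, so any $D\in\widebar{\Al_j}\cap\Al_k$ lies in $\widebar{\Al_j}\setminus\Al_j$; Proposition~\ref{pro:factorisation1} then yields the unique $g_{j,k}\colon S_j\to S_k$ with $g_k=g_{j,k}\circ g_j$, and tells us the minimal face of $\widebar{\Al_j}$ whose relative interior contains $D$ depends only on $D$. A short convexity argument (using that $\Al_k$ is convex and dense in $\widebar{\Al_k}$, and that $\phi_D$ depends only on the face) identifies this face with $\widebar{\Al_j}\cap\widebar{\Al_k}$ and shows $\phi_D=g_k$ for every $D$ in its relative interior. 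By Proposition~\ref{pro:factorization}, $g_{j,k}$ is a composition of $c:=\rho(S_j)-\rho(S_k)=\rho(S_j/S_k)$ blow-downs of Galois orbits of disjoint $(-1)$-curves.

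\emph{Half-space description and the bound $\le c$.} With the notation of Set-Up~\ref{setup:C}, write $\widebar{\Al_l}=\{D\in\Cl: D\cdot C_i\ge 0\ (i\in I^+_l),\ D\cdot C_i\le 0\ (i\in I^-_l)\}$, where $I^-_l$ indexes the classes $C_i$ that are pullbacks to $Z$ of the exceptional divisors factoring $g_l$. Since the exceptional divisors of $g_k=g_{j,k}\circ g_j$ are those of $g_j$ together with the ($g_j$-pullbacks of the) $c$ exceptional divisors of $g_{j,k}$, we get $I^-_j\subseteq I^-_k$ and $\#(I^-_k\setminus I^-_j)=c$; set $T:=I^-_k\setminus I^-_j$. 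Intersecting the descriptions of $\widebar{\Al_j}$ and $\widebar{\Al_k}$, every constraint is repeated or implied (using $I^+_k\subseteq I^+_j$) except for $i\in T$, where $D\cdot C_i\ge 0$ from $\widebar{\Al_j}$ and $D\cdot C_i\le 0$ from $\widebar{\Al_k}$ force $D\cdot C_i=0$; hence
\[\widebar{\Al_j}\cap\widebar{\Al_k}=\widebar{\Al_j}\cap\bigcap_{i\in T}C_i^\perp ,\]
which is a face of $\widebar{\Al_j}$. The $C_i$, $i\in T$, being linearly independent, this face is cut out by $c$ independent linear equations, so its codimension in $\widebar{\Al_j}$ is at most $c$.

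\emph{The bound $\ge c$, and the main difficulty.} Here I would show that the $c$ walls $C_i^\perp$ are in sufficiently general position along $\widebar{\Al_j}$ that the codimension is exactly $c$. Working in the full-dimensional cone $\Cl^\circ\subseteq N^1(Z)$ (full-dimensional since the $\Delta_i$ generate $N^1(Z)$), one picks $D_0$ in the relative interior of $\widebar{\Al_j}\cap\widebar{\Al_k}$ — non-empty because it contains the relatively open, non-empty set $\widebar{\Al_j}\cap\Al_k$, which moreover consists of big divisors — chosen generic enough to avoid the finitely many supporting hyperplanes of $\Cl^\circ$ other than the walls; then the tangent cone of $\widebar{\Al_j}^\circ$ at $D_0$ is $\{v: v\cdot C_i\ge 0,\ i\in T\}$, whose intersection with $\bigcap_{i\in T}C_i^\perp$ is that subspace itself, of codimension $c$. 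I expect the genuine obstacle to be precisely this step: ruling out that some further supporting hyperplane of $\Cl^\circ$ squeezes the face $\widebar{\Al_j}\cap\widebar{\Al_k}$ into lower dimension. An alternative avoiding the tangent-cone computation is to use Proposition~\ref{pro:factorisation1} together with the factorization $S_j=T_0\to T_1\to\dots\to T_c=S_k$ of $g_{j,k}$ to exhibit a chain of faces $\widebar{\Al_j}=\Fl_0\supsetneq\Fl_1\supsetneq\dots\supsetneq\Fl_c=\widebar{\Al_j}\cap\widebar{\Al_k}$ realizing the intermediate morphisms $S_j\to T_l$, which forces the codimension to be at least $c$.
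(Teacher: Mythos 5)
Your proposal follows essentially the same route as the paper: write $\Al_j$ and $\Al_k$ as intersections of the half-spaces $C_i^{\gtrless}$, observe that $\widebar{\Al_j}\cap\Al_k\neq\emptyset$ forces $I_j^-\subsetneq I_k^-$, and identify $\widebar{\Al_j}\cap\widebar{\Al_k}$ with the face of $\widebar{\Al_j}$ cut out by $C_i^\perp$ for $i$ in $T=I_k^-\setminus I_j^-=I_j^+\cap I_k^-$, a set of cardinality $\rho(S_j/S_k)$; this is exactly the paper's computation. One correction: you have the two inequalities swapped. Since $\widebar{\Al_j}\cap\widebar{\Al_k}$ sits inside the codimension-$c$ subspace $\bigcap_{i\in T}C_i^\perp$ (the $C_i$, $i\in T$, being independent as pullbacks of distinct exceptional divisors of $g_k$), linear independence gives codimension \emph{at least} $c$, not at most; and the step you label ``the bound $\ge c$'' --- ruling out that the face is squeezed into lower dimension --- is really the bound $\le c$, i.e.\ the full-dimensionality of the face inside $\bigcap_{i\in T}C_i^\perp\cap\Cl$. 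With the labels corrected, your argument is sound, and in fact more careful than the paper's, which disposes of the full-dimensionality issue with the words ``by construction''; either your tangent-cone argument at a generic point of the relative interior, or your alternative chain of faces realizing the intermediate contractions of $g_{j,k}$, fills in that step.
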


\begin{proof}
By definition there exist two partitions of the set of indexes $I = I_j^+ \cup I_j^- = I_k^+ \cup I_k^-$ such that
\begin{align*}
\Al_j &= \bigcap_{i \in I_j^+} C_i^{>}  \cap \bigcap_{i \in I_j^-} C_i^{\le}
\cap \Cl, \\
\Al_k &= \bigcap_{i \in I_k^+} C_i^{>}  \cap \bigcap_{i \in I_k^-} C_i^{\le}
\cap \Cl.
\end{align*}
The condition  $\widebar{\Al_j} \cap \Al_k \neq \emptyset$ means that $I_j^- \subsetneq I_k^-$.
Let $\{ i_1, \dots, i_t \} = I_j^+ \cap I_k^-$.
Then 
\[\widebar{\Al_j} \cap \widebar{\Al_k} =
\bigcap_{i \in I_k^+} C_i^{\ge} \cap
\bigcap_{i \in \{ i_1, \dots, i_t \}} C_i^\perp \cap
\bigcap_{i \in I_j^-} C_i^{\le} \cap \Cl.\]
The morphism $g_{j,k}$ corresponds to the contraction of the classes $C_{i_1}, \dots, C_{i_t}$, and $t$ is by construction the codimension of $\widebar{\Al_j} \cap \widebar{\Al_k}$ in $\widebar{\Al_j}$.
\end{proof}

\subsection{Boundary of \texorpdfstring{$\Cl$}{C}}

Assuming Set-up \ref{setup:C}, we define $\partial^+ \Cl$ as the intersection of $\Cl$ with the boundary of the pseudo-effective cone in $N^1(Z)$, or equivalently, as the classes in $\Cl$ that are not big.
If $\rho = \rho(Z)$ is the Picard number of $Z$, then by assumption $\Cl^{\circ}$ is a cone of full dimension $\rho$, hence the affine section $\Cl$ is homeomorphic to a ball $B^{\rho-1}$ and $\partial^+ \Cl$ is homeomorphic either to a ball $B^{\rho-2}$, or to a sphere
$S^{\rho-2}$, depending whether  the pseudo-effective cone is contained in
$\Cl$ or not.
For instance the sphere situation arises if $Z$ is del Pezzo, $A = 0$, and the initial collection of rank 1 fibrations $S_i/B_i$ is the full (finite) collection of such fibrations dominated by $Z$.

Now we put a structure of polyhedral complex on $\partial^+ \Cl$.
We consider the set of chambers $\Al_j$ such that $\widebar{\Al_j} \cap \partial^+ \Cl$ contains a codimension 1 face $\Wl$ of the closed polytope $\widebar{\Al_j}$.
We call such a face $\Wl$ a \textit{window} of $\Al_j$.
Then the collection of such $\Wl$ uniquely defines a structure of polyhedral complex on $\partial^+ \Cl$, such that the $\Wl$ are the maximal faces of the complex.
More generally we shall denote $\Fl^{r}$ a (closed) codimension $r$ face in $\partial^+ \Cl$.
Here the codimension is taken relatively to the ambiant space $N^1(Z)$, in particular windows correspond to codimension $1$ faces.
We say that $\Fl^{r}$ is an \textit{inner} face if it intersects the relative interior of $\partial^+ \Cl$.
Equivalently, $\Fl^{r}$ is inner if it can be written as the intersection of $r$ windows.

For a window $\Wl$ of the chamber $\Al_j$, $D$ in the relative interior of  $\Wl$, and sufficiently small $ \eps > 0$, the divisor $D' := D - \eps (K+A)$ is in $\Al_j$, and the images $S = \phi_{D'}(Z)$ and $B = \phi_{D}(Z)$ correspond to a Mori fibration $S/B$ that depends
only on $\Wl$ (and not on the particular choice of $D$ or $\eps$).

We see that the codimension $1$ faces in $\partial^+ \Cl$ are in bijection with the rank 1 fibrations $S^1/B$ dominated by a $(K+A)$-negative map from $Z$, and such that $S^1/B$ is $(-K-A)$-ample. 
More generally, we have:

\begin{proposition} \phantomsection \label{pro:F^k}
\begin{enumerate}[wide]
\item Let $\Fl^{r}$ be an inner codimension $r$ face in the polyhedral complex $\partial^+ \Cl$.
Then there exists a rank $r$ fibration $S^{r}/B$ such that:
\begin{itemize}[wide]
\item The induced map $Z \to S^{r}$ is equal to $g_j$ for some $j \in \{1, \cdots, s\}$, in particular this is a $(K+A)$-negative birational morphism;
\item The chamber $\Al_j$ associated with $g_j$ satisfies $\widebar{\Al_j} \supseteq \Fl^{r}$.
\end{itemize}
\item If moreover $\Fl^{r'} \subset \Fl^{r}$ is a strictly smaller face, then the rank $r'$ fibration associated to $\Fl^{r'}$ factorizes through the rank $r$ fibration associated to $\Fl^{r}$. 
\end{enumerate}
\end{proposition}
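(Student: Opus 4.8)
\emph{Strategy.} The plan is to read off the rank $r$ fibration attached to an inner face from a generic small perturbation in the direction $-K-A$, and then to deduce both assertions from Proposition~\ref{pro:factorisation1} — which, for a class $D$ on the boundary of a chamber $\Al_j$, identifies the ample model $\phi_D$ with a morphism through which $g_j$ factors and which depends only on the minimal face of $\widebar{\Al_j}$ containing $D$ — together with a dimension count and the Rigidity Lemma~\ref{lem:rigidity}.

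For part~$(1)$, I would start from a class $D$ in the relative interior of $\Fl^{r}$. Since $-K-A$ is relatively ample for the ample model $\phi_D\colon Z\to B$ (all the surfaces in sight being outputs of the $(K+A)$-negative minimal model program with scaling), the class $D_\eps:=D-\eps(K+A)$ is big for all small $\eps>0$; choosing $\eps$ generic one can assume $D_\eps$ lies in the interior of a single chamber $\Al_j$, with associated $(K+A)$-negative birational morphism $g_j\colon Z\to S_j=:S^{r}$. Letting $\eps\to0$ gives $D\in\widebar{\Al_j}$, and since $\partial^+\Cl$ is a subcomplex of the chamber decomposition of $\Cl$ (Theorem~\ref{thm:polyhedral chambers}), the face $\Fl^{r}$ is a face of the polytope $\widebar{\Al_j}$, necessarily of codimension $r$ by comparing dimensions. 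Proposition~\ref{pro:factorisation1} then writes $\phi_D=f\circ g_j$ with $f\colon S^{r}\to B:=\phi_D(Z)$, whose contracted curves are precisely the ones dual to the supporting hyperplanes of $\Fl^{r}$ in $\widebar{\Al_j}$, so that $\rho(S^{r}/B)$ equals the codimension $r$. Finally $(g_j)_*(D_\eps)$ is ample on $S^{r}$ while $(g_j)_*(D)$ is $f$-numerically trivial, whence $-(K_{S^{r}}+(g_j)_*A)$, and then $-K_{S^{r}}$ (adding the nef class $(g_j)_*A$), is $f$-ample; this shows $S^{r}/B$ is a rank $r$ fibration, with $Z\to S^{r}$ equal to $g_j$ and $\widebar{\Al_j}\supseteq\Fl^{r}$.

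For part~$(2)$, let $S^{r}/B$ and $S^{r'}/B'$ be the fibrations produced in part~$(1)$ for $\Fl^{r}\supsetneq\Fl^{r'}$, with chambers $\Al_j$ and $\Al_{j'}$. Because $\Fl^{r'}\subset\Fl^{r}\subseteq\widebar{\Al_j}$, the smaller face is still a face of $\widebar{\Al_j}$, cut out by supporting hyperplanes including those of $\Fl^{r}$; so for $D'$ in the relative interior of $\Fl^{r'}$, Proposition~\ref{pro:factorisation1} gives $\phi_{D'}=\tilde f\circ g_j$ with $\tilde f\colon S^{r}\to B'$, and since the curves contracted by $f$ are among those contracted by $\tilde f$, the Rigidity Lemma yields a morphism $B\to B'$ with $\tilde f=(B\to B')\circ f$. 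On the $\Al_{j'}$ side one has $\phi_{D'}=f'\circ g_{j'}$; both $g_j$ and $g_{j'}$ are birational morphisms factoring $\phi_{D'}$, and using that every birational morphism of surfaces is a composition of blow-downs (Proposition~\ref{pro:factorization}) one checks that $g_{j'}$ contracts only curves also contracted by $g_j$, so the Rigidity Lemma produces a morphism $S^{r'}\to S^{r}$. Combined with $B\to B'$, this is the factorization of $S^{r'}/B'$ through $S^{r}/B$ required by Diagram~(\ref{dia:factorize}); the case $B'=\pt$ being the one where the compatibility of bases is automatic.

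The step I expect to be the main obstacle is the chamber bookkeeping common to both parts: making sure that the perturbed class lands in a chamber $\widebar{\Al_j}$ that has $\Fl^{r}$ as a codimension-$r$ face (and not merely in its closure through a deeper face), and, in part~$(2)$, that the two chambers are situated so that $g_{j'}$ contracts a subset of the curves contracted by $g_j$ — equivalently, that $\Al_{j'}$ lies on the ``$S^{r}$-side'' of the link of $\Fl^{r'}$. This is precisely where one needs the identification of $\partial^+\Cl$ as a subcomplex of the chamber decomposition, the genericity of $\eps$, and the absence of small contractions on surfaces.
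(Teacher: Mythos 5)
Your strategy for part (1) is the paper's: perturb a class $D$ in the relative interior of $\Fl^{r}$ in the direction $-(K+A)$ to land in the interior of a chamber $\Al_j$, then read off the fibration $f\colon S^{r}\to B$ from Proposition \ref{pro:factorisation1}. (No genericity of $\eps$ is needed, by the way: since $(K+A)\cdot C_i<0$ for every exceptional class, the perturbed class automatically misses every wall $C_i^\perp$, which is also what makes $\Al_j$ independent of the choice of $D$ and $\eps$.) Where you cut a corner is the identification $\rho(S^{r}/B)=r$. Your dimension count — the face is cut out by the conditions $(g_j)_*D\cdot C=0$ for $C$ contracted by $f$, hence its codimension equals the number of independent such classes — only gives $\rho(S^{r}/B)\le r$: the face $\widebar{\Al_j}\cap\bigcap_C C^\perp$ could a priori have codimension strictly larger than the rank of the linear system if $\Fl^{r}$ fails to span that subspace. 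Moreover the relevant conditions are not all of the form $C_i^\perp$ for exceptional classes (when $B=\pt$ and $S^{r}$ fibers over $\p^1$, fiber classes intervene). This is exactly what the paper's relative MMP over $B$ and the two cases $(\dagger)$, $(\ddagger)$ — including the separate $\F_0$/$\F_1$ discussion — are there to handle, by exhibiting a codimension-$r'$ face of $\partial^+\Cl$ inside $\widebar{\Al_j}$ with the same associated morphism and concluding $\Fl^{r}=\Fl^{r'}$. Your shortcut needs a substitute for that step.

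The more serious gap is in part (2). Everything reduces to the claim that $g_{j'}$ contracts only curves also contracted by $g_j$, so that the Rigidity Lemma produces $S^{r'}\to S^{r}$; but your justification — both morphisms factor $\phi_{D'}$, plus Proposition \ref{pro:factorization} — does not establish this. When $B'=\pt$ the condition of factoring $\phi_{D'}$ is vacuous, and in general two birational morphisms from $Z$ over a common base need not be comparable (contract either of two exceptional curves of a two-point blow-up of $\F_0$ over $\p^1$). The needed input is the relative position of the two chambers with respect to the nested faces. One correct route, in the spirit of your set-up: by the construction in part (1), $I_{j'}^-=\{i\mid D'\cdot C_i<0\}$ for $D'$ in the relative interior of $\Fl^{r'}$ (any $i$ with $D'\cdot C_i=0$ is pushed into $I_{j'}^+$ by the perturbation $-\eps(K+A)$); since $D'\in\Fl^{r'}\subset\Fl^{r}\subset\widebar{\Al_j}$ forces $D'\cdot C_i\ge0$ for every $i\in I_j^+$, one gets $I_{j'}^-\subseteq I_j^-$, hence $\NE(g_{j'})\subseteq\NE(g_j)$ and Rigidity applies. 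The paper argues differently: it perturbs the scaling class $\Delta'$ so that the segment $[\Delta,K+A]$ traverses $\Al_{j'}$ and then $\Al_j$, whence the $(K+A)$-MMP with scaling itself yields $Z\to S_{j'}\to S_j$. Either argument must be supplied; as written, ``one checks'' conceals the entire content of part (2).
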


\begin{proof}
\begin{enumerate}[wide]
\item
Let $D$ be a class in the relative interior of $\Fl^{r}$.
By definition, there exists an ample class $\Delta \in \Cl^\circ$ such that $D$ lies in the segment $[\Delta, K+A]$.
Moreover for sufficiently small $\eps$, $D' = D - \eps(K+A)$ lies in a chamber $\Al_j$ (in fact in the interior of $\Al_j$, since $K+A$ is negative against the exceptional curves $C_i$), where $j$ does not depend on $D$ nor $\eps$.
Let $Z \to B$ be the morphism associated to $D$, and $Z \to S = S_j$ the morphism associated to $D - \eps(K+A)$.
The induced morphism (Proposition \ref{pro:factorisation1}) $S \to B$ 
has connected fibers and contracts curves which are trivial against a divisor of the form $K_S + $ ample, in particular $-K_S$ is relatively ample.
We deduce that $S/B$ is a rank $r'$ fibration for some $r' \ge 1$, and we need to show that $r' = r$. 
By running a relative MMP over $B$, we obtain a factorization of one of the following forms:
\begin{align}
S = S^{r'} &\to S^{r'-1} \to \dots \to S^1 \to B = \p^1 \text{ or } \pt, \tag{$\dagger$} \label{eq:dagger}\\
S = S^{r} &\to S^{r'-1} \to \dots \to S^2 \to \p^1 \to B = \pt.\tag{$\ddagger$} \label{eq:ddagger}
\end{align}
where each $S^{i+1} \to S^i$ is the contraction of one exceptional curve.
In Case (\ref{eq:dagger}), by Proposition \ref{pro:factorisation2} $\Al_{j}$  and $\Al_{S^1}$ share a codimension $(r'-1)$ face, and the intersection of this face with the codimension 1 face $\Fl^1 \subset \partial^+ \Cl$ corresponding to $S^1/B$ gives a codimension $r'$ face $\Fl^{r'}$.
The morphisms associated to $\Fl^{r}\subseteq\widebar{\Al_j}$ and to $\Fl^{r'}\subseteq\widebar{\Al_j}$ are the same, equal to $S/B$, so $\Fl^{r} = \Fl^{r'}$  which gives $r = r'$ as expected.

In Case (\ref{eq:ddagger}), $\Al_{j}$  and $\Al_{S^2}$ share a codimension $(r'-2)$ face $\Fl^{r'-2}$.
Moreover the surface $S^2$ is a Hirzebruch surface and a del Pezzo surface, so $S^2$ is isomorphic to $\F_1$ or $\F_0$.
If $S^2 \simeq \F_1$, then using the factorization $\F_1 \to \p^2 \to \pt$ we are reduced to the previous case.
Finally if $S^2 \simeq \F_0$, the face corresponding to the point is defined by $D\cdot C_1 = 0$ and $D\cdot C_2 = 0$ for $C_1, C_2$ the two rulings of $\F_0$, hence it is a codimension 2 face in $\partial^+ \Cl$.
Intersecting with $\Fl^{r'-2}$ we obtain a codimension $r'$ face in $\partial^+ \Cl$ and conclude as in the previous case that $r' = r$.

\item 
By the previous point, there exist chambers $\Al_j$, $\Al_{j'}$ such that $\Fl^r \subset \widebar{\Al_j}$ and $\Fl^{r'} \subset \widebar{\Al_{j'}}$, with corresponding rank $r$ and $r'$ fibrations $S_j/B$ and $S_{j'}/B'$. 
Moreover the inclusions $\Fl^{r'} \subset \Fl^r \subset \widebar{\Al_j}$ implies that we also have a morphism $S_j/B'$, which induces a morphism $B/B'$:
\[
\xymatrix{
S_{j'} \ar[d] & S_j \ar[ld] \ar[d] \\
B' & B \ar[l]
}
\]
We need to show that the birational map $S_{j'}\to S_j$ induced by the maps $Z\to S_{j'}$ and $Z\to S_j$ is a morphism. 
By the proof of the previous point, there exists an ample divisor $\Delta'$ and real numbers $t_0 > t_1 > 0$ such that $(1-t)\Delta' + t(K+A) \in \Al_{j'}$ for all $t \in [t_0,t_1)$, and $D' = (1-t_1)\Delta' + t_1(K+A) \in \Fl^{r'}$.
Since any ball around $D'$ meets the face $\Fl^r$ hence also the chamber $\Al_j$, there exists a small perturbation $\Delta$ of $\Delta'$ such that  
\begin{align*}
(1-t_0)\Delta + t_0(K+A) \in \Al_{j'} && 
\text{and} &&
(1-t_1)\Delta + t_1(K+A) \in \Al_{j}.
\end{align*}

As explained after Theorem \ref{thm:BPF}, moving along the segment $[\Delta, K+A]$ corresponds to running a $(K+A)$-MMP with scaling, and in particular this gives the expected birational morphism $S_{j'} \to S_j$.

\qedhere
\end{enumerate}
\end{proof}

\begin{corollary} \label{cor:window and sarkisov}
If the intersection $\Wl_i \cap \Wl_j$ of two windows has codimension $1$ in $\Wl_i$ $($hence also in $\Wl_j)$, then there is a Sarkisov link between the corresponding Mori fiber spaces.
\end{corollary}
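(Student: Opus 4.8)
The plan is to read the statement off directly from Proposition \ref{pro:F^k} and the definition of a Sarkisov link given in the subsection on Sarkisov links and elementary relations. First I would unwind the codimension hypothesis. Each window is by construction a codimension $1$ face of the polyhedral complex $\partial^+\Cl$, the codimension being measured in $N^1(Z)$; so the assumption that $\Wl_i\cap\Wl_j$ has codimension $1$ in $\Wl_i$ says exactly that $\Fl^2:=\Wl_i\cap\Wl_j$ is a codimension $2$ face of $\partial^+\Cl$. Being the intersection of the two windows $\Wl_i$ and $\Wl_j$, this face is inner in the sense defined above (an intersection of $r$ windows is inner).

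Next I would apply Proposition \ref{pro:F^k}(1) to the inner face $\Fl^2$: it produces a rank $2$ fibration $S^2/B''$ such that the induced map $Z\to S^2$ is one of the $(K+A)$-negative birational morphisms $g_j$, with $\widebar{\Al_j}\supseteq\Fl^2$. Write $S_i/B_i$ and $S_j/B_j$ for the rank $1$ fibrations attached to the windows $\Wl_i$ and $\Wl_j$ respectively, i.e. the two Mori fiber spaces of the statement. Since $\Fl^2=\Wl_i\cap\Wl_j$ is a strictly smaller face of $\Wl_i$ and of $\Wl_j$, Proposition \ref{pro:F^k}(2) applies with $r=1$, $r'=2$ and tells us that the rank $2$ fibration associated with $\Fl^2$ factorizes through the rank $1$ fibration associated with $\Wl_i$, and likewise through the one associated with $\Wl_j$. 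In other words $S^2/B''$ factorizes through both $S_i/B_i$ and $S_j/B_j$, in the marked sense of Diagram (\ref{dia:factorize}), because all the fibrations involved inherit compatible markings from $Z$.

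Finally I would invoke the definition of a Sarkisov link: a birational map between two marked rank $1$ fibrations is a Sarkisov link precisely when there exists a marked rank $2$ fibration factorizing through both (equivalently, the two corresponding vertices are at distance $2$ in $\Xl$). Since $S^2/B''$ does exactly this, the birational map $S_i\rat S_j$ induced by the markings is a Sarkisov link, as claimed.

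I do not expect a genuine obstacle here: this is a formal consequence of Proposition \ref{pro:F^k}. The only two points needing a line of care are the translation of the codimension hypothesis into ``$\Fl^2$ is an inner codimension $2$ face'' (immediate, since $\Fl^2$ is an intersection of two windows) and the bookkeeping of the implicit markings so that ``factorizes through'' is understood in the marked sense; both follow at once from the construction underlying Set-Up \ref{setup:C}.
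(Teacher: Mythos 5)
Your proof is correct and follows exactly the paper's own argument: identify $\Fl^2=\Wl_i\cap\Wl_j$ as an inner codimension $2$ face, apply Proposition \ref{pro:F^k} to obtain a rank $2$ fibration factorizing through the two rank $1$ fibrations, and invoke the definition of a Sarkisov link. The paper's proof is just a terser version of the same reasoning, so nothing is missing or different.
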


\begin{proof}
By Proposition \ref{pro:F^k}, there exists a rank $2$ fibration corresponding to the codimension 2 face $\Fl^2 = \Wl_i \cap \Wl_j$, that factorizes through the rank 1 fibrations associated respectively to $\Wl_i$ and $\Wl_j$.
This is exactly our definition of a Sarkisov link.
\end{proof}

\begin{remark}
The above corollary corresponds to \cite[Lemma 3.17]{Kaloghiros}, but the situation for surfaces is simpler (see Figure \ref{fig:4 links}).
Let $\Al_i$, $\Al_j$ be the two chambers with window $\Wl_i$, $\Wl_j$, and $S_i/B_i$, $S_j/B_j$ the corresponding rank 1 fibrations.
We write $\Fl^2 = \Wl_i \cap \Wl_j$.
We distinguish 3 cases in term of the codimension of the intersection $\widebar{\Al_i} \cap \widebar{\Al_j}$.

\begin{enumerate}[$(a)$,wide]
\item If $\widebar{\Al_i} = \widebar{\Al_j}$, that is, $\Wl_i$ and $\Wl_j$ are two windows of a same chamber, then we have a link of type IV.
In the case of a rational surface, the only possibility is the change of ruling on $\F_0 = \p^1 \times \p^1$; the fibrations $S_i/B_i$ and $S_j/B_j$ correspond to the two rulings on $\F_0$, and the codimension $2$ face $\Fl^2$ to the rank 2 fibration $\F_0/\pt$.

\item If $\widebar{\Al_i}$ and $\widebar{\Al_j}$ share a codimension 1 face, then we have a link of type I or III.
Up to reordering we can assume that we have a blow-up map $S_j \to S_i$.
Then $B_j = \p^1$, $B_i = \pt$, and $\Fl^2$ corresponds to the rank $2$ fibration $S_j/\pt$.

\item Otherwise, let $S^2/B$ be the rank $2$ fibration associated to $\Fl^2$, as given by Proposition \ref{pro:F^k}.
The chamber $\Al_{S^2}$ is distinct from $\Al_i$ and $\Al_j$, otherwise we would be in one of the two previous cases.
Then $\Al_{S^2}$ shares a codimension 1 face with $\widebar{\Al_i}$ and with $\widebar{\Al_j}$, and we have a link of type II. 
\qedhere
\end{enumerate}
\end{remark}

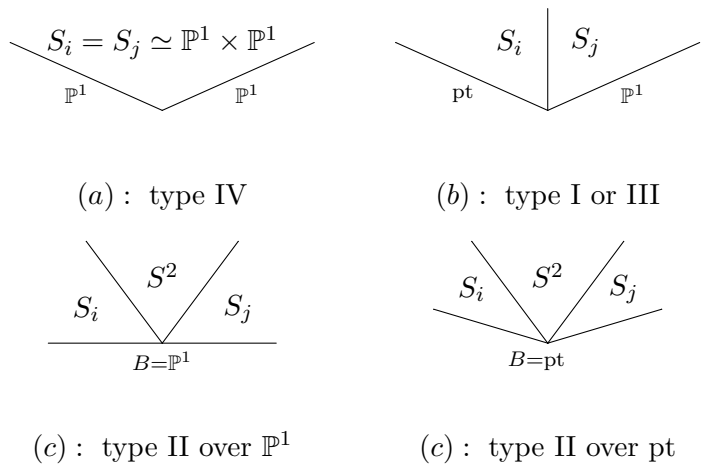
\begin{figure}[ht]
\[
\xymatrix@R-20pt{
\mygraph{
!{<0cm,0cm>;<1cm,0cm>:<0cm,.9cm>::}
!{(0,0)}*{}="0"
!{(-2,1)}*{}="1"
!{(2,1)}*{}="2"
!{(0,1)}*+{S_i = S_j  \simeq \p^1 \times \p^1}
"1"-_{\p^1}"0"-_{\p^1}"2"
}
&
\mygraph{
!{<0cm,0cm>;<1cm,0cm>:<0cm,.9cm>::}
!{(0,0)}*{}="0"
!{(-2,1)}*{}="1"
!{(2,1)}*{}="2"
!{(0,1.5)}*{}="3"
!{(-.5,1)}*+{S_i}
!{(.5,1)}*+{S_j}
"1"-_{\pt}"0"-_{\p^1}"2"
"3"-"0"
}
\\
(a): \text{ type IV}  & (b): \text{ type I  or III}  \\
\
\\
\mygraph{
!{<0cm,0cm>;<1cm,0cm>:<0cm,.9cm>::}
!{(0,0)}*{}="0"
!{(-1.5,0)}*{}="1"
!{(1.5,0)}*{}="2"
!{(-1,1.5)}*{}="3"
!{(1,1.5)}*{}="4"
!{(-1,.5)}*+{S_i}
!{(1,.5)}*+{S_j}
!{(0,1)}*+{S^2}
"1"-_{B = \p^1}"2"
"3"-"0" "4"-"0"
}
& 
\mygraph{
!{<0cm,0cm>;<1cm,0cm>:<0cm,.9cm>::}
!{(0,0)}*{}="0"
!{(-1.5,0.5)}*{}="1"
!{(1.5,0.5)}*{}="2"
!{(-1,1.5)}*{}="3"
!{(1,1.5)}*{}="4"
!{(-1,.8)}*+{S_i}
!{(1,.8)}*+{S_j}
!{(0,1)}*+{S^2}
"1"-_>{B = \pt }"0"-"2"
"3"-"0" "4"-"0"
}
\\
(c): \text{ type II over } \p^1 & (c): \text{ type II over } \pt
}
\]
\caption{Adjacent windows and Sarkisov links} \label{fig:4 links}
\end{figure}

\begin{corollary} \label{cor:relations}
Let $\Fl^3$ be an inner codimension $3$ face in $\partial^+\Cl$.
Let $S^3/B$ be the associated rank $3$ fibration, as given by Proposition $\ref{pro:F^k}$.
Then the elementary relation associated to $S^3/B$ correspond to the finite collection of windows $\Wl_1, \dots, \Wl_{m}$ containing $\Fl^3$ in their closure, and ordered such that $\Wl_j$ and $\Wl_{j+1}$ share a codimension $1$ face for all $j$ $($where indexes are in $\Z/k\Z)$.
\end{corollary}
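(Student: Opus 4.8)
The plan is to reduce the statement to the local geometry of the polyhedral complex $\partial^+\Cl$ near the face $\Fl^3$, combining Proposition \ref{pro:F^k}, Corollary \ref{cor:window and sarkisov}, and the description of $\partial^+\Cl$ as a polyhedral complex. First I would set up the picture: by Proposition \ref{pro:F^k}(1), the inner codimension $3$ face $\Fl^3$ corresponds to a rank $3$ fibration $S^3/B$, realized as $g_j\colon Z\to S^3$ for some $j$ with $\Fl^3\subset\widebar{\Al_j}$. Every window $\Wl$ (codimension $1$ face of $\partial^+\Cl$) containing $\Fl^3$ in its closure corresponds, again by Proposition \ref{pro:F^k}, to a rank $1$ fibration through which $S^3/B$ factorizes; by Proposition \ref{pro:from S3} there are only finitely many of these, namely $\Wl_1,\dots,\Wl_m$, and the union of the corresponding squares in $\Xl$ is a disk around the vertex $S^3/B$.

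Next I would establish the cyclic adjacency structure. The key point is that $\partial^+\Cl$ is a polyhedral complex of dimension $\rho-2$ whose maximal cells are the windows; passing to the link of the face $\Fl^3$ inside this complex gives a $1$-dimensional polyhedral complex (a graph) whose vertices are the windows $\Wl_i$ containing $\Fl^3$ and whose edges are the codimension $2$ faces $\Fl^2\supset\Fl^3$ lying in exactly two such windows. Because $\partial^+\Cl$ near an inner face looks like a neighborhood in a manifold (it is a ball or sphere of dimension $\rho-2$, and $\Fl^3$ is inner), this link graph is a circuit: each window $\Wl_i$ meets exactly two others $\Wl_{i-1},\Wl_{i+1}$ along codimension $1$ faces (hence codimension $2$ in $\partial^+\Cl$ containing $\Fl^3$), and the whole link is connected. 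This gives the cyclic ordering $\Wl_1,\dots,\Wl_m$ with indices in $\Z/m\Z$ such that $\Wl_j$ and $\Wl_{j+1}$ share a codimension $1$ face. By Corollary \ref{cor:window and sarkisov}, consecutive windows $\Wl_j,\Wl_{j+1}$ are joined by a Sarkisov link between the corresponding Mori fiber spaces, so going once around the circuit produces a path of Sarkisov links.

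It then remains to identify this cyclic path with the elementary relation associated to $S^3/B$ as defined before Section \ref{sec:sarkisov}. The squares in $\Xl$ having $S^3$ as a corner are exactly those constructed in the proof of Proposition \ref{pro:from S3}, and each such square has two of its four boundary edges of type $3,2$ (i.e.\ $S^3/B\to S^2_i/B_i$) corresponding to the two adjacent windows, and one edge of type $2,1$ giving the Sarkisov link between the two adjacent rank $1$ fibrations. Thus the boundary loop of the disk of squares around $S^3/B$ visits exactly the rank $1$ fibrations indexed by $\Wl_1,\dots,\Wl_m$ in cyclic order, and its composition is the elementary relation by definition. Matching the two descriptions is then a bookkeeping check: the codimension $2$ face $\Fl^2=\Wl_j\cap\Wl_{j+1}$ corresponds (Proposition \ref{pro:F^k} again) to the rank $2$ fibration $S^2/B''$ that is the middle vertex of the Sarkisov link and of the corresponding square.

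The main obstacle I anticipate is the second step: making precise that the link of an inner codimension $3$ face in $\partial^+\Cl$ is genuinely a topological circle, and not some more complicated graph, so that the cyclic ordering exists and is unique up to rotation and reflection. This requires knowing that $\partial^+\Cl$ is, near inner faces, a polyhedral manifold (which follows from its description as a ball or sphere $B^{\rho-2}$ or $S^{\rho-2}$ together with the fact that the chamber decomposition from Theorem \ref{thm:polyhedral chambers} is locally that of a hyperplane arrangement, cf.\ Propositions \ref{pro:factorisation1} and \ref{pro:factorisation2}), plus the genericity ensuring that around $\Fl^3$ only finitely many windows meet and exactly in the circular pattern. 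Once that local manifold structure is in hand, everything else is the translation dictionary between faces of $\partial^+\Cl$, rank $r$ fibrations, and cells of $\Xl$ that has been set up in Section \ref{sec:squares} and the present section.
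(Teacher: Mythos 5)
Your argument is correct and follows exactly the route the paper intends: the paper states this corollary without a written proof, as an immediate consequence of Proposition \ref{pro:F^k}, Corollary \ref{cor:window and sarkisov} and the disk of squares from Proposition \ref{pro:from S3}, and your write-up supplies precisely those details (including the point, left implicit in the paper, that the link of an inner codimension $3$ face in the polyhedral ball or sphere $\partial^+\Cl$ is a circle, so the windows containing $\Fl^3$ acquire a cyclic order). The only slip is cosmetic: each square around $S^3/B$ has two edges of type $2,1$ (both incident to its single rank $1$ corner), not one, but this does not affect the dictionary between windows, rank $1$ corners, codimension $2$ faces and middle vertices of Sarkisov links.
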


The following two propositions correspond to the two assertions in Theorem \ref{thm:sarkisov}.

\begin{proposition}\label{pro:connected}
Any birational map $f\colon S \rat S'$ between rank $1$ fibrations is a composition of Sarkisov links, and in particular the complex
$\Xl$ is connected.
\end{proposition}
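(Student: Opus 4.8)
The plan is to apply the machinery of Set-Up \ref{setup:C} to the collection consisting of just the two fibrations $S/B$ and $S'/B'$, and to read off a factorization into Sarkisov links by walking along the boundary complex $\partial^+\Cl$. First I would invoke Proposition \ref{pro:resolution} to get a common resolution, pass to a surface $Z$ with $\rho(Z)\ge 4$ if necessary (blowing up more points), and choose the ample divisor $A$ so that both $Z\to S$ and $Z\to S'$ are $(K+A)$-negative with scaling of ample divisors $\Delta_1,\Delta_2$. This gives the cone $\Cl$, the chamber decomposition $\Int(\Cl)=\bigcup_j\Al_j$ of Theorem \ref{thm:polyhedral chambers}, and the polyhedral complex structure on $\partial^+\Cl$ with its windows. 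By construction the morphisms $Z\to S$ and $Z\to S'$ occur among the $g_j$, and since $-K_S-{f_1}_*(A)$ (resp.\ the analogue for $S'$) is relatively ample, each of $S/B$, $S'/B'$ corresponds to a window $\Wl$, $\Wl'$ of $\partial^+\Cl$: indeed a window is exactly a codimension-$1$ inner face, and by the discussion preceding Proposition \ref{pro:F^k} these are in bijection with the rank $1$ fibrations that are $(-K-A)$-ample and dominated by a $(K+A)$-negative morphism from $Z$.

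The second step is a connectivity statement for $\partial^+\Cl$: the subcomplex of windows (top-dimensional faces) is connected in codimension $1$, i.e.\ any two windows can be joined by a sequence of windows in which consecutive ones share a codimension-$1$ face. This is the geometric heart of the argument. Since $\partial^+\Cl$ is the intersection of $\Cl$ with the boundary of the pseudo-effective cone, it is homeomorphic to a ball $B^{\rho-2}$ or a sphere $S^{\rho-2}$ (as noted in \S\ref{subsec:full complex} under ``Boundary of $\Cl$''), and in either case it is a connected pseudo-manifold of dimension $\rho-2$; a connected polyhedral pseudo-manifold is connected through its top faces across codimension-$1$ walls. (One must check that every window is genuinely top-dimensional and that the inner codimension-$1$ faces are exactly the walls between two windows — but that is precisely Corollary \ref{cor:window and sarkisov} together with the case analysis in Figure \ref{fig:4 links}.)

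The third step converts this walk into the desired factorization. Choose a path of windows $\Wl=\Wl_0,\Wl_1,\dots,\Wl_n=\Wl'$ with $\Wl_{i}\cap\Wl_{i+1}$ of codimension $1$ in each. Let $(S_i/B_i,\phi_i)$ be the marked rank $1$ fibration attached to $\Wl_i$, with $\phi_i$ the marking induced through $Z$ by the fixed marking of $S$ (and of $S'$ at the other end). By Corollary \ref{cor:window and sarkisov} the induced map $S_i/B_i\dashrightarrow S_{i+1}/B_{i+1}$ is a Sarkisov link, since $\Wl_i\cap\Wl_{i+1}$ is an inner codimension-$2$ face and Proposition \ref{pro:F^k} produces the rank $2$ fibration factorizing through both. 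Hence $f\colon S\dashrightarrow S'$, which by construction equals the composition $g_n\circ\cdots\circ g_1$ up to the automorphisms coming from the choices of representatives in the equivalence classes, is a composition of Sarkisov links and automorphisms. Taking $S'=S$ and ranging over all rank $1$ fibrations then shows that every vertex of type $\p^2$ (indeed every rank $1$ vertex) lies in the connected component of $(\p^2/\pt,\id)$, and since every higher-rank vertex is joined by an edge to a rank $1$ vertex, the complex $\Xl$ is connected.

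The step I expect to be the main obstacle is the second one — establishing codimension-$1$ connectivity of the window subcomplex of $\partial^+\Cl$ in a way that is clean over an arbitrary perfect field. The subtlety is that $\partial^+\Cl$ need not be a manifold: it can have inner faces of codimension $\ge 2$ where more than two windows meet (the type II situation, and the branching seen in Examples \ref{exple:complex 1} and \ref{exple:complex 2}), so one cannot naively quote ``the boundary of a polytope is a pseudo-manifold''. The right formulation is that $\partial^+\Cl$, as a polyhedral ball or sphere cut out inside $N^1(Z)$ by the facets of $\overline{\Eff}(Z)$ and by the affine slice, is pure-dimensional with every codimension-$1$ inner face contained in the closure of exactly two windows; connectedness of the window graph then follows from connectedness of $\partial^+\Cl$ itself. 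Verifying purity and the ``exactly two windows'' property requires the two-rays game (Remark \ref{rem:2 rays}) together with Proposition \ref{pro:factorisation2}, exactly as in the proof of Lemma \ref{lem:2 triangles}, and this is where the surface-specific input (every extremal contraction of a rank $2$ surface over its base is divisorial or a fibration) does the real work.
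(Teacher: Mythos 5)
Your overall strategy is the paper's: apply Set-Up \ref{setup:C} to the pair $\{S/B,\,S'/B'\}$, identify each rank $1$ fibration with a window of $\partial^+\Cl$, produce a chain of windows in which consecutive ones share a codimension-$1$ face, and convert each adjacency into a Sarkisov link via Corollary \ref{cor:window and sarkisov}. The one place where you diverge is the step you yourself flag as the main obstacle, and there your proposed justification has a real gap. The implication ``$\partial^+\Cl$ is pure-dimensional, every inner wall lies in the closure of exactly two windows, and $\partial^+\Cl$ is connected, hence the window graph is connected across codimension-$1$ walls'' is not valid in general: a pure, connected polyhedral complex in which every ridge lies in at most two facets can still fail to be strongly connected (two top-dimensional cells glued along a face of codimension $\ge 2$ is the standard counterexample; ``strong connectivity'' is an axiom in the definition of a pseudo-manifold precisely because it does not follow from the other conditions). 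So purity and the two-windows-per-wall count, even once verified via the two rays game, do not by themselves deliver the chain of windows; some use of the convexity of $\Cl^\circ$ is unavoidable.

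The paper closes this gap with a short transversality argument that you should substitute for your second step: after a small perturbation of $\Delta_1$ and $\Delta_2$, the $2$-dimensional affine plane containing $\Delta_1$, $\Delta_2$ and $K+A$ meets the faces of $\partial^+\Cl$ transversally; its trace on $\partial^+\Cl$ is then an arc from $\Wl_1$ to $\Wl_2$ that stays in the union of the (top-dimensional) windows and crosses only codimension-$1$ walls, one at a time. This uses only the fact that $\Cl^\circ$ is a rational polyhedral convex cone (Lemma \ref{lem:polyhedral}) together with general position, and it requires neither a purity statement nor a count of windows per wall. The remainder of your argument (reading off the links from Corollary \ref{cor:window and sarkisov}, and deducing connectedness of $\Xl$ from the rank $1$ case by factorizing every higher-rank vertex through a rank $1$ fibration) matches the paper and is fine.
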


\begin{proof}
We want to prove that two vertices in $\Xl$ corresponding to two rank 1 fibrations are connected by a path.
Let $S_1/B_1$ and $S_2/B_2$ be these two fibrations, and apply Set-Up \ref{setup:C} to this collection of two fibrations.
Let $\Delta_1$, $\Delta_2$ be ample divisors in $\Cl$ such that for $i = 1,2$, the fibration $S_i/B_i$ corresponds to a $(K+A)$-negative map with scaling of $\Delta_i$.
Up to a small perturbation of $\Delta_1$ and $\Delta_2$, we can assume that the 2 dimensional affine plane containing $\Delta_1, \Delta_2$ and $K+A$ intersects transversally the faces of $\partial^+ \Cl$. 
This means that the windows $\Wl_1$ and $\Wl_2$ corresponding to $S_1/B_1$ and $S_2/B_2$ are connected by a finite sequence of windows, where two successive windows share a codimension 1 face. 
By Corollary \ref{cor:window and sarkisov}, this corresponds to a sequence of Sarkisov links, hence the expected path in the complex $\Xl$.
\end{proof}

\begin{proposition}\label{pro:simply connected}
Any relation between Sarkisov links is a composition of conjugates of elementary relations, and in particular the complex $\Xl$ is simply connected.
\end{proposition}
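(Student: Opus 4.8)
The plan is to deduce simple connectedness of $\Xl$ from the polyhedral structure on $\partial^+ \Cl$ constructed in Set-Up \ref{setup:C}, exactly as connectedness was deduced in Proposition \ref{pro:connected}, but now one dimension higher. Given a relation between Sarkisov links, that is, a loop $\gamma$ in $\Xl$ built from a path of Sarkisov links $(S_0/B_0,\phi_0),\dots,(S_n/B_n,\phi_n)$ with $(S_n/B_n,\phi_n) = (S_0/B_0,\phi_0)$, I would first apply Set-Up \ref{setup:C} to the finite collection of all rank $1$ fibrations visited by the relation, together with enough auxiliary fibrations so that the common resolution $Z$ has $\rho(Z) \ge 4$. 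This produces the cone $\Cl$, its boundary $\partial^+ \Cl$ with its structure of polyhedral complex whose maximal faces are the windows $\Wl$, and the dictionary of Proposition \ref{pro:F^k} and Corollary \ref{cor:window and sarkisov}: windows correspond to rank $1$ fibrations, codimension $2$ inner faces to rank $2$ fibrations (hence to Sarkisov links between the adjacent windows), and codimension $3$ inner faces to rank $3$ fibrations (hence, by Corollary \ref{cor:relations}, to elementary relations).

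Next I would realize the relation $\gamma$ as a loop in the dual graph of the polyhedral complex $\partial^+ \Cl$, that is, the graph whose vertices are windows and whose edges join windows sharing a codimension $1$ face. Each Sarkisov link $g_i\colon S_i/B_i \rat S_{i+1}/B_{i+1}$ in the relation corresponds, after possibly replacing it by a bounded path through windows all sharing the relevant codimension $2$ face (here I would invoke that all the rank $2$ fibrations through a given codimension $2$ face yield Sarkisov links, so this replacement changes the relation only by conjugates of elementary relations), to an edge-path in this dual graph; concatenating gives a loop $\widetilde\gamma$ in the dual graph. The key geometric input is now that $\partial^+ \Cl$ is homeomorphic to a ball $B^{\rho-2}$ or a sphere $S^{\rho-2}$ with $\rho - 2 \ge 2$, hence simply connected, so its dual graph's fundamental groupoid is generated by the "small" loops around codimension $3$ faces. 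Concretely, I would push $\widetilde\gamma$ off the codimension $\ge 3$ skeleton into the union of windows and codimension $2$ faces, which is possible since that union is obtained from the simply connected space $\partial^+\Cl$ by deleting a subcomplex of codimension $\ge 3$ (still simply connected when $\rho - 2 \ge 2$); then a standard combinatorial homotopy argument writes $\widetilde\gamma$ as a product of loops each of which encircles a single codimension $3$ face $\Fl^3$. By Corollary \ref{cor:relations}, each such elementary loop is precisely an elementary relation coming from the associated rank $3$ fibration $S^3/B$. Tracking the conjugating elements (the Sarkisov links traversed to get from the basepoint of $\gamma$ to the basepoint of each elementary loop) shows $\gamma$ is a product of conjugates of elementary relations. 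Since every loop in the connected complex $\Xl$ is, by Proposition \ref{pro:connected} and the definitions, homotopic to a composition of Sarkisov links and hence to such a relation, and since every elementary relation bounds a disk in $\Xl$ by Proposition \ref{pro:from S3}, it follows that $\Xl$ is simply connected.

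The main obstacle I anticipate is the bookkeeping around the two rays game and the relative position of $\gamma$'s basepoint versus the various codimension $3$ faces: one must make the auxiliary choices (the perturbation ensuring the relevant affine slice meets $\partial^+\Cl$ transversally, the replacement of each Sarkisov link by a window-path, the homotopy pushing $\widetilde\gamma$ off the high-codimension skeleton) compatibly, and keep the conjugating elements consistent so that the final equality holds on the nose in $\Bir_\K(\p^2)$ and not merely up to the ambiguity of automorphisms. A secondary subtlety is the case $\rho(Z) - 2 = 2$, where $\partial^+\Cl$ is only $2$-dimensional: one must check that deleting the (finitely many) codimension $3$, i.e. vertex, faces still leaves a simply connected space and that the "loops around a codimension $3$ face" genuinely generate $\pi_1$ of the punctured space — this is where the hypothesis $\rho(Z)\ge 4$ is used, and I would verify it by the standard fact that removing a discrete set of points from a connected surface-with-boundary does not kill simple connectedness only after we have already contracted along windows, so some care with the order of operations is needed.
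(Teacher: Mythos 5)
Your overall strategy is the paper's own: apply Set-Up \ref{setup:C} to the rank $1$ fibrations visited by the relation, transfer the loop to the dual complex of the polyhedral structure on $\partial^+\Cl$, use simple connectedness of $\partial^+\Cl$ (a ball or sphere of dimension $\rho(Z)-2\ge 2$), and identify the $2$-cells dual to inner codimension $3$ faces with elementary relations via Proposition \ref{pro:F^k} and Corollary \ref{cor:relations}. The reduction of an arbitrary loop in $\Xl$ to one avoiding rank $3$ vertices, by going around the disks of Proposition \ref{pro:from S3}, is also the same first step as in the paper.

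However, one step is stated in a way that is both false in the lowest-dimensional case and self-defeating. You justify writing $\widetilde\gamma$ as a product of loops around codimension $3$ faces by claiming that the union of windows and codimension $2$ faces, i.e.\ the complement of the codimension $\ge 3$ skeleton, is \emph{still simply connected}. If that were true, $\widetilde\gamma$ would contract inside that union and no elementary relation would ever be needed; and when $\rho(Z)=4$ the codimension $3$ faces are points of a $2$-dimensional $\partial^+\Cl$, whose removal visibly destroys simple connectedness. The correct mechanism is the opposite one: removing the codimension $\ge 3$ strata \emph{creates} fundamental group, generated precisely by the meridian loops around the codimension $3$ faces, and each such meridian is an elementary relation. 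The paper packages this cleanly by forming the $2$-complex $\Bl$ (barycentric subdivision of the $2$-skeleton of the dual cell complex of $\partial^+\Cl$), which is simply connected because the $2$-skeleton of a simply connected CW-complex is, and then deformation-retracting onto the subcomplex $\Il$ of inner simplexes via Lemma \ref{lem:70.1 Munkres}; every triangle of $\Il$ lies in one of the pairwise-disjoint disks centered at the $p(\Fl^3)$, so the contraction of $\widetilde\gamma$ in $\Il$ is literally a composition of conjugates of elementary relations. Replacing your ``delete the codimension $\ge 3$ skeleton'' step by this $2$-skeleton argument (or any equivalent statement that the $2$-cells of the dual complex generate the needed homotopies) also disposes of the order-of-operations worry you raise in your last paragraph, which does not arise in the paper's formulation.
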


\begin{proof}
Let $\gamma$ be a loop in $\Xl$.
Without loss in generality, we can assume that the image of $\gamma$ lies in the 1-skeleton of $\Xl$.
We can also assume that the loop visits only vertices of rank 1 or 2.
Indeed, using elementary relations, that is, moving around the boundary of disks as provided by Proposition \ref{pro:from S3}, we can avoid all vertices of rank 3.
Moreover, we can assume that the base point is a vertex of rank 1, $S_1/B_1$.
Now the sequence of vertices of rank 1 visited by the loop corresponds to a sequence $S_1/B_1, S_2/B_2, \dots, S_{m}/B_{m} = S_1/B_1$, where for each $i$, the map from $S_i/B_i$ to $S_{i+1}/B_{i+1}$ is a Sarkisov link. 
Consider the surface $Z$ and the complex $\Cl$ associated to this collection, as in Set-Up \ref{setup:C}, and denote as above $\partial^+ \Cl$ the non-big boundary with its induced structure of polyhedral complex.

Since $\partial^+ \Cl$ is homeomorphic to a ball or a sphere of dimension $\rho(Z) - 2 \ge 2$, it is simply connected.
Now we construct a 2-dimensional simplicial complex $\Bl$ embedded in $\partial^+ \Cl$, as follows.
For each face $\Fl^{r} \subset \partial^+ \Cl$ of codimension $k$ with $3 \ge r \ge 1$, we denote by $p(\Fl^{r})$ the barycenter of the vertices of $\Fl^{r}$. 
The $2$-simplexes of $\Bl$ are defined as the convex hulls of the $p(\Fl^{r})$, for each sequence of nested faces $\Fl^{3} \subset \Fl^{2} \subset \Fl^{1}$.
If $\Fl^{3}$ is an inner face (hence also all other faces of the sequence), we say that the corresponding simplex is an inner simplex of $\Bl$.

The complex $\Bl$ is homeomorphic to the barycentric subdivision of the 2-skeleton of the dual cell complex of 
$\partial^+ \Cl$, and so also is simply connected (recall that the 2-skeleton of a simply connected CW-complex is simply connected,  see \cite[Corollary 4.12]{Hatcher}).
 
The inner simplexes form a subcomplex $\Il \subseteq \Bl$, and $\Il$ is a deformation retract of the interior of $\Bl$: this follows from Lemma \ref{lem:70.1 Munkres} below, with $X = \Bl$, $A = \Il$ and $C$ the boundary of $\Bl$.
It follows that $\Il$ also is simply connected.
Moreover, by Proposition \ref{pro:from S3}, each face $\Fl^3$ of codimension $3$ in $\Il$ is the center of a disk whose boundary corresponds to an elementary relation.
By construction, the interiors of these disks are pairwise disjoint, and any triangle of $\Il$ belongs to one of the disks. 
In conclusion, we can perform the required homotopy of our initial loop to a constant loop inside this subcomplex of $\Xl$ by using elementary relations.
\end{proof}

\begin{figure}[ht]
\def\svgwidth{.99\textwidth}
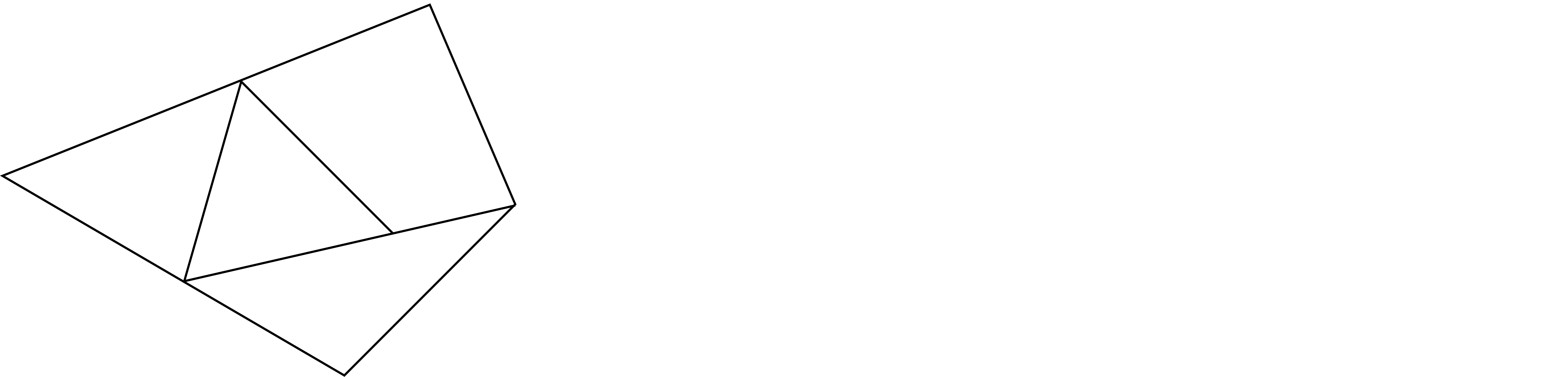
\caption{The complexes $\partial^+\Cl$, $\Bl$ and $\Il$.}
\label{fig:simply connected}
\end{figure}

\begin{lemma}[{\cite[Lemma 70.1]{Munkres}}] \label{lem:70.1 Munkres}
Let $A$ be a full subcomplex of a finite simplicial complex $X$.
Let $C$ consist of all simplices of $X$ that are disjoint from $A$.
Then $A$ is a deformation retract of $X \setminus C$.
\end{lemma}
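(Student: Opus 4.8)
The plan is to exhibit $A$ as a strong deformation retract of $X\setminus C$ by projecting, inside each simplex of $X$, onto the face spanned by the vertices lying in $A$, and then contracting along straight segments. Fullness of $A$ will be used exactly once, namely to guarantee that this projection takes values in $A$.

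First I would set up notation. For each vertex $v$ of $X$ let $t_v\colon X\to[0,1]$ be the corresponding barycentric coordinate function; these are continuous on $X$, and for $x\in X$ the set of vertices $v$ with $t_v(x)>0$ is exactly the vertex set of the carrier of $x$ (the unique simplex containing $x$ in its relative interior). Since $C$ is a subcomplex, $x$ lies in $C$ if and only if its carrier does; and a simplex of $X$ lies in $C$, i.e.\ is disjoint from $A$, if and only if none of its vertices lie in $A$ (the non-obvious inclusion uses only that $A$ is closed under faces). Consequently, setting $t(x):=\sum_{v\in A} t_v(x)$, one obtains the clean description
\[
X\setminus C=\{\,x\in X : t(x)>0\,\}.
\]

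Next I would define the retraction $r\colon X\setminus C\to A$ by
\[
r(x):=\frac{1}{t(x)}\sum_{v\in A} t_v(x)\,v .
\]
For $x\in X\setminus C$, the vertices $v\in A$ with $t_v(x)>0$ are precisely the vertices of the carrier of $x$ that lie in $A$; they span a face of the carrier all of whose vertices lie in $A$, hence — and here is where fullness enters — a simplex of $A$. Thus $r(x)$ is a convex combination of vertices of a simplex of $A$, so $r(x)\in A$. The map $r$ is continuous on $X\setminus C$ (a finite sum of continuous functions divided by the continuous strictly positive function $t$), and it restricts to the identity on $A$, since there $t\equiv 1$ while $t_v\equiv 0$ for $v\notin A$. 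I would then use the straight-line homotopy $H\colon (X\setminus C)\times[0,1]\to X$, $H(x,s)=(1-s)\,x+s\,r(x)$. Both $x$ and $r(x)$ lie in the carrier of $x$, so $H(x,s)$ stays in that simplex, hence in $X$; a one-line computation gives that the total $A$-weight $\sum_{v\in A} t_v\bigl(H(x,s)\bigr)$ equals $(1-s)\,t(x)+s$, which is $>0$ for every $s\in[0,1]$, so $H(x,s)\in X\setminus C$. Hence $H$ is a well-defined continuous homotopy with $H(\cdot,0)=\id$, $H(\cdot,1)=r$, and $H(a,s)=a$ for all $a\in A$, which is exactly the assertion.

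There is no genuinely hard step: the argument is a routine manipulation of barycentric coordinates, and the finiteness of $X$ is only used for comfort. The one point worth flagging is the role of fullness, which is essential: without it the image of $r$ need not lie in $A$ and the statement is simply false — for instance if $X$ is a single $1$-simplex and $A$ is the subcomplex consisting of its two endpoints, then $C=\varnothing$ yet those two points are not a deformation retract of the edge.
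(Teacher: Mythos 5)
Your proof is correct: the paper does not prove this lemma but simply cites it from Munkres, and your argument (project each point onto the face of its carrier spanned by the vertices lying in $A$, using fullness to see this face is a simplex of $A$, then retract along straight segments, checking the $A$-weight $(1-s)t(x)+s$ stays positive) is essentially Munkres's own proof verbatim. The counterexample you give for the necessity of fullness is also apt, so there is nothing to add.
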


\section{Elementary generators} \label{sec:generators}

\subsection{Definition}

As in the previous section we consider a  path of Sarkisov links 
\[(S_0/B_0, \phi_0) \dots, (S_n/B_n, \phi_n),\]
that is, for each $0 \le i \le n-1$, there is a Sarkisov link $g_i\colon S_i/B_i \dashrightarrow S_{i+1}/B_{i+1}$.
If in such a path $S_0$, $S_n$ are both isomorphic to $\p^2$, but no intermediate vertex $S_i$ is isomorphic to $\p^2$, we say that
$\phi_n g_{n-1}\cdots g_1 g_0 \phi_0^{-1} \in \Bir_\K(\p^2)$ is an \textit{elementary generator}. 

We denote by $\E$ a choice of representatives of elementary generators in $\Bir_\K(\p^2)$, up to right and left composition by elements of
$\Aut_\K(\p^2)$.
From the Sarkisov program in dimension 2 it directly follows:

\begin{proposition}\label{prop:generators}
Any $f \in \Bir_{\K}(\p^2)$ is a composition of elementary generators, up to an automorphism.
In particular, $\Bir_{\K}(\p^2) = \langle \Aut_\K(\p^2), \E \rangle$.
\end{proposition}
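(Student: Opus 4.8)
The plan is to deduce Proposition~\ref{prop:generators} directly from the Sarkisov Program (Theorem~\ref{thm:sarkisov}~\ref{sarkisov1}, equivalently Proposition~\ref{pro:connected}), by chopping a Sarkisov factorization into segments that begin and end at vertices of type $\p^2$. First I would take $f \in \Bir_\K(\p^2)$ and regard it as a birational map between the rank $1$ fibration $\p^2/\pt$ and itself. By Proposition~\ref{pro:connected}, $f$ is a composition of Sarkisov links (and automorphisms), so there is a path of Sarkisov links
\[
(S_0/B_0,\phi_0),\ (S_1/B_1,\phi_1),\ \dots,\ (S_n/B_n,\phi_n)
\]
with $S_0 = S_n = \p^2$, $\phi_0 = \phi_n = \id$ (or more precisely $\phi_0,\phi_n \in \Aut_\K(\p^2)$), and such that $f = \phi_n g_{n-1} \cdots g_1 g_0 \phi_0^{-1}$ up to an automorphism, where $g_i\colon S_i/B_i \rat S_{i+1}/B_{i+1}$ is a Sarkisov link for each $i$.

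Next I would isolate the indices $0 = i_0 < i_1 < \dots < i_k = n$ at which $S_{i_j} \simeq \p^2$. For each consecutive pair $i_j < i_{j+1}$, the sub-path
\[
(S_{i_j}/B_{i_j}, \phi_{i_j}),\ \dots,\ (S_{i_{j+1}}/B_{i_{j+1}}, \phi_{i_{j+1}})
\]
is a path of Sarkisov links whose endpoints are isomorphic to $\p^2$ but with no intermediate vertex isomorphic to $\p^2$; hence the composition
\[
h_j := \phi_{i_{j+1}}\, g_{i_{j+1}-1} \cdots g_{i_j}\, \phi_{i_j}^{-1} \in \Bir_\K(\p^2)
\]
is by definition an elementary generator. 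Inserting $\phi_{i_j}^{-1}\phi_{i_j} = \id$ between the segments (which is where the left-right ambiguity by $\Aut_\K(\p^2)$ is absorbed), I get $f = h_{k-1} \cdots h_1 h_0$ up to an automorphism, i.e. $f$ is a composition of elementary generators up to an automorphism. Since each $h_j \in \langle \Aut_\K(\p^2), \E\rangle$ by the definition of $\E$ as a set of representatives modulo $\Aut_\K(\p^2)$ on both sides, and automorphisms obviously lie in that subgroup, this gives $\Bir_\K(\p^2) = \langle \Aut_\K(\p^2), \E\rangle$.

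One bookkeeping point I would be careful with, and which I expect to be the only genuine subtlety, is the handling of markings and of the ``up to an automorphism'' clauses: a Sarkisov link was defined between \emph{marked} rank $1$ fibrations, and when I split the path at a vertex $(S_{i_j}/B_{i_j},\phi_{i_j})$ with $S_{i_j}\simeq\p^2$, the marking $\phi_{i_j}$ need not be the identity, so the piece $h_j$ is a priori only an element of $\Bir_\K(\p^2)$ and the identification with an honest elementary generator requires composing with $\phi_{i_j}$ on one side and $\phi_{i_{j+1}}^{-1}$ on the other. This is precisely why $\E$ was defined modulo left and right composition by $\Aut_\K(\p^2)$, so no real difficulty remains, but it is worth spelling out that the telescoping $f = (\phi_n g_{n-1}\cdots g_0 \phi_0^{-1}) = h_{k-1}\cdots h_0$ is consistent. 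I would also remark that the case $n = 0$ (i.e. $f$ already an automorphism) is trivially covered, and that the second sentence of the proposition is a literal restatement of the first, so nothing extra is needed there.
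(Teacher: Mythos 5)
Your proof is correct and follows exactly the paper's own argument: apply the Sarkisov Program (Theorem~\ref{thm:sarkisov}\ref{sarkisov1}) to factor $f$ into Sarkisov links, then cut the resulting path at each intermediate vertex isomorphic to $\p^2$ so that each segment is by definition an elementary generator. The paper's proof is a two-line version of this; your extra care with markings and the left-right $\Aut_\K(\p^2)$ ambiguity is sound bookkeeping but not a different method.
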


\begin{proof}
By Theorem \ref{thm:sarkisov}\ref{sarkisov1}, there exists a sequence of Sarkisov links decomposing $f$. Then we can cut this sequence at each intermediate surface isomorphic to $\p^2$.
\end{proof}

\begin{remark}
The set $\E$ contains the list of generators given in \cite{IKT}, which contains all Jonqui\`eres transformations.
The set $\E$ is really huge: over an algebraically closed field, in fact every element of $\Bir_\K(\p^2)$ is in $\E$, which does not seem a very clever choice of generators.
However, when working over a non-closed field, and since we never try to work with an explicit presentation of $\Bir_\K(\p^2)$ by generators and relations, the immensity of $\E$ is not a draw-back.
\end{remark}

In the following sections we study two particular examples of elementary generators.

\subsection{Bertini involutions} \label{sec:bertini}

Let $p =\{p_1, \dots, p_8\} \in \p^2$ a point of degree 8.
We say that $p$ is \textit{general}, or equivalently that the $p_i$ are in \textit{general position}, if the blow-up of $p$ is a del Pezzo surface of degree 1, that is, if no line (defined over $\K^a$) contains 3 of the $p_i$, no conic contains six of them, and no cubic is singular at one of them and contains all the others.

Let $S$ be the surface obtained by blowing-up such a general point $p$ of degree 8.
Then $S$ is a rank 2 fibration with exactly two outgoing arrows, and there exists another contraction $S \to \p^2$ that fits into a diagram
\begin{gather} \label{diag:chi8}
\xymatrix{
& S \ar[dr] \ar[dl] \\
\p^2 \ar@{-->}[rr]_{b} && \p^2
}
\end{gather}
where $b$ is a Bertini involution (this link is noted $\chi_8$ in \cite{IKT}).
Recall that geometrically, $b$ is defined as follows.
Since $p$ is general, the linear system $\Gamma$ of cubics through $p$ is a pencil whose general member is smooth. 
The base locus of the pencil is equal to $p \cup q$, where $q$ is a point of degree 1. 
For $x \in \p^2$ a general point, the unique member of $\Gamma$ through $x$ is a smooth cubic, that we can see as an elliptic curve with neutral element $q$.
Then $b(x) = -x$, where $-x$ means the opposite of $x$ with respect to the group law on the elliptic curve. 

In particular, such a link $b$ is an elementary generator as defined above.
Now the crucial but easy observation is:

\begin{lemma}\label{lem:chi8}
In diagram $(\ref{diag:chi8})$, each contraction $S \to \p^2$ corresponds to an
edge in $\Xl$ which is not contained in any square.
\end{lemma}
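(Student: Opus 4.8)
The plan is to use the characterization of squares in $\Xl$ developed in Proposition \ref{pro:from S3} and the surrounding discussion: a square containing a given edge of type $3,2$ corresponds to a rank $3$ fibration factorizing through the rank $2$ fibration at the bottom of the edge. So to show that an edge $S \to \p^2$ coming from diagram (\ref{diag:chi8}) lies in no square, I would suppose for contradiction that it does, and derive a contradiction from the geometry of the del Pezzo surface of degree $1$.

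Concretely, the edge in question goes from the marked rank $2$ fibration $S/\pt$ (where $S$ is the blow-up of the general degree $8$ point $p$, a del Pezzo surface of degree $1$) to the rank $1$ fibration $\p^2/\pt$. If this edge were a side of a square, then by the discussion after Proposition \ref{pro:from S3} there would be a rank $3$ fibration $S'/\pt$ factorizing through $S/\pt$, i.e.\ a divisorial contraction $S' \to S$ with $\rho(S') = 3$ (equivalently $S' = \F_0/\pt$ is excluded, and the base must be a point since $S/\pt$ already has base a point). Then $S'$ would be a del Pezzo surface of Picard rank $3$ over $\K$ dominating $S$. But a del Pezzo surface of degree $1$ admits \emph{no} $K_{S}$-negative extremal contraction other than the fibration structures already present: over $\K^a$, $S$ is a del Pezzo surface of degree $1$, whose effective cone is generated by the $240$ exceptional curves (the $(-1)$-curves), and blowing up any further point — of any degree — would lower the anticanonical degree to $\le 0$, so that $-K_{S'}$ could not be relatively ample. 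Hence there is no rank $3$ fibration above $S/\pt$, so the edge lies in no square.

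Thus the key steps in order are: (1) recall that a square through the edge $S \to \p^2$ forces a rank $3$ fibration $S'$ with a divisorial contraction $S' \to S$, necessarily with $S'$ having a point as base (since $S$ has a point as base and the base map $B \to B'$ of (\ref{dia:factorize}) would have to be trivial); (2) observe that such $S'$ would be the blow-up of $S$ along a point of some degree $d \ge 1$, hence a smooth rational surface with $K_{S'}^2 = 1 - d \le 0$; (3) conclude that $-K_{S'}$ cannot be ample relative to the point, contradicting the definition of a rank $r$ fibration (which requires $-K_{S'}$ to be $\pi$-ample). I would phrase step (3) using the Cone Theorem \ref{thm:cone} and the equivariant remarks in Remark \ref{rem:2 rays}: the two rays game on $S'/\pt$ produces two divisorial contractions, and one checks directly that neither can recover the fibration-to-a-point structure compatibly with $-K_{S'}$ being ample, since every $(-1)$-curve orbit on $S'$ other than the exceptional one of $S' \to S$ sits over a curve of $\mathrm{NE}(S')$ that is $K_{S'}$-trivial or positive.

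The main obstacle will be making step (3) clean and self-contained: one must rule out all possible rank $3$ fibrations over a point lying above $S$, not just the ``obvious'' blow-up. The cleanest route is probably to argue numerically: if $S'/\pt$ is a rank $3$ fibration then $S'$ is a del Pezzo surface over $\K$ of Picard rank $3$, so $K_{S'}^2 \ge 1$ (del Pezzo surfaces have positive anticanonical self-intersection), while a nontrivial divisorial contraction $S' \to S$ gives $K_{S'}^2 = K_S^2 - d = 1 - d < 1$ for $d \ge 1$, a contradiction; and the case $S' \to S$ an isomorphism is excluded because $\rho(S') = 3 \ne 2 = \rho(S)$. This reduces everything to the single clean fact that a del Pezzo surface has $K^2 \ge 1$, which holds over any field, together with the observation from Diagram (\ref{dia:factorize}) that the base of $S'$ must again be a point. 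I would present the argument in this numerical form, as it avoids any case analysis on the $240$ lines.
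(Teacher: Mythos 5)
Your proposal is correct and follows essentially the same route as the paper: a square containing the edge would force a rank $3$ fibration $S'/\pt$ factorizing through $S/\pt$, hence a del Pezzo surface obtained by blowing up the degree $1$ del Pezzo surface $S$, which is impossible. Your numerical phrasing ($K_{S'}^2 = 1 - d < 1$ versus $K^2 \ge 1$ for a del Pezzo surface) just makes explicit the contradiction that the paper leaves implicit.
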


\begin{proof}
If $S/\pt \to \p^2/\pt$ corresponds to an edge of a square, we would have a rank 3 fibration $S'/\pt$ that factorizes through $S/\pt$.
But such a surface $S'$ would be a del Pezzo surface, and would be a blow-up of $S$ which is already a del Pezzo surface of degree 1; contradiction.
\end{proof}

Up to changing the initial choice of representatives $\E$, we can assume that a representative of a Bertini involution is an involution. 
We denote by $\B \subseteq \E$ the subset of representatives of Bertini involutions.

\begin{example}
Since the above construction relies on the existence of a Galois extension of degree 8, we recall a few examples of such extensions:
\begin{enumerate}[wide]
\item
$\Q(\sqrt{2}, \sqrt{3}, \sqrt{5})/\Q$ is Galois with Galois group isomorphic to $(\Z/2\Z)^3$.
This is the splitting field of $(X^2 - 2)(X^2 -3)(X^2-5)$.

\item
The cyclotomic extension $\Q(e^{2i\pi/15})/\Q$ has degree $\phi(15) = 8$ and Galois group isomorphic to $(\Z/15\Z)^* \simeq \Z/2\Z \times \Z/4\Z$.
This is the splitting field of the 15th cyclotomic polynomial $\Phi_{15}(X) \in \Z[X]$ (\cite[Corollary 7.8]{Morandi}).

\item
$\Q(\sqrt[4]{2}, i)/\Q$ is Galois with Galois group isomorphic to the dihedral group $D_8$.
This is the splitting field of $(X^4 - 2)(X^2 + 1)$.
Generators for the Galois group are $r,s$ where 
\[r(\sqrt[4]{2}) = i\sqrt[4]{2},\; r(i) = i 
\text{ and }
s(\sqrt[4]{2}) = \sqrt[4]{2},\; s(i) = -i.
\]

\item
Let $\alpha = e^{i\pi/4}$, and set $\K = \Q(\alpha)$.
Then pick $\beta \in \K$ which is not a square in $\K$: for instance $\beta = 3$
is a possible choice, but $\beta = 2$ is not because $\alpha + \alpha^7 = \sqrt2$.
Then $\K(\sqrt[8]{\beta})/\K$ is a cyclic extension of degree 8, that is, Galois
with Galois group isomorphic to $\Z/8\Z$ (\cite[Corollary 9.6]{Morandi}).

\item
$\Q(\theta)/\Q$ with $\theta^2 = (2+\sqrt2)(2+\sqrt3)(3+\sqrt6)$ is Galois with
Galois group isomorphic to the quaternionic group.
This is the splitting field of $X^8-72X^6+ 180X^4- 144X^2+36$ (\cite{Dean}).

\item
Let $\FF{q}$ be a finite field, and $\FF{q^n}/\FF{q}$ the (essentially unique)
extension of degree $n$ (in particular one can take $n = 8$).
Then this extension is Galois, with Galois group isomorphic to $\Z/n\Z$, generated by the Frobenius automorphism $x \mapsto x^q$ (\cite[Corollary 6.7]{Morandi}).
\end{enumerate}
\end{example}

Now we turn to the problem of proving that the set $\B$ is large, that is, there exist many Bertini involutions, even modulo the action of $\PGL_3(\K)$.
We shall produce points of degree 8 in general position by using nodal cubics. 
The following set-up about the group structure on the smooth locus of a nodal cubic is classical, see for instance \cite[\S III.2, Proposition 2.5]{Silverman}, and also  Remark \ref{rem:group law}.
However since we want to work over a perfect field (typically non-algebraically closed), we find convenient to make a slightly different 
choice of normal form. 

\begin{setup} \label{setup:nodal cubic}
Consider the plane nodal cubic curve $C_P$ defined over $\K$, given by the equation
\[xyz = P(x,z),\]
where $P(x,z) = c_0 x^3 - c_0 z^3$, $c_0 \in \K^*$.
In the affine chart $z= 1$, the equation becomes
\[
y = \frac{P(x,1)}{x} \left( = \frac{c_0x^3 - c_0}{x} \right).
\]
Observe also that the singular point $[0:1:0]$ is the only intersection point
between $C_P$ and the line at infinity $z = 0$.
\end{setup}

We shall be interested in nodal cubics up to the action of $\PGL_3(\K)$, and we shall use the above set-up as a normal form.
Observe that we cannot assume $c_0 = 1$ even after applying a diagonal element of $\PGL_3(\K)$, because $c_0$ might not be a cube in $\K$.

\begin{lemma} \label{lem:general cubic}
Let $C \subset \p^2$ be an irreducible nodal cubic with singular point at $[0:1:0]$, and tangent cone at this point given by $xz = 0$.
Then $C$ admits an equation of the form
\[xyz = c_0 x^3 + c_1 x^2z + c_2xz^2 +c_3 z^3,
 \qquad c_0, c_3 \in \K^*, c_1, c_2 \in \K,\]
and $C$ is equivalent under the action of $\PGL_3(\K)$ to a cubic from Set-Up \ref{setup:nodal cubic} if and only if $-\frac{c_0}{c_3}$ is a cube in $\K$.
\end{lemma}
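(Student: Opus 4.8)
The statement has two halves: first, that a nodal cubic $C$ with the prescribed singular point and tangent cone has an equation of the stated shape; second, the characterization of when $C$ is $\PGL_3(\K)$-equivalent to a curve $C_P$ from Set-Up 1.22. The plan is to handle the normal form first by a direct bookkeeping of which monomials are forced to vanish, and then to analyze the residual coordinate changes that preserve this form.

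First I would write out the general cubic $F(x,y,z) = \sum_{i+j+k=3} a_{ijk} x^i y^j z^k$ and impose the geometric conditions. Singularity at $[0:1:0]$ means $F$ and all its first partials vanish there: this kills $a_{030}$ (the $y^3$ term), $a_{120}$ (the $xy^2$ term) and $a_{021}$ (the $y^2z$ term), so $F$ has degree $\le 1$ in $y$. The tangent cone at $[0:1:0]$ is the degree-$2$ part of $F$ in the local coordinates $x,z$, which is $a_{210}x^2 + a_{111}xz + a_{012}z^2$; requiring this to be proportional to $xz$ forces $a_{210}=a_{012}=0$ and $a_{111}\neq 0$. After rescaling so that $a_{111}=-1$ (absorbing a constant), we are left exactly with $xyz = c_0x^3 + c_1x^2z + c_2xz^2 + c_3z^3$, and irreducibility of $C$ forces $c_0,c_3\in\K^*$: if $c_0=0$ then $z$ divides $F$, and if $c_3=0$ then $x$ divides $F$, in either case $C$ would be reducible. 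This gives the first assertion.

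Next, for the equivalence criterion, I would identify the subgroup of $\PGL_3(\K)$ preserving the normal form ``$xyz = $ binary cubic in $x,z$ with the given singular point and tangent cone''. Such a transformation must fix $[0:1:0]$ and preserve the pair of tangent lines $\{x=0, z=0\}$ (it cannot swap them without disturbing the shape, as swapping $x\leftrightarrow z$ sends $c_0x^3+\cdots+c_3z^3$ to $c_3x^3+\cdots+c_0z^3$, which is again of the same type, so I would need to allow this swap too), and it must send the line $z=0$ — the only line meeting $C$ solely at the node — to itself or to $x=0$. Tracking this through, the admissible maps are generated by diagonal scalings $(x,y,z)\mapsto(\lambda x, \mu y, \nu z)$, which replace $(c_0,c_1,c_2,c_3)$ by $(\tfrac{\lambda^2}{\mu\nu}c_0, \tfrac{\lambda}{\mu}c_1, \tfrac{\nu}{\mu}c_2, \tfrac{\nu^2}{\lambda\mu}c_3)$, together with the involution $x\leftrightarrow z$. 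To reach the form of Set-Up 1.22 one needs $c_1=c_2=0$ and $c_3 = -c_0$. A diagonal change with $\lambda = c_0/(\text{something})$... more carefully: one can always kill $c_1$ and $c_2$ by translating $y$? No — there is no translation available in $\PGL_3$ fixing the line at infinity pointwise only at the node. The correct observation is that after the substitution $y \mapsto y - (c_1 x + c_2 z)/(\text{linear in }x,z)$... this is not linear. So instead: $c_1x^2z$ and $c_2xz^2$ can be absorbed into the left side by replacing $y$ with $y + c_1 x/z\cdot$? Not polynomial either. The clean route is: the term $c_1x^2z + c_2xz^2 = xz(c_1x+c_2z)$, so moving it across gives $xz(y - c_1 x - c_2 z) = c_0x^3 + c_3z^3$, and $y' := y - c_1x - c_2z$ is a legitimate linear change of coordinates in $\PGL_3(\K)$ fixing $[0:1:0]$ and the tangent cone. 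Hence one may always reduce to $c_1=c_2=0$, leaving $xy'z = c_0x^3 + c_3z^3$. Finally, a diagonal scaling $(x,y,z)\mapsto(\lambda x, \mu y, \nu z)$ sends this to $xyz = \tfrac{\lambda^2}{\mu\nu}c_0 x^3 + \tfrac{\nu^2}{\lambda\mu}c_3 z^3$; to match Set-Up 1.22 I need the two coefficients to be negatives of one another, i.e. $\tfrac{\lambda^2}{\mu\nu}c_0 = -\tfrac{\nu^2}{\lambda\mu}c_3$, equivalently $(\lambda/\nu)^3 = -c_3/c_0$. Such $\lambda/\nu \in \K^*$ exists precisely when $-c_3/c_0$, equivalently $-c_0/c_3$ (they are reciprocal), is a cube in $\K$ — and the $x\leftrightarrow z$ swap does not help since it only inverts this ratio, and a ratio is a cube iff its inverse is. This yields the stated criterion.

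The main obstacle I anticipate is the ``only if'' direction: one must be sure that \emph{no} element of $\PGL_3(\K)$ outside the subgroup analyzed above can carry $C$ into Set-Up 1.22 form. This requires a genuine argument that the normalization data (the singular point, the tangent cone lines, and the distinguished line $z=0$ meeting $C$ only at the node, or its role as the inflection/component structure) are intrinsic to $C$, so that any equivalence must respect them up to the finite ambiguity $x\leftrightarrow z$. I would phrase this by noting that $[0:1:0]$ is the unique singular point of $C$, hence preserved by any isomorphism; the tangent cone is then canonically the pair $\{x=0,z=0\}$; and among lines through the node, $z=0$ is characterized (for a $C_P$ from Set-Up 1.22) as the one whose third intersection with $C$ is again the node, i.e. meeting $C$ with multiplicity $3$ there — a projectively intrinsic condition. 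Pinning down this intrinsic characterization carefully is where the real content lies; the rest is the routine coordinate bookkeeping sketched above.
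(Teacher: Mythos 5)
Your proof is correct and, for the normal form and the ``if'' direction, follows essentially the same route as the paper: impose the singularity and tangent-cone conditions to get $xyz=P(x,z)$ with $P(1,0),P(0,1)\neq 0$ by irreducibility, kill $c_1,c_2$ with the shear $y\mapsto y-c_1x-c_2z$, and then solve $(\lambda/\nu)^3=-c_3/c_0$ by a diagonal scaling. You go further than the paper's written proof, which only carries out this ``if'' direction: your observation that the node and its tangent cone are intrinsic, so any equivalence lies in the group generated by shears, diagonal scalings and the swap $x\leftrightarrow z$ --- under which the class of $-c_0/c_3$ modulo cubes and inversion is invariant --- is exactly what is needed for the ``only if'' half, and is a genuine (if routine) addition. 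One small slip: $z=0$ is \emph{not} the only line meeting $C$ solely at the node ($x=0$ has the same property, each tangent line meeting $C$ with multiplicity $3$ there), so only the unordered pair $\{x=0,z=0\}$ is intrinsic; this is harmless since you allow the swap, which merely inverts $-c_0/c_3$ and so preserves the cube condition.
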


\begin{proof}
The assumption on the singular point implies that $C$ admits an equation of the form $xyz = P(x,z)$ with $P$ a homogeneous polynomial of degree 3.
Moreover $P(0,1) \neq 0$ and $P(1,0) \neq 0$, otherwise $C$ would be reducible.
So $P(x,z) = c_0 x^3 + c_1 x^2z + c_2xz^2 +c_3 z^3$, with $c_0, c_3 \in \K^*$, and no condition on $c_1, c_2$.

By applying $(x,y,z) \mapsto (x,y+c_1x + c_2z,z)$ we can assume $c_1 = c_2 = 0$.
Then, if $-\frac{c_0}{c_3} = a^3$ for some $a \in \K^*$, by applying $(x,y,z) \mapsto (x,a^{-1}y, az)$ we get $c_0 = -c_3$.
\end{proof}

\begin{lemma} \label{lem:produit 1}
Assume Set-up $\ref{setup:nodal cubic}$, and consider a collection of three or six pairwise distinct $a_i \in \K^*$. 
Then:
\begin{enumerate}
\item The points $p_i = \left(a_i,  \frac{P(a_i,1)}{a_i}\right) \in C_P$, $i = 1,2,3$, are on a same line if and only if $a_1 a_2 a_3 = 1$.

\item  The points $p_i = \left(a_i,  \frac{P(a_i,1)}{a_i}\right) \in C_P$, $i = 1,\dots,6$, are on a same conic if and only if $a_1 \dots a_6 = 1$.
\end{enumerate}
\end{lemma}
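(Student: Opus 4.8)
The plan is to compute everything explicitly using the rational parametrization of the nodal cubic $C_P$ coming from Set-Up \ref{setup:nodal cubic}. Writing a point in the affine chart $z = 1$ as $(t, \frac{P(t,1)}{t}) = (t, c_0 t^2 - \frac{c_0}{t})$, the smooth locus of $C_P$ is parametrized by $t \in \K^* = \mathbb{G}_m$, and one expects the group law on $C_{P,\mathrm{sm}}$ (with the node sent to the identity, or more precisely with a suitable choice of neutral element) to be exactly $t_1 \cdot t_2 \cdot t_3 = 1$ for collinear triples — this is the content of statement (1), and statement (2) is the degree-6 analogue (six points lie on a conic iff the product of the parameters is the "square" of the collinearity relation, hence again $a_1 \cdots a_6 = 1$, since a conic cuts out a divisor of degree $6$ that is $2H$, twice a hyperplane section). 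So the two statements are really the assertion that this parametrization is a group isomorphism $\mathbb{G}_m \to C_{P,\mathrm{sm}}$ compatible with the intersection-theoretic group law.

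Concretely, for (1) I would proceed as follows. Three points $p_i = (a_i, y_i)$ with $y_i = c_0 a_i^2 - c_0/a_i$ are collinear iff the $3\times 3$ determinant
\[
\det \begin{pmatrix} 1 & a_1 & y_1 \\ 1 & a_2 & y_2 \\ 1 & a_3 & y_3 \end{pmatrix} = 0.
\]
Substituting $y_i = c_0 a_i^2 - c_0/a_i$ and expanding, one clears denominators by multiplying through by $a_1 a_2 a_3$; the resulting polynomial factors, and since the $a_i$ are pairwise distinct one can divide by the Vandermonde factor $\prod_{i<j}(a_i - a_j)$, leaving a single linear condition equivalent to $a_1 a_2 a_3 = 1$. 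For (2), six points $p_i = (a_i, y_i)$ lie on a conic iff the $6 \times 6$ determinant of the matrix with rows $(1, a_i, y_i, a_i^2, a_i y_i, y_i^2)$ vanishes; again substituting the parametrization, clearing denominators (now by $\prod a_i$, or a suitable power), and factoring out the appropriate Vandermonde-type factor reduces the vanishing to the single condition $a_1 \cdots a_6 = 1$. One should check along the way that the degenerate conics (pairs of lines) are correctly accounted for — but since a pair of lines through two collinear triples gives $(a_1a_2a_3)(a_4a_5a_6) = 1$, this is consistent.

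The main obstacle I anticipate is purely computational: the degree-6 determinant in part (2) is large, and the bookkeeping needed to (a) clear the denominators coming from the $1/a_i$ terms in $y_i$ and $y_i^2$ correctly, and (b) identify and cancel the spurious factor so as to isolate the clean factor $a_1 \cdots a_6 - 1$, requires care. A cleaner route that avoids brute force would be conceptual: identify $C_{P,\mathrm{sm}}$ with $\mathbb{G}_m$ as above, note that three points on $\p^2$ sum to zero in $\mathrm{Pic}^0$ of (a smooth model of) the curve exactly when they are collinear — using that the hyperplane class restricts to a fixed divisor class of degree $3$ — and that the isomorphism $\mathrm{Pic}^0(C_P) \cong \mathbb{G}_m$ sends the class of $[p_{a}] - [p_{1}]$ to $a$; then collinearity of $p_{a_1}, p_{a_2}, p_{a_3}$ is equivalent to $a_1 a_2 a_3 = 1$ after normalizing the neutral element appropriately (the normal form $P(x,z) = c_0 x^3 - c_0 z^3$ is chosen precisely so that $t = 1$, i.e. the point $(1,0)$, is an inflection-type base point making the constant $1$; this matches the remark that $C_P$ passes through $(1, 0)$ and the base point $q$ of the cubic pencil plays the role of neutral element). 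Statement (2) then follows since a conic section is linearly equivalent to $2H$, so the six parameters multiply to $1^2 = 1$. I would likely present the determinant computation for (1) in full since it is short, and for (2) either push through the determinant or invoke the $\mathrm{Pic}^0 \cong \mathbb{G}_m$ argument to reduce to (1).
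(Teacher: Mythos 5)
Your overall strategy is sound, and it is close in spirit to the paper's, but the execution differs in a way that matters. The paper does \emph{not} compute determinants. It writes down the affine equation of a line not passing through the node, $y+Ax+B=0$ (justified because a line through the node and through three further points of the cubic would violate B\'ezout), substitutes the parametrization $y=P(x,1)/x$, and multiplies by $x$ to get
\[
P(x,1)+Ax^{2}+Bx \;=\; c_{0}(x-a_{1})(x-a_{2})(x-a_{3}).
\]
Because the normal form of Set-Up \ref{setup:nodal cubic} forces $P(x,1)=c_{0}x^{3}-c_{0}$, the left-hand side has leading coefficient $c_{0}$ and constant term $-c_{0}$, so Vieta gives $a_{1}a_{2}a_{3}=1$ at once; conversely, when $a_{1}a_{2}a_{3}=1$ the difference $c_{0}\prod(x-a_{i})-P(x,1)$ lies in the span of $x,x^{2}$, which produces the line. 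Part (2) is word for word the same with $y^{2}+Axy+By+Cx^{2}+Dx+E=0$ and the degree-$6$ polynomial $c_{0}^{2}\prod(x-a_{i})$. This is why the paper gets part (2) for free, whereas your $6\times 6$ determinant is exactly the computation you would want to avoid.

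Two concrete criticisms of your write-up. First, for part (2) you only \emph{assert} that after clearing denominators the $6\times 6$ determinant factors as a Vandermonde-type expression times $(a_{1}\cdots a_{6}-1)$; this is the entire content of the statement and is not carried out (your part (1) determinant does work — I checked that $a_1a_2a_3\cdot\det = c_0\,V\,(a_1a_2a_3-1)$ with $V$ the Vandermonde — but the degree-$6$ analogue is precisely the step you acknowledge as the obstacle). Second, your ``cleaner conceptual route'' via $\Pic^{0}(C_{P})\simeq\mathbb{G}_m$ is essentially circular as stated: the assertion that the isomorphism sends $[p_{a}]-[p_{1}]$ to $a$, i.e.\ that the coordinate $x$ realizes the group law with neutral element $(1,0)$, is exactly what Lemma \ref{lem:produit 1}(1) proves — indeed the paper's Remark \ref{rem:group law} \emph{deduces} the group-morphism property of $x\mapsto(x,P(x,1)/x)$ from this lemma, not the reverse. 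To make that route honest you would need an independent computation of the gluing parameter at the node on the normalization, which is no shorter than the direct argument. I recommend replacing both of your routes by the substitution-and-Vieta argument above; it handles (1) and (2) uniformly and uses the normal form $P=c_{0}x^{3}-c_{0}z^{3}$ exactly where it is needed.
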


\begin{proof}
\begin{enumerate}[wide]
\item
The general equation of a line that does not intersect the cubic $C_P$ at infinity is $y + Ax + B = 0$.
If the $p_i$ are on a same line, then replacing $y = \frac{P(x,1)}{x}$ in the equation of the line we get
\[P(x,1) + Ax^2 + Bx   = c_0(x-a_1)(x-a_2)(x-a_3),\]
hence, by comparing the constant terms and dividing by $-c_0$, we get $1 = a_1a_2a_3$ as expected.
Conversely, if $a_1a_2a_3 = 1$, then the above relation allows to define $A, B$ in function of the $a_i$ such that the line $y + Ax + B = 0$ contains the three points $p_i$.

\item The proof is similar, working with the general equation of a conic that does not intersect the cubic at infinity:
\[y^2 + Ayx + By + Cx^2 + Dx + E=0.\]
Replacing $y = \frac{P(x,1)}{x}$ we get
\begin{multline*}
P(x,1)^2 + Ax^2P(x,1) + BxP(x,1) + Cx^4 + Dx^3 + Ex^2 \\
= c_0^2 x^6 + \dots + c_0^2=  c_0^2\prod_{i=1}^6 (x-a_i),
\end{multline*}
and we conclude as in the previous case. \qedhere
\end{enumerate}
\end{proof}

\begin{remark}\label{rem:group law}
On the smooth locus $C^{sm}$ of a nodal cubic $C$, with a choice of $e$ a flex point, 
% Il y a 3 points d'inflexion
recall that there exists a group law defined similarly as in the case of an elliptic curve.
Given $p, q \in C^{sm}$, define $p \circ q$ as the third point of intersection of $C$ with the line through $p$ and $q$ (or the tangent by $p$ if $p = q$).
Then by setting $p\cdot q := (p \circ q) \circ e$, we get a group law $\cdot$ with neutral element $e$.
The previous lemma shows that in the case of $C_P$ given by Set-Up \ref{setup:nodal cubic}, where one can check that $e = (1,0)$ is a flex point, the map $x \mapsto (x,\frac{P(x,1)}{x})$ is a group morphism from $\K^*$ to $C_P^{sm}$. 
\end{remark}

We shall need the following result about singular fibers of a pencil of cubic curves:

\begin{lemma} \label{lem:12 nodal cubics}
Let $\Gamma$ be a pencil of plane cubic curves.
Then $\Gamma$ contains at most $12$ nodal cubics.
\end{lemma}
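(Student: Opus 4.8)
The plan is to use the classical Euler characteristic count for an elliptic fibration obtained by resolving the pencil $\Gamma$. First I would dispose of degenerate cases: if $\Gamma$ has a fixed component, or if all members of $\Gamma$ are singular in a trivial way (e.g. the pencil is composed of a fixed line plus a moving conic), then the statement is either vacuous or easy to check directly. So assume the generic member of $\Gamma$ is a smooth cubic, hence an elliptic curve. The pencil $\Gamma$ has nine base points counted with multiplicity; blowing these up (resolving the indeterminacy of the rational map $\p^2 \rat \p^1$ given by $\Gamma$) produces a rational elliptic surface $f\colon X \to \p^1$ with section, and $\chi_{\mathrm{top}}(X) = 12$ since $X$ is obtained from $\p^2$ by nine blow-ups, so $\chi_{\mathrm{top}}(X) = 3 + 9 = 12$.

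The key step is then the standard formula: for an elliptic surface $f\colon X \to \p^1$ with no multiple fibers over a field of characteristic zero (or of characteristic prime to the relevant torsion), one has
\[
\chi_{\mathrm{top}}(X) = \sum_{t \in \p^1} e(F_t),
\]
where $e(F_t)$ denotes the local Euler number of the fiber over $t$, which is $0$ for a smooth fiber and strictly positive for a singular one. A nodal cubic in the pencil gives a fiber of Kodaira type $I_1$ (at least after the resolution, the strict transform of a nodal cubic through the nine base points with the right multiplicities is an $I_1$ fiber), and $e(I_1) = 1$. Since every singular fiber contributes at least $1$ to the sum, and the total is $12$, there can be at most $12$ singular fibers, hence at most $12$ nodal cubics in $\Gamma$. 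One must be slightly careful that a nodal cubic in the \emph{plane} pencil really does yield an $I_1$ fiber after resolution and not something that has been further modified; this holds provided the node is not at a base point of the pencil, which is the generic situation, and the boundary cases where the node sits at a base point only \emph{decrease} the count of nodal cubics one can have simultaneously, so the bound is unaffected.

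The main obstacle, and the place requiring the most care, is handling positive characteristic, since the paper works over an arbitrary perfect field $\K$ (and one typically passes to $\K^a$ to count cubics). In small characteristic, wild ramification of the elliptic fibration can make the Euler number formula fail: the correct statement is $\chi_{\mathrm{top}}(X) = \sum_t (e(F_t) + \delta_t)$ with $\delta_t \ge 0$ a wild ramification term, which only \emph{strengthens} the inequality $\#\{\text{singular fibers}\} \le 12$, so the bound survives; but one should remark on this rather than ignore it. The cleanest exposition is probably to count over $\K^a = \C$ (or use a Lefschetz-type argument / reduction mod $p$ to reduce to characteristic zero), invoke the count there, and note that the number of nodal cubics in the plane pencil is intrinsic and bounded by the number of singular fibers. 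Alternatively, and perhaps more in the spirit of the surrounding material, one can give a direct argument: parametrize the pencil as $\lambda F + \mu G = 0$ and observe that a member is singular exactly when the discriminant (a specific resultant-type polynomial in $[\lambda:\mu]$) vanishes; a degree count for this discriminant gives $12$, and a nodal cubic corresponds to a simple root, whence at most $12$ of them. I would present the Euler-characteristic argument as the main line since it is the most transparent, with the discriminant computation mentioned as an alternative.
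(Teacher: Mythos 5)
Your proposal is correct and follows essentially the same route as the paper: blow up the nine base points to obtain a rational elliptic surface with Euler number $12$, then invoke Ogg's formula (equivalently, the Euler-number count with wild ramification terms, which only strengthen the bound in positive characteristic) and the fact that each nodal cubic contributes exactly $1$. The extra care you take about degenerate pencils, nodes at base points, and the alternative discriminant count goes beyond what the paper writes but does not change the argument.
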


\begin{proof}
Consider the surface $S$ obtained by blowing-up $\p^2$ at the nine base points of the pencil.
Then $S$ admits an elliptic fibration and has Euler number $c_2(S) = 12$, which by Ogg's formula is equal to the sum over the singular fibers of the valuation $v(\Delta)$ of the minimal discriminant. 
Independently of the characteristic of the base field, each nodal cubic contributes by 1 to this sum, hence the result (see e.g. \cite[\S 5.3]{liedtke}).
\end{proof}

Now we apply this set-up to the case of a field which admits a Galois extension of degree 8.

\begin{proposition} \label{pro:general position}
Assume Set-Up $\ref{setup:nodal cubic}$.
Let $\L/\K$ be a Galois field extension of degree 8, $b_1, \dots, b_8 \in \L$ be an orbit under $\Gal(\L/\K)$, and $\lambda \in \K^*$. 
Set $a_i = \lambda b_i$, so that $a_1, \dots, a_8 \in \L$ also is a Galois orbit.
Then, except for at most $6$ values of $\lambda$, the points $p_i = (a_i, \frac{P(a_i,1)}{a_i}) \subset \A^2 \subset \p^2$ are in general position.
\end{proposition}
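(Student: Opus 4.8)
The plan is to reduce the statement to Lemma~\ref{lem:produit 1} together with an elementary Galois argument. Since $\lambda\in\K^*$ and the $b_i$ form a single orbit of cardinality $8$ under $\Gal(\K^a/\K)$, the elements $a_i=\lambda b_i$ are again pairwise distinct, nonzero and permuted by $\Gal(\K^a/\K)$, so $p=\{p_1,\dots,p_8\}$ is a point of degree $8$, all of whose members lie on $C_P$. By the description of ``general position'' recalled in \S\ref{sec:bertini}, $p$ fails to be general exactly when one of the following holds: \textbf{(a)} three of the $p_i$ are collinear; \textbf{(b)} six of them lie on a conic; \textbf{(c)} some cubic is singular at one $p_i$ and passes through the other seven. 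First I would turn each condition into a multiplicative relation among the $a_i$. For (a) and (b) this is precisely Lemma~\ref{lem:produit 1}: they say $\prod_{i\in T}a_i=1$ for some subset $T$ of cardinality $3$, respectively $6$. For (c): such a cubic $D$ has the same degree as $C_P$, hence does not contain $C_P$; by Bézout it meets $C_P$ in the divisor $2p_i+\sum_{j\neq i}p_j$ (and in particular avoids the node of $C_P$); since the divisor cut on $C_P$ by any cubic is linearly equivalent to three times that cut by a line, and by Lemma~\ref{lem:produit 1}(1) three collinear points of $C_P^{sm}$ sum to $0$ in the group law, the divisor cut by $D$ sums to $0$ as well, i.e. $a_i\prod_{k=1}^{8}a_k=1$.

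\textbf{Eliminating (c) and normalising (a), (b).} I would then substitute $a_i=\lambda b_i$ and use that $\lambda\in\K^*$ while $N:=\prod_k b_k\in\K^*$. Condition (c) becomes $a_i=(\lambda^{8}N)^{-1}$, which would force $b_i=a_i/\lambda\in\K^*$; this is impossible since $b_i$ lies in a Galois orbit of size $8$. So (c) never occurs for $\lambda\in\K^*$. Condition (a) for a subset $T$ becomes $\lambda^{3}\prod_{i\in T}b_i=1$, so $\prod_{i\in T}b_i=\lambda^{-3}\in\K^*$ and hence $\lambda^{6}=\big(\prod_{i\in T}b_i\big)^{-2}$; condition (b) for a subset $\Sigma$ with complementary pair $\{l,m\}$ becomes $\lambda^{6}=\big(\prod_{i\in\Sigma}b_i\big)^{-1}=b_lb_m/N$, so in particular $b_lb_m\in\K^*$. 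In both cases $\lambda^{6}$ equals an element of $\K^*$ determined by the ``bad'' subset. Thus the set of $\lambda\in\K^*$ for which $p$ is not general is contained in the set of roots in $\K^*$ of finitely many polynomials $\lambda^{6}-v$ with $v\in\K^*$.

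\textbf{The combinatorial-Galois core.} It remains to bound the number of such roots by $6$, i.e. essentially to show that a single polynomial $\lambda^{6}-v$ absorbs all the bad equations. Here I would use that $\Gal(\L/\K)$, being a transitive permutation group of order $8$ on the $8$-element orbit $\{b_1,\dots,b_8\}$, acts \emph{freely}. Applying $\Gal(\L/\K)$ to a relation $\prod_{i\in T}b_i\in\K^*$ (resp.\ $b_lb_m\in\K^*$) produces the full $\Gal$-orbit of $T$ (resp.\ of $\{l,m\}$), on which the product is constant; freeness and distinctness of the $b_i$ force this orbit to have maximal size and any two of its members to meet in at most one point, and a bad triple cannot contain a bad pair (else the remaining element would lie in $\K^*$). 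Tracking these orbits — a bad pair generates a perfect matching of $\{b_1,\dots,b_8\}$ of constant product $c_*$, whence $N=c_*^{4}$, and a bad triple meets such a matching in one point from three of its four pairs, which forces its product to satisfy $c^{2}=c_*^{3}$ — one finds that all the equations $\lambda^{6}=v$ coming from (a) and (b) coincide, leaving at most $6$ bad values of $\lambda$.

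\textbf{Main obstacle.} I expect this last orbit analysis to be the technical heart of the proof. The difficulty is that distinct $\Gal$-orbits of bad subsets could a priori yield distinct equations $\lambda^{6}=v$; ruling this out (in particular excluding rigid configurations such as $b_l=\zeta b_m$ with $\zeta$ a non-trivial root of unity in $\K$, and matching the triple-product with the pair-product) uses in an essential way both that the Galois group has order exactly $8$ and that it acts freely on the orbit. Everything upstream — the translation of (a), (b), (c) via Lemma~\ref{lem:produit 1} and the group law on $C_P^{sm}$, and the elimination of (c) — is routine.
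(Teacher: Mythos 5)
Your set-up and reduction are sound: translating conditions (a) and (b) into the multiplicative relations $\prod_{i\in T}a_i=1$ via Lemma~\ref{lem:produit 1} is exactly what the paper does, and your elimination of (c) — deducing $a_i\prod_k a_k=1$ from the group law and concluding $b_i\in\K^*$, contradicting that $b_i$ generates a degree-$8$ extension — is a correct alternative to the paper's argument (the paper instead applies Bézout to a cubic $C$ singular at $p_i$ and its Galois conjugate $\sigma(C)$, singular at a different $p_j$, getting $C\cdot C'\ge 10>9$; see Lemma~\ref{lem:no cubic}). The genuine gap is in your ``combinatorial-Galois core''. Your plan is to let collinear triples contribute bad values of $\lambda$ and then show that all the resulting equations are absorbed into a single $\lambda^6=v$. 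But your derivation of $c^2=c_*^3$ presupposes that a bad \emph{pair} (hence a bad conic configuration) exists simultaneously with the bad triple; you say nothing about the case where some triple $T$ has $\prod_{i\in T}b_i\in\K^*$ while no six points ever lie on a conic. In that case each Galois orbit of bad triples contributes up to $3$ solutions of $\lambda^3=c_T^{-1}$, and you have no bound on the number of distinct values $c_T$ over different orbits, so the count does not close at $6$. The assertion ``one finds that all the equations coincide'' is exactly the step that is not proved, and it is not a routine verification.

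The paper avoids this entirely by proving that condition (a) \emph{never} occurs, for any $\lambda$ (Lemma~\ref{lem:no line}): if three of the $p_i$ were collinear, the Galois orbit of that line (which has exactly $8$ elements, since a stabilizing element would have order $3\nmid 8$ by freeness) forms the M\"obius--Kantor configuration $8_3$; chasing the multiplicative relations of Lemma~\ref{lem:produit 1} around this configuration forces $a_i^3=1$ for one of the coordinates $a_i$ themselves, which is incompatible with $\K(a_i)=\L$ being of degree $8$. With (a) and (c) ruled out unconditionally, only the conic condition depends on $\lambda$: the four conjugate conics form a $4_3$ configuration whose pair-products satisfy $\pi_1=\dots=\pi_4$ and $\pi_i^3=1$, and since rescaling by $\lambda$ multiplies $\pi_i^3$ by $\lambda^6$, at most one fibre of $\lambda\mapsto\lambda^6$ — hence at most $6$ values — can be bad. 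To repair your proof you would essentially have to import this $8_3$ computation anyway (note that the Galois orbit of a bad triple, having $8$ members any two of which share at most one point, \emph{is} an $8_3$ configuration), so I recommend restructuring along the paper's lines: kill (a) for all $\lambda$ first, then run your $\lambda^6$ analysis only for (b).
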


Before giving the proof we establish a corollary.

\begin{corollary}
Let $\K$ be an infinite field that admits at least one Galois extension of degree $8$.
Then the set $\B$ of representatives of Bertini involutions up to conjugacy by $\PGL_3(\K)$ has at least the same cardinality than the field $\K$.
\end{corollary}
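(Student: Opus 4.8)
The plan is to deduce this corollary directly from Proposition \ref{pro:general position} together with Lemma \ref{lem:chi8} and Lemma \ref{lem:12 nodal cubics}, by showing that distinct choices of the scaling parameter $\lambda$ give rise to non-conjugate Bertini involutions, except for a controlled (finite, in fact at most countable) overlap. First I would fix a Galois extension $\L/\K$ of degree $8$ (which exists by hypothesis) and a Galois orbit $b_1,\dots,b_8\in\L$, and work with a fixed nodal cubic $C_P$ as in Set-Up \ref{setup:nodal cubic}. For each $\lambda\in\K^*$ outside the bad set of at most $6$ values provided by Proposition \ref{pro:general position}, the points $p_i^\lambda=(\lambda b_i,\frac{P(\lambda b_i,1)}{\lambda b_i})$ form a point of degree $8$ in general position, so blowing them up yields a del Pezzo surface of degree $1$ and hence a Bertini involution $b^\lambda\in\Bir_\K(\p^2)$ as in Diagram \eqref{diag:chi8}. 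Since $\K$ is infinite, we have $|\K^*|=|\K|$ such values of $\lambda$.

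The core of the argument is a counting/injectivity statement: I would show that the map $\lambda\mapsto [b^\lambda]$, from the good values of $\lambda$ to conjugacy classes in $\B$, has fibers of bounded size. Concretely, two Bertini involutions $b^\lambda$ and $b^\mu$ are conjugate by an element of $\PGL_3(\K)$ only if there is an isomorphism of the corresponding del Pezzo surfaces of degree $1$ compatible with the two contractions to $\p^2$; such an isomorphism descends to an element $g\in\PGL_3(\K)$ sending the point of degree $8$ $\{p_i^\lambda\}$ to $\{p_i^\mu\}$ (this uses that on a del Pezzo surface of degree $1$ the anticanonical morphism and the $(-1)$-curves are canonical, so the blow-down structure is essentially rigid; a reference along the lines of the $\K^a$ classification of del Pezzo surfaces of degree $1$ plus Galois descent suffices). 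Now both $\{p_i^\lambda\}$ and $\{p_i^\mu\}$ lie on the fixed nodal cubic $C_P$, and $g(C_P)$ is again a nodal cubic through $\{p_i^\mu\}$. The pencil of cubics through the degree-$8$ point $\{p_i^\mu\}$ contains, by Lemma \ref{lem:12 nodal cubics}, at most $12$ nodal members, hence at most $12$ choices for $g(C_P)$; and once $g(C_P)$ is fixed, $g$ is constrained to lie in the stabilizer of a nodal cubic — which, using the group-law description of $C_P^{sm}\cong\K^*$ from Remark \ref{rem:group law}, is a group acting on the parameters by $x\mapsto \zeta x^{\pm1}$ for cube roots of unity $\zeta$, so it permutes the parameter sets $\{a_i\}$ in at most finitely many ways (bounded independently of $\lambda$). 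Putting these bounds together, the set of $\mu$ with $[b^\mu]=[b^\lambda]$ is finite of size bounded by an absolute constant $N$.

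It follows that $\B$, which contains all the classes $[b^\lambda]$ for the at least $|\K|$-many good values of $\lambda$ (using Lemma \ref{lem:chi8} to see these are genuine elementary generators, hence land in $\E$ and thus in $\B$), has cardinality at least $|\K|/N=|\K|$ since $\K$ is infinite. This gives the stated lower bound $|\B|\ge|\K|$; combined with the obvious upper bound $|\B|\le|\Bir_\K(\p^2)|\le|\K|$ (elements of the Cremona group are given by finitely many polynomials over $\K$), one even gets $|\B|=|\K|$, but only the lower bound is claimed.

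The step I expect to be the main obstacle is the rigidity statement controlling when two of these Bertini involutions are $\PGL_3(\K)$-conjugate — i.e. pinning down that a conjugacy forces an honest projective transformation matching the two Galois orbits of degree $8$, and then bounding how many $\mu$ can be matched to a given $\lambda$. The geometric input (uniqueness of the blow-down structure on a del Pezzo surface of degree $1$, the $12$-nodal-cubics bound, and the explicit automorphisms of $C_P$) is all available, but assembling it into a clean finite bound on the fibers, carefully over a non-closed perfect field with the Galois action bookkeeping, is the delicate part. A cleaner alternative, if one only wants the cardinality statement and not an explicit constant, is to argue more softly: the assignment $\lambda\mapsto\{p_i^\lambda\}$ is injective on parameters (distinct $\lambda$ give distinct orbits since the $b_i$ are fixed and nonzero), the set of degree-$8$ points in general position on $C_P$ thus has cardinality $|\K|$, and since each $\PGL_3(\K)$-orbit of such points is at most countable (indeed finite, but countability is enough) while $|\K|$ is infinite, there are $|\K|$-many orbits, hence $|\B|\ge|\K|$.
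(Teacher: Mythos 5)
Your main argument is correct and is essentially the paper's own proof: fix one Galois orbit in general position on the nodal cubic $C_P$, scale by $\lambda\in\K^*$, invoke Proposition \ref{pro:general position} to retain general position outside at most $6$ values, and bound the fibres of $\lambda\mapsto[b^\lambda]$ by combining Lemma \ref{lem:12 nodal cubics} with the finiteness of the stabilizer of a nodal cubic in $\PGL_3(\K)$. One caveat: the ``cleaner alternative'' in your last paragraph does not work as stated, since a $\PGL_3(\K)$-orbit of a degree-$8$ point is in general neither finite nor countable (the stabilizer of a general configuration is finite, so the orbit has cardinality $|\PGL_3(\K)|=|\K|$, which may be uncountable); what is finite is the set of points of such an orbit \emph{lying on the fixed cubic $C_P$}, and establishing that is exactly the twelve-nodal-cubics-plus-finite-stabilizer argument of your main proof, so the aside is not actually an independent route.
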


\begin{proof}
Assume that $a_1,\dots, a_8$ is a Galois orbit such that the points  $p_i = \big(a_i, \frac{P(a_i,1)}{a_i}\big)$ are in general position.
It is sufficient to prove that for any $\lambda \in \K^*$ except finitely many, the points $q_i =  \big(\lambda a_i, \frac{P(\lambda a_i,1)}{\lambda a_i}\big)$ also are in general position and are not equivalent to the $p_i$ under the action of $\PGL_3(\K)$.
First by Proposition \ref{pro:general position}, by avoiding 6 values of $\lambda$ we can assume that the $q_i$ are in general position.
Assume that $g \in \PGL_3(\K)$ sends the $p_i$ on the $q_i$.
By assumption the nodal cubic $C_P$ from Set-Up \ref{setup:nodal cubic} contains both the $p_i$ and the $q_i$.
Then $g^{-1}(C_P)$ is a nodal cubic through the $p_i$, and by Lemma \ref{lem:12 nodal cubics} we know that there are at most 12 of them.
Moreover a given nodal cubic is stabilized by finitely many elements of $\PGL_3(\K)$, hence the result.
\end{proof}

We separate the proof of Proposition  \ref{pro:general position} into the next three lemmas, where we always assume Set-Up \ref{setup:nodal cubic}.

\begin{lemma} \label{lem:no cubic}
There is no singular irreducible cubic passing through all the $p_i$, with one of them the double point.
\end{lemma}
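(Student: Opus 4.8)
The statement to prove is Lemma \ref{lem:no cubic}: assuming Set-Up \ref{setup:nodal cubic}, there is no irreducible singular cubic passing through all eight points $p_i$ with one of the $p_i$ as its singular (double) point. Since the $p_i$ lie on the nodal cubic $C_P$, and since a singular irreducible cubic is rational, my plan is to compare the two rational curves and derive a contradiction from the degree‑8 structure of the Galois orbit together with the group law on $C_P^{sm}$ described in Remark \ref{rem:group law}.

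\textbf{Main steps.} First I would fix notation: suppose $D$ is an irreducible singular cubic through all the $p_i$, with (say) $p_j$ the double point of $D$. Since $D$ is an irreducible plane cubic, it has exactly one singular point, so $D$ meets $C_P$ at the seven remaining points $p_i$ ($i\neq j$) plus the point $p_j$ counted with multiplicity at least $2$ (the double point of $D$ forces the local intersection multiplicity of $D$ and $C_P$ at $p_j$ to be $\ge 2$, assuming $p_j$ is a smooth point of $C_P$, which it is since the $a_i\in\K^*$ avoid the node $[0:1:0]$). By Bézout, $D\cdot C_P = 9$, so this accounts for all of the intersection: $D$ and $C_P$ meet exactly along $p_1+\cdots+p_8$ with $p_j$ doubled and the others simple — provided $D\neq C_P$, which holds since $C_P$ is itself nodal, not of the "one $p_i$ is the double point" type (the node of $C_P$ is at infinity). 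Second, I would translate "two cubics $C_P$ and $D$ cut out the same degree‑9 divisor $\sum_{i\neq j}p_i + 2p_j$ on $C_P$" into the group law: on the smooth elliptic-type locus, two cubic sections are linearly equivalent, so the sum of the nine intersection points (using the group law on $C_P^{sm}$ with neutral element $e=(1,0)$) is a fixed element, namely the one cut out by $C_P$ on itself, which is $0$ by the flex-and-group-law normalization. Hence $2(p_j\cdot) \cdot \prod_{i\neq j}(p_i\cdot) = 0$ in the group, i.e. $\prod_i (p_i\cdot)\cdot (p_j\cdot) = 0$. Using the isomorphism $x\mapsto(x,\frac{P(x,1)}{x})$ from $\K^*$ to $C_P^{sm}$ (Remark \ref{rem:group law}), this reads $(a_1\cdots a_8)\cdot a_j = 1$ in $\K^{a*}$, i.e. $a_j = (a_1\cdots a_8)^{-1}$.

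\textbf{Conclusion of the argument.} The quantity $\sigma := a_1\cdots a_8$ is the norm $N_{\L/\K}(a_1)$ up to reindexing, hence $\sigma\in\K^*$, so $\sigma^{-1}\in\K^*$. But $a_j$ lies in the Galois orbit $\{a_1,\dots,a_8\}$, which has cardinality $8$, so no $a_j$ can equal an element of $\K^*$ — a $\K$-rational value would be a fixed point of $\Gal(\L/\K)$, shrinking the orbit. This contradiction shows no such $D$ exists. I would also note the one degenerate possibility to rule out, namely $D = C_P$: this cannot occur because the double point of $C_P$ is the point $[0:1:0]$ at infinity, which is not among the $p_i$ (all $a_i\in\K^*$), so $C_P$ is not a cubic "with one of the $p_i$ the double point."

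\textbf{Expected main obstacle.} The delicate point is making the group-law bookkeeping precise: one must correctly identify the divisor class cut out by a cubic on $C_P^{sm}$ and verify that with the chosen normal form (Set-Up \ref{setup:nodal cubic}, with $e=(1,0)$ a flex) this class corresponds to $0\in\K^*$ under the parametrization, so that "nine points on a cubic" becomes "product of parameters equals $1$" exactly as in Lemma \ref{lem:produit 1}. Once that dictionary is in hand, the degree-$8$-orbit argument is immediate; the rest is the routine check that $p_j$ is a smooth point of $C_P$ and that the intersection multiplicity at the node of $D$ is at least $2$.
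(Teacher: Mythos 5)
Your proof is correct, but it takes a genuinely different route from the paper's. The paper's argument is a one-line Bézout computation between the hypothetical cubic $C$ and a Galois conjugate $C'=\sigma(C)$: since the orbit has size $8$ the stabilizer of each $p_i$ is trivial, so the (unique) singular points of $C$ and $C'$ differ, hence $C\neq C'$, and the intersection number is at least $2+2+6=10>9$, a contradiction. You instead intersect $D$ with the ambient nodal cubic $C_P$, pin the intersection divisor down to $2p_j+\sum_{i\neq j}p_i$ by Bézout, and then invoke the degree-$9$ case of the product criterion of Lemma~\ref{lem:produit 1} (via the group law of Remark~\ref{rem:group law}) to get $a_j\cdot\prod_i a_i=1$; since $\prod_i a_i$ is the norm $N_{\L/\K}(a_1)\in\K^*$, this forces $a_j\in\K^*$, contradicting $\K(a_j)=\L$. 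This is sound: your checks that $D\neq C_P$, that $D$ misses the node at infinity, and that the local multiplicity at $p_j$ is exactly $2$ are all needed and all correct, and the cubic case of Lemma~\ref{lem:produit 1} (nine affine intersection points have parameter product $1$) follows by the identical computation as the line and conic cases, comparing the constant and leading coefficients of $x^3F(x,P(x,1)/x)$. Your approach has the merit of being uniform with Lemmas~\ref{lem:no line} and~\ref{lem:no conic}, which use exactly this product-plus-Galois-orbit mechanism; the paper's conjugate-curve trick is shorter and avoids extending Lemma~\ref{lem:produit 1} beyond the stated cases, but is special to the situation where the obstruction is already visible at the level of intersection numbers.
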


\begin{proof}
If $C$ is such a cubic, and $C' = \sigma(C)$ is the image of $C$ under a non trivial element $\sigma \in \Gal(\L/\K)$, then $C\cdot C' = 2 + 2 + \underbrace{1+ \dots + 1}_{6} = 10$, a contradiction.
\end{proof}

\begin{lemma} \label{lem:no line}
Any three points among the $p_i = \big( a_i, \frac{P(a_i,1)}{a_i} \big)$ are not on a line.
\end{lemma}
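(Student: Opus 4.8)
The plan is to turn the geometric statement into an arithmetic one via Lemma~\ref{lem:produit 1}(1) and then exploit the Galois action on the orbit $\{a_1,\dots,a_8\}$. First I would assume for contradiction that three of the points, say $p_i,p_j,p_k$, are collinear; by Lemma~\ref{lem:produit 1}(1) this means $a_ia_ja_k=1$. Since $a_ia_ja_k=1\in\K$, it is fixed by $G:=\Gal(\L/\K)$, and because the $a_\ell=\lambda b_\ell$ form a single $G$-orbit, every $\sigma\in G$ permutes the index set $\{1,\dots,8\}$ and produces a new relation $a_{\sigma(i)}a_{\sigma(j)}a_{\sigma(k)}=1$. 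The key structural remark is that $\lvert G\rvert=8$ equals the size of the orbit, so $G$ acts freely and transitively; in particular no non-trivial element of $G$ fixes an index, and $\Stab_G(\{i,j,k\})=\{e\}$, so the $G$-orbit of $\{i,j,k\}$ has exactly $8$ members.

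The engine of the argument is the following substitution: if some $\sigma\in G$ satisfies $\lvert\sigma(\{i,j,k\})\cap\{i,j,k\}\rvert=2$, write $\{i,j,k\}=\{u,v,w\}$ with $\sigma(\{i,j,k\})=\{u,v,w'\}$ and $w'\neq w$; then the relations $a_ua_va_w=1$ and $a_ua_va_{w'}=\sigma(a_ia_ja_k)=1$ give $a_w=a_{w'}$, contradicting that the $a_\ell$ are pairwise distinct. So it suffices to exhibit such a $\sigma$. A short group-theoretic computation shows that this ``overlap-$2$'' element always exists, unless $G\cong\Z/8\Z$ and $\{i,j,k\}$ is (up to the action of $G$) one of the two triples whose six nonzero differences in $\Z/8\Z$ are pairwise distinct, i.e.\ a line of the M\"obius--Kantor configuration; this is exactly the step I expect to be the main obstacle, since there every non-trivial translate of the triple meets it in at most one point.

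For that exceptional case I would use all eight conjugate relations at once. Dividing consecutive ones yields $s_\ell s_{\ell+1}s_{\ell+3}=1$ for $\ell\in\Z/8\Z$, where $s_\ell:=b_\ell/b_{\ell+1}$; the associated circulant integer matrix (first row supported in columns $\ell,\ell+1,\ell+3$) has nonzero determinant (namely $\pm27$), so each $s_\ell$ is a root of unity of order dividing $27$. Hence $b_\ell^{27}$ is Galois-invariant, thus lies in $\K$, and $b_{\ell+1}/b_\ell\in\mu_{27}$; as the $b_\ell$ are $8$ distinct elements of $b_1\cdot\mu_{27}$ one gets $\mu_9\subseteq\L$, and then writing $\tau$ for a generator of $G$ and noting that $\tau$ acts on $\mu_9$ through a group of order dividing $2$, the relation $\tau^8=\mathrm{id}$ together with $8$ being coprime to $9$ forces $b_\ell=b_{\ell+2}$ for all $\ell$ — impossible. (A cheaper variant, giving a slightly weaker conclusion that still suffices inside the proof of Proposition~\ref{pro:general position} in the spirit of Lemma~\ref{lem:no cubic}, is to multiply all eight conjugate relations, obtaining $\bigl(\prod_{\ell=1}^{8}a_\ell\bigr)^3=1$, i.e.\ $\lambda^{24}N_{\L/\K}(b_1)^3=1$, which already excludes all but finitely many $\lambda$.)
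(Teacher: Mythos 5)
Your reduction of collinearity to the multiplicative relation $a_ia_ja_k=1$ via Lemma~\ref{lem:produit 1}(1), followed by the ``overlap-$2$'' trick, is a genuinely different route from the paper, which instead shows that the Galois orbit of the line forms an $8_3$ (M\"obius--Kantor) configuration and reads off $c^3=1$ directly from the incidence relations --- an argument that never uses the isomorphism type of $\Gal(\L/\K)$. Unfortunately your case analysis has a real gap: it is not true that an overlap-$2$ element exists whenever $G\not\cong\Z/8\Z$. Identifying the index set with $G$ via the simply transitive action, one has $|\sigma T\cap T|\ge 2$ for some $\sigma\neq e$ exactly when the six quotients $t't^{-1}$ ($t\neq t'$ in $T$) are not pairwise distinct. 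In the quaternion group $Q_8$ the triple $T=\{1,i,j\}$ has quotient set $\{\pm i,\pm j,\pm k\}$, all six distinct, so every nontrivial translate of $T$ meets $T$ in at most one index. Since $Q_8$ does occur as the Galois group of a degree-$8$ extension (the paper's example (5) is precisely such an extension of $\Q$), an entire case is missing, and your fallback argument --- consecutive differences $s_\ell=b_\ell/b_{\ell+1}$ and a circulant determinant --- is specific to the cyclic group and does not transfer to $Q_8$ as written. (The groups $D_8$, $\Z/2\Z\times\Z/4\Z$ and $(\Z/2\Z)^3$ are indeed fine: they have too few elements of order $4$ for a repeated quotient to be avoided, so your overlap-$2$ step applies there.)

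Two smaller points. The ``cheaper variant'' $\lambda^{24}N_{\L/\K}(b_1)^3=1$ only excludes finitely many $\lambda$, whereas the lemma is an unconditional statement and is used as such in Lemma~\ref{lem:finite field} (``Lemmas \ref{lem:no cubic} and \ref{lem:no line} are valid for any $\beta$''), so it cannot replace the full claim. And the endgame of your $\Z/8\Z$ case (from $\mu_9\subseteq\L$ to $b_\ell=b_{\ell+2}$) is only sketched and would need care. By contrast, the paper's configuration argument disposes of all group types at once: multiplying the three labels along a suitable line of the $8_3$ configuration gives $c^3=1$ for one of the coordinates $c=a_i$, which contradicts $[\K(a_i):\K]=8$ since a cube root of unity has degree at most $2$ over $\K$. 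If you want to keep your approach, you must either treat $Q_8$ by an analogous determinant computation over its group algebra, or fall back on the incidence-configuration argument for the two exceptional groups.
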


\begin{proof}
Assume the contrary, and denote by $L$ a line containing three of the $p_i$.
Since the $p_i$ lie on a nodal cubic, the line $L$ contains exactly three points among the $p_i$.
Then taking the Galois orbit of $L$ we obtain a configuration of 8 lines, each line containing 3 of the $p_i$, and with 3 lines through each $p_i$.
This is the classical Möbius-Kantor configuration $8_3$, see figure \ref{fig:mobius-kantor}, where 7 of the lines are represented as lines of the plane, and the label of a vertex indicates the first coordinate of the corresponding point in $\A^2$.
Precisely, $a,b,c$ are among the $a_i$, and the other labels are expressed in terms of $a,b,c$ using Lemma \ref{lem:produit 1}.

Multiplying the labels of the bottom line we find $c^3 = 1$.
But a cubic root of the unity either lives in $\K$ or in a quadratic extension of $\K$, in contradiction with the assumption that $c$, being any one of the $a_i$, should satisfy $\K(c) = \L$.
\end{proof}

\begin{figure}[ht]
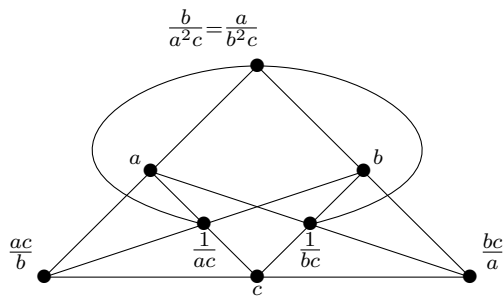

\[\mygraph{
!{<0cm,0cm>;<1.4cm,0cm>:<0cm,1.4cm>::}
!{(0,2)}*-{\rond}="0"
!{(-1,1)}*-{\rond}="1"
!{(1,1)}*-{\rond}="2"
!{(-0.5,.5)}*-{\rond}="3"
!{(.5,.5)}*-{\rond}="4"
!{(-2,0)}*-{\rond}="5"
!{(0,0)}*-{\rond}="6"
!{(2,0)}*-{\rond}="7"
"1"-"4"-_<{\quad \tfrac{1}{bc}}"7"
"1"-"3"-"6"
"1"-_>{\tfrac{ac}{b}}"5"
"2"-"3"-^<{\tfrac{1}{ac}\quad}"5"
"2"-"4"-"6"
"2"-^>{\tfrac{bc}{a}}"7"
"5"-_>c"6"-"7"
"1"-^<a^>(1.1){\tfrac{b}{a^2c} = \tfrac{a}{b^2c}}"0"-^>b"2"
"3"-@`{c+(-2,.5),c+(2.5,2),p+(2,.5)}@{-}"4"
}\]
\caption{The Möbius-Kantor configuration $8_3$} \label{fig:mobius-kantor}
\end{figure}

\begin{lemma} \label{lem:no conic}
Except for at most $6$ values of $\lambda$, any $6$ points among the points $p_i$ associated to the orbit $\lambda b_i$ are not on a conic.
\end{lemma}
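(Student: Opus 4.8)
The plan is to use Lemma \ref{lem:produit 1}(2), which says that six points $p_{i_1}, \dots, p_{i_6}$ among the $p_i$ lie on a conic if and only if the product $a_{i_1} \cdots a_{i_6} = 1$, where $a_i = \lambda b_i$. First I would fix a subset $T = \{i_1, \dots, i_6\} \subset \{1,\dots,8\}$ of size $6$, and observe that the condition ``$p_{i_1},\dots,p_{i_6}$ lie on a conic'' becomes $\lambda^6 \prod_{i\in T} b_i = 1$, that is, $\lambda^6 = \big(\prod_{i\in T} b_i\big)^{-1}$. Since $\lambda \in \K^*$, this is a polynomial equation in $\lambda$ of degree $6$ (with nonzero constant term, as the $b_i \in \L^*$), so for each fixed $T$ there are at most $6$ values of $\lambda \in \K^*$ satisfying it. Naively summing over all $\binom{8}{6} = 28$ subsets would give a bound of $168$, which is far worse than the claimed bound of $6$; the point of the lemma is that these $28$ bad loci essentially coincide.

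The key observation is that the full set $\{a_1,\dots,a_8\}$ is a Galois orbit, so $N := a_1 a_2 \cdots a_8 = \lambda^8 \prod_i b_i$ lies in $\K^*$ (it is $\Gal(\L/\K)$-invariant), and in fact $\prod_i b_i \in \K^*$ as well. Now for a $6$-element subset $T$, writing $\{1,\dots,8\} \setminus T = \{j,k\}$, the conic condition $\prod_{i \in T} a_i = 1$ is equivalent to $a_j a_k = N$. So I would reformulate: six of the $p_i$ lie on a conic precisely when the product $a_j a_k$ of the two complementary indices equals $N$. Thus the bad values of $\lambda$ are exactly those for which $\lambda^2 b_j b_k = N = \lambda^8 \prod_i b_i$ for some pair $j \neq k$, i.e. $\lambda^6 = b_j b_k / \prod_i b_i$ for some pair. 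I would then argue that the Galois action permutes the pairs $\{j,k\}$ transitively-enough that the quantities $b_j b_k$ take few values, or more simply: the polynomial $\prod_{\{j,k\}} \big(\lambda^6 \prod_i b_i - b_j b_k\big)$ has coefficients fixed by $\Gal(\L/\K)$, hence lies in $\K[\lambda]$, but a cleaner route is to note that if $\lambda \in \K^*$ is bad, then applying any $\sigma \in \Gal(\L/\K)$ shows $\sigma(b_j)\sigma(b_k)$ is also of the form $\lambda^2 \cdot(\text{something})$ — I would need to track this carefully to collapse the count down to a single degree-$6$ equation over $\K$, or at worst a bounded number of them whose union still has at most $6$ solutions.

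The honest way to get exactly $6$: I expect that the correct statement is that \emph{all} the bad conditions, as $T$ ranges over $6$-subsets, reduce to the single equation $\lambda^6 = c$ for one specific constant $c \in \K^*$ (or that the bad locus is contained in the solution set of one such equation), because of the rigidity forced by the Galois orbit structure — the $a_j a_k$ for complementary pairs, when one of the $a_i a_j a_k = N$ conditions holds, force all of them by applying Galois. I would make this precise by: assume $a_j a_k = N$ for some pair $\{j,k\}$; apply $\sigma \in \Gal(\L/\K)$ to get $\sigma(a_j)\sigma(a_k) = N$; since the orbit is the full set and $N = \lambda^8 \prod b_i$ is already symmetric, deduce that the stabilizer structure forces $\lambda^6 \in \K^*$ to equal a Galois-fixed element, so $\lambda$ lies in the root set of a \emph{fixed} degree-$6$ polynomial over $\K$, independent of $T$. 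This yields at most $6$ bad values.

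\textbf{Main obstacle.} The hard part will be the bookkeeping that collapses the $\binom{8}{6}$ a priori independent bad loci into (the solution set of) a single degree-$6$ equation, using the transitivity of the Galois action on the orbit $\{a_1,\dots,a_8\}$ together with the constraint $\prod a_i \in \K^*$. If that collapse does not work cleanly for every intermediate structure of $\Gal(\L/\K)$, the fallback is to bound each of finitely many loci separately and argue that the bad $\lambda$ still satisfy $\lambda^6 = (\text{Galois-fixed constant})$, keeping the total at $6$; in the worst case one accepts a slightly larger finite bound, which is still enough for Proposition \ref{pro:general position} and its corollary, since only finiteness (with a bound independent of the field) is actually needed downstream.
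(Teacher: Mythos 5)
Your starting point coincides with the paper's: Lemma \ref{lem:produit 1}(2) converts ``six of the $p_i$ lie on a conic'' into the product condition $\lambda^6\prod_{i\in T}b_i=1$, and this already gives finiteness of the bad locus (at most $6\binom{8}{6}$ values of $\lambda$). But the lemma claims the bound $6$, and that is precisely what your proposal does not deliver. The collapse you hope for --- that all $28$ conditions reduce to a single equation $\lambda^6=c$ --- does not follow from the Galois-invariance of $N=\prod_i a_i$. What that invariance gives is that the bad $6$-subsets $T$ come in full Galois orbits and that the constant $c_T=b_jb_k/\prod_i b_i$ is constant along each orbit and lies in $\K^*$; it does not exclude several distinct orbits with distinct constants, and the union of the solution sets of $\lambda^6=c_1$ and $\lambda^6=c_2$ with $c_1\neq c_2$ can have up to $12$ elements. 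Your fallback of accepting ``a slightly larger finite bound'' proves a weaker statement than the one asserted (and repeated in Proposition \ref{pro:general position}), even though it would indeed suffice for the corollary about infinite fields.

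The missing ingredient is geometric and is the core of the paper's proof. If a conic $C$ contains $6$ of the $p_i$, B\'ezout forces the remaining two $p_i$ off $C$, and any conjugate $\sigma(C)\neq C$ meets $C$ in exactly $4$ points; a counting argument then shows the Galois orbit of $C$ consists of exactly $4$ conics and the $8$ points split into $4$ Galois-stable pairs, each conic omitting exactly one pair (a $4_3$ configuration). Applying Lemma \ref{lem:produit 1}(2) to all four conics yields the system $\pi_1\pi_2\pi_3=\pi_1\pi_2\pi_4=\pi_1\pi_3\pi_4=\pi_2\pi_3\pi_4=1$ for the pair-products $\pi_i$, whence $\pi_1=\pi_2=\pi_3=\pi_4$ and $\pi_i^3=1$. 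Since rescaling the orbit by $\mu$ multiplies each pair-product by $\mu^2$, a second bad value of the parameter forces $\mu^6=1$ (granting, as the paper does implicitly, that the pairing is unchanged), and this is where the bound $6$ comes from. So a single bad conic does not just give one product condition: it propagates to a rigid configuration of four conics whose combined conditions pin the pair-products down to cube roots of unity. Without that step your argument stalls at ``finitely many'', which is not the statement to be proved.
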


\begin{proof}
Assume $C$ is a conic through $6$ of the $p_i$.
Since the $p_i$ lie on the cubic $C_P$, Bézout Theorem implies that the 2 remaining points $p_i$ are not on the conic $C$.
Consider $\sigma$ in the Galois group $\Gal(\L/\K)$.
Then $\sigma(C)$ is either equal to $C$, or shares exactly 4 intersection points with $C$.
The only possibility is that the Galois orbit of $C$ consists of 4 conics, and we can group the $p_i$ into 4 pairs, such that each conic passes through three of the four pairs of points.
Then the combinatorics is a configuration $4_3$: by each pair pass 3 conics, and each conic contains three pairs of points.
Denote by $\pi_1, \pi_2, \pi_3, \pi_4$ the product of the first coordinates of each pair of points.
By Lemma \ref{lem:produit 1} we have
\begin{align*}
\pi_1  \pi_2  \pi_3 &= 1, & \pi_1  \pi_2  \pi_4 &= 1, \\
\pi_1  \pi_3  \pi_4 &= 1, & \pi_2  \pi_3  \pi_4 &= 1.
\end{align*}
It follows that $\pi_1 = \pi_2 = \pi_3 = \pi_4$, and then $\pi_i^3 = 1$ for all $i$.
Finally pick an element $\lambda \in \K^*$, such that $\lambda^6 \neq 1$.
Now if we replace $a_i$ by $\lambda a_i$, and the corresponding points on $C_P$ are still in non-general position, then the same argument
gives 
\[\lambda^6  = \lambda^6 \pi_i^3 = (\lambda a_i \tau(\lambda a_i))^3 = 1,\]
a contradiction.
We conclude that if there is a bad point of degree 8 associated to an orbit $a_i$, then for all $\lambda \in \K^*$ except maybe the 6th roots of unity, the point of degree 8 associated with the orbit $\lambda a_i$  is in general position with respect to conics.
\end{proof}

In the case of finite fields, for small cardinal the previous statements are empty, and anyways for arbitrary cardinal one would like an estimate of the cardinality of $\B$.
We adapt the previous argument as follows.

\begin{lemma} \label{lem:finite field}
Assume Set-Up $\ref{setup:nodal cubic}$ for a finite field $\K = \FF{q}$.
Let $a_1, \dots, a_8 \in \FF{q^8}$ be an orbit under $\Gal(\FF{q^8}/\FF{q})$, and assume that the points $p_i = (a_i, \frac{P(a_i,1)}{a_i}) \in \A^2 \subset \p^2$ are in non-general position.
For $\beta \in \FF{q^4}^*$, set $b_i = \beta^{q^{i-1}} a_i$, so that $b_1, \dots, b_8 \in \FF{q^8}$ is also a Galois orbit.

If $\beta \not\in \{x \in \FF{q^4}^*;\; x^6 = 1, x^2 \in \FF{q}\}$, then the points $q_i = (b_i, \frac{P(b_i,1)}{b_i}) $ are in general position.
\end{lemma}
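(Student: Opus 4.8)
The plan is to mimic the proofs of Lemmas \ref{lem:no cubic}, \ref{lem:no line} and \ref{lem:no conic}, but keeping careful track of how the Galois-twisted multiplier $\beta$ enters the picture. Recall that for a Galois orbit $a_1,\dots,a_8 \in \FF{q^8}$ under $\mathrm{Gal}(\FF{q^8}/\FF{q}) = \langle \mathrm{Frob}_q\rangle$, we may index so that $a_i = a_1^{q^{i-1}}$; setting $b_i = \beta^{q^{i-1}} a_i$ with $\beta \in \FF{q^4}^*$ indeed gives $b_i = (\beta a_1)^{q^{i-1}}$ after noting $\beta^{q^4} = \beta$, so $b_1,\dots,b_8$ is again a Galois orbit. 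The three failure modes for general position are: (a) a singular irreducible cubic through all $p_i$ with one of them the node; (b) three of the $p_i$ on a line; (c) six of the $p_i$ on a conic. I would argue that mode (a) and mode (b) never occur for \emph{any} Galois orbit on $C_P$, by exactly the arguments of Lemmas \ref{lem:no cubic} and \ref{lem:no line} (the $C\cdot C'=10$ Bézout contradiction, respectively the Möbius--Kantor configuration forcing a primitive cube root of unity to generate a degree-8 extension, which is impossible). Hence these modes are irrelevant to the twisting argument, and only mode (c) needs attention.

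For mode (c), I would reproduce the combinatorial analysis in the proof of Lemma \ref{lem:no conic}: if six of the $p_i$ lie on a conic $C$, then Bézout with $C_P$ shows the remaining two are off $C$, and the Galois orbit of $C$ must consist of exactly $4$ conics (since $\sigma(C)$ either equals $C$ or meets it in $4$ points), pairing the eight points into four pairs with products of first coordinates $\pi_1,\pi_2,\pi_3,\pi_4$ satisfying all four triple-product relations $\pi_i\pi_j\pi_k=1$; these force $\pi_1=\pi_2=\pi_3=\pi_4$ and $\pi_i^3=1$. The key new point is the structure of such a pair: the stabilizer of $C$ in $\mathrm{Gal}(\FF{q^8}/\FF{q})$ has order $2$, generated by $\mathrm{Frob}_q^{\,4}$, so each pair has the form $\{a, a^{q^4}\}$ with product $\pi = a^{1+q^4} = N_{\FF{q^8}/\FF{q^4}}(a) \in \FF{q^4}^*$; moreover all four pairs are Galois-conjugate under $\mathrm{Frob}_q$, and since $\pi_1=\cdots=\pi_4$ this common value $\pi$ is fixed by $\mathrm{Frob}_q$, hence $\pi \in \FF{q}^*$. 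So the obstruction to general position (via conics) forces the existence of some $a_i$ with $a_i^{1+q^4} \in \FF{q}^*$ and $(a_i^{1+q^4})^3 = 1$.

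Now I would run the same computation for the twisted orbit $b_i = \beta^{q^{i-1}} a_i$. If the $q_i$ fail general position, the same reasoning yields: each twisted pair is $\{b_i, b_i^{q^4}\} = \{\beta^{q^{i-1}}a_i, \beta^{q^{i+3}}a_i^{q^4}\}$ with $\beta^{q^4}=\beta$, so its product is $\beta^{q^{i-1}(1+q^4)} a_i^{1+q^4} = (\beta a_i)^{q^{i-1}(1+q^4)}$; after the Frobenius-invariance argument the common pair-product equals $\beta^{1+q^4}a_i^{1+q^4}$ up to a Frobenius twist, and this lies in $\FF{q}^*$ and cubes to $1$. Dividing the twisted relation by the untwisted one (both give pair-products that are $6$th roots of unity in $\FF{q}^*$, using $\pi^3=1$ so $\pi^6=1$) gives $\beta^{6(1+q^4)} = 1$ and hence, combined with the observation that the pair-product $\beta^{1+q^4}\pi$ being Frobenius-fixed forces $\beta^{1+q^4} = \beta^2 \cdot(\text{stuff in }\FF q)$... more precisely I expect to extract that $\beta^{1+q^4}$ must itself be a $6$th root of unity lying in $\FF q^*$, and since $\beta\in\FF{q^4}^*$ one has $\beta^{1+q^4} = N_{\FF{q^8}/\FF{q^4}}$-type expression forcing $\beta^2 \in \FF q$ and $\beta^6 = 1$. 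Thus any bad $\beta$ lies in $\{x \in \FF{q^4}^* : x^6 = 1,\ x^2 \in \FF q\}$, which is the claimed exceptional set, completing the proof. The main obstacle I anticipate is the bookkeeping in the middle paragraph: correctly pinning down that the $4$-element Galois orbit of the conic forces the pair-stabilizer to be $\langle\mathrm{Frob}_q^4\rangle$ and hence that the common pair-product lands in $\FF q^*$ rather than merely in $\FF{q^4}^*$ — this is what makes the exponent conditions ``$x^6=1$ and $x^2\in\FF q$'' come out exactly, and getting the Frobenius-descent step airtight (as opposed to off by a conjugate) is where care is needed.
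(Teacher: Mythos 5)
Your proposal is correct and follows essentially the same route as the paper: lines and cubics are ruled out unconditionally, and for conics one uses that $\tau = \mathrm{Frob}_q^4$ is the unique order-2 element stabilizing the conic, so the pairs are $\tau$-orbits with common product $\pi \in \FF{q}^*$ satisfying $\pi^3=1$, and the twist multiplies this product by $\beta^{1+q^4}=\beta^2$, forcing any bad $\beta$ to satisfy $\beta^2\in\FF{q}$ and $\beta^6=1$. The only wobbles are cosmetic (the identity $\beta^{q^{i-1}(1+q^4)}a_i^{1+q^4}=(\beta a_i)^{q^{i-1}(1+q^4)}$ is a typo, and the detour through $\beta^{6(1+q^4)}=1$ only gives $\beta^{12}=1$ before you correctly switch to the cube-root relations), but your final extraction matches the paper's.
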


\begin{proof}
Lemmas \ref{lem:no cubic} and \ref{lem:no line} are valid for any $\beta$, so the delicate point is only the general position with respect to conics.
With the same notation as in the proof of Lemma \ref{lem:no conic}, since 6 among the $p_i$ lie on a conic $C$ then we obtain that the products $\pi_i$  are all equal to a same 3rd root of unity.
Now we use the fact that over a finite field, $\tau:x \mapsto x^{q^4}$ is the only element of order 2 in $\Gal(\FF{q^8}/\FF{q})$.
In particular, $\tau$ is the element of order 2 that fixes $C$. It also fixes all conjugates of $C$ and hence interchanges the two elements in each pair of $p_i$.
Then the product $\pi_i$ is invariant under the Galois group $\Gal(\FF{q^8}/\FF{q})$, so we have $\pi_i \in \FF{q}$.
Pick $\beta \in \FF{q^4} \setminus  \{x \in \FF{q^4}^*;\; x^6 = 1, x^2 \in \FF{q}\}$, and replace the orbit of $a_1$ by the orbit of $b_1 = \beta a_1$, which still has cardinal 8.
Then $\pi_1 \in \FF{q}$ is replaced by $\beta^2 \pi_1$, which either is not a 3rd root of the unity anymore, or is not an element of $\FF{q}$.
Thus for each such choice of $\beta$ the points $q_i =(b_i, \frac{P(b_i,1)}{b_i})$ are in general position with respect to conics.
\end{proof}

\begin{lemma} \label{lem:same orbit}
Let $x, x'$ be two conjugate elements in  $\FF{q^8} \setminus \FF{q^4}$, that are also in the same orbit under the action of $\FF{q^4}^*$ by left multiplication.
Assume that one of the following conditions holds:
\begin{enumerate}[$(i)$]
\item $x$ is a generator of $\FF{q^8}$;
\item $q = 2$.
\end{enumerate}
Then $x = x'$.
\end{lemma}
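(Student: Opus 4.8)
The plan is to reduce both hypotheses to a single divisibility statement about the multiplicative order of $x$, and then run an elementary order‑of‑$q$‑modulo‑$q^4+1$ computation. First I would note that since $x$ and $x'$ are conjugate over $\FF{q}$ and both lie in $\FF{q^8}\setminus\FF{q^4}$, they have degree exactly $8$ over $\FF{q}$, so the eight conjugates $x,x^q,\dots,x^{q^7}$ are pairwise distinct; hence $x'=x^{q^k}$ for a unique $k\in\{0,\dots,7\}$, and it suffices to prove $k=0$. The hypothesis that $x'$ lies in the $\FF{q^4}^*$-orbit of $x$ means $x'=\beta x$ with $\beta\in\FF{q^4}^*$, so $\beta=x^{q^k-1}$; applying $y\mapsto y^{q^4}$, which fixes $\beta$, gives $x^{q^4(q^k-1)}=x^{q^k-1}$, that is
\[\order(x)\ \big|\ (q^4-1)(q^k-1).\]
Everything then comes down to showing this divisibility forces $8\mid k$.

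In case $(i)$, $x$ generates $\FF{q^8}^*$, so $\order(x)=q^8-1=(q^4-1)(q^4+1)$; cancelling the factor $q^4-1$, the displayed divisibility becomes $q^k\equiv 1\pmod{q^4+1}$. Here $q^4\equiv-1\pmod{q^4+1}$, so the multiplicative order of $q$ modulo $q^4+1$ divides $8$ but does not divide $4$ (otherwise $q^4\equiv 1$, impossible since $q^4+1>2$), hence equals $8$. Therefore $8\mid k$, and since $0\le k\le 7$ we conclude $k=0$ and $x'=x$.

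In case $(ii)$, $q=2$ and $x$ is not assumed primitive. Then $d:=\order(x)$ divides $2^8-1=255=15\cdot 17$, while $d\nmid 2^4-1=15$ because $x\notin\FF{2^4}$; since $\gcd(15,17)=1$ this forces $17\mid d$. The displayed divisibility now gives $17\mid d\mid 15(2^k-1)$, hence $17\mid 2^k-1$, and $2$ has order $8$ modulo $17$ (as $2^4\equiv-1$), so again $8\mid k$ and $x'=x$. The genuinely delicate point is precisely this last step: one needs a divisor of $\order(x)$ that is ``primitive'' for $q^8-1$, in the sense that it divides no $q^k-1$ with $0<k<8$. For $q=2$ the coprimality $\gcd(2^4-1,2^4+1)=1$ together with $x\notin\FF{2^4}$ guarantees that $2^4+1=17$ is such a divisor of $\order(x)$; for general $q$ one has only $\gcd(q^4-1,q^4+1)\in\{1,2\}$, so a non-primitive element of $\FF{q^8}\setminus\FF{q^4}$ may fail to have order divisible by a suitable divisor of $q^4+1$, which is exactly why the stronger hypothesis $(i)$ of primitivity is imposed in that generality.
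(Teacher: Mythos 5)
Your proof is correct and follows essentially the same route as the paper: both arguments reduce the hypothesis to the divisibility $\order(x)\mid(q^k-1)(q^4-1)$ obtained from $x^{q^k-1}\in\FF{q^4}^*$, and then in case $(ii)$ both extract the prime $17$ and note that $17\nmid 2^k-1$ for $1\le k\le 7$. The only (minor) divergence is in case $(i)$: you cancel the factor $q^4-1$ and invoke the order of $q$ modulo $q^4+1$, whereas the paper writes $(q^k-1)(q^4-1)=d(q^8-1)$ and derives a contradiction from a congruence modulo $q^4$ together with a size estimate on $d$ — your version is a touch cleaner but not a genuinely different argument.
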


\begin{proof}
Assume $x \neq x'$.
We have $x' = x^{q^i}$, for some $1 \le i \le 7$, and $x'/x = x^{q^i - 1} \in \FF{q^4}^*$.
In particular $x^{(q^i - 1)(q^4 -1)} = 1$, so that $(q^i - 1)(q^4 -1)$ is a non-zero multiple of $\order(x)$.

If $x$ is a generator of $\FF{q^8}$, we get $(q^i - 1)(q^4 -1) = d(q^8 -1)$ for some $d > 0$.
This implies $5 \le i \le 7$, and reducing modulo $q^4$ we find $d \equiv -1 \mod q^4$, hence $d \ge q^4 -1$ which gives a contradiction.

If $q = 2$, the group $\FF{2^8}^*$ is cyclic of order $255 = 3\cdot5\cdot17$.
Observe that an element $x \in \FF{2^8}^*$ is in $\FF{2^8} \setminus \FF{2^4}$ if and only if $\order(x)$ is a multiple of 17 (namely, the possibilities are $17, 51, 85$ and $255$).
Then one checks that for $1 \le j \le 7$, $2^j -1$ is not a multiple of 17, which gives the expected contradiction.
\end{proof}

\begin{proposition}\label{cor:B count}
Let $\FF{q}$ be a finite field.
Then the number of Bertini involutions with a base point of degree $8$, up to conjugacy by $\PGL_3(\FF{q})$, is at least $M_q$, where $M_2 = 2$, $M_3 = 12$, and for $q \ge 4$, 
\[M_q =\frac{1}{640} (q^6-1).\]
In particular, $M_q \ge q$ for all $q \ge 2$.
\end{proposition}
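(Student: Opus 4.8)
The plan is to manufacture, on a single nodal cubic, a large family of general-position degree-$8$ points that are pairwise inequivalent under $\PGL_3(\FF q)$, and then turn the resulting count into a count of Bertini involutions. Fix the nodal cubic $C=C_P$ of Set-Up \ref{setup:nodal cubic} over $\FF q$ (by Lemma \ref{lem:general cubic}, or directly, the value of $c_0$ does not matter up to $\PGL_3(\FF q)$). By Remark \ref{rem:group law}, $C^{sm}\cong\mathbb G_{m}$ over $\FF q$, so the degree-$8$ points of $\p^2$ lying on $C$ correspond bijectively to the degree-$8$ closed points of $\mathbb G_m$, i.e.\ to the Galois orbits $\{a,a^q,\dots,a^{q^7}\}$ with $a\in\FF{q^8}\setminus\FF{q^4}$. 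For such a point, the only possible obstruction to general position is ``six points on a conic'': Lemma \ref{lem:no cubic} rules out a singular cubic through all eight with a node at one of them, and Lemma \ref{lem:no line} rules out three collinear points. Starting from a Galois orbit of a primitive element $a_0\in\FF{q^8}^*$ and twisting by $\FF{q^4}^*$ (replacing $a_0$ by $\beta a_0$), Lemma \ref{lem:same orbit}$(i)$ shows the twisted orbits are pairwise distinct, and Lemma \ref{lem:finite field} shows all but at most $6$ of them are in general position. Letting $a_0$ range over the Galois orbits of primitive elements and grouping into these (pairwise disjoint) twist families produces a supply of general-position degree-$8$ points on $C$ whose size I will estimate from below in terms of $q$.

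Next I pass from points on $C$ to Bertini involutions. For a general-position degree-$8$ point $p$, let $b_p$ be the Bertini involution of diagram \ref{diag:chi8}. Since $g b_p g^{-1}=b_{g(p)}$ for $g\in\PGL_3(\FF q)$, and since a Bertini involution determines its degree-$8$ base point (the del Pezzo surface of degree $1$ obtained by blowing it up carries a unique Bertini involution), the assignment $[p]\mapsto[b_p]$ is a well-defined injection from $\PGL_3(\FF q)$-equivalence classes of general-position degree-$8$ points to conjugacy classes of Bertini involutions. Hence it suffices to bound below the number of $\PGL_3(\FF q)$-classes represented among the points on $C$ constructed above. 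If $g\in\PGL_3(\FF q)$ carries one such point $p$ (on $C$) to another point on $C$, then $g^{-1}(C)$ is a nodal cubic through the eight points of $p$; by Lemma \ref{lem:12 nodal cubics} there are at most $12$ such cubics, and for each of them the admissible $g$ form a coset of its stabilizer in $\PGL_3(\FF q)$, which is finite (the stabilizer of a nodal cubic is finite, since all nodal cubics form a single $8$-dimensional $\PGL_3$-orbit; concretely, in characteristic $\neq 3$ it embeds in $S_3$ via its action on the three flexes of $C$). Thus each $\PGL_3(\FF q)$-orbit meets $C$ in a bounded number of points, and dividing the count of the previous paragraph by this bound yields a lower bound of the form (explicit constant) $\cdot$ (number of good twists); this is where the asserted shape $M_q=\tfrac1{640}(q^6-1)$ for $q\ge 4$ comes out after bounding the number of primitive orbits and keeping track of the twist parameter.

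The cases $q=2$ and $q=3$ are treated separately: there the asymptotic estimate is too weak, and for $q=2$ one must moreover invoke Lemma \ref{lem:same orbit}$(ii)$ in place of $(i)$; one then verifies $M_2=2$ and $M_3=12$ by direct enumeration of the relevant Galois orbits in $\FF{2^8}$ and $\FF{3^8}$ together with the conics passing through six of their points (using Lemmas \ref{lem:no cubic}, \ref{lem:no line}, \ref{lem:no conic}). I expect the main obstacle to be the honest bookkeeping of constants rather than any new idea: pinning down the order of $\Stab_{\PGL_3(\FF q)}(C)$ (and its action on the degree-$8$ points of $C$, including the characteristic $3$ subtlety with flexes), separating the general-position points from the excluded loci over a small field so as to lose only a controlled factor, and turning ``many twists of many primitive orbits'' into a clean polynomial-in-$q$ lower bound; the small-field cases are delicate precisely because general-position degree-$8$ points are then comparatively scarce.
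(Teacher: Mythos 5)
Your proposal is correct in substance and shares the paper's core engine (the nodal cubic of Set-Up \ref{setup:nodal cubic}, the group law identifying $C^{sm}$ with $\mathbb{G}_m$, Lemmas \ref{lem:no cubic}--\ref{lem:no line} reducing general position to the conic condition, twisting a primitive orbit by $\FF{q^4}^*$ via Lemmas \ref{lem:finite field} and \ref{lem:same orbit}, and the bound of Lemma \ref{lem:12 nodal_cubics}), but your final counting step is genuinely different. The paper enumerates all normal-form nodal cubics ($N_1N_2$ of them), counts pairs (cubic, good orbit), and divides by $12\,|\PGL_3(\FF{q})|$; you fix a single cubic $C$ and divide its supply of good orbits by $12\,|\Stab_{\PGL_3(\FF{q})}(C)|$. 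These are numerically the same because $N_1N_2=\tfrac{1}{6}|\PGL_3(\FF{q})|$ in the paper, which is exactly $|\PGL_3(\FF{q})|/|\Stab(C)|$ with $|\Stab(C)|\le 6$ from your flex argument; so $\tfrac{9}{80}\cdot\tfrac{q^6-1}{8}\cdot\tfrac{1}{72}=\tfrac{1}{640}(q^6-1)$ does come out. Your route buys a cleaner conceptual statement (one cubic, one stabilizer) at the cost of having to control $\Stab_{\PGL_3(\FF{q})}(C)$, which the paper's normal-form count sidesteps; conversely you make explicit the injection from $\PGL_3(\FF{q})$-classes of general degree-$8$ points to conjugacy classes of Bertini involutions, which the paper leaves implicit.

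One concrete warning about the small fields: the paper does \emph{not} obtain $M_3=12$ and $M_2=2$ by enumeration, but by sharpening the constants --- dropping the factor $3$ in $N_2$ because every element of $\FF{q}$ is a cube when $3\nmid q-1$, using the exact value $\varphi(3^8-1)=2560$, and for $q=2$ using Lemma \ref{lem:same orbit}$(ii)$ to admit all $240$ elements of $\FF{2^8}\setminus\FF{2^4}$ together with the improved ratio $14/15$. In your framework the analogue of the dropped factor $3$ is precisely that $|\Stab_{\PGL_3(\FF{q})}(C)|\le 2$ in characteristics $2$ and $3$ (no nontrivial cube roots of unity, so only $x\mapsto x^{\pm1}$ survives); with the crude bound $|\Stab(C)|\le 6$ your method yields only $M_3=\lfloor 288/72\rfloor=4$, not $12$. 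So the ``characteristic $3$ subtlety with flexes'' you flag is not optional bookkeeping: you must actually compute the stabilizer in characteristics $2$ and $3$ (or fall back on genuine enumeration) to recover the stated values of $M_2$ and $M_3$.
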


\begin{proof}
First we count the number of nodal cubics equivalent to the ones from Set-Up~\ref{setup:nodal cubic}.

We need to choose a point in $\p^2({\FF{q}})$, and then two distinct lines defined over $\FF{q}$ through this point. 
The number of such choices is:
\[N_1 = (q^2 + q + 1) \frac{q(q+1)}2.\]
Using the action of $\PGL_3(\FF{q})$, we can assume that the two lines are $x = 0$ and $z = 0$, and by Lemma \ref{lem:general cubic} we need to count nodal cubics with equation of the form
\[xyz = c_0 x^3 + c_1 x^2z + c_2xz^2 +c_3 z^3,
 \qquad c_0, c_3 \in \FF{q}^*, c_1, c_2 \in \FF{q},\]
and $-c_0/c_3$ a cube in $\FF{q}$.
The number of choices is at least 
\[N_2 =  \frac{(q-1)^2q^2}{3}.\]
In fact, if 3 does not divides $q-1$, then any element of $\FF q$ is a cube, and we do not need to divide by 3 in the above formula (we shall use this remark below, for $q = 2$ or $3$).

Consider a Galois orbit $a_1, \dots, a_8$, and the associated points $p_i =\big( a_i, \frac{P(a_i,1)}{a_i}\big)$.
If the $p_i$ are in non-general position, then Lemma \ref{lem:finite field} says that by multiplying by $\beta \in \FF{q^4}^* \setminus \{x;\; x^6 = 1, x^2 \in \FF{q}\}$ we can produce Galois orbits in general position.
Moreover if $a_i$ is a generator of $\FF{q^8}^*$, by Lemma \ref{lem:same orbit} these orbits are pairwise disjoint.
The number of such orbits in general position is bounded from below by the rational $(q^4-7)/(q^4-1)$, which is greater than $9/10$ for $q \ge 3$.
In fact, for $q = 2$ the only third root of unity with square in $\FF{2}$ is $1$, so again we get the ratio $(2^4-2)/(2^4 - 1) > 9/10$.

The number of generators for $\FF{q^8}^*$ is equal to $\phi(q^8 - 1)$, where $\phi$ is the Euler function.
We have the following lower bound for the Euler function \cite[Theorem 15]{RS}:
\[\phi(n) \ge \frac{n}{e^\gamma \log(\log n) + \frac{3}{\log(\log n)}},\]
where $\gamma$ is the Euler constant.
One can check that for $q \ge 3$, this implies $\phi(q^8-1) \ge q^6 - 1$.
So we get at least
\[N_3 = \frac{9}{10}\frac{q^6-1}{8}\]
Galois orbits of cardinal 8 in general position on a given nodal cubic.
Finally by Lemma \ref{lem:12 nodal cubics}, a given orbit belongs to at most 12 nodal cubics, and we also have to mod out by the action of $\PGL_3(\FF{q})$ whose cardinal is:
\[|\PGL_3(\FF{q})| = \frac{1}{q-1}(q^3 - 1)(q^3 - q)(q^3-q^2) = q^3 (q^3-1)(q^2-1).\]
Finally:
\begin{align*}
M_q &= \frac{N_1 \cdot N_2 \cdot N_3}{12 |\PGL_3(\FF{q})|} \\
    &= \frac{9}{2\cdot3\cdot80\cdot12} \frac{(q^2 + q + 1) q(q+1) \cdot  (q-1)^2q^2 \cdot
(q^6-1)}
    {q^3 (q^3-1)(q^2-1)} \\
   &= \frac{1}{640} (q^6-1).
\end{align*}

For $q = 3$, we do not need the 3 in the denominator of $N_2$, and we can replace the coarse estimate $3^6-1$ in the formula of $N_3$
by the exact number $\phi(3^8-1) = 2560$, so that we get the better bound:
\begin{align*}
M_3 &= \frac{9 \cdot 2560}{2 \cdot 80 \cdot 12}\frac{(3^2 + 3 + 1) 3(3+1) \cdot 
 (3-1)^23^2}{3^3
(3^3-1)(3^2-1)} \\
&= 12.
\end{align*}

Finally for $q=2$, by Lemma \ref{lem:same orbit} we can use all 240 elements of $\FF{2^8} \setminus \FF{2^4}$ in the estimate for $N_3$, and not only the generators of $\FF{2^8}^*$. 
Moreover as above we can discard the 3 in the denominator of $N_2$, and we find:
\[N_1 = 21, \; N_2 = 4, \; N_3 = 27,\; |\PGL_3(\FF2)| = 168,\]
and
\begin{align*}
M_2 &= \frac{21 \cdot 4 \cdot 27}{12 \cdot 168} = \frac98 > 1,\\
\end{align*}
which we can round-up to $M_2 = 2$.
\end{proof}

\subsection{Jonquières maps}\label{subsec:jonq}

Let $\K$ be any field, $n \ge 2$ any dimension.
We define the \textit{Jonquières subgroup} $J \subset \Bir_\K(\p^n)$ as the subgroup isomorphic to $\PGL_2(\K(x_2, \dots, x_n))$, via the choice of an affine chart $\A^n \subset \p^n$ and the formula 
\[(x_1, \dots, x_n) \rat \left( \frac{A(x_2, \dots, x_n)x_1 + B(x_2, \dots, x_n)} {C(x_2, \dots, x_n)x_1 + D(x_2, \dots, x_n)} , x_2, \dots, x_n \right).\]
Recall that a group $G$ is called \textit{perfect} if it is equal to its commutator subgroup $G^{(1)}$.
We define the special Jonquières subgroup as the commutator subgroup of $J$; this is a group  isomorphic to $\PSL_2(\K(x_2, \dots, x_n))$.
Let $G$ be the subgroup of $\Bir_\K(\p^n)$ generated by the special Jonquières subgroup $J^{(1)}$ and $\Aut_{\K}(\p^n)^{(1)} = \PSL_{n+1}(\K)$. 

\begin{proposition} \phantomsection \label{pro:special jonq}
\begin{enumerate}[wide]
\item If $g \neq \id$ is an element in the special Jonquières subgroup or in $\PSL_n(\K)$, then the normal subgroup $\llangle g \rrangle$ generated by $g$ in $G$ is equal to $G$;
\item $G$ is a perfect group.
\end{enumerate}
\end{proposition}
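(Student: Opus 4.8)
The plan is to reduce both claims to two standard facts about the two generating subgroups, which I abbreviate $H_1 := J^{(1)} \simeq \PSL_2(\K(x_2,\dots,x_n))$ and $H_2 := \Aut_\K(\p^n)^{(1)} = \PSL_{n+1}(\K)$, so that $G = \langle H_1, H_2\rangle$: first, that each $H_i$ is a \emph{nonabelian simple} group; second, that $H_1$ and $H_2$ share a nonidentity element. For the first point I would invoke that $\PSL_{n+1}$ is simple over any field because $n+1\ge 3$, and that $\PSL_2(F)$ is nonabelian simple whenever $|F|>3$, which applies to $F=\K(x_2,\dots,x_n)$ since this field contains the infinite field $\K(x_2)$ — this is exactly why one must work with $\K(x_2,\dots,x_n)$ rather than with $\K$ and $\PSL_2(\K)$, the latter being solvable for $\K=\FF{2},\FF{3}$. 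For the second point I would exhibit the elementary transformation $\tau\colon(x_1,\dots,x_n)\rat(x_1+x_2,x_2,\dots,x_n)$: written in homogeneous coordinates it is the transvection sending $y_1$ to $y_1+y_2$ and fixing the other coordinates, so $\tau\in\PSL_{n+1}(\K)=H_2$; viewed inside $\PGL_2(\K(x_2,\dots,x_n))$ it is the unipotent element with matrix $\left(\begin{smallmatrix}1&x_2\\0&1\end{smallmatrix}\right)$, so $\tau\in\PSL_2(\K(x_2,\dots,x_n))=H_1$ as well. Thus $\id\ne\tau\in H_1\cap H_2$.

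Granting this, part (1) follows quickly. Let $g\ne\id$ lie in $H_i$ for some $i\in\{1,2\}$, and put $N:=\llangle g\rrangle$, the normal closure of $g$ in $G$. Then $N\cap H_i$ is a nontrivial normal subgroup of $H_i$, so by simplicity $H_i\subseteq N$; in particular $\tau\in N$. Since $\tau\ne\id$ also lies in the other factor $H_{3-i}$, the subgroup $N\cap H_{3-i}$ is a nontrivial normal subgroup of $H_{3-i}$, so again by simplicity $H_{3-i}\subseteq N$. Hence $N\supseteq\langle H_1,H_2\rangle=G$, i.e.\ $N=G$. For part (2), I would note that $H_1$ and $H_2$ are perfect (being nonabelian simple), so $H_i=[H_i,H_i]\subseteq[G,G]$ for $i=1,2$; therefore $G=\langle H_1,H_2\rangle\subseteq[G,G]$ and $G$ is perfect. (This also drops out of part (1): $[G,G]\trianglelefteq G$ contains $[H_1,H_1]=H_1$, hence a nonidentity element $g\in H_1$, so $[G,G]=\llangle g\rrangle=G$.)

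As for difficulties, I do not expect any genuine obstacle. The only things needing care are the two simplicity inputs — in particular making sure the hypothesis $n\ge 2$ is really what guarantees both that $\K(x_2,\dots,x_n)$ is infinite and that $n+1\ge 3$ — together with the verification that the chosen witness $\tau$ sits simultaneously in both $\PSL$-subgroups; everything else is formal group theory.
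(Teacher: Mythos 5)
Your proof is correct and follows essentially the same route as the paper: both rely on the simplicity of $\PSL_2(\K(x_2,\dots,x_n))$ and $\PSL_{n+1}(\K)$ (using $n\ge 2$ to rule out $\PSL_2(\FF2)$ and $\PSL_2(\FF3)$ and to get $n+1\ge 3$) together with a nontrivial common element of the two subgroups. The paper's witness is the translation $(x_1+1,x_2,\dots,x_n)$ rather than your $(x_1+x_2,x_2,\dots,x_n)$, but both work and the rest of the argument is identical.
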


\begin{proof}
\begin{enumerate}[wide]
\item The groups $\PSL_2(\K(x_2, \dots, x_n))$ and $\PSL_{n+1}(\K)$ are simple (recall that we assume $n \ge 2$, so we avoid the non simple groups $\PSL_2(\FF 2)$ and $\PSL_2(\FF 3)$), and they have a non trivial intersection, as they both contain for instance the translation $(x_1 + 1, x_2, \dots, x_n)$.
From these facts it follows  that $\llangle g \rrangle$ contains both $\PSL_2(\K(x_2, \dots, x_n))$ and $\PSL_{n+1}(\K)$, hence $G$.

\item Follows from the fact that $\PSL_2(\K(x_2, \dots, x_n))$ and $\PSL_{n+1}(\K)$ are both perfect groups (because simple and non-abelian).
\qedhere
\end{enumerate}
\end{proof}

\begin{corollary} \phantomsection \label{cor:jonquieres}
\begin{enumerate}[wide]
\item If $\phi\colon \Bir_\K(\p^n) \to A$ is a morphism to an abelian group $A$, then $G = \langle \PSL_2(\K(x_2, \dots, x_n)) , \PSL_{n+1}(\K) \rangle \subset \ker \phi.$
\item If a morphism $\phi\colon\Bir_\K(\p^n)\rightarrow H$ sends a non-trivial element $g\in G$ onto $1_H$, then $G\subset\ker\phi$.
\end{enumerate}
\end{corollary}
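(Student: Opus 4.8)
The plan is to deduce both statements from Proposition~\ref{pro:special jonq}, using only abstract group theory together with a little information about how the two distinguished subgroups sit inside the Cremona group.

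Assertion~(1) is immediate. The image $\phi(G)$ is a subgroup of the abelian group $A$, hence abelian, while $G=[G,G]$ by Proposition~\ref{pro:special jonq}(2); therefore $\phi(G)=\phi([G,G])=[\phi(G),\phi(G)]=\{1_A\}$, i.e. $G\subseteq\ker\phi$.

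For assertion~(2) set $N:=\ker\phi\cap G$, a normal subgroup of $G$ containing the nontrivial element $g$. The key step I would carry out is to produce inside $N$ a nontrivial element of the special Jonquières subgroup $J^{(1)}=\PSL_2(\K(x_2,\dots,x_n))$ or of $\PSL_{n+1}(\K)$: granting this, Proposition~\ref{pro:special jonq}(1) applied to that element shows that its normal closure in $G$, which lies in $N$, is all of $G$, so $G=N\subseteq\ker\phi$. To look for such an element one uses that $\ker\phi$ is normal in all of $\Bir_\K(\p^n)$, not merely in $G$: then $\ker\phi\cap J^{(1)}$ is normal in $J^{(1)}$ and $\ker\phi\cap\PSL_{n+1}(\K)$ is normal in $\PSL_{n+1}(\K)$, and since both groups are simple (we are in dimension $n\ge 2$, so we avoid the exceptional cases $\PSL_2(\FF{2})$ and $\PSL_2(\FF{3})$) each of these intersections is either trivial or all of the subgroup. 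Thus the matter reduces to ruling out the possibility that $\phi$ is injective on $J^{(1)}$ and on $\PSL_{n+1}(\K)$ while still killing a nontrivial $g\in G$.

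To exclude that possibility I would first note that the centralizer of $J^{(1)}$ in $\Bir_\K(\p^n)$ is trivial — an element commuting with every fiber-preserving element of $J^{(1)}$ must act trivially on the generic fiber and not involve the variable $x_1$, and then commutes with a generic element of $J^{(1)}$ only if it is the identity — and likewise the centralizer of $\PSL_{n+1}(\K)$ is trivial. Hence for $g\neq\id$ the commutators $[h,g]$ with $h\in J^{(1)}$ cannot all be trivial, and since $\phi(g)=1_H$ forces $\phi([h,g])=1_H$ for every $h$, these commutators generate a nontrivial subgroup of $N$ which is normalized by $J^{(1)}$ and therefore meets $J^{(1)}$ in a normal subgroup of $J^{(1)}$; if that intersection is nontrivial we are done, and otherwise one iterates, now bringing $\PSL_{n+1}(\K)$ into play through $G=\langle J^{(1)},\PSL_{n+1}(\K)\rangle$ and the inclusion $gJ^{(1)}g^{-1}\subseteq J^{(1)}\cdot N$. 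The step I expect to be the main obstacle is precisely this last one: the purely group-theoretic input (simplicity of the two factors plus Proposition~\ref{pro:special jonq}(1)) does not by itself force a nontrivial normal subgroup of $G$ to meet one of the two factors — abstract counterexamples of the shape $A_5\times A_5$ generated by two diagonal copies of $A_5$ show this — so one genuinely has to use the geometry, namely the triviality of the relevant centralizers and the fact that the normalizer of the Jonquières group in $\Bir_\K(\p^n)$ is a proper subgroup. I would also point out that in all the situations used later in the paper the difficulty evaporates: whenever $H$ contains no copy of the infinite non-abelian simple group $\PSL_2(\K(x_2,\dots,x_n))$ — for instance $H=A$ abelian, or $H=\bigast_{\B}\Z/2\Z$, which is virtually free and so has no subgroup isomorphic to $\Z^2$ — one gets $\ker\phi\cap J^{(1)}=J^{(1)}$ at once, and Proposition~\ref{pro:special jonq}(1) finishes the argument directly.
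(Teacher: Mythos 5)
Your proof of part (1) is correct and is exactly the intended argument: $\phi(G)$ is an abelian subgroup of $A$, while $G=[G,G]$ by Proposition \ref{pro:special jonq}(2), so $\phi(G)=\phi([G,G])=[\phi(G),\phi(G)]=\{1_A\}$.

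For part (2) you have put your finger on a real issue, but you draw the wrong conclusion from it. You are right that Proposition \ref{pro:special jonq}(1) only controls the normal closure of elements lying in one of the two factors $J^{(1)}=\PSL_2(\K(x_2,\dots,x_n))$ or $\PSL_{n+1}(\K)$, and right that no purely abstract argument upgrades this to arbitrary non-trivial $g\in G$. The conclusion to draw, however, is not that one needs a harder centralizer argument: the statement with arbitrary $g\in G$ is \emph{false}, so no repair can succeed. Take $\K=\C$ and $n=2$: then $\PSL_3(\C)=\PGL_3(\C)$, and $G$ contains $(x,y)\dashmapsto(-1/x,y)$ (the class of $\left(\begin{smallmatrix}0&-1\\1&0\end{smallmatrix}\right)$ in $\PSL_2(\C(y))$) together with its conjugate $(x,y)\dashmapsto(x,-1/y)$ by the swap of coordinates; their composition is the standard quadratic involution up to a linear map, so by Noether--Castelnuovo $G=\Bir_\C(\p^2)$. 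Since $\Bir_\C(\p^2)$ is not simple \cite{CL, Lonjou}, it has a quotient with non-trivial proper kernel, which kills some non-trivial $g\in G$ without containing $G$. The corollary must therefore be read with $g$ a non-trivial element of the special Jonquières subgroup or of $\PSL_{n+1}(\K)$, i.e.\ with the same hypothesis as Proposition \ref{pro:special jonq}(1); this is also the only way it is used in \S\ref{subsec:jonq}, where one first notes that if $\phi|_J$ is not injective then $\ker\phi\cap J$ is a non-trivial normal subgroup of $\PGL_2(\K(x_2,\dots,x_n))$ and hence contains $\PSL_2(\K(x_2,\dots,x_n))$. Under that reading the proof is the one-liner your write-up circles around but never states: $\ker\phi\cap G$ is a normal subgroup of $G$ (because $\ker\phi$ is normal in all of $\Bir_\K(\p^n)$) containing $g$, hence contains $\llangle g\rrangle=G$ by Proposition \ref{pro:special jonq}(1). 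Your closing observation — that in every application of the corollary the target $H$ contains no copy of the simple group $\PSL_2(\K(x_2,\dots,x_n))$, so that $\ker\phi\supseteq J^{(1)}$ automatically — is correct and explains why the restricted statement is all the paper ever needs.
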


Now we come back to the case of dimension 2.
An equivalent definition of Jonquières map is that $f \in  \Bir (\p^2_\K)$ is Jonquières if it admits a base point $q$ of degree 1 and $f$ preserves a general member of the pencil of lines through $q$.
For instance a quadratic map with one base point of degree 1 and one base point of degree 2 is Jonquières, but a quadratic map with a unique base point of degree 3 is not Jonquières.
With the identification $(x,y) \in \A^2 \mapsto [x:y:1] \in \p^2$, a Jonquières map is written $(x,y) \mapsrat \left( \frac{A(y)x + B(y)}{C(y)x + D(y)}, y\right)$ and admits the degree 1 point $q = [1:0:0]$ as a base point.
Over a perfect field $\K$ one can factorize such a map into Sarkisov links by first blowing-up $q$ to get a surface $\F_1$, then performing a sequence of type II links over $\p^1$ between Hirzebruch surfaces, and a last contraction to come back to $\p^2$.
In particular, any Jonquières map $f$ is an elementary generator, so that $\E$ contains a representative equivalent to $f$.

Recall that by Noether-Castelnuovo Theorem, over an algebraically closed field $\K$  the Jonquières group $J$ and the automorphism group  $\PGL_{3}(\K)$ generate the Cremona group $\Bir_\K(\p^2)$.
In this context Corollary \ref{cor:jonquieres} implies that $J$ and $\PGL_{3}(\K)$ embed into any non-trivial quotient of $\Bir_\K(\p^2)$, in particular such a quotient cannot be finite, nor abelian.
Observe also that there exists some non-algebraically closed field $\K$ such that $\Bir (\p^2_\K) = \langle J, \PGL_3(\K) \rangle$, so that the same remark applies: by \cite{Isko}, it is sufficient that $\K$ does not admit any extension of degree $\le 8$.
For instance, one can take $\K$ to be the composite (which in this case is also the union) of all towers of extensions of $\Q$ of degree at most 8. 

Finally, remark that a Jonquières map can have base points of arbitrary degree, and in particular of degree 8.
For instance, over a suitable field $\K$ let $p,q \in \p^2$ be points of respective degree 8 and 1.
Then blowing-up the point $q$, and performing an elementary link from $\F_1$ to $\F_1$ by blowing up $p$ and contracting the orbit of 8 fibers through $p$, we get a Jonquières map $j \in \Bir_\K(\p^2)$, that we can choose to be an involution (up to composing by an automorphism of $\p^2$).
On the other hand, by blowing up only $p$, we construct a Bertini involution which has nothing to do with $j$.
In fact it follows from Theorem \ref{thm:main big amalgam} that $b$ and $j$ generate an infinite dihedral group $\Z/2\Z * \Z/2\Z$.

\section{Amalgamated structure and morphism to a free product}
\label{sec:proofs}

\subsection{Bass-Serre tree of an amalgam}

Our reference for this section is \cite{Serre_Trees}.
Let $G$ be a group, $A$ a subgroup, and $(G_i)_{i \in I}$ a collection of (proper) subgroups generating $G$ and such that $A \subset G_i$ 
for all $i$.

One constructs a graph $\Gl$ on which $G$ acts as follows.
The vertices are left cosets $g A$ and $g G_i$ in $G/A$ and $G/G_i$.
Then for each $g \in G$ and each $i \in I$, we put an edge between the vertices $g A$ and $g G_i$.
The group $G$ acts on the resulting graph $\Gl$ by 
\begin{align*}
f \cdot g A &:= (fg)A, &  f \cdot g G_i &:= (fg)G_i.
\end{align*}

One says that $G$ is the \textit{amalgamated product} of the $G_i$ over $A$, denoted $G = \bigast_A G_i$, if it satisfies the following
universal property: for any group $H$, and any collection of morphisms $\phi_i\colon G_i \to H$ that coincide on $A$, there exists a (necessarily unique) morphism $\phi\colon G \to H$ that extends all the $\phi_i$.
In this case, one can show that $A = G_i \cap G_j$ for all $i \neq j$ (see \cite[Remark after Theorem 1 p.3]{Serre_Trees}).

Recall that a \textit{star graph} is a tree of diameter 2. 
We call \textit{central vertex} the unique center of a star graph, and \textit{peripheral vertices} the other vertices.
When the group $G$ is the amalgamated product of the $G_i$ over $A$ then the graph $\Gl$ is a tree, and a fundamental domain with respect to the action of $G$ is the star graph pictured on Figure \ref{fig:treeFD}, where we label each vertex by its stabilizer.
Conversely we have the following basic result from Bass-Serre Theory:

\begin{theorem}[{\cite[\S4, Theorem 10]{Serre_Trees}}]\label{thm:bass-serre}
With the notation above, suppose that $G$ acts on a tree $\Tl$ with fundamental domain as in Figure~$\ref{fig:treeFD}$, such that
\begin{itemize}
\item $A$ is the stabilizer of the central vertex in the fundamental domain
\item the $G_i$ are the stabilizers of the peripheral vertices in the fundamental domain.
\end{itemize}
Then $G = \bigast_A G_i$ is the amalgamated product of the $G_i$ along $A$, and the graph $\Gl$ constructed above is isometric to $\Tl$.
\end{theorem}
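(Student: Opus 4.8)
The plan is to recall Serre's argument, which compares $G$ to the abstract amalgam. First I would form $G^{*} := \bigast_{A} G_{i}$, the colimit (push-out) of the diagram of inclusions $G_{i} \hookleftarrow A \hookrightarrow G_{j}$ in the category of groups; it exists and satisfies the universal property used as the definition of $\bigast_{A}$ in the excerpt. Applying that universal property to the inclusions $G_{i} \hookrightarrow G$, which all restrict to the inclusion $A \hookrightarrow G$, yields a canonical homomorphism $\psi \colon G^{*} \to G$. Since the $G_{i}$ generate $G$ and $\psi$ restricts to the inclusion on each $G_{i}$, the map $\psi$ is surjective; since each composite $G_{i} \to G^{*} \xrightarrow{\psi} G$ is injective, each $G_{i}$, and hence also $A$, embeds in $G^{*}$. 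Because $G^{*}$ is generated by $\bigcup_{i} G_{i}$, a minimal-length word argument shows every element of $G^{*}$ admits a \emph{normal form} $a_{0} g_{1} \cdots g_{n}$ with $a_{0} \in A$, $g_{k} \in G_{i_{k}} \setminus A$ and $i_{k} \neq i_{k+1}$; if the element is nontrivial, then either $n \geq 1$, or $n = 0$ and $a_{0} \neq 1$.

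The crux is the injectivity of $\psi$, and this is where the tree $\Tl$ enters. Write $\tilde{c}$ for the central vertex and $\tilde{v}_{i}$ for the peripheral vertices of the given fundamental domain, let $\tilde{e}_{i}$ be the edge joining $\tilde{c}$ to $\tilde{v}_{i}$, and recall $A = \Stab(\tilde{c})$, $G_{i} = \Stab(\tilde{v}_{i})$, hence $\Stab(\tilde{e}_{i}) = A \cap G_{i} = A$. Given $g \in G^{*} \setminus \{1\}$ in normal form: if $n = 0$ then $\psi(g) = a_{0} \neq 1$ because $A$ embeds in $G$. If $n \geq 1$, I would look at the path in $\Tl$ traced by $\tilde{c}$, $\tilde{v}_{i_{1}}$, $g_{1}\tilde{c}$, $g_{1}\tilde{v}_{i_{2}}$, $g_{1}g_{2}\tilde{c}$, $\dots$, $g_{1}\cdots g_{n}\tilde{c}$, whose consecutive edges are $\tilde{e}_{i_{1}}, g_{1}\tilde{e}_{i_{1}}, g_{1}\tilde{e}_{i_{2}}, g_{1}g_{2}\tilde{e}_{i_{2}}, \dots$ At each interior central vertex the incoming and outgoing edges are translates of $\tilde{e}_{i_{k}}$ and $\tilde{e}_{i_{k+1}}$ by the same element, so they differ since $i_{k} \neq i_{k+1}$; at each peripheral vertex $w\tilde{v}_{i_{k}}$ the incoming and outgoing edges are $w\tilde{e}_{i_{k}}$ and $w g_{k}\tilde{e}_{i_{k}}$, which differ since $g_{k} \notin A = \Stab(\tilde{e}_{i_{k}})$. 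Thus the path backtracks nowhere, so it is reduced of length $2n > 0$; in a tree such a path has distinct endpoints, hence $g_{1}\cdots g_{n}\tilde{c} \neq \tilde{c}$, and applying $a_{0} \in \Stab(\tilde{c})$ gives $\psi(g)\tilde{c} = a_{0}(g_{1}\cdots g_{n}\tilde{c}) \neq \tilde{c}$, whence $\psi(g) \neq 1$. Therefore $\psi$ is an isomorphism, $G = \bigast_{A} G_{i}$, and the relation $A = G_{i} \cap G_{j}$ for $i \neq j$ follows from the general fact recalled just before the statement. (Throughout I use the standing convention that $G$ acts on $\Tl$ without inversion, replacing $\Tl$ by its barycentric subdivision if necessary.)

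Finally, to identify $\Gl$ with $\Tl$ I would use the $G$-equivariant map $\Gl \to \Tl$ given on vertices by $gA \mapsto g\tilde{c}$ and $gG_{i} \mapsto g\tilde{v}_{i}$, and on edges by $(gA, gG_{i}) \mapsto g\tilde{e}_{i}$. It is well defined and injective because $hX = h'X$ in $\Tl$ forces $h^{-1}h' \in \Stab(X)$, which is $A$ or $G_{i}$ or $A$ in the three relevant cases (using also that $\tilde{c}$ and the $\tilde{v}_{i}$ lie in pairwise distinct $G$-orbits, being distinct vertices of a fundamental domain); it is surjective on vertices and edges because $\tilde{c}, \tilde{v}_{i}, \tilde{e}_{i}$ form a fundamental domain, so every vertex or edge of $\Tl$ is a $G$-translate of one of them; and it is a graph morphism by construction. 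Hence it is an isomorphism of graphs, so $\Gl$ is a tree isometric to $\Tl$ for the standard edge-length metrics. The one genuinely delicate point in the whole argument is the verification in the second paragraph that the path in $\Tl$ backtracks nowhere: this is the combinatorial heart of Bass--Serre theory, and it uses both hypotheses $g_{k} \notin A$ and $i_{k} \neq i_{k+1}$; every other step is routine bookkeeping.
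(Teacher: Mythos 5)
Your proof is correct: the paper gives no proof of this statement (it is quoted from Serre's \emph{Trees}), and your reconstruction is precisely the standard Bass--Serre argument from that source --- surjectivity of the canonical map $\psi\colon \bigast_A G_i \to G$ from generation, injectivity via the non-backtracking path traced by a normal form acting on $\Tl$, and the equivariant coset-to-orbit identification of $\Gl$ with $\Tl$. The only cosmetic remark is that your parenthetical about passing to a barycentric subdivision is unnecessary: since $\tilde c$ and the $\tilde v_i$ lie in distinct $G$-orbits by the fundamental-domain hypothesis, no element can invert an edge, so the action is automatically without inversion.
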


\begin{figure}[ht]
\def\svgwidth{.3\textwidth}
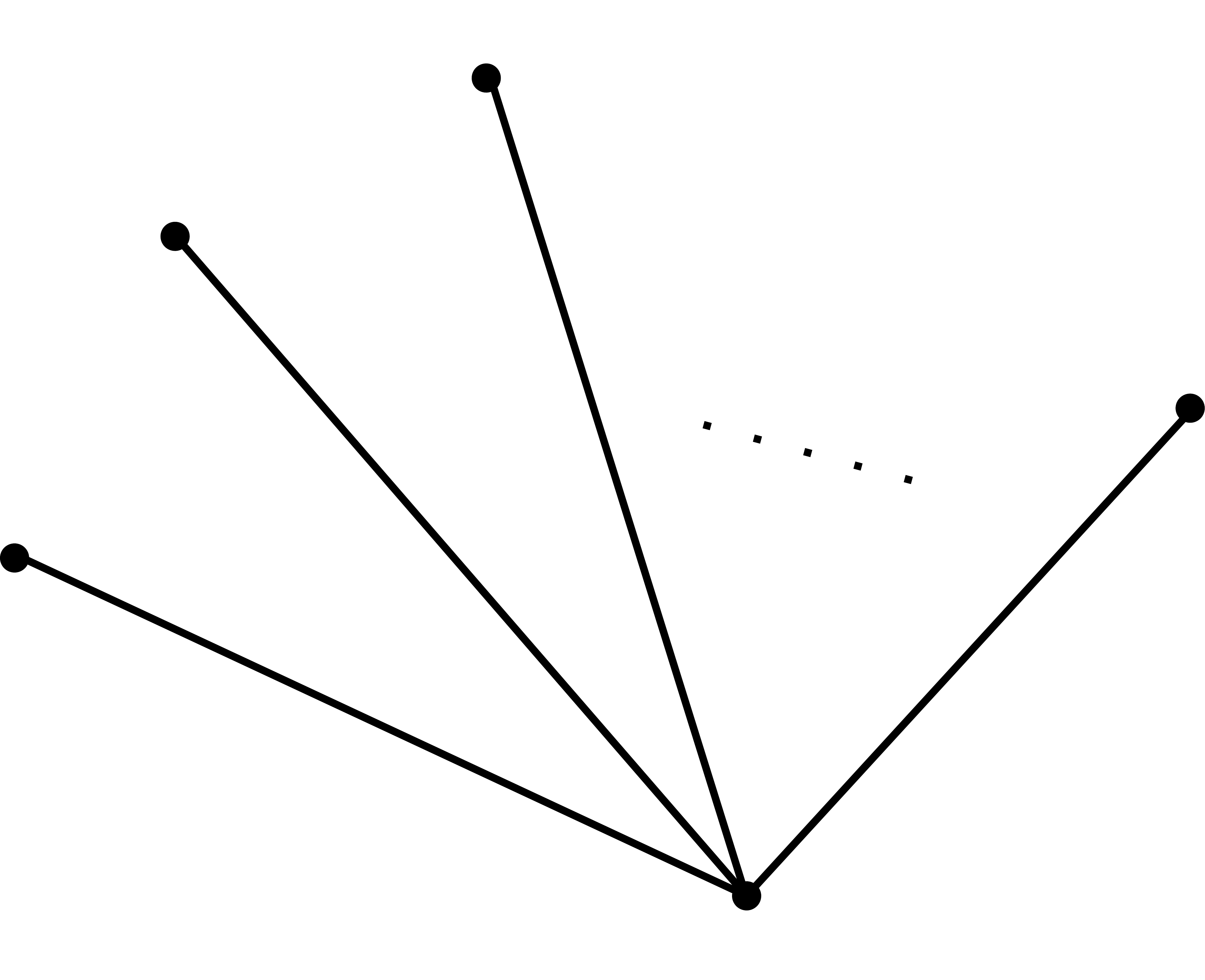
\caption{Fundamental domain for the Bass-Serre tree of the
amalgam $\bigast_AG_i$.}\label{fig:treeFD}
\end{figure}

\begin{remark} \label{rem:2 factors}
In the case where there are only two subgroups $G_1$ and $G_2$ with intersection $A = G_1 \cap G_2$, it is customary to remove the vertices of valence 2 corresponding to the left cosets modulo $A$, and to consider that the fundamental domain is a single edge with the following stabilizers:
\[\mygraph{
!{<0cm,0cm>;<1cm,0cm>:<0cm,1cm>::}
!{(-1,0)}*{\bullet}="G1"
!{(1,0)}*{\bullet}="G2"
"G1"-^<{G_1}^A^>{G_2}"G2"
}\]
\end{remark}

\subsection{Subcomplexes}

First we define two subcomplexes of the complex $\Xl$ constructed in \S\ref{subsec:full complex}.
Recall that we say that a vertex in $\Xl$ has type $\p^2$ if it is of the form $(\p^2, \phi)$ for some $\phi \in \Bir_\K(\p^2)$.

Let $\Xl_\B\subset\Xl$ be the subgraph whose edges correspond to blow-ups of degree 8 points in $\p^2$.
If $b \in \B$, we denote $\eta_b\colon S_b \to \p^2$ the blow-up of the base point of degree 8 of $b$.
In particular, for any $\phi \in \Bir_\K(\p^2)$ and any $\alpha \in \Aut_\K(\p^2)$, we obtain the following two edges in $\Xl_\B$:
\[\xymatrix@R-15pt{
& (S_b,\phi \alpha \eta_b) \ar[dl]_{\eta_b} \ar[dr] \\
(\p^2, \phi) = (\p^2, \phi \alpha)&& (\p^2, \phi \alpha b) = (\p^2, \phi \alpha
b \alpha^{-1})
}\]
Conversely, any edge in $\Xl_\B$ has this form, and any two vertices of type $\p^2$ at distance 2 in $\Xl_\B$ differ by a Bertini involution $\alpha b \alpha^{-1}$, for some $b \in \B$ and $\alpha \in \Aut_\K(\p^2)$.

We define another subcomplex $\Xl_e\subset\Xl$, by taking the closure of the complement of $\Xl_\B$ in $\Xl$.

\begin{lemma} \label{lem:intersection}
The intersection of the two subcomplexes $\Xl_\B$ and $\Xl_e$ is exactly the set of vertices of type $\p^2$:
\[\Xl_\B \cap \Xl_e = \{(\p^2, \phi) \mid \phi \in \Bir(\p^2) \}.\]
\end{lemma}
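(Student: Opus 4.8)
I want to show the two subcomplexes $\Xl_\B$ and $\Xl_e$ meet exactly in the set $V$ of vertices of type $\p^2$. Since $\Xl_e$ is by definition the closure of the complement of $\Xl_\B$, the content of the lemma is twofold: first that every vertex of type $\p^2$ lies in $\Xl_e$ (and it obviously lies in $\Xl_\B$, being an endpoint of any degree-8 blow-up edge, such edges existing by Proposition~\ref{pro:general position}), and second that no vertex of $\Xl$ lies in $\Xl_\B \cap \Xl_e$ other than these, i.e.\ the only vertices of $\Xl_\B$ that are limits of cells not contained in $\Xl_\B$ are the $\p^2$-vertices, and conversely $\Xl_\B$ contains no cell of $\Xl_e$ in positive dimension.

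\emph{Step 1: describe $\Xl_\B$ precisely.} I would first pin down which vertices and which cells of $\Xl$ belong to $\Xl_\B$. By the discussion preceding the lemma, an edge of $\Xl_\B$ joins a vertex $(\p^2,\phi)$ of type $\p^2$ to a vertex $(S_b,\phi\alpha\eta_b)$ where $S_b$ is the blow-up of a general point of degree $8$; the other edge out of $(S_b,\phi\alpha\eta_b)$ is the second contraction to $\p^2$ in diagram~(\ref{diag:chi8}). So the vertices of $\Xl_\B$ are exactly the $\p^2$-vertices together with the ``degree-1 del Pezzo'' vertices $(S_b,\cdot)$, and its edges are exactly the contractions $S_b\to\p^2$. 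The key input is Lemma~\ref{lem:chi8}: each such edge lies in no square of $\Xl$. Hence $\Xl_\B$ is a graph (a $1$-dimensional subcomplex) containing no $2$-cell of $\Xl$ at all.

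\emph{Step 2: $\p^2$-vertices lie in $\Xl_e$.} Each vertex $(\p^2,\phi)$ is a corner of a square of $\Xl$ coming from, say, blowing up two points of degree $1$ (Example~\ref{exple:complex 1}); such a square involves Hirzebruch surfaces, not degree-1 del Pezzo surfaces, so it is not contained in $\Xl_\B$. Therefore $(\p^2,\phi)$ lies in the closure of $\Xl\setminus\Xl_\B$, i.e.\ in $\Xl_e$. This gives $V\subseteq\Xl_\B\cap\Xl_e$.

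\emph{Step 3: no other vertex lies in the intersection.} Take a vertex $v\in\Xl_\B$ that is not of type $\p^2$; by Step~1 it is some $(S_b,\psi)$. I must show $v\notin\Xl_e$, i.e.\ that every cell of $\Xl$ incident to $v$ lies in $\Xl_\B$. Since $\Xl_\B$ contains no $2$-cell, it suffices to check that $v$ is incident to no square and to no edge outside $\Xl_\B$. A square through $v$ would give a rank-$3$ fibration factoring through $S_b/\pt$, impossible since $S_b$ is already del Pezzo of degree $1$ (this is exactly the argument of Lemma~\ref{lem:chi8}). An edge through $v$ not in $\Xl_\B$ would be an edge of type $3,2$ or $2,1$ incident to the rank-$2$ fibration $S_b$: a $2,1$ edge is a contraction $S_b\to\p^2$, which is in $\Xl_\B$ by definition; a $3,2$ edge would require a rank-$3$ fibration through $S_b$, again impossible. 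So $v$ is not in the closure of $\Xl\setminus\Xl_\B$, hence $v\notin\Xl_e$. Combined with Step~2 this yields $\Xl_\B\cap\Xl_e=V$.

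\emph{Main obstacle.} The only delicate point is the bookkeeping in Step~3: making sure one has enumerated all cell types incident to a degree-1 del Pezzo vertex $S_b$ (it has rank $2$, so exactly two outgoing edges in the simplicial picture, both of type $2,1$ and both landing on $\p^2$ by diagram~(\ref{diag:chi8}), and no incoming $3,2$ edge by the del~Pezzo-of-degree-1 obstruction) and confirming that after the square-gluing operation that turns the simplicial complex into the square complex, no new $2$-cell gets attached along these edges---which is precisely the content of Lemma~\ref{lem:chi8} and Lemma~\ref{lem:2 triangles} (there is no pair of triangles to glue here, since these edges have type $2,1$, not $3,1$). Everything else is formal from the definitions of $\Xl_\B$ and $\Xl_e$.
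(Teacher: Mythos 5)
Your proposal is correct and follows essentially the same route as the paper: Lemma~\ref{lem:chi8} rules out squares through the degree-$8$ blow-up edges, the vertices $(S_b,\cdot)$ are excluded because their only incident cells are the two edges of $\Xl_\B$, and the $\p^2$-vertices are kept because they meet cells outside $\Xl_\B$ (the paper uses an edge from a blow-up of a point of degree $\neq 8$ where you use a square, an immaterial difference). The extra bookkeeping in your Step~3 is just a more explicit version of the paper's one-line observation that such a vertex ``belongs to exactly two edges of $\Xl$''.
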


\begin{proof}
Lemma~\ref{lem:chi8} states that an edge in $\Xl$ corresponding to the blow-up of a point in $\p^2$ of degree $8$ is not contained in any square. 
Therefore,  $\Xl_\B \cap \Xl_e$ contains only vertices.
Now as observed before, there are two types of vertices in $\Xl_\B$.
A vertex of the form $(S_b,\phi \alpha \eta_b)$ belongs to exactly two edges of $\Xl$, which by definition are edges of the graph $\Xl_\B$, therefore such a vertex does not belong to $\Xl_e$.
On the other hand, any vertex of type $\p^2$ belongs to edges from both $\Xl_\B$ and $\Xl_e$, associated to blow-up of points of respective degree 8 or distinct from 8.
\end{proof}

We denote respectively $\Xl_\B^\circ\subset\Xl_\B$ and $\Xl_e^\circ \subset \Xl_e$ the connected components containing $(\p^2,\id)$.

\begin{lemma}\label{lem:Xe_and_Xb} 
\text{ }
\begin{enumerate}
\item\label{Xe} Both $\Xl_e$ and $\Xl_\B$ are invariant under the action of $\Bir_\K(\p^2)$.
\item\label{Xb} The graph $\Xl_\B^\circ$ is a tree.
\end{enumerate}
\end{lemma}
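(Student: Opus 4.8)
The plan is to treat the two assertions of Lemma~\ref{lem:Xe_and_Xb} separately: part~\ref{Xe} is essentially formal, whereas part~\ref{Xb} is where the content lies and is deduced from the simple connectedness of $\Xl$ together with the rigidity of Bertini edges.

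For part~\ref{Xe}, I would recall that $\Bir_\K(\p^2)$ acts on $\Xl$ by post-composition on markings, $f\cdot(S/B,\phi)=(S/B,f\phi)$, and that this is an action by automorphisms of the square complex: such an $f$ leaves the underlying surfaces and the factorization morphisms of a marked fibration unchanged (if $(S/B,\phi)$ factorizes through $(\p^2,\psi)$ via the morphism $\psi^{-1}\circ\phi$, then $(S/B,f\phi)$ factorizes through $(\p^2,f\psi)$ via the same morphism $(f\psi)^{-1}\circ(f\phi)=\psi^{-1}\circ\phi$), it only modifies the marking. Now every edge of $\Xl_\B$ has the explicit shape exhibited in the discussion just before Lemma~\ref{lem:intersection}, with a middle vertex $(S_b,\phi\alpha\eta_b)$ joined to vertices of type $\p^2$ of the form $(\p^2,\phi)$; applying $f$ produces an edge of exactly the same shape with $\phi$ replaced by $f\phi$, which is again an edge of $\Xl_\B$. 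Running the same argument with $f^{-1}$ gives $f\cdot\Xl_\B=\Xl_\B$. Since the action is cellular, the complement $\Xl\setminus\Xl_\B$ is invariant too, hence so is its closure $\Xl_e$.

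For part~\ref{Xb}, the graph $\Xl_\B^\circ$ is connected by construction, so it suffices to show it contains no embedded cycle; I would in fact prove the stronger statement that the whole graph $\Xl_\B$ is a forest, so that each of its connected components, and in particular $\Xl_\B^\circ$, is a tree. The key input is Lemma~\ref{lem:chi8}: every edge of $\Xl_\B$ corresponds to the blow-up of a (general) point of degree $8$ in $\p^2$, hence is contained in no square of $\Xl$. Therefore the boundary $1$-cycle of every $2$-cell of $\Xl$ is supported on edges that avoid $\Xl_\B$, and consequently every $1$-boundary of $\Xl$ is supported away from $\Xl_\B$. Hence a simplicial $1$-cycle carried by the subgraph $\Xl_\B$ which bounds in $\Xl$ must be zero (just compare coefficients on the $\Xl_\B$-edges, on which every $2$-cell boundary vanishes), so the inclusion-induced map $H_1(\Xl_\B;\Z)\to H_1(\Xl;\Z)$ is injective. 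But $\Xl$ is simply connected by Proposition~\ref{pro:simply connected}, so $H_1(\Xl;\Z)=0$, whence $H_1(\Xl_\B;\Z)=0$; a graph with trivial first homology is a forest, which is exactly what we want. (An equivalent route: choose a spanning tree $T$ of the $1$-skeleton of $\Xl$, write $\pi_1$ of the $1$-skeleton as the free group on the edges outside $T$, observe that no $\Xl_\B$-edge occurs in any $2$-cell relator, and conclude from $\pi_1(\Xl)=1$ that no edge of $\Xl_\B$ can lie outside $T$, so $\Xl_\B$ has no cycle.)

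The main obstacle is not in the combinatorics, which is routine, but in having available the two facts imported from earlier sections: the simple connectedness of $\Xl$ (Proposition~\ref{pro:simply connected}) and the rigidity of Bertini edges (Lemma~\ref{lem:chi8}), the latter ultimately reflecting the minimality of the del Pezzo surface of degree $1$. The only mild point of care is that $\Xl$ may be infinite, so one should phrase the comparison between $1$-cycles and $2$-cell boundaries (or the spanning-tree/van Kampen argument) for a general, possibly infinite, CW complex; this causes no real difficulty. I do not expect any serious obstruction here.
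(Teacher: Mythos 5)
Your proof is correct. Part (1) is the same argument as in the paper: an edge of $\Xl_\B$ has an explicit form that is visibly preserved by post-composition of markings, and invariance of $\Xl_e$ follows by passing to the closed complement.

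For part (2) you use exactly the same two inputs as the paper --- simple connectedness of $\Xl$ (Proposition \ref{pro:simply connected}) and the fact that Bertini edges lie in no square (Lemma \ref{lem:chi8}) --- but you package the deduction differently. The paper takes an embedded loop in $\Xl_\B^\circ$, collapses all but one of its edges, and invokes van Kampen to exhibit a $\Z$ free factor in $\pi_1(\Xl)$; notably, its proof of (2) never cites Lemma \ref{lem:chi8}, even though that lemma is what guarantees the surviving edge is not filled by a $2$-cell and hence really contributes a free factor. Your route instead observes that every $2$-cell boundary has zero coefficient on each edge of $\Xl_\B$, so a $1$-cycle carried by $\Xl_\B$ that bounds in $\Xl$ must vanish, giving injectivity of $H_1(\Xl_\B;\Z)\to H_1(\Xl;\Z)=0$. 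This is marginally cleaner: it makes the reliance on Lemma \ref{lem:chi8} explicit, avoids the slightly informal ``connected sum'' language, and yields the (harmless) stronger conclusion that all of $\Xl_\B$ is a forest, from which the statement about the component $\Xl_\B^\circ$ is immediate. Both arguments are routine once the two inputs are in place, so I would regard yours as a valid, slightly more careful variant of the paper's proof rather than a genuinely different strategy.
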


\begin{proof}
\begin{enumerate}[wide]
\item An edge in $\Xl_\B$ has the form $(S, \phi\eta) \to (\p^2,\phi)$ for some $\phi \in \Bir_\K(\p^2)$ and $\eta$ a blow-up of a point of degree 8.
Now $g \in \Bir_\K(\p^2)$ sends this edge to $(S, g\phi\eta) \to (\p^2,g\phi)$, which is again an edge of the same form, hence in $\Xl_\B$.
This gives that $\Xl_\B$ is invariant under the action of $\Bir_\K(\p^2)$, thus the same is true for the closure of its complement.

\item Assume that the graph $\Xl_\B^\circ$ is not a tree.
Then there exists a sequence of edges $e_1, \dots, e_r$ in $\Xl_\B^\circ$ that form an embedded loop.
Recall that by Proposition \ref{pro:simply connected} the complex $\Xl$ is simply connected.
By collapsing in $\Xl$ all edges of this loop except $e_1$, we obtain a space which is still simply connected, and which is the connected sum of a circle (corresponding to $e_1$) and another space.
By van Kampen theorem such a space should have fundamental group of the form $\Z * G$ for some $G$, contradiction. \qedhere
\end{enumerate}
\end{proof}

We recall the following definitions of subgroups of $\Bir_\K(\p^2)$ that were given in the introduction:
\[G_{e}:=\langle\Aut_\K(\p^2),\E\setminus\B \rangle\qquad
G_b:=\langle\Aut_\K(\p^2), b\rangle,\ b\in\B,\qquad
G_\B:=\langle\Aut_\K(\p^2),\B\rangle.\]
Observe that $G_\B=\langle G_b\mid b\in\B\rangle$.

\begin{lemma} \label{lem:stab X^o}
Let  $g \in \Bir_\K(\p^2)$.
Then
\begin{enumerate}
\item \label{stab:X^o_e} $g(\Xl_e^\circ) = \Xl_e^\circ$ if and only if $g \in G_e$.
\item \label{stab:X^o_B} $g(\Xl_\B^\circ) = \Xl_\B^\circ$ if and only if $g \in G_\B$.
\end{enumerate}
\end{lemma}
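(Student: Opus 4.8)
The plan is to exploit that both subcomplexes are $\Bir_\K(\p^2)$-invariant (Lemma~\ref{lem:Xe_and_Xb}\ref{Xe}): since $g\in\Bir_\K(\p^2)$ acts as a homeomorphism of $\Xl_\B$, the image $g(\Xl_\B^\circ)$ is a connected component of $\Xl_\B$, namely the one containing $g\cdot(\p^2,\id)=(\p^2,g)$, so $g(\Xl_\B^\circ)=\Xl_\B^\circ$ if and only if $(\p^2,g)\in\Xl_\B^\circ$; likewise $g(\Xl_e^\circ)=\Xl_e^\circ$ iff $(\p^2,g)\in\Xl_e^\circ$. Since $\Stab(\Xl_\B^\circ)$ and $\Stab(\Xl_e^\circ)$ are subgroups, it then suffices, for each of (1) and (2), to check two inclusions separately: that each generator of the relevant subgroup --- the elements of $\Aut_\K(\p^2)$ together with those of $\B$, resp. of $\E\setminus\B$ --- sends $(\p^2,\id)$ to a vertex lying in the component, and conversely that any edge path inside the subcomplex from $(\p^2,\id)$ to $(\p^2,g)$ forces $g$ into the subgroup.

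I would do (2) first, as it is short: $\Xl_\B^\circ$ is a tree by Lemma~\ref{lem:Xe_and_Xb}\ref{Xb}. An automorphism fixes $(\p^2,\id)\in\Xl_\B^\circ$; and for $b\in\B$ the middle vertex $(S_b,\eta_b)$ of diagram~(\ref{diag:chi8}) yields a path $(\p^2,\id)-(S_b,\eta_b)-(\p^2,b)$ of length two inside $\Xl_\B$, so $(\p^2,b)\in\Xl_\B^\circ$; hence $G_\B\subseteq\Stab(\Xl_\B^\circ)$. Conversely, a path in $\Xl_\B$ from $(\p^2,\id)$ to $(\p^2,g)$ alternates vertices of type $\p^2$ with the valence-two vertices of the form $(S_b,\cdot)$, and by the description recalled just before the lemma each double step replaces the current marking $\mu$ by $\mu\alpha b$ for some $\alpha\in\Aut_\K(\p^2)$, $b\in\B$; composing from $\mu=\id$ one gets that $g$ equals a word in $\Aut_\K(\p^2)\cup\B$ up to an automorphism on the right, so $g\in G_\B$.

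For (1), the sufficiency direction uses that each $w\in\E\setminus\B$, being an elementary generator, is realised by a path of Sarkisov links with no intermediate vertex of type $\p^2$; transported into $\Xl$ and based at $(\p^2,\id)$, this is a path to $(\p^2,w)$ whose intermediate rank-$1$ vertices are not of type $\p^2$. Such a path cannot pass through a vertex of the form $(S_b,\cdot)$, since such a vertex has valence two with both neighbours of type $\p^2$ (Lemma~\ref{lem:chi8} and diagram~(\ref{diag:chi8})), so its presence would either create an intermediate $\p^2$-vertex or reduce the path to a single Sarkisov link with $S_b$ as middle vertex --- i.e. make $w$ a Bertini involution, contrary to $w\in\E\setminus\B$. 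Hence the path avoids all $(S_b,\cdot)$-vertices and all degree-$8$ blow-up edges, so by Lemma~\ref{lem:intersection} it lies in $\Xl_e$, giving $(\p^2,w)\in\Xl_e^\circ$ and therefore $G_e\subseteq\Stab(\Xl_e^\circ)$. For the converse, starting from a path in $\Xl_e$ from $(\p^2,\id)$ to $(\p^2,g)$, I would first reroute it to avoid every rank-$3$ vertex using the disks of Proposition~\ref{pro:from S3}, which lie in $\Xl_e$ by Lemmas~\ref{lem:chi8} and~\ref{lem:intersection}; the rerouted path is a path of Sarkisov links, and cutting it at its intermediate $\p^2$-vertices as in the proof of Proposition~\ref{prop:generators} writes $g$, up to automorphisms, as a product of elementary generators $w_j$, each realised by a subpath still contained in $\Xl_e$ and hence missing every $(S_b,\cdot)$-vertex. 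By Lemma~\ref{lem:chi8} together with Theorem~\ref{thm:sarkisov}\ref{sarkisov2}, no elementary relation involves a $\chi_8$ link, so every Sarkisov factorisation of a Bertini involution --- and, by composing with automorphisms on both sides, of any map left and right equivalent to one --- passes through a vertex of the form $(S_b,\cdot)$; since the factorisation of $w_j$ just produced avoids such vertices, $w_j$ is not equivalent to a Bertini involution, hence lies in $\E\setminus\B$ up to automorphisms, hence in $G_e$. Therefore $g\in G_e$.

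The step I expect to be the main obstacle is the last one in (1): turning an arbitrary edge path of $\Xl_e$ into a Sarkisov path without leaving $\Xl_e$, and --- more delicately --- certifying that the resulting elementary generators are genuinely non-Bertini. This second point rests on the rigidity of the $\chi_8$ links recorded in Lemma~\ref{lem:chi8}: because their edges lie in no square, hence in no elementary relation, a Bertini involution has no Sarkisov factorisation bypassing the $(S_b,\cdot)$-vertices, so a factorisation that does bypass them cannot come from one. The analogous difficulty does not occur in (2), where $\Xl_\B^\circ$ is already a tree whose edges are literally the degree-$8$ blow-ups.
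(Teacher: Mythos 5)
Your proof is correct and takes essentially the same route as the paper's: reduce via the invariance of Lemma~\ref{lem:Xe_and_Xb} to deciding whether $(\p^2,g)$ lies in the given component, then cut a connecting path at its vertices of type $\p^2$, using Lemma~\ref{lem:intersection} to keep each piece inside the right subcomplex. You are in fact more explicit than the published proof on exactly the two points you flag — rerouting an edge path of $\Xl_e^\circ$ into a Sarkisov path without leaving $\Xl_e$, and using Lemma~\ref{lem:chi8} to certify that the resulting elementary generators cannot be equivalent to Bertini involutions — both of which the paper leaves implicit in the phrase ``differ by an element of $\E\setminus\B$''.
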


\begin{proof}
\begin{enumerate}[wide]
\item
Let $g\in\Bir_\K(\p^2)$ such that $g(\Xl_e^\circ) = \Xl_e^\circ$, and in particular $g \cdot (\p^2,\id)=(\p^2,g)\in\Xl_e^\circ$.
Pick a path $\gamma$ of edges connecting $(\p^2,g)$ and $(\p^2,\id)$ inside $\Xl_e^\circ$.
By Lemma \ref{lem:intersection}, the path $\gamma$ does not involve any edge from $\Xl_\B$.
By cutting at each intermediate vertex of type $\p^2$, we write $\gamma$ as a composition of paths $\gamma_i$, where each $\gamma_i$ links two vertices of type $\p^2$ whose markings differ by an element of $\E \setminus \B$.
It follows that $g\in G_e$.

Conversely if $g\in G_e$ we write $g=g_1\cdots g_n$ for some $g_1,\dots,g_n\in \Aut_\K(\p^2)\cup\E\setminus\B$.
Then for each $i$, there exists a path from $(\p^2,g_1\cdots g_i)$ to $(\p^2,g_1\cdots g_{i+1})$ inside $\Xl_e^\circ$.
By joining them we obtain a path in $\Xl_e^\circ$ starting at $(\p^2,\id)$ and ending at $(\p^2,g)=g\cdot (\p^2,\id)$, so that the connected component $g(\Xl_e^\circ)$ coincides with  $\Xl_e^\circ$.

\item The proof is entirely similar, and left to the reader. \qedhere
\end{enumerate}
\end{proof}

For each $b \in \B$, we define $\Tl_b$ to be the subgraph of $\Xl_\B^\circ$ obtained as the orbit of the edge between $(\p^2, \id)$ and $(S_b,\eta_b)$, under the action of $G_b = \langle\Aut_\K(\p^2), b\rangle$. 
Since $b$ stabilizes $(S_b,\eta_b)$ and $\Aut_\K(\p^2)$ stabilizes $(\p^2, \id)$, we obtain that $\Tl_b$ is connected, that is, $\Tl_b$ is a subtree of $\Xl_\B^\circ$ (which is a tree by Lemma~ \ref{lem:Xe_and_Xb}\ref{Xb}).

\begin{lemma} \label{lem:stab Tb}
Let $b, b'$ be two elements in $\B$, and $g, g' \in \Bir_\K(\p^2)$.
If $g(\Tl_b)$ and $g'(\Tl_{b'})$ are distinct, then the intersection $g(\Tl_b) \cap g'(\Tl_{b'})$ is either empty or equal to a single vertex of type $\p^2$.
\end{lemma}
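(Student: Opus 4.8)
The plan is to prove the equivalent statement that if $g(\Tl_b)$ and $g'(\Tl_{b'})$ share an edge then they coincide; granting this, the conclusion follows formally from the fact that the connected components of $\Xl_\B$ are trees. First I would set up a combinatorial invariant of edges. Write $e_0$ for the edge of $\Xl_\B$ between $(\p^2,\id)$ and $(S_b,\eta_b)$, so that $\Tl_b=\{h\cdot e_0\mid h\in G_b\}$ by definition. Every edge $e$ of $\Xl_\B$ is, by construction, the blow-up of a point of degree $8$, so its middle vertex carries a del Pezzo surface of degree $1$, which, as recalled in \S\ref{sec:bertini}, has exactly two contractions to $\p^2$; hence $e$ determines a Bertini involution, well defined up to left--right composition by automorphisms, i.e.\ a well-defined element $\beta(e)\in\B$. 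Since replacing the markings by their composites with a fixed element of $\Bir_\K(\p^2)$ does not change the underlying blow-up, the map $\beta$ is $\Bir_\K(\p^2)$-invariant; and $\beta(e_0)=b$, so $\beta\equiv b$ on $\Tl_b$ and therefore on every translate $g(\Tl_b)$.

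I would then record two ``saturation'' properties of $g(\Tl_b)$. $(a)$ At a vertex $(\p^2,\phi)$ of $g(\Tl_b)$, the edges of $g(\Tl_b)$ incident to it are exactly the edges $\phi\alpha\cdot e_0$ with $\alpha\in\Aut_\K(\p^2)$, and these are precisely all the edges incident to $(\p^2,\phi)$ on which $\beta$ takes the value $b$; in particular this set of edges depends only on the vertex, not on $g$. $(b)$ At a vertex of the form $(S_b,\psi)$ of $g(\Tl_b)$ --- which has valence $2$ in $\Xl$ by Lemma~\ref{lem:chi8} --- both incident edges belong to $g(\Tl_b)$, since $b\in G_b$ fixes $(S_b,\eta_b)$ (as used just above the statement to see that $\Tl_b$ is connected) while $b\notin\Aut_\K(\p^2)$ moves $(\p^2,\id)$, so $b$ interchanges the two edges at $(S_b,\eta_b)$.

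Now assume $g(\Tl_b)\cap g'(\Tl_{b'})\neq\emptyset$, the statement being vacuous otherwise. Both subtrees then lie in a single connected component of $\Xl_\B$, which is a tree: every component of $\Xl_\B$ is a $\Bir_\K(\p^2)$-translate of $\Xl_\B^\circ$, which is a tree by Lemma~\ref{lem:Xe_and_Xb}. Suppose they share an edge $f$; evaluating $\beta$ at $f$ gives $b=\beta(f)=b'$. The set of edges lying in both subtrees is then non-empty (it contains $f$) and, by $(a)$ at common vertices of type $\p^2$ and $(b)$ at common vertices of type $S_b$, it is stable under passing from an edge to an adjacent edge of $g(\Tl_b)$; since $g(\Tl_b)$ is connected, this set is all of $g(\Tl_b)$, and by symmetry all of $g'(\Tl_{b'})$, so the two subtrees coincide. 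Hence, if they are distinct, they share no edge, and their intersection is a subtree of a tree containing no edge, i.e.\ a single vertex; and this vertex cannot be of type $S_b$, for otherwise $(b)$ would force both subtrees to contain the two edges at it, hence to share an edge. Therefore the intersection is a single vertex of type $\p^2$.

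The only steps that are not purely formal are the well-definedness and $\Bir_\K(\p^2)$-invariance of $\beta$ and the property $(a)$; both are matters of keeping track of markings, and the one genuinely geometric input underlying them and $(b)$ --- that a Bertini involution $b$ fixes the vertex $(S_b,\eta_b)$, equivalently that it lifts to an automorphism of the associated del Pezzo surface of degree $1$ --- has already been used just before the statement. I therefore expect the main obstacle to be this bookkeeping, rather than any substantial new argument.
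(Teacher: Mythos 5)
Your proof is correct and follows essentially the same route as the paper's: both rest on the tree structure of the components of $\Xl_\B$, on the fact that a translate of $\Tl_b$ contains both (valence-two) edges at each of its rank-$2$ vertices, and on reading off the class of $b$ in $\B$ from the middle vertex of any shared edge. The only organizational difference is that you replace the paper's case split ($b\neq b'$ handled by comparing Bertini classes at a common rank-$2$ vertex, $b=b'$ by a coset computation on the $\p^2$-vertices of $\Tl_b$) with a uniform propagation argument based on your saturation property $(a)$, which is a harmless repackaging.
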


\begin{proof}
By using the action of $\Bir_\K(\p^2)$, we can assume that $g' = \id$.
If $g \not\in G_\B$, then by Lemma \ref{lem:stab X^o}\ref{stab:X^o_B} $g(\Tl_b)$ and $\Tl_{b'}$ are in distinct connected components of $\Xl_\B$, and so are disjoint.
Now we assume $g \in G_\B$, so that $g(\Tl_b)$ and $\Tl_{b'}$ are two subtrees of $\Xl_\B^\circ$.

$\bullet$ First we consider the case $b \neq b'$.

Suppose that $g(\Tl_b)$ and $\Tl_{b'}$ contain a common edge.
Then in particular they contain a vertex of the form $(S_b,\phi\eta_b)$, and the two edges from this vertex.
But then the markings of the two corresponding vertices of type $\p^2$ should differ by composition by an element of the form $\alpha b \alpha^{-1} = \alpha' b' \alpha'^{-1}$ for some $\alpha, \alpha' \in \Aut_\K(\p^2)$, and this contradicts our assumption that $b, b'$ are two distinct representatives of Bertini involutions.
Now, since $g(\Tl_b)$, $\Tl_{b'}$ are two subtrees of $\Xl_\B^\circ$ without a common edge, they cannot have more than one common vertex, which has to be of type $\p^2$. 

$\bullet$ Now we assume $b = b'$.

By definition of $\Tl_b$, we have $\Tl_b =  g(\Tl_{b})$ if and only if $g \in G_b$. 
So we can assume that $g \in G_\B \setminus G_b$.
Now if $g(\Tl_b) \cap \Tl_{b}$ contains a vertex of the form $(S_b,\phi\eta_b)$, then it also contains the two neighbor vertices of type
$\p^2$. But if $(\p^2, \phi) \in g(\Tl_b) \cap \Tl_{b}$, then we should have $\phi = g f_1 = f_2$ with $f_1, f_2 \in G_b$, in contradiction with $g \not\in G_b$.
So we conclude that $g(\Tl_b) \cap \Tl_{b}$ is empty.
\end{proof}

\subsection{Quotients}

Let $\Yl$ be any connected subcomplex of $\Xl$.
We define a star graph $\Star(\Yl)$ associated with $\Yl$, by requiring that the peripheral vertices of $\Star(\Yl)$ are in one-to-one correspondence with the vertices of $\Yl$ of type $\p^2$.
Then we have a uniquely defined simplicial map from $\Yl$ to $\Star(\Yl)$, which is a bijection in restriction to the vertices of type $\p^2$, and which sends any other vertex to the central vertex of $\Star(\Yl)$.
We call $\Star(\Yl)$ together with the map $\Yl \to \Star(\Yl)$ the \textit{star quotient} of $\Yl$. 

Now assume that $(\Yl_i)$ is a collection of connected subcomplexes of $\Xl$, such that $\Xl = \bigcup_i \Yl_i$, and for any $i,j$ either $\Yl_i = \Yl_j$ or $\Yl_i \cap \Yl_j$ contains only vertices of type $\p^2$.
Then we can put together all star quotients $\Yl_i \to \Star(\Yl_i)$ in order to get a map from $\Xl$ to a well defined connected graph.

Now we check that we can apply this construction to the family of subcomplexes
\[(\Yl_i)_i = \{ g(\Xl_e^\circ);\; g \in \Bir_\K(\p^2)\} \cup \{ g(\Tl_b);\; g
\in \Bir_\K(\p^2), b \in \B\}.\]
First, 
the $g(\Tl_b)$, $g \in G_\B$, $b\in \B$, form a cover of $\Xl_\B^\circ$:
any vertex in $\Xl_\B^\circ$ at distance 2 from $(\p^2, \id)$ has the form $(\p^2, \alpha b)$ for some $b \in \B$ and $\alpha \in \Aut_\K(\p^2)$, and any edge of $\Xl_\B^\circ$ can be mapped to an edge issued from $(\p^2, \id)$ by applying an element of $G_\B$.
Lemma~\ref{lem:Xe_and_Xb}\ref{Xe} implies that the $g(\Xl_e^\circ)$ are exactly the connected components of $\Xl_e$, and in particular they are pairwise equal or disjoint. 
The same lemma implies that the $g(\Tl_b)$ cover $\Xl_\B$.
This implies that the family of complexes $(\Yl_i)_i$ is a cover of $\Xl$.
Moreover, by Lemma \ref{lem:intersection}, any intersection $g(\Xl_e^\circ) \cap g'(\Tl_b)$ contains only vertices of type $\p^2$.
Finally, by Lemma \ref{lem:stab Tb}, any intersection $g(\Tl_b) \cap g'(\Tl_{b'})$ between distinct subcomplexes is either empty or is equal to a single vertex of type $\p^2$. 

We denote by $\Tl_Q$ the resulting quotient graph, and $\sigma\colon \Xl \to \Tl_Q$ the associated simplicial map.
By a slight abuse of notation we shall use the same notation $(\p^2, \phi)$ either for a vertex of type $\p^2$ in $\Xl$, or for the corresponding vertex in $\Tl_Q$.

\begin{figure}[ht]
\def\svgwidth{.9\textwidth}
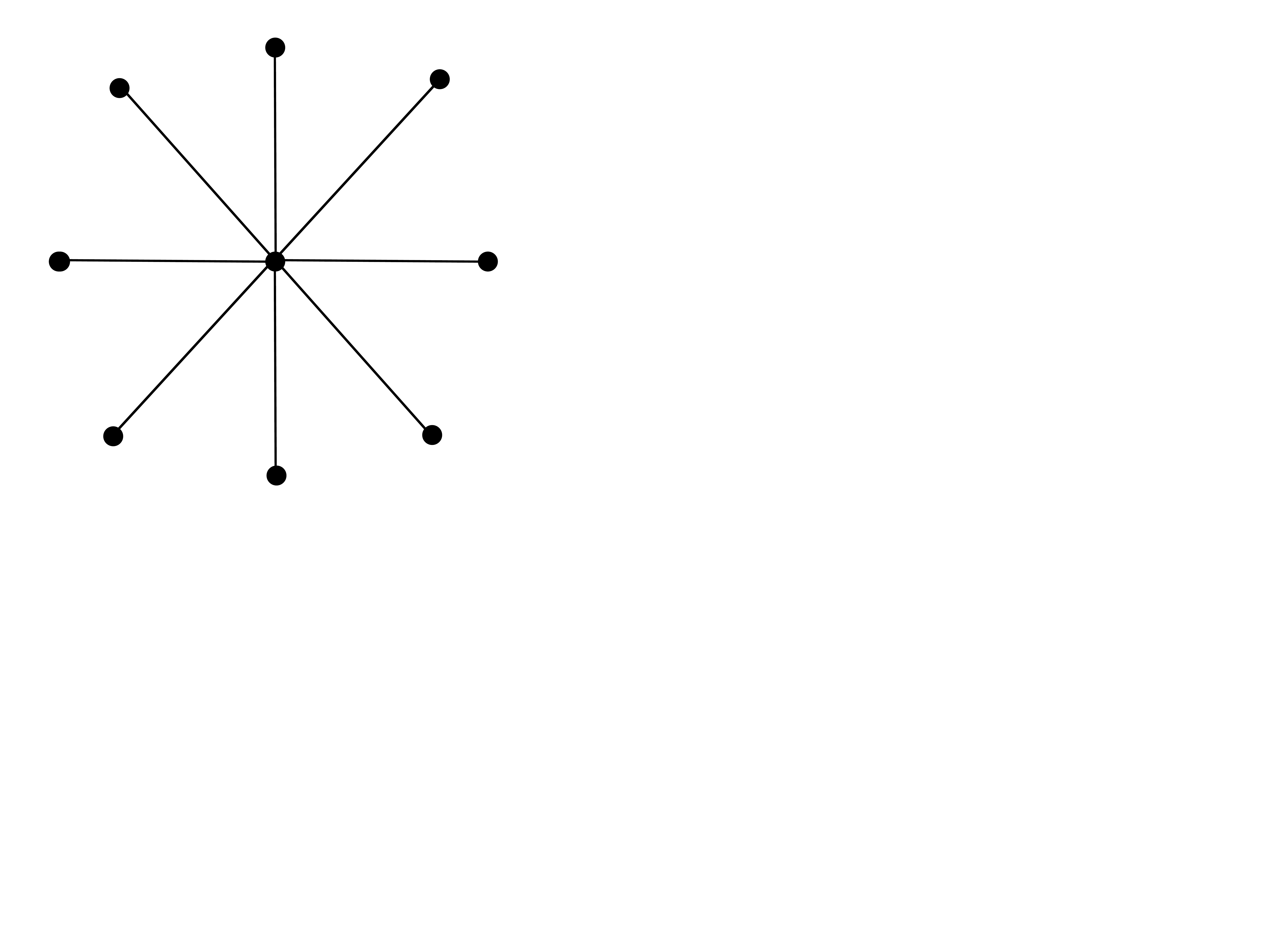
\caption{A few vertices of the tree $\Tl_Q$, where $e_i\in G_e$, $b\in\B$ and $\alpha_i\in\Aut_\K(\p^2)$.
}\label{fig:treeTQ}
\end{figure}

\begin{lemma}\label{lem:TQ}
The connected graph $\Tl_Q$ is a tree.
\end{lemma}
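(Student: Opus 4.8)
The plan is to show $\Tl_Q$ is connected and simply connected, and deduce it is a tree. Connectedness is essentially built into the construction: $\Tl_Q$ is a quotient of the connected complex $\Xl$ (which is connected by Proposition~\ref{pro:connected}), so $\sigma\colon \Xl \to \Tl_Q$ being surjective and simplicial forces $\Tl_Q$ to be connected. The real content is simple connectedness, which combined with the fact that $\Tl_Q$ is a graph (a $1$-dimensional CW complex) immediately gives that it is a tree.

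First I would record the local structure of $\sigma$. Each subcomplex $\Yl_i$ in the cover $(\Yl_i)_i$ is either a translate $g(\Xl_e^\circ)$ or a translate $g(\Tl_b)$, and $\sigma$ restricted to $\Yl_i$ is the star quotient $\Yl_i \to \Star(\Yl_i)$. Since each $\Tl_b$ is a subtree of the tree $\Xl_\B^\circ$ (Lemma~\ref{lem:Xe_and_Xb}\ref{Xb}), its star quotient $\Star(\Tl_b)$ is a star graph, hence a tree; and since distinct translates of the $\Yl_i$ meet only in vertices of type $\p^2$, the graph $\Tl_Q$ is assembled by gluing these star graphs along single vertices.

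The key step is then: any loop in $\Tl_Q$ is null-homotopic. Take a loop $\gamma$ in $\Tl_Q$; up to homotopy it is a finite edge-path. Each edge of $\Tl_Q$ comes from an edge of some $\Yl_i$, and because distinct $\Yl_i$ meet only at vertices of type $\p^2$ (central vertices never being shared), the loop $\gamma$ decomposes into successive arcs, each arc lying entirely in one star graph $\Star(\Yl_i)$ and with endpoints vertices of type $\p^2$. Now I lift: since $\Xl$ is simply connected (Proposition~\ref{pro:simply connected}) and $\sigma$ is surjective on vertices of type $\p^2$, I can choose, for each arc of $\gamma$ inside a given $\Star(\Yl_i)$, a lift to a path in $\Yl_i \subseteq \Xl$ between the corresponding vertices of type $\p^2$ — this is possible because $\Yl_i$ is connected. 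Concatenating these lifts produces a loop $\widetilde\gamma$ in $\Xl$ with $\sigma(\widetilde\gamma) = \gamma$ (up to homotopy rel endpoints inside each star). As $\Xl$ is simply connected, $\widetilde\gamma$ bounds a disk, and applying $\sigma$ to a filling of $\widetilde\gamma$ exhibits $\gamma$ as null-homotopic in $\Tl_Q$. Hence $\pi_1(\Tl_Q) = 1$, and a simply connected graph is a tree.

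The main obstacle is making the lifting argument clean: one must be careful that an arc of $\gamma$ staying inside a single star graph $\Star(\Yl_i)$ can indeed be lifted to a path inside $\Yl_i$ (not merely inside $\Xl$), and that the pieces glue up at the type-$\p^2$ vertices where consecutive $\Yl_i$ overlap — this is exactly where the hypothesis ``$\Yl_i \cap \Yl_j$ contains only vertices of type $\p^2$'' is used, to guarantee that a loop in $\Tl_Q$ cannot pass between two $\Yl_i$ except through a shared $\p^2$-vertex, so the decomposition into arcs and the corresponding lifts are well-defined. Once that bookkeeping is in place the argument is routine; alternatively one could phrase the same idea as: $\sigma$ induces a surjection on fundamental groups (by the lifting of arcs), and $\pi_1(\Xl) = 1$, so $\pi_1(\Tl_Q) = 1$.
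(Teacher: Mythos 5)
Your proof is correct and follows essentially the same route as the paper: decompose a loop in $\Tl_Q$ into arcs lying in single star graphs with endpoints at type-$\p^2$ vertices, lift each arc to a path in the corresponding connected subcomplex $\Yl_i$, concatenate into a closed loop in $\Xl$ (using that $\sigma$ is a bijection on type-$\p^2$ vertices), and conclude from the simple connectedness of $\Xl$ (Proposition~\ref{pro:simply connected}) by pushing the filling forward. Your extra remarks on connectedness and on the ``up to homotopy inside each star'' subtlety are fine but not needed beyond what the paper already records.
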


\begin{proof}
Let $\gamma$ be a loop in $\Tl_Q$. 
We can assume that $\gamma$ is parametrized by arc length, with base point a vertex of the form $(\p^2,\phi)$.
For each even $i$, $[\gamma(i), \gamma(i+2)]$ is a segment connecting two peripheral vertices in  $\Star(\Yl)$ for some connected subcomplex $\Yl \subset \Xl$. 
In particular, we can lift this segment as a path in $\Yl$.
Thus we obtain a lift $\tilde \gamma$ of the entire path $\gamma$, and this lift is also a closed loop because the map $\pi$ is a bijection in restriction to vertices of the form $(\p^2, \phi)$.
Now by Proposition~\ref{pro:simply connected} the loop $\tilde \gamma$ is trivial in $\pi_1(\Xl)$, hence the push-forward $\sigma_*(\tilde \gamma) = \gamma$ is trivial in $\pi_1(\Tl_Q)$.
\end{proof}

\begin{lemma}\label{lem:stabiliser}
The tree $\Tl_Q$ inherits the action of $\Bir_\K(\p^2)$ on $\Xl$, and 
\begin{enumerate}
\item \label{stabiliser:P2} the group $\Aut_\K(\p^2)$ is the stabilizer of $(\p^2,\id)$,
\item \label{stabiliser:e} the group $G_e$ is the stabilizer of the central vertex of $\Star(\Xl_e^\circ)$,
\item \label{stabiliser:b} for each $b \in \B$, the group $G_b$ is the stabilizer of the central vertex of $\Star(\Tl_b)$.
\end{enumerate}
\end{lemma}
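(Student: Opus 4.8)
The statement asserts that the action of $\Bir_\K(\p^2)$ on the quotient tree $\Tl_Q$ has prescribed vertex stabilizers. I would prove the three items in turn, in each case reducing to the facts already assembled: that $\sigma\colon\Xl\to\Tl_Q$ is equivariant (because all the subcomplexes in the cover $(\Yl_i)_i$ are permuted by $\Bir_\K(\p^2)$), that $\sigma$ restricts to a bijection on vertices of type $\p^2$, and the ``stabilizer dictionary'' furnished by Lemma~\ref{lem:stab X^o} and the definition of $\Tl_b$. The first sentence of the statement (that the action descends to $\Tl_Q$) is essentially immediate: the group acts on $\Xl$, permutes the cover, hence permutes the star quotients compatibly, and so acts simplicially on $\Tl_Q$; I would dispatch this in one line.

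\textbf{Item \ref{stabiliser:P2}.} For $(\p^2,\id)$: since $\sigma$ is a bijection on type-$\p^2$ vertices and is equivariant, $g$ fixes the vertex $(\p^2,\id)$ of $\Tl_Q$ if and only if $g\cdot(\p^2,\id)=(\p^2,g)$ equals $(\p^2,\id)$ as vertices of $\Xl$, i.e.\ if and only if $g^{-1}\circ\id=g^{-1}$ is an automorphism of $\p^2$ (the markings agree up to the equivalence relation defining vertices, which here forces $g\in\Aut_\K(\p^2)$ since the fibration is $\p^2/\pt$ with the trivial marking). So the stabilizer is exactly $\Aut_\K(\p^2)$.

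\textbf{Items \ref{stabiliser:e} and \ref{stabiliser:b}.} These are the heart of the matter, and both follow the same template. The central vertex $v_e$ of $\Star(\Xl_e^\circ)$ is, by construction of the star quotient, the image $\sigma(\Xl_e^\circ\setminus\{\text{type }\p^2\text{ vertices}\})$; since $g$ acts on $\Tl_Q$ by permuting these central vertices according to how it permutes the subcomplexes $g'(\Xl_e^\circ)$, we get $g\cdot v_e=v_e$ in $\Tl_Q$ precisely when $g(\Xl_e^\circ)=\Xl_e^\circ$ as subcomplexes of $\Xl$ — and by Lemma~\ref{lem:stab X^o}\ref{stab:X^o_e} this holds iff $g\in G_e$. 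The one subtlety to address: two distinct subcomplexes in the cover could in principle have the same image central vertex in $\Tl_Q$; I would note that this cannot happen because distinct $g'(\Xl_e^\circ)$ (resp.\ distinct $g'(\Tl_b)$, resp.\ a copy of $\Xl_e^\circ$ versus a copy of $\Tl_b$) meet only in type-$\p^2$ vertices, which map to peripheral vertices, so their non-peripheral parts have disjoint images and give genuinely distinct central vertices. Granting that, the argument for \ref{stabiliser:b} is identical with $\Xl_e^\circ$ replaced by $\Tl_b$ and Lemma~\ref{lem:stab X^o}\ref{stab:X^o_B} replaced by the defining property of $\Tl_b$ together with Lemma~\ref{lem:stab Tb}: one shows $g(\Tl_b)=\Tl_b$ iff $g\in G_b$. (By definition $\Tl_b$ is the $G_b$-orbit of a single edge, so $G_b\subseteq\Stab(\Tl_b)$; conversely if $g(\Tl_b)=\Tl_b$ then $g$ fixes or permutes its edges, and using that $g(\Tl_b)$ and $\Tl_b$ are subtrees of the tree $\Xl_\B^\circ$ sharing an edge, the computation in the proof of Lemma~\ref{lem:stab Tb} with $b=b'$ forces $g\in G_b$.)

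\textbf{Expected main obstacle.} The genuinely delicate point is not the stabilizer computations, which are bookkeeping on top of the already-established lemmas, but making the claim ``$g\cdot v=v$ in $\Tl_Q$ $\iff$ $g$ preserves the corresponding subcomplex of $\Xl$'' fully rigorous — i.e.\ pinning down how $\Bir_\K(\p^2)$ acts on the central vertices of $\Tl_Q$ and checking that the central vertices of the various $\Star(\Yl_i)$ remain distinct in the amalgamated quotient graph. This is where one must carefully invoke the hypothesis that pairwise intersections of the $\Yl_i$ consist only of type-$\p^2$ vertices (Lemmas~\ref{lem:intersection} and~\ref{lem:stab Tb}), exactly as was verified just before Lemma~\ref{lem:TQ}. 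Once that identification of the action is in hand, each of \ref{stabiliser:P2}--\ref{stabiliser:b} is a one-line translation via the relevant earlier lemma.
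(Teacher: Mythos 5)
Your proposal is correct and follows essentially the same route as the paper: the action descends because the covering family is invariant, item (1) follows from $\sigma$ being an equivariant bijection on type-$\p^2$ vertices, and items (2)–(3) reduce to identifying the stabilizer of a central vertex with the stabilizer of the corresponding subcomplex, then invoking Lemma~\ref{lem:stab X^o} and the definition of $\Tl_b$. You spell out two points the paper leaves implicit (distinctness of central vertices for distinct subcomplexes, and the converse direction of $g(\Tl_b)=\Tl_b\iff g\in G_b$), which is fine but does not change the argument.
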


\begin{proof}
By construction, the family
\[(\Yl_i)_i = \{ g(\Xl_e^\circ);\; g \in \Bir_\K(\p^2)\} \cup \{ g(\Tl_b);\; g
\in \Bir_\K(\p^2), b \in \B\}\]
of subcomplexes involved in the construction of  $\Tl_Q$  is invariant under the action of $\Bir_\K(\p^2)$, hence the action descends to $\Tl_Q$.
Since the quotient map $ \sigma\colon\Xl \to \Tl_Q$ is a bijection in restriction to vertices of type $\p^2$, the stabilizers of these vertices remain the same in $\Tl_Q$: this gives \ref{stabiliser:P2}.
The stabilizer of the central vertex of $\Star(\Yl_i)$ corresponds to the stabilizer of the subcomplex $\Yl_i$ for the initial action on $\Xl$.
So the two remaining assertions follow from Lemma \ref{lem:stab X^o}\ref{stab:X^o_e} and the definition of $\Tl_b$.
\end{proof}

Observe that each vertex of $\Tl_Q$ is either of the form $(\p^2, \phi)$, or is the central vertex of $\Star(\Yl)$ for some subcomplex $\Yl$ in the family.
These two types of vertices are preserved by the action of $ \Bir_\K(\p^2)$, so that $\Tl_Q$ has a natural structure of a bicolored tree.

\subsection{Structure of the Cremona group}

In this last section we prove the results stated in the introduction.

\begin{proof}[Proof of Theorem $\ref{thm:main big amalgam}$]
Since $\Tl_Q$ is a bicolored tree, and the action of $\Bir_\K(\p^2)$ on vertices of type $\p^2$ is transitive, we can look for a fundamental domain of the action inside the ball of center $(\p^2, \id)$ and radius 1. 
In fact the whole ball is a fundamental domain, indeed $\Aut_\K(\p^2)$ is the stabilizer of $(\p^2,\id)$ (see Lemma \ref{lem:stabiliser}), and we now check that $\Aut_\K(\p^2)$ also fixes each neighbor vertex.
First,  for each $b \in \B$ the central vertex of $\Star(\Tl_b)$ is a neighbor of $(\p^2, \id)$, and by Lemma \ref{lem:stabiliser} the corresponding stabilizer is $G_b$.
Then the last remaining neighbor vertex is the central vertex of $\Star(\Xl_e^\circ)$, whose stabilizer is $G_e$, by the same lemma.

By applying Theorem~\ref{thm:bass-serre} to the action of $\Bir_\K(\p^2)$ on the tree $\Tl_Q$,
we get that $\Bir_\K(\p^2)$ is isomorphic to $\bigast_{\Aut_\K(\p^2)}G_i$ where $I=\B\cup\{e\}$.

Now we prove that the action of $\Bir_\K(\p^2)$ on $\Tl_Q$ is faithful, by proving that the intersection of the stabilizers of $(\p^2, \id)$ and $(\p^2,b)$ is trivial, for any $b \in \B$.
If $g \in \Stab(\p^2, \id) \cap \Stab(\p^2, b)$, we have $g \in \Aut_\K(\p^2)$ and $bgb = g'  \in \Aut_\K(\p^2)$, so that $bg = g'b$.
But these two maps cannot have the same base points unless $g = \id$, because any automorphism of $\p^2$
preserving $8$ points in general position is the identity.
\end{proof}

\begin{remark}\label{rmk:B vide}
If the field $\K$ does not have any Galois extension of degree $8$, i.e. if $\B$ is empty, we have $\Xl_\B = \emptyset$, $\Xl_e^\circ = \Xl_e = \Xl$ and $\Tl_Q= \Star(\Xl_e^\circ)$. 
This reflects the fact that in this case $\Bir_\K(\p^2)=G_e$. 
In fact, trivially we have $\Bir_\K(\p^2)\simeq \Aut_\K(\p^2)\ast_{\Aut_\K(\p^2)\cap G_e} G_e=G_e$, and $\Tl_Q$ is its Bass-Serre tree, whose fundamental domain is the edge between $(\p^2,\id)$ and the central vertex of $\Star(\Xl_e^\circ)$.
\end{remark}

\begin{proof}[Proof of Corollary \ref{cor:main 2 amalgam}]
Let $I=\B\cup\{e\}$. Then Theorem \ref{thm:main big amalgam} gives
\[\Bir_\K(\p^2)=\bigast_{\Aut_\K(\p^2)}G_i=\left(\bigast_{\Aut_\K(\p^2)}
G_b\right)\ast_{\Aut_\K(\p^2)}G_e.\]
Now we have 
\[\bigast_{\Aut_\K(\p^2)}G_b 
= \langle \Aut_\K(\p^2), G_b\mid b \in \B \rangle
= G_\B, \]
from which the claim follows. 
The reason why $\Bir_\K(\p^2)$ acts faithfully on the Bass-Serre tree of $G_\B\ast_{\Aut_\K(\p^2)}G_e$ is the same as in the proof of Theorem~\ref{thm:main big amalgam}.
\end{proof}

\begin{proof}[Proof of Theorem \ref{thm:main morphisms}]
\begin{enumerate}[wide]
\item
Let $b \in \B$ be a Bertini involution, and consider the edge in the tree $\Xl_\B^\circ$ between the vertices $(\p^2,\id)$ and $(S_b,\eta_b)$, where $\eta_b\colon S_b \to \p^2$ is the blow-up of the base point of degree 8 of $b$.
Recall from Lemma \ref{lem:stab X^o}\ref{stab:X^o_B} that the group $G_\B$ acts on $\Xl_\B^\circ$.
The involution $b$ fixes the vertex $(S_b,\eta_b)$ and exchanges the two edges attached to it.
In particular $b$ does not fix the vertex $(\p^2,\id)$.
On the other hand, the subgroup $\Aut_\K(\p^2)$ stabilizes $(\p^2,\id)$, and any $\alpha \in \Aut_\K(\p^2)$ that also stabilizes $(S_b,\eta_b)$ must be the identity, because any automorphism of $\p^2$ preserving $8$ points in general position is the identity.
By definition the tree $\Tl_b$ is the orbit of the edge from $(\p^2,\id)$ to $(S_b,\eta_b)$, under the action of the subgroup  $G_b = \langle \Aut_\K(\p^2), b\rangle$.
Therefore the group $G_b$ acts on the tree $\Tl_b$ with fundamental domain a single edge, with stabilizer of vertices $\Aut_\K(\p^2)$ and $\langle b \rangle$, and trivial stabilizer for the entire edge.
By Theorem~\ref{thm:bass-serre} (or more precisely by using the special convention for two subgroups, see Remark \ref{rem:2 factors}),
it follows that $G_b$ is the free product
\[G_b\simeq\Aut_\K(\p^2)\ast\langle b\rangle,\]
and $\Tl_b$ is the associated Bass-Serre tree.
 
There are natural injections of $G_e$ and $G_b \simeq\Aut_\K(\p^2)\ast\langle b\rangle$ in both groups 
\begin{align*}
\Bir(\p^2) \simeq \bigast_{\Aut_\K(\p^2)} G_i &&
\text{and} &&
G_e \ast \left(\bigast_{b \in \B} \langle b \rangle \right).
\end{align*}
By the universal property of the free (or amalgamated) product we get morphisms in both direction, which are inverse to each other, and thus give the expected isomorphism
\[
\Bir(\p^2) \simeq G_e \ast \left(\bigast_{b \in \B} \Z/2\Z \right).
\]

\item
By the universal property of the free product, applied to the trivial morphism from $G_e$ and the identity map on the second factor, we get a surjective homomorphism $\Bir_\K(\p^2) \to \bigast_{\B}\Z/2\Z$.

\item The result is immediate by composing the above morphism with the abelianization morphism
\begin{equation*}
\bigast_{\B}\Z/2\Z \to \bigoplus_{\B}  \Z/2\Z. \qedhere
\end{equation*}
\end{enumerate}
\end{proof}

\begin{remark}
Another way to express point (2) of Theorem \ref{thm:main morphisms} is that we have  isomorphisms
\[\Bir_\K(\p^2)/\langle\langle G_e \rangle\rangle
\stackrel{\sim}\longrightarrow\bigast_{b\in\B}
\left(G_b/ \langle\langle\Aut_\K(\p^2)\rangle\rangle \right )\stackrel{\sim}
\longrightarrow\bigast_{b\in\B}\Z/2\Z.\]
\end{remark}

\bibliographystyle{myalpha}
\bibliography{biblio}

\end{document}